\newtheorem{thm}{Theorem}[section]
\newtheorem{lem}[thm]{Lemma}
\newtheorem{prop}[thm]{Proposition}
\numberwithin{equation}{section}
\theoremstyle{definition}
\newtheorem{rmk}[thm]{Remark}
\newtheorem{conv}[thm]{Convention}
\newcommand{\relmiddle}[1]{\mathrel{}\middle#1\mathrel{}}
\newcommand{\BB}{\mathbb{B}}
\newcommand{\BC}{\mathbb{C}}
\newcommand{\BN}{\mathbb{N}}
\newcommand{\BR}{\mathbb{R}}
\newcommand{\BZ}{\mathbb{Z}}
\newcommand{\SA}{\mathscr{A}}
\newcommand{\SD}{\mathscr{D}}
\newcommand{\SF}{\mathscr{F}}
\newcommand{\SL}{\mathscr{L}}
\renewcommand{\SS}{\mathscr{S}}
\newcommand{\CF}{\mathcal{F}}
\newcommand{\CI}{\mathcal{I}}
\newcommand{\CJ}{\mathcal{J}}
\newcommand{\CK}{\mathcal{K}}
\newcommand{\CL}{\mathcal{L}}
\newcommand{\CN}{\mathcal{N}}
\newcommand{\CT}{\mathcal{T}}
\newcommand{\pd}{\partial}
\renewcommand{\Re}{\mathrm{Re}\,}
\renewcommand{\d}{\,\mathrm{d}}
\renewcommand{\d}{\,\mathrm{d}}
\renewcommand{\div}{\mathrm{div}\,}
\DeclareMathOperator{\curl}{curl}
\begin{document}

%

\title[]{
\uppercase{Navier--Stokes flow in the exterior of a moving obstacle \\ with a Lipschitz boundary}
}

\author[]{Tomoki Takahashi}
\address{(T. Takahashi) 
Tokyo Institute of Technology,
Meguro, Tokyo, 152-8511 Japan}
\email{takahashi.t.cx@m.titech.ac.jp}

\author[]{Keiichi Watanabe}
\address{(K. Watanabe) School of General and Management Studies,
Suwa University of Science, 5000-1, Toyohira, Chino, Nagano 391-0292, Japan}		
\email{\texttt{watanabe\_keiichi@rs.sus.ac.jp}}

\dedicatory{Dedicated to Professor Toshiaki Hishida on the Occasion of his 60th Birthday}

\subjclass{Primary: 35Q30; Secondary: 76D05}

\thanks{The first author was partially supported by 
JSPS KAKENHI Grant Number 20H00118 and 23K12999, and 
the second author was partially supported by JSPS KAKENHI Grant Number 21K13826.}

\keywords{Navier--Stokes equations; Oseen semigroup; 
Rotating obstacle; 
Lipschitz domains; Exterior domains}

\date{}   

\begin{abstract}
Consider the three-dimensional Navier--Stokes flow 
past a moving rigid body $\mathscr{O} \subset \mathbb{R}^3$
with prescribed translational and angular velocities, 
where $\mathscr{O}$ stands for a bounded Lipschitz domain. 
We prove that the solution to the linearized problem is 
governed by a $C_0$-semigroup on 
solenoidal $L^q$-vector spaces with the $L^q$-$L^r$ 
estimates provided that $|1/q-1/2|<1/6+\varepsilon$ 
with some $\varepsilon>0$, where $r \ge q$ may be taken 
arbitrary large. As an application, 
we prove the existence and 
uniqueness of global mild solutions to the Navier--Stokes problem 
if the translational and angular velocities as well as 
the initial are sufficiently small. 
\end{abstract}

\maketitle

\section{Introduction}
Let us consider the three-dimensional Navier--Stokes flow past 
an obstacle with the Lipschitz boundary, that 
moves with prescribed translational and angular velocities. 
In the reference frame attached to the obstacle, 
the motions of the incompressible Navier--Stokes liquid is 
described by the following system in a fixed exterior domain 
$\Omega\subset \BR^3$ with the Lipschitz boundary:   
\begin{equation}
\label{eq-main}
\left\{\begin{aligned}
\pd_t v + v \cdot \nabla v - \Delta v - (\eta + \omega \times x) \cdot \nabla v + \omega \times v  + \nabla \pi & = 0
& \quad & \text{in $\Omega \times \BR_+$}, \\
\div v & = 0 & \quad & \text{in $\Omega \times \BR_+$}, \\
v & = \eta + \omega \times x & \quad & \text{on $\pd \Omega \times \BR_+$}, \\
v & \to 0 & \quad & \text{as $\lvert x \rvert \to \infty$}, \\
v \vert_{t = 0} & = v_0 & \quad & \text{in $\Omega$},
\end{aligned}\right.
\end{equation}
see Galdi \cite[Sec.~1]{G02} for details. Here, 
$v = (v_1 (x,t), v_2 (x,t), v_3 (x,t))^\top$ and 
$\pi = \pi (x,t)$ are 
unknown velocity field and pressure of the fluid, respectively, whereas 
$\eta = (\eta_1,\eta_2,\eta_3)^\top$ and $\omega = (\omega_1, \omega_2, \omega_3)^\top$ are 
the translational and angular velocities of the rigid body, respectively, where
$\eta_i, \omega_i \in \BR$, $i = 1,2,3$, are assumed to be constants. Here, we 
have used the notation $(\,\cdot\,)^\top$ to express the transpose of vectors. 
Without loss of generality, we suppose that the origin of the coordinates is located interior to the complement of the domain $\Omega$. 
\par Our results include 
the case $\omega=0$, and 
the linearized system 
\begin{equation}
\label{eq-main2}
\left\{\begin{aligned}
\pd_t u - \Delta u - (\eta + \omega \times x) \cdot \nabla u + \omega \times u  + \nabla p & = 0
& \quad & \text{in $\Omega \times \BR_+$}, \\
\div u & = 0 & \quad & \text{in $\Omega \times \BR_+$}, \\
u & =0& \quad & \text{on $\pd \Omega \times \BR_+$}, \\
u & \to 0 & \quad & \text{as $\lvert x \rvert \to \infty$}, \\
u \vert_{t = 0} & = f & \quad & \text{in $\Omega$}
\end{aligned}\right. 
\end{equation}
with $\eta=\omega=0$ (resp. $\eta\ne 0,\omega=0$) 
is said to be the Stokes system (resp. the Oseen system). 
If the boundary is smooth (at least of class $C^{1,1}$), 
the well-posedness of the Stokes system (resp. the Oseen system)  
in the sense that generation of an analytic $C_0$-semigroup, 
which is called the Stokes semigroup  
(resp. the Oseen semigroup),
on $L^q_\sigma(\Omega)~(1<q<\infty)$ was established 
by \cite{G81,S77,FS94} (resp. by \cite{M82}). 
Here, $L^q_\sigma(\Omega)$ is the space of solenoidal $L^q$-vector 
fields with vanishing normal trace at $\partial \Omega$. 
Concerning studies of the attainability 
(raised by Finn \cite{F65}) 
as well as the stability of the steady Navier--Stokes flow 
within the $L^q$-framework, 
see \cite{BM95,ES05,GHS97,HM18,K17,KY98,S99,T22,Y00}. 
In these studies, 
the $L^q$-$L^r$ estimates of the semigroup play a crucial role. 
By \cite{I89,MS97} for $\eta=\omega=0$ and by  
\cite{KS98,ES04,ES05,H21} for $\eta\ne 0,\omega=0$, 
we know the following 
$L^q$-$L^r$ estimates of the semigroup
\begin{equation}
\label{lqlr1}
\lVert \nabla^jS_{\eta,\Omega}(t) f \rVert_{L^r(\Omega)}
\le C t^{-\frac32 (\frac1q - \frac1r) -\frac j2} 
\lVert f \rVert_{L^q(\Omega)},\qquad t>0,
\, f\in L^q_\sigma(\Omega),\, j=0,1
\end{equation}
provided that 
\begin{align}
\label{condi-qr}
1<q\leq r\leq \infty~(q\ne\infty)\quad \text{if}~j=0,
\qquad 1<q\leq r\leq 3 \quad \text{if}~j=1, 
\end{align}
where 
$(S_{\eta,\Omega}(t))_{t\ge 0}$ stands for the Oseen semigroup 
for the case $\eta \ne 0$ and 
$(S_{0,\Omega}(t))_{t\ge 0}$ stands for the Stokes semigroup 
for the case $\eta = 0$. 
If $\omega\ne 0, \eta=0$, 
then Hishida \cite{H99a,H99b} (in $L^2$) and 
Geissert, Heck and Hieber \cite{GHH06} (in $L^q$) 
proved that the solution to Problem \eqref{eq-main2} is governed 
by a $C_0$-semigroup, which is called  
the Stokes semigroup with the rotation effect. 
Indeed, the coefficient in the drift term 
$(\omega \times x) \cdot \nabla u$ is unbounded and 
the essential spectrum of 
$\Delta u+(\eta+\omega \times x)\cdot\nabla u-\omega \times u$,
provided that $\omega$ is parallel to the $x_1$-axis, 
is given by 
$\{a + i k \lvert \omega \rvert 
\mid a \le 0, \enskip k \in \BZ \}$ due to \cite{FNN07,FN07}, 
which means that the drift term 
induces a hyperbolic aspect and that the drift term is never 
subordinate to the viscous term. Nevertheless, some  
smoothing properties of the Stokes semigroup 
with the rotation effect were established by 
\cite{H99a,H99b,GHH06}. Moreover, 
the $L^q$-$L^r$ estimates of 
the semigroup deduced in \cite{GHH06} were developed by 
Hishida and Shibata \cite{HS09}, 
and they constructed a unique global solution 
to the nonlinear problem, which tends to a 
stationary solution as $t\rightarrow \infty$. The results in 
\cite{H99a,H99b,GHH06,HS09} were also extended to 
the case $\omega = (0,0,k)^\top\ne 0$, 
$\eta = (0,0,a)^\top \ne 0$, 
$(k,a\in\BR)$ by Shibata \cite{S08,S10}. 
Here, notice that 
the Mozzi--Chasles transform \cite[VIII.0.5]{G11} makes it 
possible to reduce 
the problem to the particular case 
that $\omega$ and $\eta$ are parallel to the $x_3$-axis. 
\par The aim of this paper is to extend the above results 
on the linearized problem 
to the case of the Lipschitz boundary 
and, as an application, 
we construct a unique global solution to 
Problem \eqref{eq-main} provided that 
$\eta$, $\omega$, and $v_0$ are suitably small. 
Indeed, the Lipschitz boundary case was 
less studied in contrast to the smooth
boundary case, even for the case $\eta=\omega=0$ and 
to the best of the authors' knowledge, 
we only know the results 
due to the second author \cites{TW20,W23} toward this topic. 
It was proved in \cite{TW20} that 
the Stokes operator 
generates an analytic semigroup on $L^q_\sigma(\Omega)$ 
provided 
\begin{equation}
\label{cond-p}
\bigg\lvert \frac1q - \frac12 \bigg\rvert < \frac16 + \varepsilon,
\end{equation}
where $\varepsilon > 0$ is a number depending on $\Omega$. 
This result gave an affirmative answer to 
Taylor's conjecture \cite{T00} 
in the case of exterior Lipschitz domains. 
Notice that Taylor's conjecture in the case of a bounded Lipschitz domain, say, $D$, was affirmatively solved by Shen \cite{S12}, who
proved that the Stokes operator generates an analytic semigroup on 
$L^q_\sigma(D)$ provided $q$ satisfies \eqref{cond-p}, see also \cite{KW17,T18,Tthesis} 
for the extention of Shen's paper. 
Some $L^q$-$L^r$ estimates of the Stokes semigroup 
and its gradient in exterior Lipschitz domains 
were also deduced from \cite{TW20}, where 
the condition 
\begin{equation}
\label{cond-r2}
\bigg\lvert \frac1r - \frac12 \bigg\rvert < \frac16 + \varepsilon
\end{equation}
was imposed. However, the condition \eqref{cond-r2} was
redundant and removed 
by the second author \cite{W23}. In particular, he established 
the desired $L^q$-$L^r$ estimates of the Stokes semigroup 
and its gradient, i.e., 
\eqref{lqlr1} with $\eta = 0$ for $q,r$ fulfilling
\eqref{condi-qr} as well as \eqref{cond-p}. 
These estimates 
enable us to apply the classical approach due to Kato \cite{K84} 
(see also Fujita and Kato \cite{FK64}) and Weigner's method 
\cite{W00} to the nonlinear problem, see \cite[Thm. 1.10]{W23}. 
\par In order to describe our main results, 
following \cite{TW20}, we first define the Stokes operator
on $L^q_\sigma (\Omega)$, $1 < q < \infty$. 
First, we define the Stokes
operator $A_{2,\Omega}$ on $L^2_\sigma(\Omega)$ 
by using a sesquilinear form, see, e.g.,
\cite[Ch. 4]{MM08} or \cite[Sec. III.2.1]{Sbook}. 
Then the Stokes operator $A_{q,\Omega}$
on $L^q_\sigma(\Omega)$, $1<q<\infty$, is defined in two steps. First, we take the part of
$A_{2, \Omega}$ in $L^q_\sigma (\Omega)$, i.e.,
\begin{equation}
\SD(A_{2,\Omega}|_{L^q_\sigma(\Omega)}):=
\{u\in \SD (A_{2,\Omega})\cap L^q_\sigma(\Omega)
\mid A_{2,\Omega} u\in L^q_\sigma(\Omega)\}
\end{equation}
and $A_{2,\Omega} \vert_{L^q_\sigma (\Omega)}$ is given by $A_{2,\Omega} u$ for $u \in \SD (A_{2,\Omega})$.
Notice that $A_{2,\Omega}|_{L^q_\sigma(\Omega)}$ is densely defined and is
closable. Second, we define $A_{q,\Omega}$ as the closure of $A_{2,\Omega}|_{L^q_\sigma(\Omega)}$
in $L^q_\sigma(\Omega)$. If there is no confusion, we omit the subscription $q$ in the notation of the Stokes operator. 
\par For $q$ fulfilling \eqref{cond-p}, 
let $P_\Omega$ denote the Helmholtz projection 
from $L^q(\Omega)$ onto $L^q_\sigma(\Omega)$ associated with 
the decomposition
\begin{align*}
L^q(\Omega)=L^q_{\sigma}(\Omega)\oplus\{\nabla p\in L^q(\Omega)\mid 
p\in L^q_{\rm{loc}}(\Omega)\},
\end{align*}
see Lang and Mendez \cite[Thm. 6.1]{LM06} and Tolksdorf and Watanabe \cite[Prop. 2.3]{TW20}. 
Given $\eta = (\eta_1, \eta_2, \eta_3)^\top \in \BR^3$ and 
$\omega=(\omega_1,\omega_2,\omega_3)^\top \in \BR^3$,
we define $B_{\Omega,\eta,\omega} \colon
L^q_{\sigma}(\Omega) \to L^q_{\sigma}(\Omega)$ by
\begin{align}
B_{\Omega,\eta,\omega}u & :=P_\Omega[-(\eta + \omega\times x)\cdot\nabla u
+\omega\times u], \\
\SD(B_{\Omega,\eta,\omega}) & :=
\{u\in W^{1,q}_{0,\sigma}(\Omega)\mid
(\eta + \omega\times x)\cdot\nabla u\in L^q(\Omega)\}.
\end{align}
With the aforementioned notation, define $\CL_{\Omega,\eta,\omega}:L^q_{\sigma}(\Omega)\rightarrow
L^q_{\sigma}(\Omega)$ by
\begin{equation}
\label{def-operator}
\CL_{\Omega,\eta,\omega}u :=A_{\Omega}u+B_{\Omega,\eta,\omega} u, \qquad
\SD(\CL_{\Omega,\eta,\omega}) := \SD (A_{\Omega}) \cap \SD (B_{\Omega,\eta,\omega}).
\end{equation}
\par We are now in a position to state the main theorems. 
The following theorem implies 
that the operator $- \CL_{\Omega,\eta,\omega}$ 
generates a $C_0$-semigroup
$(e^{- t \CL_{\Omega,\eta,\omega}})_{t \ge 0}$ on 
$L^q_\sigma (\Omega)$ equipped with the $L^q$-$L^r$ estimates 
provided that 
$\lvert \eta \rvert := \sqrt{\eta_1^2 + \eta_2^2 + \eta_3^2}$
and $\lvert \omega \rvert := 
\sqrt{\omega_1^2 + \omega_2^2 + \omega_3^2}$ are bounded.  

\begin{thm}\label{thmlqlr}
Let $\Omega \subset \BR^3$ be an exterior Lipschitz domain. Let $\eta, \omega \in \BR^3$.
Then the following assertions are valid.
\begin{enumerate}
\item There exists $\varepsilon > 0$
such that for every numbers $q$ satisfying
\begin{equation}
\label{cond-q}
\bigg\lvert \frac{1}{q} - \frac12 \bigg\rvert < \frac16 + \varepsilon	
\end{equation}
the operator $- \CL_{\Omega,\eta,\omega}$ defined by \eqref{def-operator} generates a bounded
$C_0$-semigroup $(e^{- t \CL_{\Omega,\eta,\omega}})_{t \ge 0}$ on $L^q_\sigma (\Omega)$.
\item Fix $c_0>0$ and assume $\lvert \eta \rvert + \lvert \omega \rvert \leq c_0$. 
Then for every $1 < q \le r \le \infty$ with $q$ satisfying \eqref{cond-q}
there exists a constant $C$ independent of $\eta$ and $\omega$ 
(but depending on $c_0$) such that
\begin{equation}
\label{lqlr}
\lVert e^{- t \CL_{\Omega,\eta,\omega}} f \rVert_{r, \Omega}
\le C t^{-\frac32 (\frac1q - \frac1r) } \lVert f \rVert_{q,\Omega}
\end{equation}
for $t > 0$ and $f \in L^q_\sigma (\Omega)$ as well as
\begin{equation}
\label{gradlqlr}
\lVert \nabla e^{- t \CL_{\Omega,\eta,\omega}} f \rVert_{r, \Omega}
\le C t^{-\frac32 (\frac1q - \frac1r) - \frac12} \lVert f \rVert_{q, \Omega}
\end{equation}
for $0 < t \le 2$ and $f \in L^q_\sigma (\Omega)$, where 
$\lVert \enskip\cdot\enskip \rVert_{q,\Omega}
=\lVert \enskip\cdot\enskip \rVert_{L^{q}(\Omega)}$. 
If $r$ satisfies additionally
\begin{align}\label{rcondi}
\left|\frac{1}{r}-\frac{1}{2}\right|
<\frac{1}{6}+\varepsilon,
\end{align}
then there holds
\begin{align}\label{gradlqlr2}
\|\nabla e^{-t\CL_{\Omega,\eta,\omega}}f\|_{r,\Omega}\leq
Ct^{-\min\{\frac{3}{2}\left(\frac{1}{q}-\frac{1}{r}\right)
+\frac{1}{2},\frac{3}{2q}\}}\|f\|_{q,\Omega}	
\end{align}
for $t\geq 2$ and $f\in L^q_\sigma(\Omega)$.
\end{enumerate}
\end{thm}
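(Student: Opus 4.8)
The plan is to construct the semigroup by a localization argument that reduces the exterior Lipschitz problem to two model problems—the whole space $\BR^3$ and a bounded Lipschitz domain—on each of which $\CL_{\Omega,\eta,\omega}$ is tractable, and then to glue the resulting semigroups together with a cutoff and a Bogovskii-type corrector. I would first treat the whole-space operator $-\Delta-(\eta+\omega\times x)\cdot\nabla+\omega\times$ on $L^q_\sigma(\BR^3)$. Writing $O(t)=e^{tW}$ for the rotation generated by $Wy=\omega\times y$, the change of variables $u(x,t)=O(t)\,v(O(t)^\top x,t)$, combined with the Galilean shift that removes the constant drift $\eta$, conjugates the rotation and transport terms away and reduces the equation to the heat equation (the constant drift merely producing the Oseen shift). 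Since $O(t)$ and the shift act as isometries on $L^q_\sigma(\BR^3)$, the whole-space semigroup inherits the heat-kernel bounds, so the model $L^q$–$L^r$ and gradient estimates $t^{-\frac32(1/q-1/r)}$ and $t^{-\frac32(1/q-1/r)-1/2}$ hold on all of $(0,\infty)$.

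For part (1), on a bounded Lipschitz domain $D$ containing $\pd\Omega$ the coefficient $\omega\times x$ is bounded, so $B_{D,\eta,\omega}$ is a first-order term that is relatively bounded with relative bound zero with respect to the Stokes operator $A_D$ (via the square-root characterization $\SD(A_D^{1/2})\hookrightarrow W^{1,q}$, equivalent to the short-time gradient estimate of the Stokes semigroup on Lipschitz domains); since $-A_D$ generates an analytic semigroup on $L^q_\sigma(D)$ for $q$ in the range \eqref{cond-q} by Shen's theorem, standard perturbation theory yields an analytic semigroup on $L^q_\sigma(D)$ with compact resolvent. I then fix a cutoff $\varphi$ equal to $1$ near $\pd\Omega$ and $0$ outside a ball and build a parametrix for the exterior semigroup as $(1-\varphi)$ times the whole-space semigroup plus $\varphi$ times the bounded-domain semigroup, corrected by a Bogovskii operator to restore the solenoidal and boundary conditions destroyed by $\varphi$. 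Substituting this ansatz into \eqref{eq-main2} produces commutator remainders supported in the annular overlap, where $\omega\times x$ is bounded; these are lower order and are absorbed through a Duhamel (Neumann) series, yielding the $C_0$-semigroup $(e^{-t\CL_{\Omega,\eta,\omega}})_{t\ge0}$ together with the resolvent bound giving boundedness. The case $\omega=0$ is simpler, since $B_{\Omega,\eta,0}=P_\Omega[-\eta\cdot\nabla]$ has a bounded coefficient and is $A_\Omega^{1/2}$-bounded, so the semigroup follows by perturbing the analytic Stokes semigroup of \cite{TW20,W23}.

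The short-time bounds \eqref{lqlr} and \eqref{gradlqlr} for $0<t\le2$ I would read off from the parametrix: on a finite time interval both the whole-space and bounded-domain pieces satisfy the desired smoothing, the drift and rotation are lower order relative to the leading Stokes smoothing, and the factors $t^{-\frac32(1/q-1/r)}$ and (with the extra $t^{-1/2}$) the gradient version follow from the heat-type smoothing of the two model semigroups together with the $L^q$–$L^r$ estimates of the Stokes semigroup established in \cite{W23}. The uniformity of $C$ in $\eta,\omega$ for $|\eta|+|\omega|\le c_0$ is obtained by tracking these parameters through the conjugation and the perturbation series.

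The genuinely hard part is the large-time estimate \eqref{gradlqlr2} and the sharp long-time rate in \eqref{lqlr} for $t\ge 2$. Here I would again use the cutoff decomposition, but now iterate a Duhamel identity in the Iwashita–Kobayashi–Shibata spirit: the exterior piece decays at the whole-space rate $t^{-\frac32(1/q-1/r)}$, the compactly supported commutator remainders are fed back through Duhamel, and the resulting local energy decay of the full exterior semigroup controls the slow tail, producing the exponent $\min\{\frac32(1/q-1/r)+\frac12,\frac{3}{2q}\}$, the second entry being the saturation rate of the gradient at large time forced by the nonlocal pressure and far-field behavior. The main obstacle I anticipate is establishing this local energy decay and the accompanying Duhamel estimates without the $W^{2,q}$ regularity and the low-frequency resolvent expansion near $\lambda=0$ that are unavailable for Lipschitz boundaries; moreover the essential spectrum $\{a+ik|\omega|:a\le0,\ k\in\BZ\}$ lies on the imaginary axis, so the standard resolvent analysis must be replaced by direct semigroup estimates drawn from \cite{TW20,W23}, and the unbounded coefficient $\omega\times x$ must be kept strictly inside the region where the cutoff localizes it.
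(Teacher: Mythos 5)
Your construction of the semigroup and the short-time estimates follows the same route as the paper: conjugation of the whole-space problem to the heat semigroup, relative boundedness of $B_{D,\eta,\omega}$ with respect to $A_D$ via $\SD(A_D^{1/2})=W^{1,q}_{0,\sigma}(D)$ on the bounded Lipschitz piece, and a cutoff/Bogovski\u{\i} parametrix whose remainder is absorbed by a Neumann--Duhamel series. However, already here there is a concrete gap: for the Neumann series (equivalently, the invertibility of $I+P_\Omega\Phi_\lambda$ for large $\lambda$) one needs a quantitative decay rate in $\lambda$ of the remainder, and the dangerous term is the localized pressure $\pi^D_\lambda\nabla\varphi$. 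In the $C^{1,1}$ case this is handled by the trace estimate together with $W^{2,q}$-regularity (as in \cite{HS09}), both unavailable on Lipschitz domains; calling the remainder ``lower order'' does not produce the needed smallness. The paper's substitute is the characterization $\SD(A_D^\alpha)=H^{2\alpha,q}_{0,\sigma}(D)$ of fractional power domains for $\alpha>1/2$ (Lemma \ref{lemfractional}), which by a duality argument gives the pressure decay $\|P_\lambda\|\le C|\lambda|^{-\alpha}$ (Proposition \ref{proppdecay}) and also the gradient $L^q$-$L^r$ smoothing of $e^{-t\CL_{D,\eta,\omega}}$, since Gagliardo--Nirenberg via $W^{2,q}$ is not an option. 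This ingredient is absent from your proposal, and without it both the series construction and the gradient bound \eqref{gradlqlr} stall.

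The second and more serious gap is the large-time part of assertion (2), which is the heart of the theorem. You propose a Duhamel iteration ``in the Iwashita--Kobayashi--Shibata spirit'' driven by local energy decay, but in \cite{I89,KS98} that local energy decay is exactly what the low-frequency resolvent expansion delivers, and you concede this expansion is unavailable here; saying the resolvent analysis ``must be replaced by direct semigroup estimates drawn from \cite{TW20,W23}'' does not work either, since those references concern only $\eta=\omega=0$. What is missing is the actual mechanism, which in the paper is Hishida's semigroup-level method \cite{H18,H20}: (i) skew-symmetry of $(\eta+\omega\times x)\cdot\nabla$ gives the energy identity \eqref{en2}, which combined with duality yields uniform $L^r$ bounds and then the decay \eqref{lqlr} for exponents restricted by \eqref{rcondi} (Lemmas \ref{lem0} and \ref{proplqlr}); (ii) local energy decay is then obtained by perturbation from the whole-space evolution, which---because the semigroup is \emph{not} analytic when $\omega\ne0$---forces one to control $\partial_t e^{-t\CL_{\Omega,\eta,\omega}}$ and the pressure in $W^{-1,q}$ over bounded sets (Proposition \ref{prop-6.1}, Lemma \ref{lemsigma1}); (iii) a quantified bootstrap, Proposition \ref{propr0}, converts a local decay rate $t^{-1/2-\kappa}$ into $L^q$-$L^r$ bounds up to $r\le 3/(1-2\kappa+2\epsilon)$, and iterating this finitely many times (improving $\kappa$ by a fixed increment depending on $\varepsilon_2$ at each step) removes the restriction \eqref{rcondi}, reaches $r=\infty$, and produces the exponent $\min\{\tfrac32(\tfrac1q-\tfrac1r)+\tfrac12,\tfrac{3}{2q}\}$ in \eqref{gradlqlr2}. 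Your proposal names the obstacle but supplies none of these three steps, so as written it proves \eqref{lqlr} only under the extra hypothesis \eqref{rcondi} on $r$ and does not reach \eqref{gradlqlr2} at all.
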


\begin{rmk}
As is expected from the case of the smooth boundary, 
if $\omega=0$, then the Oseen semigroup 
$(e^{- t \CL_{\Omega,\eta,0}})_{t \ge 0}$ is an analytic 
$C_0$-semigroup 
on $L^q_\sigma (\Omega)$ provided that $q$ satisfies \eqref{cond-q}. 
In fact, it follows from \cite[p.300]{W23} that 
\begin{equation}
\lVert \nabla (\lambda+A_\Omega)^{-1} u\rVert_{q,\Omega}
\le \frac{C}{|\lambda|^{\frac{1}{2}}} \lVert u\rVert_{q,\Omega}
\end{equation} 
for $u \in L^q_\sigma(\Omega)$ and 
$\lambda\in \Sigma_\theta := \{z \in \BC \setminus \{0\}
\mid \lvert \arg z \rvert < \theta\}$ 
fulfilling $|\lambda|\geq \beta$ with some $\theta\in(\pi/2,\pi)$ 
and $\beta\geq 1$, which implies that 
$(I+B_{\Omega,\eta,0}
(\lambda+A_\Omega)^{-1})^{-1}: L^q_\sigma (\Omega)\rightarrow 
L^q_\sigma(\Omega)$ is a bounded operator with 
\begin{equation}
\lVert (I+B_{\Omega,\eta,0}
(\lambda+A_\Omega)^{-1})^{-1} u\rVert_{q,\Omega}\leq 2
\lVert u\rVert_{q,\Omega}, \qquad u\in L^q_\sigma(\Omega), \, \lambda\in \Sigma_\theta, 
\, \lvert \lambda \rvert \ge \beta
\end{equation}
with large $\beta$. Applying this
operator to the form 
$(\lambda+\CL_{\Omega,\eta,0})^{-1}
=(\lambda+A_\Omega)^{-1}(I+B_{\Omega,\eta,0}
(\lambda+A_\Omega)^{-1})^{-1}$ and employing the estimates 
of the resolvent $(\lambda+A_\Omega)^{-1}$
established in \cite[Thm. 1.1, (5.24)]{TW20} asserts that 
$(e^{- t \CL_{\Omega,\eta,0}})_{t \ge 0}$ is an analytic 
$C_0$-semigroup on $L^q_\sigma (\Omega)$. 
\end{rmk}

\begin{rmk}
Theorem \ref{thmlqlr} with $\eta=\omega=0$ 
coincides with the results 
given in \cite{TW20} and \cite{W23}. In particular, 
the estimate 
\eqref{gradlqlr2} with $\eta=\omega=0$ implies that the $L^r$-norm 
of the gradient of the Stokes semigroup is bounded by 
$Ct^{-3/(2q)}\|f\|_{q,\Omega}$ if $r>3$. This decay rate might be 
optimal due to the result for the case of the smooth boundary, 
see Maremonti and Solonnikov \cite{MS97}. 
Moreover, the decay rate in \eqref{gradlqlr2} is   
$t^{-3(1/q-1/r)/2-1/2}$ if $r\leq 3$. 
The restriction $r\leq 3$ seems to be optimal according to 
the result for the case of the smooth boundary, see \cite{MS97,H11}, 
in which a key observation is the relation between the 
restriction of exponent and the summability of the 
steady Stokes flow at spatial infinity. 
From this observation, it is also conjectured by 
Hishida \cite[Sec. 5]{H11} that 
the restriction $r\leq 3$ may not be optimal if $\eta\ne 0$, 
but it is not proved even for the case of the smooth boundary. 
\end{rmk}

By means of real interpolation due to Yamazaki \cite{Y00} and 
Hishida and Shibata \cite{HS09}, we may have the following result.

\begin{thm}\label{thmlqlr2}
Let $\Omega \subset \BR^3$ be an exterior Lipschitz domain.
Let $\eta, \omega \in \BR^3$.
Fix $c_0>0$ and assume $\lvert \eta \rvert + \lvert \omega \rvert \leq c_0$.
Then the following assertions are valid.
\begin{enumerate}
\item For every $1\le\rho\le \infty$ and
$1 < q \le r<\infty$ with $q$ satisfying \eqref{cond-q},
there exists a constant $C$
independent of $\eta$ and $\omega$ 
(but depending on $c_0$) such that
\begin{equation}
\label{lqlr01}
\lVert e^{- t \CL_{\Omega,\eta,\omega}} f \rVert_{r,\rho,\Omega}
\le C t^{-\frac32 (\frac1q - \frac1r) }
\lVert f \rVert_{q,\rho,\Omega}
\end{equation}
for $t > 0$ and $f \in L^{q,\rho}_\sigma (\Omega)$ as well as
\begin{equation}
\label{gradlqlr01}
\lVert \nabla e^{- t \CL_{\Omega,\eta,\omega}} f \rVert_{r,\rho,\Omega}
\le C t^{-\frac32 (\frac1q - \frac1r) - \frac12}
\lVert f \rVert_{q,\rho,\Omega}
\end{equation}
for $0 < t \le 2$ and $f \in L^{q,\rho}_\sigma (\Omega)$.
If $1\le \rho<\infty$ and $r$ satisfies additionally
\eqref{rcondi},
then there holds
\begin{align}\label{gradlqlr02}
\|\nabla e^{-t\CL_{\Omega,\eta,\omega}}f\|_{r,\rho,\Omega}\leq
Ct^{-\min\{\frac{3}{2}\left(\frac{1}{q}-\frac{1}{r}\right)
+\frac{1}{2},\frac{3}{2q}\}}\|f\|_{q,\rho,\Omega}
\end{align}
for $t\geq 2$ and $f\in L^{q,\rho}_\sigma(\Omega)$.
\item Let $q$ satisfy \eqref{cond-q}. If $q$ and $r$ satisfy
$1/q-1/r=1/3$ and $1<q\le r\le 3$, then there holds
\begin{equation}
\label{duality}
\int_0^\infty\lVert\nabla e^{-\tau\CL_{\Omega,\eta,\omega}}f
\rVert_{r,1,\Omega}\d\tau\le
C\lVert f\rVert_{q,1,\Omega}
\end{equation}
for $f\in L^{q,1}_\sigma(\Omega)$.
\end{enumerate}
Here, $L^{q,\rho} (\Omega)$ stands for the Lorentz space defined on $\Omega$, equipped with the
norm $\lVert \enskip\cdot\enskip \rVert_{L^{q,\rho} (\Omega)} = \lVert \enskip\cdot\enskip \rVert_{q, \rho, \Omega}$
and 
$L^{q,\rho}_\sigma (\Omega)$ is defined as 
$L^{q,\rho}_\sigma(\Omega)=
(L^{q_0}_{\sigma}(\Omega),L^{q_1}_{\sigma}(\Omega))_{\theta,\rho}$
with 
\begin{align}
q_0<q<q_1,\quad \frac1q=\frac{(1-\theta)}{q_0}
+\frac{\theta}{q_1},\quad  
\left|\frac{1}{q_i}-\frac{1}{2}\right|
<\frac{1}{6}+\varepsilon\quad (i=0,1),\quad 
0 < \theta < 1, \quad 1\leq \rho\leq \infty,
\end{align}
where $(\,\cdot\,,\,\cdot\,)_{\theta,\rho}$ 
denotes the real interpolation functor. 
\end{thm}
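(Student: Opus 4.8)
The plan is to derive both assertions of Theorem \ref{thmlqlr2} from Theorem \ref{thmlqlr} by applying the real interpolation functor $(\,\cdot\,,\,\cdot\,)_{\theta,\rho}$, following the strategy of Yamazaki \cite{Y00} and Hishida--Shibata \cite{HS09}.

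For assertion (1) I would freeze $t>0$ and regard $e^{-t\CL_{\Omega,\eta,\omega}}$ as a single linear operator. Given $q,r,\rho$ as in the statement, choose $q_0<q<q_1$ satisfying \eqref{cond-q}, together with $1<r_0<r<r_1<\infty$ and $q_i\le r_i$, and a common $\theta\in(0,1)$ with $1/q=(1-\theta)/q_0+\theta/q_1$ and $1/r=(1-\theta)/r_0+\theta/r_1$. By \eqref{lqlr}, $e^{-t\CL_{\Omega,\eta,\omega}}\colon L^{q_i}_\sigma(\Omega)\to L^{r_i}_\sigma(\Omega)$ has norm $\le Ct^{-\frac32(1/q_i-1/r_i)}$, so interpolation and the definition $L^{q,\rho}_\sigma(\Omega)=(L^{q_0}_\sigma(\Omega),L^{q_1}_\sigma(\Omega))_{\theta,\rho}$ bound its norm on $L^{q,\rho}_\sigma(\Omega)$ by $C(t^{-\frac32(1/q_0-1/r_0)})^{1-\theta}(t^{-\frac32(1/q_1-1/r_1)})^{\theta}=Ct^{-\frac32(1/q-1/r)}$, the exponent being affine in $(1/q,1/r)$. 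This yields \eqref{lqlr01}, and \eqref{gradlqlr01} follows verbatim from \eqref{gradlqlr}. For \eqref{gradlqlr02} the target exponent is a minimum of two affine functions, hence concave, and a direct interpolation of the endpoint bounds over-estimates it; I would instead split according to the two regions where one branch dominates (the first for $r\le 3$, the second for $r\ge 3$) and, in each region, choose the endpoints $q_i,r_i$ (with $r_i$ satisfying \eqref{rcondi}) so that the same affine branch is active at both endpoints, whence it interpolates to the correct value at $(1/q,1/r)$.

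Assertion (2) is the borderline estimate and is where Yamazaki's device is essential. Under the constraint $1/q-1/r=1/3$ with $1<q\le r\le3$ one checks $\frac32(1/q-1/r)+\frac12=1\le\frac{3}{2q}$, so \eqref{gradlqlr} and \eqref{gradlqlr2} combine into $\|\nabla e^{-t\CL_{\Omega,\eta,\omega}}f\|_{r,\Omega}\le Ct^{-1}\|f\|_{q,\Omega}$ for all $t>0$, a decay that is precisely non-integrable. The resolution is not to interpolate at fixed $t$ but to view $\Phi f:=\nabla e^{-(\,\cdot\,)\CL_{\Omega,\eta,\omega}}f$ as a map into vector-valued Lorentz spaces in time. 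I would pick two pairs $(q_j,r_j)$, $j=0,1$, with a common $\theta$ realizing $1/q$ and $1/r$ as above but with $1/q_j-1/r_j\ne1/3$, so that $\gamma_j:=\frac32(1/q_j-1/r_j)+\frac12\ne1$, one value on each side of $1$; since the elementary computation $\gamma_j\le 3/(2q_j)\iff r_j\le3$ shows $r_j\le3$ forces the global decay, the estimate $\|\nabla e^{-t\CL_{\Omega,\eta,\omega}}f\|_{r_j,\Omega}\le Ct^{-\gamma_j}\|f\|_{q_j,\Omega}$ holds for every $t>0$, i.e.\ $\Phi\colon L^{q_j}_\sigma(\Omega)\to L^{1/\gamma_j,\infty}((0,\infty);L^{r_j}_\sigma(\Omega))$ is bounded. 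Real interpolation with fine index $1$ maps the domain $(L^{q_0}_\sigma(\Omega),L^{q_1}_\sigma(\Omega))_{\theta,1}=L^{q,1}_\sigma(\Omega)$ into $(L^{1/\gamma_0,\infty}(L^{r_0}_\sigma),L^{1/\gamma_1,\infty}(L^{r_1}_\sigma))_{\theta,1}$, and because $\gamma_0\ne\gamma_1$ the vector-valued interpolation identity for Lorentz spaces identifies the latter with $L^{p,1}((0,\infty);(L^{r_0}_\sigma,L^{r_1}_\sigma)_{\theta,1})$ where $1/p=(1-\theta)\gamma_0+\theta\gamma_1=\frac32(1/q-1/r)+\frac12=1$; that is, $L^{1}((0,\infty);L^{r,1}_\sigma(\Omega))$, which is exactly \eqref{duality}.

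The main obstacle lies in assertion (2): since the endpoints must straddle the line $1/q-1/r=1/3$, one time-exponent $1/\gamma_j$ is $<1$, so the interpolation takes place in quasi-Banach vector-valued Lorentz spaces and the identity with distinct time-exponents must be invoked with care. One must also confirm that the four endpoint exponents can simultaneously satisfy $1<q_j\le r_j\le3$, the admissibility conditions \eqref{cond-q} and \eqref{rcondi}, and share a single $\theta$; the relation $\gamma_j\le3/(2q_j)\iff r_j\le3$ makes the global-in-$t$ decay automatic once $r_j\le3$, so the remaining freedom suffices (in the endpoint case $r=3$ one keeps $r_0=r_1=3$ and straddles through the $q_j$ alone). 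These verifications are precisely the content of \cite{Y00,HS09}, so the essential task is checking the hypotheses rather than reproving the interpolation lemma.
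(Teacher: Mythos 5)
Your treatment of \eqref{lqlr01} and \eqref{gradlqlr01} coincides with the paper's (direct real interpolation of Theorem \ref{thmlqlr}, the exponents being affine in $(1/q,1/r)$), but your argument for \eqref{gradlqlr02} and \eqref{duality} breaks down exactly at $r=3$ --- which is the case the paper explicitly singles out as the essential one, and the case used later in the nonlinear theory. The difficulty: to produce the target $L^{3,\rho}$ by real interpolation you must take $r_0<3<r_1$, because $(L^{3},L^{3})_{\theta,\rho}=L^{3}$, not $L^{3,\rho}$. But then the two endpoints lie on different branches of the minimum in \eqref{gradlqlr2}, and the interpolated exponent equals
\[
(1-\theta)\Bigl[\tfrac32\bigl(\tfrac{1}{q_0}-\tfrac{1}{r_0}\bigr)+\tfrac12\Bigr]+\theta\,\tfrac{3}{2q_1}
=\tfrac{3}{2q}-(1-\theta)\Bigl(\tfrac{3}{2r_0}-\tfrac12\Bigr)<\tfrac{3}{2q},
\]
strictly below the claimed rate, since $r_0<3$. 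Your fallback for \eqref{duality} --- taking $r_0=r_1=3$ and straddling through the $q_j$ alone --- does not repair this: the interpolated target is then $(L^{3},L^{3})_{\theta,1}=L^{3}$, so you obtain $\int_0^\infty\|\nabla e^{-\tau\CL_{\Omega,\eta,\omega}}f\|_{3,\Omega}\d\tau\le C\|f\|_{3/2,1,\Omega}$ with the Lebesgue norm $\|\cdot\|_{3,\Omega}$ inside rather than $\|\cdot\|_{3,1,\Omega}$. Since $L^{3,1}\subsetneq L^{3}$ this is strictly weaker than \eqref{duality}, and it is insufficient downstream: Lemma \ref{lem-est} pairs $\nabla e^{-s\CL_{\Omega,-\eta,-\omega}}\varphi$ against $\CF(b)\in L^{3/2,\infty}(\Omega)$, and Lorentz--H\"older duality requires precisely the $L^{3,1}$ norm. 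Taking instead $r_1>3$ destroys the global single-power decay $t^{-\gamma_1}$, $\gamma_1=\frac32(\frac{1}{q_1}-\frac{1}{r_1})+\frac12$, needed for the time interpolation, because for $r_1>3$ the large-time rate saturates at $3/(2q_1)<\gamma_1$.

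The paper circumvents this corner by not interpolating the final estimates of Theorem \ref{thmlqlr} at all: it interpolates the ingredients --- the local energy decay estimates \eqref{partialtest3}, \eqref{pest2}, \eqref{local}, the whole-space estimates of $\nabla T_{\BR^3,\eta,\omega}(t)$, and the Bogovski\u{\i} bounds --- into Lorentz norms, and then re-runs the cut-off argument of Lemma \ref{lemsigma2} entirely in the Lorentz scale. There the exponent $\min\{\frac32(\frac1q-\frac1r)+\frac12,\frac{3}{2q}\}$ arises from combining near-boundary and far-field decay, not from interpolating across the corner, so $r=3$ is included (this is also why $\rho=\infty$ must be excluded in \eqref{gradlqlr02}, a restriction your scheme would not see). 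Once \eqref{gradlqlr02} is known at $r=3$ in Lorentz norms, the two branches coincide there, giving a single-power bound $\|\nabla e^{-t\CL_{\Omega,\eta,\omega}}f\|_{3,1,\Omega}\le Ct^{-3/(2q)}\|f\|_{q,1,\Omega}$ valid for all $t>0$; Yamazaki's argument is then applied with the target space held fixed at $L^{3,1}$ and only the source exponents $q_0<3/2<q_1$ straddling --- which is exactly the step your proposal cannot reach from Theorem \ref{thmlqlr} alone.
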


As an application of the aforementioned theorems, we 
may construct a unique global solution to Problem \eqref{eq-main}. 
To describe its result, let $\zeta \in C^\infty_0 (\BR^3)$ be a cut-off function with $0 \le \zeta \le 1$ and $\zeta = 1$ near $\pd \Omega$. Define $b$ by
\begin{equation}
\label{def-b}
b (x) := \frac12 \curl \Bigl\{\zeta (x) \Bigl( \eta \times x - \lvert x \rvert^2 \omega \Bigr)\Bigr\}
\end{equation}
Clearly, $b$ is a compactly supported smooth function defined on $\BR^3$ and there hold
$\div b = 0$ in $\Omega$ as well as $b \vert_{\pd \Omega} = \eta + \omega \times x$.
We set $u := v - b (x)$ in \eqref{eq-main}. Using the Helmholtz projection
$P_\Omega \colon L^q (\Omega) \to L^q_\sigma (\Omega)$, we infer from Theorem \ref{thmlqlr} that $u$ satisfies
\begin{equation}
\label{eq-nonlinear}
\left\{\begin{aligned}
\pd_t u + \CL_{\Omega,\eta,\omega} u + P_\Omega (u \cdot \nabla u) & = P_\Omega \CN_1 (b, u) + P_\Omega \CN_2 (b)
& \quad & \text{in $\Omega \times \BR_+$}, \\
u \vert_{t = 0} & = v_0 - b & \quad & \text{in $\Omega$},
\end{aligned}\right.
\end{equation}
where we have set
\begin{align}
\CN_1 (b, u) & := - u \cdot \nabla b - b \cdot \nabla u,
\label{def-n1}\\
\CN_2 (b) & := \Delta b + (\eta + \omega \times x) \cdot \nabla b - \omega \times b - b \cdot \nabla b,\label{def-n2}
\end{align}
respectively. Notice that the compatibility condition $\div (u \vert_{t=0}) = 0$ in $\Omega$ is fulfilled.
Then \eqref{eq-nonlinear} is formally converted to the integral equation
\begin{equation}
\label{eq-integral}
u (t) = e^{- t \CL_{\Omega,\eta,\omega}} (v_0 - b) - \int_0^t e^{- (t - s) \CL_{\Omega,\eta,\omega}}
P_\Omega \Big[u \cdot \nabla u + \CN_1 (b, u) + \CN_2 (b)\Big] \d s.
\end{equation}

The existence and uniqueness of mild solutions to Problem 
\eqref{eq-integral} reads as follows. 
\begin{thm}
\label{thmnonlinear}
Let $\Omega \subset \BR^3$ be an
exterior Lipschitz domain and $\eta, \omega \in \BR^3$. Let $\varepsilon > 0$ be as in
Theorem \ref{thmlqlr2}.
In addition, let $b (x)$ be defined by \eqref{def-b}.
There exist constants $c_1, c_2 > 0$ such that if
$\lvert \eta \rvert + \lvert \omega \rvert <c_1$ and
$v_0 - b \in L^{3,\infty}_\sigma (\Omega)$ satisfies
$\lVert v_0 - b\rVert_{3,\infty,\Omega}<c_2$,
then Problem \eqref{eq-integral}
admits a global solution with the following properties:
\begin{enumerate}
\item $u\in BC((0,\infty);L^{3,\infty}_\sigma(\Omega))\cap
C((0,\infty);L^q(\Omega))$ and
$\nabla u\in C((0,\infty);L^r(\Omega))$ for every $3<q\leq\infty$
and $3<r<\infty$.
\item $u(t)\rightarrow v_0-b$
weakly $*$ in $L^{3,\infty}(\Omega)$ as $t \searrow +0$.
\item There is a constant $C>0$ such that
\begin{equation}
\lVert u(t)\rVert_{3,\infty, \Omega} \le C \Big(\lvert\eta\rvert
+ \lvert \omega \rvert +\lVert v_0 - b\rVert_{3,\infty,\Omega} \Big)
\end{equation}
for $t>0$. 
\item The solution $u$ is even unique among solutions 
with small $\sup_{t>0}\|u(t)\|_{3,\infty}$ to  
the integral equation in the weak form:
\begin{equation}
\begin{split}
(u(t),\varphi)& =(v_0-b,e^{-t\CL_{\Omega,- \eta, - \omega}}\varphi) \\
& \quad + \int_0^t \Big(u(s)\otimes u(s)+u(s)\otimes b
+b\otimes u(s)-\CF (b),
\nabla e^{-(t-s)\CL_{\Omega,- \eta, - \omega}}\varphi \Big) \d s
\end{split}
\end{equation}
for every $\varphi \in C_{0,\sigma}^\infty(\Omega)$, 
where $\CF (b)$ is given by $\div \CF (b) = \CN_2 (b)$, i.e., 
\begin{equation}
\label{def-F}
\CF (b) =(\eta + \omega\times x)\otimes b-b\otimes b+
\begin{pmatrix}
-\omega_2 f_2 - \omega_3 f_3 & - \Delta f_3 + \omega_1 f_2
& \Delta f_2 + \omega_1 f_3 \\
\Delta f_3 + \omega_2 f_1 & -\omega_1 f_1 - \omega_3 f_3 & -\Delta f_1
+ \omega_2 f_3 \\
-\Delta f_2 + \omega_3 f_1 & \Delta f_1 + \omega_3 f_2
& -\omega_1 f_1 - \omega_2 f_2	
\end{pmatrix}
\end{equation}	
with $f(x)=(f_1(x),f_2(x),f_3(x))^\top$ defined by
\begin{equation}
f (x) = \frac12 \Bigl\{\zeta(x) \Bigl(\eta \times x - |x|^2\omega \Bigr)\Bigr\}.
\end{equation}
\end{enumerate}
\end{thm}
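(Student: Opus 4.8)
The plan is to solve the integral equation \eqref{eq-integral} by a Banach fixed-point argument in the scale-critical space
\begin{equation}
X := \Big\{u \in BC((0,\infty);L^{3,\infty}_\sigma(\Omega)) \mid \|u\|_X := \sup_{t>0}\|u(t)\|_{3,\infty,\Omega} < \infty\Big\}, \notag
\end{equation}
following the duality method of Yamazaki \cite{Y00}, adapted to the present operator $\CL_{\Omega,\eta,\omega}$ in the spirit of Hishida and Shibata \cite{HS09}. Writing $u\cdot\nabla u=\div(u\otimes u)$ and, using $\div b=\div u=0$, $\CN_1(b,u)=-\div(b\otimes u+u\otimes b)$ together with $\CN_2(b)=\div\CF(b)$, the right-hand side of \eqref{eq-integral} becomes an affine-plus-bilinear map $\Phi(u)(t)=e^{-t\CL_{\Omega,\eta,\omega}}(v_0-b)+F(t)+L(u)(t)-G(u,u)(t)$, where $G$ is the bilinear term arising from $u\otimes u$, $L$ is linear in $u$ with coefficient $b$, and $F$ is the fixed forcing generated by $\CF(b)$. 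I would show that $\Phi$ is a contraction on a small ball of $X$ once $|\eta|+|\omega|$ and $\|v_0-b\|_{3,\infty,\Omega}$ are small.

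The core is the bilinear estimate $\|G(u,w)\|_X\le C\|u\|_X\|w\|_X$, which is precisely where the critical integrability \eqref{duality} enters. For a test field $\varphi\in C^\infty_{0,\sigma}(\Omega)$ I would pair $G(u,w)(t)$ against $\varphi$ and transfer every operator onto $\varphi$ via $P_\Omega^*=P_\Omega$ (against solenoidal fields) and the adjoint identity $(e^{-\tau\CL_{\Omega,\eta,\omega}})^*=e^{-\tau\CL_{\Omega,-\eta,-\omega}}$ — exactly the reason the weak form in (4) features $\CL_{\Omega,-\eta,-\omega}$ — and then integrate by parts to land the gradient on the semigroup:
\begin{equation}
(G(u,w)(t),\varphi)=-\int_0^t\big(u(s)\otimes w(s),\nabla e^{-(t-s)\CL_{\Omega,-\eta,-\omega}}\varphi\big)\d s. \notag
\end{equation}
Hölder in Lorentz spaces gives $\|u\otimes w\|_{3/2,\infty,\Omega}\le C\|u\|_{3,\infty,\Omega}\|w\|_{3,\infty,\Omega}$, while \eqref{duality} applied to $\CL_{\Omega,-\eta,-\omega}$ with $q=3/2$, $r=3$ (so that $1/q-1/r=1/3$ and $1<3/2\le 3\le 3$) yields $\int_0^\infty\|\nabla e^{-\tau\CL_{\Omega,-\eta,-\omega}}\varphi\|_{3,1,\Omega}\d\tau\le C\|\varphi\|_{3/2,1,\Omega}$. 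Combining these and taking the supremum over $\|\varphi\|_{3/2,1,\Omega}\le 1$, the duality $L^{3,\infty}(\Omega)=(L^{3/2,1}(\Omega))^*$ gives the claim. The linear term $L$ is treated identically and carries a factor $\|b\|_{3,\infty,\Omega}=O(|\eta|+|\omega|)$, while the forcing obeys $\|F\|_X\le C\|\CF(b)\|_{3/2,\infty,\Omega}=O(|\eta|+|\omega|)$ because $\CF(b)$ is compactly supported of size $O(|\eta|+|\omega|)$; the initial contribution is bounded by $C\|v_0-b\|_{3,\infty,\Omega}$ through \eqref{lqlr01} with $q=r=3$, $\rho=\infty$. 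Hence $\|\Phi(u)\|_X\le C(|\eta|+|\omega|+\|v_0-b\|_{3,\infty,\Omega})+C(|\eta|+|\omega|)\|u\|_X+C\|u\|_X^2$, with a matching Lipschitz estimate, and the contraction mapping principle produces the solution together with bound (3) and the uniqueness asserted in (4).

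Properties (1)–(2) then follow by bootstrapping: inserting the fixed point into \eqref{eq-integral} and invoking the smoothing estimates \eqref{lqlr01}–\eqref{gradlqlr01} upgrades $u(t)$ to $L^q(\Omega)$ for $3<q\le\infty$ and $\nabla u(t)$ to $L^r(\Omega)$ for $3<r<\infty$, with continuity on $(0,\infty)$, while the convergence $u(t)\to v_0-b$ weakly-$*$ in $L^{3,\infty}(\Omega)$ as $t\searrow 0$ comes from the weak-$*$ continuity of $(e^{-t\CL_{\Omega,\eta,\omega}})_{t\ge 0}$ on the non-separable space $L^{3,\infty}_\sigma(\Omega)$ together with the vanishing of the Duhamel terms as $t\searrow 0$. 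I expect the main obstacle to be the rigorous justification of the duality computation behind the bilinear estimate: one must verify the adjoint relation $(e^{-\tau\CL_{\Omega,\eta,\omega}})^*=e^{-\tau\CL_{\Omega,-\eta,-\omega}}$ on the relevant Lorentz spaces, legitimize the integration by parts and the Fubini exchange against a dense class of test fields, and reconcile the weak formulation in (4) with the mild formulation \eqref{eq-integral}. Because $L^{3,\infty}$ is scale-invariant, the argument relies on the \emph{exact} time-integrability \eqref{duality} rather than on decaying pointwise bounds; any loss in the exponents there would break the scheme, so securing \eqref{duality} for $\CL_{\Omega,-\eta,-\omega}$ uniformly in small $(\eta,\omega)$ is the linchpin.
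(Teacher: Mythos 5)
Your first step --- constructing the global $L^{3,\infty}$ solution of the weak-form equation \eqref{eq-dual} by Yamazaki's duality method (H\"older in Lorentz spaces, the adjoint relation $(e^{-\tau\CL_{\Omega,\eta,\omega}})^*=e^{-\tau\CL_{\Omega,-\eta,-\omega}}$, and the critical time-integrability \eqref{duality}) --- is exactly what the paper does in Lemma \ref{lem-est} and the first assertion of Proposition \ref{prop-sol}, and this part of your argument is sound.

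The genuine gap is in your treatment of property (1). You claim that ``inserting the fixed point into \eqref{eq-integral} and invoking the smoothing estimates'' upgrades $u$ to $C((0,\infty);L^q(\Omega))$, $3<q\le\infty$, with $\nabla u\in C((0,\infty);L^r(\Omega))$. This fails for two reasons. First, your fixed point solves only the weak formulation \eqref{eq-dual}; it is not known a priori to satisfy the mild equation \eqref{eq-integral} at all --- indeed $u\cdot\nabla u$ is not even defined when all you know is $u\in BC((0,\infty);L^{3,\infty}_\sigma(\Omega))$, which is precisely why Yamazaki's method throws every derivative onto the dual semigroup. Second, even writing the nonlinearity in divergence form, the Duhamel integrand obeys only $\lVert e^{-(t-s)\CL_{\Omega,\eta,\omega}}P_\Omega\,\mathrm{div}(u\otimes u)(s)\rVert_{q,\Omega}\lesssim (t-s)^{-\frac32+\frac{3}{2q}}\lVert u(s)\rVert_{3,\infty,\Omega}^2$, and the exponent $\tfrac32-\tfrac{3}{2q}\ge 1$ whenever $q\ge 3$, so the integral diverges at $s=t$: the flat norm $\sup_t\lVert u(t)\rVert_{3,\infty,\Omega}$ cannot break the criticality barrier, and no choice of exponents in \eqref{lqlr01}--\eqref{gradlqlr01} repairs this. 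The paper circumvents this with three ingredients your proposal lacks: (i) a second, Kato-type weighted estimate (Lemma \ref{lem-est}(2), Proposition \ref{prop-sol}(2), norms \eqref{subnorm}) giving $u(t)\in L^{q,\infty}(\Omega)$ for $1/3-\varepsilon_2<1/q<1/3$, where --- because $\CF(b)$ has \emph{no} temporal decay --- the bound \eqref{est-qinfty} must be restarted on intervals $(k\CT,(k+1)\CT]$ rather than held globally; (ii) local well-posedness of the genuine mild equation \eqref{eq-t0} in the space $Y_r(t_0,t_1)$ with data in $L^r_\sigma(\Omega)$, $r>3$ (Lemma \ref{lem-local}), which is the only place the gradient regularity is produced; and (iii) a uniqueness lemma for the weak form in $L^\infty(t_0,t_1;L^r_\sigma(\Omega))$ (Lemma \ref{lem-uniquness}), used to identify the global weak solution with these local mild solutions in a neighborhood of every $t_*>0$, thereby transferring the regularity (this is the Kozono--Yamazaki identification argument the paper invokes). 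Without steps (ii)--(iii), the statement $\nabla u\in C((0,\infty);L^r(\Omega))$ cannot be extracted from the weak formulation, so your bootstrap as written does not close.
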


\begin{rmk}
Since $\CN_2 (b)$ does not have a temporal decay property, 
it seems to be difficult to derive a decay property of the
solution $u$ to \eqref{eq-integral} unless $b$ is a stationary
solution to \eqref{eq-main}, i.e., $b$ satisfies $\CN_2 (b) = 0$.
Notice that the existence of a stationary solution of
class $u_s \in L^{3,\infty} (\Omega)$ and 
$(\nabla u_s, p_s) \in L^{3 \slash 2,\infty} (\Omega)$ 
in the case of exterior Lipschitz domains
is still open as far as the authors know.
\end{rmk}
Let us give the strategy of the proof of 
Theorem \ref{thmlqlr}. We employ the idea in 
Geissert, Heck, and Hieber \cite{GHH06} to construct the 
$C_0$-semigroup $(e^{- t \CL_{\Omega,\eta,\omega}})_{t \ge 0}$ 
on $L^q_\sigma (\Omega)$ provided that $q$ 
satisfies \eqref{cond-q}. 
Namely, we consider the parametrix $(u,p)$ of the resolvent 
$(\lambda+\CL_{\Omega,\eta,\omega})^{-1}$, which consists of 
the resolvent in the whole space and the one in the bounded 
Lipschitz domain $D$ (near the boundary $\partial \Omega$)
and obeys the problem 
\begin{equation}
\label{eq-res}
\left\{\begin{aligned}
\lambda u - \Delta u - (\eta + \omega \times x) \cdot \nabla u + \omega \times u  + \nabla p & =f+\Psi(\lambda)f
& \quad & \text{in $\Omega$}, \\
\div u & =0& \quad & \text{in $\Omega$}, \\
u & =0& \quad & \text{on $\pd \Omega$}, \\
u & \to 0 & \quad & \text{as $\lvert x \rvert \to \infty$}.
\end{aligned}\right. 
\end{equation} 
By deducing the decay estimate with respect to $\lambda>0$ 
of the reminder term $\Psi(\lambda)f$, 
the term $(I+\Psi(\lambda))^{-1}$ is written by 
$(I+\Psi(\lambda))^{-1}=\sum_{j=0}^\infty(-\Psi(\lambda))^j$ 
for large $\lambda>0$. 
From this observation, we use the Laplace transform and the lemma on 
iterated convolutions \cite[Lem. 4.6]{GHH06}, 
see also Lemma \ref{lemserires} below, to construct the 
$C_0$-semigroup $(e^{- t \CL_{\Omega,\eta,\omega}})_{t \ge 0}$
on $L^q_\sigma (\Omega)$ provided that $q$ satisfies \eqref{cond-q}. 
The desired $L^q$-$L^r$-smoothing rates near the initial time 
of $(e^{- t \CL_{\Omega,\eta,\omega}})_{t \ge 0}$ 
are also induced by corresponding estimates of 
the semigroup in the whole space and in the bounded Lipschitz domain $D$. 
However, unlike the case of the smooth boundary, 
the $L^q$-$L^r$-smoothing rates of the 
gradient of the semigroup in $D$ do not
directly follow from the Gagliard--Nirenberg inequality since 
we may not expect $W^{2,q}$-regularity for the velocity field 
in $D$. Hence, in Section 3, we characterize the domains of 
fractional powers of the Stokes operator $A_{q,D}$
in terms of suitable Bessel potential spaces and employ 
its characterization to obtain the $L^q$-$L^r$-estimates 
of the gradient of the semigroup in $D$. 
In particular, we aim to characterize 
$\SD(A_{q,D}^\alpha)$ with $\alpha>1/2${, which was given in Gabel and Tolksdorf \cite{GT22} for the case of two-dimensional bounded Lipschitz domains}, even though 
the case $0<\alpha<3/4$ and $q=2$ as well as the case 
$0<\alpha\leq 1/2$ and $q$ fulfilling \eqref{cond-q} 
have been established in \cite{MM08,KW17,T18,Tthesis}. 
Then, by virtue of the aforementioned 
construction of the $C_0$-semigroup 
$(e^{- t \CL_{\Omega,\eta,\omega}})_{t \ge 0}$, we obtain 
\begin{equation}
\lVert \nabla^j 
e^{- t \CL_{\Omega,\eta,\omega}} f \rVert_{r, \Omega} 
\le C t^{-\frac32 (\frac1q - \frac1r) - \frac1j} 
\lVert f \rVert_{q, \Omega}
\end{equation} 
for $t\leq 1$ and $j=0,1$. 
\par To deduce the $L^q$-$L^r$-decay estimates of 
$(\nabla^j e^{-t \CL_{\Omega,\eta,\omega}})_{t>0}$ 
described in Theorem \ref{thmlqlr}, we will adapt 
Hishida's method in the case of $C^{1,1}$-boundary 
\cite{H18,H20}, where 
the $L^q$-$L^r$-decay estimates of the evolution operator 
$(T(t,s))_{t \ge s \ge 0}$ constructed 
by Hansel and Rhandi \cite{HR14} 
in the non-autonomous case 
(i.e., $\eta=\eta(t)$, $\omega=\omega(t)$) and of 
its adjoint $(T(t,s)^*)_{t \ge s \ge 0}$ were established. 
We thus do not rely on 
the expansion of the resolvent of $-\CL_{\Omega,\eta,\omega}$ 
near the origin to derive the large time behavior of 
$(e^{-t \CL_{\Omega,\eta,\omega}})_{t > 0}$, 
which is completely different from 
the strategy used in \cite{ES04,ES05,HS09,I89,KS98,S08,S10,W23}. 
In the following, let us summarize Hishida's method and 
make comments about how our argument differs from his method, 
although our proof is inspired by him. 
After proving the $L^r$-boundedness of 
$T(t,s)$ and $T(t,s)^*$ 
for $r\in(2,\infty)$ on account of the duality argument, 
the $L^q$-$L^r$-estimates of the solution 
to the same system in $\BR^3$ and the first energy relation, 
he obtain the $L^q$-$L^r$-decay estimates of 
$T(t,s)$ and $T(t,s)^*$ with $1<q\leq r<\infty$, 
see \cite[Thm.~2.1]{H18}. 
With these estimates at hand, the decay estimates of 
$T(t,s)$ and $T(t,s)^*$ near the boundary, which is so-called the 
local energy decay estimates, were established in \cite[Thm.~2.1]{H20}. 
However, in our situation, 
the condition \eqref{cond-q} is needed to 
define the $C_0$-semigroup 
$(e^{- t \CL_{\Omega,\eta,\omega}})_{t \ge 0}$ on 
$L^q_\sigma(\Omega)$, and hence we only know 
the $L^r$-boundedness of 
$(e^{- t \CL_{\Omega,\eta,\omega}})_{t \ge 0}$ 
for $r\in(2,\infty)$ fulfilling \eqref{rcondi}, from which we 
conclude \eqref{lqlr} under the conditions \eqref{cond-q} and 
\eqref{rcondi}, see Section 5 below. 
\par We then obtain the rate $t^{-1/2-\varepsilon_1}$ 
of the local energy decay estimates with some small	
positive constant $\varepsilon_1$ depending on $\varepsilon$, 
which is slower than the one in \cite[Prop.~6.1]{H20}, 
thereby, the $L^q$-$L^\infty$-decay estimates of 
$(e^{- t \CL_{\Omega,\eta,\omega}})_{t \ge 0}$ as well as 
the decay estimates of its gradient may not directly follow from 
\cite{H20}. To overcome this difficulty, 
in Proposition \ref{propr0} below, 
we will provide the relation between the rate of the 
local energy decay estimates and 
the range of exponent $r$ for the $L^q$-$L^r$ estimates of 
$(e^{- t \CL_{\Omega,\eta,\omega}})_{t \ge 0}$. 
This may be observed by investigating 
how the rate of the decay estimates 
near the boundary for the general date 
and the one at spatial infinity inherit from 
the one of the local energy decay estimates. 
Carrying out the bootstrap argument 
with the aid of Proposition \ref{propr0} 
leads us to the desired decay estimates of 
$(e^{- t \CL_{\Omega,\eta,\omega}})_{t \ge 0}$ 
asserted in Theorem \ref{thmlqlr}. 
It should be also emphasized that, since 
$(e^{- t \CL_{\Omega,\eta,\omega}})_{t \ge 0}$
is a $C_0$-semigroup unless $\omega=0$, 
unlike the case of the Stokes or Oseen semigroup 
\cite{I89,KS98,ES05,H21,W23}, 
it is required to analyze the asymptotic behavior of 
$(\partial_te^{- t \CL_{\Omega,\eta,\omega}})_{t \ge 0}$ 
and the pressure in the Sobolev space of order $(-1)$ 
over the bounded domain near the boundary to 
deduce the decay estimates at spatial infinity 
via cut-off procedure as in \cite{HS09,H20}. 
\par Combined Theorem \ref{thmlqlr} with real 
interpolation, we may show Theorem \ref{thmlqlr2}. 
Then we may construct a solution to the nonlinear problem 
in weak-Lebesgue spaces by Yamazaki's method \cite{Y00} 
(cf. Hishida and Shibata \cite{HS09}). 
In addition, the deduction of better regularities of the solution 
by identifying this solution with a local solution 
in a neighborhood of each time may be done by the argument due to 
Kozono and Yamazaki \cite{KY98}. 
Although these methods are standard, 
we will give the outline of the proof of 
Theorem \ref{thmnonlinear} 
in the last section of the present paper 
for the reader's convenience. 
\par This paper is organized as follows.
In the next section, we prepare 
the notation and preliminary results that we will use throughout 
this paper. In Section \ref{sec-3}, we discuss properties of solutions to
the linearized problem on bounded Lipschitz domains. Using a cut-off technique,
we prove a generation of the $C_0$-semigroup $(e^{- t \CL_{\Omega,\eta,\omega}})_{t\ge0}$ 
in Section \ref{sec-4}. Section \ref{sec-5} is devoted to showing the $L^q$-$L^r$-estimates
for the $C_0$-semigroup $(e^{- t \CL_{\Omega,\eta,\omega}})_{t\ge0}$ with an additional 
restriction for $r$, but this restriction may be removed in Section \ref{sec-6}. 
Finally, in Section \ref{sec-7}, 
we deal the proof of Theorem \ref{thmnonlinear}
on account of Theorems \ref{thmlqlr} and \ref{thmlqlr2}.

\section{Preliminaries}
\subsection{Notation}
We fix the notation that we will use throughout the paper.
As usual, $\BC$, $\BN$, and $\BR$ stand for the set of all complex, natural, and real numbers, respectively.
In addition, $\BR_+$ stands for $(0, \infty)$,
whereas $\BC_+$ stands for the set of all $z \in \BC$ such that $\Re z > 0$, where
$\Re z$ means the real part of $z$. 
We denote various constants by $C$ 
and they may change from line to line 
whenever there is no confusion. 
The constant dependent on $A,B,\ldots$ is denoted by $C(A,B,\ldots)$. 
\par
Given two Banach spaces $X$ and $Y$, the Banach space consisting of
all bounded linear operators from $X$ into $Y$ is denoted by $\SL (X, Y)$.
In particular, if $X=Y$, we write $\SL (X) = \SL (X, X)$ for short. By $(\,\cdot\, , \,\cdot\,)_E$
we denote the duality product over the domain $E \subset \BR^3$. In addition, the dual space of $X$
is denoted by $X^*$. \par
Given an open set $E \subset \BR^3$, $1 < q < \infty$, $1 \le \rho \le \infty$, and $k \in \BN$, the
standard Lebesgue, Lorentz, and Sobolev spaces are denoted by $L^q (E)$, $L^{q,\rho} (E)$, and $W^{k, q} (E)$, respectively.
Given a Banach space $X$, its norm is denoted by $\lVert\enskip \cdot \enskip \rVert_X$,
and in particular, we abbreviate the norm
$\lVert \enskip\cdot\enskip \rVert_{L^q (E)} = \lVert \enskip\cdot\enskip \rVert_{q, E}$ for the case $X = L^q (E)$,
whereas $\lVert \enskip\cdot\enskip \rVert_{L^{q,\rho} (E)} = \lVert \enskip\cdot\enskip \rVert_{q, \rho, E}$ for the case $X = L^{q,\rho} (E)$.
The set of all smooth functions with compact support in $E \subset \BR^3$ is
denoted by $C^\infty_0 (E)$. For $k \in \BN$, the completion of $C^\infty_0 (E)$
in $W^{k,q} (E)$ is denoted by $W^{k,q}_0 (E)$. Sobolev spaces of negative order $W^{-1,q} (E)$ is \textit{defined} by the dual space of $W_0^{1,q'} (E)$,
where $q' := q \slash (q - 1)$ is the H\"older conjugate exponent of $q$.
We further define the set of all compactly supported smooth and solenoidal vector fields in $E \subset \BR^3$ as
\begin{equation} 
C^\infty_{0,\sigma} (E) := \{f \in C^\infty_0 (E)^3
\mid \div f = 0 \enskip \text{in $E$}\}.
\end{equation}
Then $L^q_\sigma (E)$ and 
$W^{1,q}_{0,\sigma} (E)$ stand for the completion of
$C^\infty_{0,\sigma} (E)$ in $L^q (E)$ and 
$W^{1,q}(E)$, respectively, endowed with their natural norms. 
For $q$ fulfilling \eqref{cond-q}, 
the solenoidal Lorentz spaces 
$L^{q,\rho}_\sigma (E)$ are defined as 
$L^{q,\rho}_\sigma(E)=
(L^{q_0}_{\sigma}(E),L^{q_1}_{\sigma}(E))_{\theta,\rho}$
with 
\begin{align}
q_0<q<q_1,\quad \frac1q=\frac{(1-\theta)}{q_0}
+\frac{\theta}{q_1},\quad  
\left|\frac{1}{q_i}-\frac{1}{2}\right|
<\frac{1}{6}+\varepsilon\quad (i=0,1),\quad
0 < \theta < 1, \quad
1\leq \rho\leq \infty,
\end{align}
where $(\cdot,\cdot)_{\theta,\rho}$ denotes the real interpolation 
functor. 
\par 
For $R > 0$, set $B_R (0) := \{x \in \BR^3 \mid \lvert x \rvert < R\}$.
If $R$ is sufficiently large such that $B_R (0) \supset \BR^3 \setminus \Omega$ holds,
then we will write $\Omega_R := \Omega \cap B_R (0)$. In addition, for $L > R$, we set
$A_{R, L} := \{x \in \BR^3 \mid R < \lvert x \rvert < L \}.$

\subsection{Whole space problem}
For $1 < q < \infty$, define a $C_0$-semigroup $(T_{\BR^3,\eta,\omega}(t))_{t\ge0}$
on $L^q_{\sigma}(\BR^3)$ by
\begin{equation}
\label{r3semi}
T_{\BR^3,\eta,\omega}(t)f :=Q(t)^\top 
e^{t\Delta}f\left(Q(t)x+\int_0^t Q(\tau)\eta\,\d\tau\right),
\quad
e^{t\Delta}f(x) =
\left(\frac{1}{4\pi t}\right)^{\frac{3}{2}}\int_{\BR^3}
e^{-\frac{|x-y|^2}{4t}}f(y)\d y,
\end{equation}
which is related to transformation, see Galdi \cite[Sec.~1]{G02}. 
Here, $Q(t)$ is a $3\times 3$ orthogonal matrix fulfilling 
$\d Q(t)/\d t=-\omega \times Q$. 
If $f$ satisfies $\div f=0$ in the sense of distributions, 
we see that $\nabla p=0$ and
that $u(x,t):= T_{\BR^3,\eta,\omega}(t)f$ solves the problem:
\begin{align*}
\left\{
\begin{aligned}
\pd_t u -\Delta u -(\eta + \omega\times x)\cdot\nabla u
+\omega\times u +\nabla p & = f, &\quad & \text{in $\BR^3 \times \BR_+$}, \\
\div u & = 0 &\quad & \text{in $\BR^3 \times \BR_+$},\\
u \vert_{t = 0} & = f & \quad & \text{in $\BR^3$}.
\end{aligned}\right.
\end{align*}
According to Kobayashi and Shibata \cite{KS98}, 
Hishida and Shibata \cite{HS09}, and Shibata \cite{S08,S10}, 
we have the following $L^q$-$L^r$-estimates 
of $(T_{\BR^3,\eta,\omega}(t))_{t\ge0}$ from those of the 
heat semigroup. 
\begin{lem}
Let $\eta, \omega \in \BR^3$.
Given $c_0>0$, assume $\lvert \eta \rvert + \lvert \omega \rvert \leq c_0$.
Let $1<q\leq r\leq\infty$, $q \ne \infty$, and $j \in \BN \cup \{0\}$. Then there holds
$\nabla^j T_{\BR^3,\eta,\omega}(t)f\in C((0,\infty);L^r(\BR^3))$ with
\begin{align}\label{lqlrwhole}
\|\nabla^jT_{\BR^3,\eta,\omega}(t)f\|_{r,\BR^3}
\leq Ct^{-\frac{3}{2}(\frac{1}{q}-\frac{1}{r})-\frac{j}{2}}
\|f\|_{q,\BR^3}
\end{align}
for $t>0$ and $f\in L^q_\sigma(\BR^3)$.
\end{lem}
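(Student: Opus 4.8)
The plan is to reduce the assertion directly to the classical $L^q$-$L^r$ estimates for the Gaussian semigroup, exploiting that every operation in the formula \eqref{r3semi} other than $e^{t\Delta}$ is an isometry. Write $c(t):=\int_0^t Q(\tau)\eta\,\d\tau$ and $g:=e^{t\Delta}f$, so that $T_{\BR^3,\eta,\omega}(t)f(x)=Q(t)^\top g(Q(t)x+c(t))$. I would first recall the heat semigroup estimate
$$\|\nabla^j e^{t\Delta}g\|_{r,\BR^3}\le C\,t^{-\frac32(\frac1q-\frac1r)-\frac j2}\|g\|_{q,\BR^3},\qquad t>0,$$
valid for $1<q\le r\le\infty$ ($q\ne\infty$) and $j\in\BN\cup\{0\}$ with an absolute constant $C$, which follows from Young's inequality applied to the Gaussian kernel and its derivatives. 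The point is that applying the rotation $x\mapsto Q(t)x$, the translation by $c(t)$, and the left multiplication by $Q(t)^\top$ changes neither the $L^r$-norm nor, pointwise, the Euclidean norm of any derivative tensor, so that the whole-space estimate inherits the heat estimate with the \emph{same} constant; in particular $C$ is independent of $\eta$ and $\omega$, hence trivially admissible under $|\eta|+|\omega|\le c_0$.

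To carry this out, I would fix $t>0$, abbreviate $Q=Q(t)$, $c=c(t)$, and use that, since $Q$ is orthogonal, the chain rule gives componentwise
$$\partial_{x_{l_1}}\cdots\partial_{x_{l_j}}\big(Q^\top g(Q\,\cdot+c)\big)_i(x)=\sum Q_{ki}\,Q_{m_1l_1}\cdots Q_{m_jl_j}\,(\partial_{m_1}\cdots\partial_{m_j}g_k)(Qx+c),$$
so that the $(j{+}1)$-tensor $\nabla^j\big(T_{\BR^3,\eta,\omega}(t)f\big)(x)$ is obtained from $(\nabla^j g)(Qx+c)$ by contracting each of its indices with the orthogonal matrix $Q$. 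Since orthogonal transformations preserve the Euclidean (Frobenius) tensor norm, this yields the pointwise identity $|\nabla^j(T_{\BR^3,\eta,\omega}(t)f)(x)|=|(\nabla^j g)(Qx+c)|$. Integrating the $r$-th power and changing variables $y=Qx+c$, whose Jacobian equals $|\det Q|=1$, I would obtain
$$\|\nabla^j T_{\BR^3,\eta,\omega}(t)f\|_{r,\BR^3}=\|\nabla^j e^{t\Delta}f\|_{r,\BR^3},$$
and combining this with the heat estimate above (taking $g=f$) gives \eqref{lqlrwhole}.

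Finally, for the continuity claim $\nabla^jT_{\BR^3,\eta,\omega}(t)f\in C((0,\infty);L^r(\BR^3))$ I would write $T_{\BR^3,\eta,\omega}(t)f$ as the composition of three continuous ingredients: $t\mapsto Q(t)$ is $C^1$, being the solution of $\d Q/\d t=-\omega\times Q$, whence so is $t\mapsto c(t)$; $t\mapsto\nabla^j e^{t\Delta}f$ is continuous from $(0,\infty)$ into $L^r(\BR^3)$; and rotations and translations act strongly continuously on $L^r(\BR^3)$ for $r<\infty$, and for $r=\infty$ on the spatially uniformly continuous family $\{\nabla^j e^{t\Delta}f\}$ with $t$ ranging over compact subintervals of $(0,\infty)$. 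Since the reduction is entirely elementary, I do not expect a genuine obstacle; the only point requiring a line of care is the tensorial chain-rule computation confirming that the derivative norms are preserved pointwise under the orthogonal change of variables together with the left multiplication by $Q(t)^\top$.
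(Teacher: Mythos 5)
Your proof is correct and is essentially the argument the paper has in mind: the paper states this lemma without proof, citing Kobayashi--Shibata, Hishida--Shibata, and Shibata and remarking that the estimates follow ``from those of the heat semigroup,'' which is exactly your reduction via the formula \eqref{r3semi}. The key points — that contracting each slot of the derivative tensor with the orthogonal matrix $Q(t)$ preserves its pointwise norm, that the affine change of variables has unit Jacobian (so the constant is independent of $\eta,\omega$), and the extra care needed for strong continuity when $r=\infty$ — are all handled correctly in your write-up.
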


Moreover, we recall the following result 
due to Hishida \cite{H20}.
\begin{lem}[\!\!{\cite[Lem. 3.2]{H20}}] 
\label{lempartialtwhole}
Let $\eta, \omega \in \BR^3$. Given $c_0>0$, assume $\lvert \eta \rvert + \lvert \omega \rvert \leq c_0$.
Let $1<q<\infty$ and $R,T>0.$ Given $f\in L^q_\sigma(\BR^3)$, set
$u(t)=T_{\BR^3,\eta,\omega}(t)f$.
Then there holds $u\in C^1((0,\infty);W^{-1,q}(B_R(0)))$ with
\begin{align*}
\|\partial_tT_{\BR^3,\eta,\omega}(t)f\|_{W^{-1,q}(B_R(0))}
\leq Ct^{-\frac{1}{2}}\|f\|_{q,\BR^3}
\end{align*}
for $t\leq T$ and $f\in L^q_\sigma(\BR^3)$. Here, a constant $C$ is independent of $T$.
Furthermore, there holds
\begin{align*}
(\partial_tu,\psi)_{B_R(0)}+(\nabla u+u\otimes
(\eta + \omega\times x)-(\omega\times x)\otimes u,\nabla \psi)_{B_R(0)}=0
\end{align*}
for $t>0$ and $\psi\in W_0^{1,q'}(B_R(0))^3$.
\end{lem}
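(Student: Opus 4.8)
The plan is to read the whole lemma off the explicit representation \eqref{r3semi}, which shows that for $t>0$ the function $u(t)=T_{\BR^3,\eta,\omega}(t)f$ is a classical solution (smooth in $t$) of the homogeneous problem with $\nabla p=0$, and then to dualize the equation against test functions supported in $B_R(0)$. All spatial derivatives are thrown onto the test function, so that $\partial_t u$ is identified as a functional on $W_0^{1,q'}(B_R(0))$ and estimated through the whole-space bounds \eqref{lqlrwhole}.

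First I would establish the weak formulation (the second assertion), which is pure integration by parts. Pairing the pointwise identity $\partial_t u=\Delta u+(\eta+\omega\times x)\cdot\nabla u-\omega\times u$ against $\psi\in W_0^{1,q'}(B_R(0))^3$, the viscous term yields $(\nabla u,\nabla\psi)$. For the drift term I would use $\div(\eta+\omega\times x)=0$ to rewrite $(\eta+\omega\times x)\cdot\nabla u=\div(u\otimes(\eta+\omega\times x))$, which after one integration by parts becomes $(u\otimes(\eta+\omega\times x),\nabla\psi)$. For the Coriolis term I would integrate $(\omega\times u,\psi)$ by parts in reverse, using $\div u=0$ together with the elementary identity $\partial_j(\omega\times x)_i=\epsilon_{ikj}\omega_k$, which recombines exactly into $-((\omega\times x)\otimes u,\nabla\psi)$. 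Collecting the three contributions gives the stated identity; since $\nabla p=0$, no solenoidality of $\psi$ is required.

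The weak formulation then identifies $\partial_t u(t)$ as the element of $W^{-1,q}(B_R(0))=(W_0^{1,q'}(B_R(0)))^*$ given by $\psi\mapsto-(\nabla u+u\otimes(\eta+\omega\times x)-(\omega\times x)\otimes u,\nabla\psi)$. Taking the supremum over $\|\psi\|_{W_0^{1,q'}(B_R(0))}\le 1$ and estimating each pairing by H\"older's inequality, I bound the right-hand side by $C(R,c_0)(\|\nabla u\|_{q,\BR^3}+\|u\|_{q,\BR^3})$, using that on $B_R(0)$ the coefficients $\eta+\omega\times x$ and $\omega\times x$ are bounded by $C(R,c_0)$. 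The decisive input is the whole-space estimate \eqref{lqlrwhole} with $r=q$: it furnishes $\|u(t)\|_{q,\BR^3}\le C\|f\|_{q,\BR^3}$ and, crucially, the smoothing bound $\|\nabla u(t)\|_{q,\BR^3}\le Ct^{-1/2}\|f\|_{q,\BR^3}$. Hence $\|\partial_t u(t)\|_{W^{-1,q}(B_R(0))}\le C(t^{-1/2}+1)\|f\|_{q,\BR^3}$, where the singular factor $t^{-1/2}$ comes solely from the viscous term and carries a constant depending only on $(R,c_0,q)$ and on the gradient bound.

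For the regularity $u\in C^1((0,\infty);W^{-1,q}(B_R(0)))$ I would note that $t\mapsto u(t)$ and $t\mapsto\nabla u(t)$ are continuous into $L^q_{\mathrm{loc}}$ for $t>0$ (from the explicit formula and the strong continuity of the whole-space semigroup), so the functional $g(t):=\partial_t u(t)$ defined above lies in $C((0,\infty);W^{-1,q}(B_R(0)))$; since $u$ is a genuine solution for $t>0$, the fundamental theorem of calculus $u(t_2)-u(t_1)=\int_{t_1}^{t_2}g(s)\,\d s$ then holds in $W^{-1,q}(B_R(0))$, upgrading continuity to $C^1$. The main obstacle I anticipate is the bookkeeping behind the claim that the constant is independent of $T$. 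Because $(T_{\BR^3,\eta,\omega}(t))_{t\ge0}$ enjoys no $L^q$--$L^q$ decay --- indeed $\|T_{\BR^3,\eta,\omega}(t)f\|_{q,\BR^3}=\|e^{t\Delta}f\|_{q,\BR^3}$ by the orthogonal invariance of \eqref{r3semi} --- the drift and Coriolis terms contribute a genuinely $O(1)$ piece, so one must argue carefully that this piece is dominated by the smoothing factor on the relevant range and that the $T$-independent constant is precisely the one attached to $t^{-1/2}$. The unboundedness of the coefficient $\omega\times x$ causes no difficulty here, since we only test against $\psi$ supported in the bounded set $B_R(0)$.
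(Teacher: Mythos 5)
This lemma is quoted from Hishida's work and the paper supplies no proof of its own (it simply cites \cite[Lem.~3.2]{H20}), so your proposal can only be measured against the natural argument, which is indeed the route you take. Your derivation of the weak formulation is correct: with $\nabla p=0$ and $\div u=0$, the identities $((\eta+\omega\times x)\cdot\nabla u,\psi)_{B_R(0)}=-(u\otimes(\eta+\omega\times x),\nabla\psi)_{B_R(0)}$ and $(\omega\times u,\psi)_{B_R(0)}=-((\omega\times x)\otimes u,\nabla\psi)_{B_R(0)}$ hold for every $\psi\in W_0^{1,q'}(B_R(0))^3$ (no solenoidality of $\psi$ needed), and they assemble into the stated identity; the $C^1$ statement also goes through along the lines you sketch.

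The genuine gap is the estimate, and you have named it yourself without closing it. What you actually prove is $\|\partial_tT_{\BR^3,\eta,\omega}(t)f\|_{W^{-1,q}(B_R(0))}\le C(t^{-1/2}+1)\|f\|_{q,\BR^3}$, which on $(0,T]$ only yields the constant $C(1+T^{1/2})$ in front of $t^{-1/2}$, i.e.\ exactly the $T$-dependence the lemma claims to avoid. The missing idea is that the zeroth-order contributions must be estimated through the smoothing of the whole-space semigroup with an exponent $r>q$, not $r=q$: since everything is tested on the bounded set $B_R(0)$, H\"older's inequality gives $\|u(t)\|_{q,B_R(0)}\le |B_R(0)|^{\frac1q-\frac1r}\|T_{\BR^3,\eta,\omega}(t)f\|_{r,\BR^3}$, and choosing $1/q-1/r=1/3$ (possible when $q\le 3$, with $r=\infty$ for $q=3$) together with \eqref{lqlrwhole} turns the drift and Coriolis terms into $O(t^{-1/2})$ for \emph{all} $t>0$; this localization-plus-$L^q$-$L^r$ step is what makes the constant independent of $T$, and it is absent from your argument. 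Note, incidentally, that for $q>3$ this device can only produce the rate $t^{-3/(2q)}$, and a scaling family $f_\lambda(x)=\lambda^{-3/q}g(x/\lambda)$ (with $g$ solenoidal) evaluated at $t=\lambda^2$ shows that the Coriolis term really is of size $t^{-3/(2q)}$ in $W^{-1,q}(B_R(0))$, so no uniform-in-$T$ bound of the form $Ct^{-1/2}$ can hold there; this is consistent with how the present paper actually invokes the lemma, namely with $T$-dependent constants in Proposition \ref{prop-6.1} and with the large-time rate $t^{-1/2}(1+t)^{\frac12-\frac{3}{2q}}$ for the corresponding term $F_2$ in the proof of Lemma \ref{lemsigma1}. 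In short: the algebra in your proposal is right, but the estimate as you leave it does not prove the lemma; closing it requires the $r>q$ smoothing argument, and for $q>3$ a correct proof must in any case settle for a $T$-dependent constant.
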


\subsection{Bogovski\u{\i} operator}
To carry out the cut-off procedure, we introduce the Bogovski\u{\i}
operator in order to keep the divergence-free condition.
This operator was constructed by Bogovski\u{\i} \cite{B79}, see also
Borchers and Sohr \cite{BS90} and Galdi \cite[Sec. III.3]{G11}.
To describe the result on the Bogovski\u{\i} operator, let $D\subset \BR^3$
be a bounded Lipschitz domain and set $L^q_0(D):=
\{f\in L^q(D)\,|\,\int_Df\d x=0\}.$
\begin{prop}
\label{prop-Bogovskii}
Let $D\subset \BR^3$ be a bounded Lipschitz domain, $1<q<\infty$.
Then there exists a continuous operator
$\BB_D \colon L^q(D) \to W_0^{1,q}(D)$ with
$\BB_D\in \SL(W_0^{k,q}(D),W_0^{k+1,q}(D))$ 
for positive integers $k$
such that
\begin{align*}
\div \BB_D[f]=f, \qquad f\in L^q_0(D).
\end{align*}
\end{prop}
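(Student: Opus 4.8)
The plan is to reproduce the classical construction of Bogovski\u{\i} \cite{B79} (see also Borchers and Sohr \cite{BS90} and Galdi \cite[Sec.~III.3]{G11}), the essential point being to reduce the general Lipschitz case to that of a domain star-shaped with respect to a ball, where an explicit integral kernel is available.

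First I would treat the \emph{star-shaped case}: assume $D$ is star-shaped with respect to an open ball $B$ with $\overline{B}\subset D$ and fix $\theta\in C^\infty_0(B)$ with $\int_B\theta\d x=1$. For $f\in L^q(D)$ set
\begin{equation*}
\BB_D[f](x):=\int_D f(y)\,\frac{x-y}{\lvert x-y\rvert^3}
\left(\int_{\lvert x-y\rvert}^\infty
\theta\!\left(y+r\,\frac{x-y}{\lvert x-y\rvert}\right)r^2\d r\right)\d y .
\end{equation*}
A direct differentiation under the integral sign shows $\div\BB_D[f]=f$ whenever $f\in L^q_0(D)$, and one checks that $\BB_D$ maps $C^\infty_0(D)$ into $C^\infty_0(D)^3$, so that by density $\BB_D[f]$ will have vanishing trace, i.e.\ $\BB_D[f]\in W^{1,q}_0(D)$. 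The crucial quantitative step is the bound $\lVert\nabla\BB_D[f]\rVert_{q,D}\le C\lVert f\rVert_{q,D}$: after differentiating the kernel, $\partial_{x_k}\BB_D$ splits into a principal part whose kernel is homogeneous of degree $-3$ in $x-y$ and enjoys the cancellation property, plus a weakly singular remainder. Hence $\nabla\BB_D$ is a Calder\'on--Zygmund operator and is bounded on $L^q(D)$ for every $1<q<\infty$; combined with the (elementary) bound on $\BB_D$ itself this yields $\BB_D\in\SL(L^q(D),W^{1,q}_0(D))$.

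To pass to a general bounded Lipschitz domain I would localize. A bounded Lipschitz domain admits a finite cover by subdomains $D_j$ ($j=1,\dots,N$), each star-shaped with respect to a ball, with $D=\bigcup_{j=1}^N D_j$. Given $f\in L^q_0(D)$, the task is to write $f=\sum_{j=1}^N f_j$ with $\mathrm{supp}\,f_j\subset D_j$ and $\int_{D_j}f_j\d x=0$ for each $j$; this is achieved by taking a partition of unity $\{\psi_j\}$ subordinate to $\{D_j\}$ and correcting the means of $\psi_j f$ by fixed bump functions supported in the pairwise overlaps, the global constraint $\int_D f\d x=0$ together with the connectedness of $D$ guaranteeing that all corrections cancel. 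Applying the star-shaped operator to each piece and extending by zero, $u:=\sum_{j=1}^N\BB_{D_j}[f_j]\in W^{1,q}_0(D)$ satisfies $\div u=f$, and the desired operator $\BB_D$ is obtained by composing this decomposition with the $\BB_{D_j}$. Finally, the mapping property $\BB_D\in\SL(W^{k,q}_0(D),W^{k+1,q}_0(D))$ follows by differentiating the integral formula $k$ times and applying the same Calder\'on--Zygmund estimates to the resulting kernels on each star-shaped piece.

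The main obstacle is the Calder\'on--Zygmund bound for the gradient kernel on a star-shaped domain, namely verifying the exact homogeneity and cancellation of the principal part so that boundedness holds uniformly across the full range $1<q<\infty$; the combinatorial mean-zero decomposition over the star-shaped pieces is comparatively routine, but it must be arranged so that no spurious mean is created when the local solutions are glued.
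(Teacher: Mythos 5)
Your proposal is correct and coincides with the classical construction the paper relies on: the paper gives no proof of this proposition, citing exactly Bogovski\u{\i} \cite{B79}, Borchers--Sohr \cite{BS90}, and Galdi \cite[Sec.~III.3]{G11}, and your argument (explicit kernel on star-shaped domains, Calder\'on--Zygmund bound for the gradient, mean-zero decomposition over a finite star-shaped cover of the Lipschitz domain, and differentiation of the kernel for the higher-order estimates) is precisely the proof found in those references.
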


We also need the following proposition to deal with the Bogovski\u{\i}
operator in Sobolev spaces of negative order. This was proved by
Geissert, Heck, and Hieber \cite{GHH062}, where the operator
$\BB_D$ extends to a bounded operator
from $W^{1,q'}(D)^*$ to $L^q(D)$.

\begin{prop}
\label{prop-Bogovskii2}
Let $\BB_D$ be the operator defined in Proposition \ref{prop-Bogovskii}.
The operator $\BB_D$ may also extend to a bounded operator
from $W^{1,q'}(D)^*$ to $L^q(D)$.
\end{prop}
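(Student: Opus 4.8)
The plan is to prove the a priori bound $\lVert \BB_D[f]\rVert_{q,D}\le C\lVert f\rVert_{(W^{1,q'}(D))^*}$ for $f$ ranging over $L^q(D)$, and then to invoke the density of $L^q(D)$ in $(W^{1,q'}(D))^*$ in order to extend $\BB_D$ by continuity. The embedding $L^q(D)\hookrightarrow (W^{1,q'}(D))^*$, $f\mapsto (f,\,\cdot\,)_D$, is continuous with $\lVert f\rVert_{(W^{1,q'}(D))^*}\le \lVert f\rVert_{q,D}$, and it has dense range because its adjoint is the injective inclusion $W^{1,q'}(D)\hookrightarrow L^{q'}(D)$; hence the bounded extension $\BB_D\in\SL((W^{1,q'}(D))^*,L^q(D))$ is uniquely determined once the a priori bound is in hand.

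First I would use the duality between $L^q(D)$ and $L^{q'}(D)$ to write, for $f\in L^q(D)$,
\[
\lVert \BB_D[f]\rVert_{q,D}=\sup\bigl\{(\BB_D[f],g)_D : g\in L^{q'}(D)^3,\ \lVert g\rVert_{q',D}\le 1\bigr\},
\]
and then transfer the operator onto the test field through its adjoint, $(\BB_D[f],g)_D=(f,\BB_D^*[g])_D$, where $\BB_D^*\colon L^{q'}(D)^3\to L^{q'}(D)$ denotes the (scalar-valued) adjoint of $\BB_D\colon L^q(D)\to L^q(D)^3$. The conclusion is then immediate provided one has the smoothing property $\BB_D^*\in\SL(L^{q'}(D),W^{1,q'}(D))$, because
\[
|(f,\BB_D^*[g])_D|\le \lVert f\rVert_{(W^{1,q'}(D))^*}\,\lVert \BB_D^*[g]\rVert_{W^{1,q'}(D)}\le C\lVert f\rVert_{(W^{1,q'}(D))^*}\,\lVert g\rVert_{q',D},
\]
and taking the supremum over $g$ yields the desired estimate.

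The heart of the matter, and the step I expect to be the main obstacle, is therefore the mapping property $\BB_D^*\in\SL(L^{q'}(D),W^{1,q'}(D))$. I would derive it from Bogovski\u{\i}'s explicit representation: on a subdomain star-shaped with respect to a ball one has $\BB_D[f](x)=\int_D N(x,x-y)\,f(y)\d y$ with a kernel whose leading part is homogeneous of degree $-(n-1)$ in the second variable ($n=3$), so that $\BB_D^*$ is a singular integral operator of the same type; differentiating its kernel produces a Calder\'on--Zygmund kernel (homogeneous of degree $-n$) together with a weakly singular remainder, both bounded on $L^{q'}(D)$ for $1<q'<\infty$. For a general bounded Lipschitz domain one reduces to this case by writing $D$ as a finite union of such star-shaped pieces and patching the local operators with a partition of unity, as in Galdi \cite[Sec. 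III.3]{G11} (cf. Geissert, Heck, and Hieber \cite{GHH062}). It should be stressed that this construction yields $\BB_D^*[g]\in W^{1,q'}(D)$ but \emph{not} $\BB_D^*[g]\in W_0^{1,q'}(D)$: the adjoint does not preserve the vanishing boundary trace, which is precisely why the natural target of the extension is the dual of the full space $W^{1,q'}(D)$ rather than $W^{-1,q}(D)=(W_0^{1,q'}(D))^*$. The delicate point in carrying this out is to check that the requisite kernel bounds and cancellation survive the partition-of-unity reduction on a merely Lipschitz domain; since these are properties of the fixed operator of Proposition \ref{prop-Bogovskii} and involve no regularity of $\pd D$ beyond being Lipschitz, they follow from its construction together with standard Calder\'on--Zygmund theory.
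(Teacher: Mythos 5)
The paper itself offers no proof of this proposition: it is imported directly from Geissert--Heck--Hieber \cite{GHH062}, so the only meaningful comparison is with that reference, and your argument is essentially a reconstruction of its proof. The reduction you set up is correct and complete: density of $L^q(D)$ in $(W^{1,q'}(D))^*$ (via reflexivity of $W^{1,q'}(D)$ and injectivity of the adjoint inclusion), the duality identity $(\BB_D[f],g)_D=(f,\BB_D^*[g])_D$, and the identification of the key lemma $\BB_D^*\in\SL(L^{q'}(D),W^{1,q'}(D))$ --- including your remark that the adjoint does not have vanishing trace, which is exactly why the target must be $(W^{1,q'}(D))^*$ rather than $W^{-1,q}(D)$. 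On a star-shaped piece your kernel analysis is also sound: $\nabla_y$ of Bogovski\u{\i}'s kernel is, up to a weakly singular term in which the bump function $\omega$ is replaced by $\nabla\omega$, minus the transpose of the Calder\'on--Zygmund kernel occurring in $\nabla_x\BB_D$, so $L^{q'}$-boundedness of $\nabla\BB_D^*$ follows by transposing the known $L^q$-boundedness of $\nabla\BB_D$ (plus the usual pointwise correction from differentiating through the principal value).

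The step that would fail as literally written is the passage to general Lipschitz domains ``by patching the local operators with a partition of unity, as in Galdi.'' Galdi's decomposition $f=\sum_k f_k$ with $f_k\in L^q_0(D_k)$ multiplies $f$ by characteristic functions of subsets of $D$ and uses mean-value functionals $\int_A f\d x$ over merely measurable sets $A$; the adjoints of these operations do not act on $W^{1,q'}$ (multiplication by $\chi_A$ destroys $W^{1,q'}$-regularity, and $\chi_A\notin W^{1,q'}(D)$), so the transposition argument cannot be run through that decomposition. One must instead arrange the localization with smooth cutoffs subordinate to the star-shaped cover together with finite-rank corrections of the form $\varphi_j\int \eta_j f\d x$ with smooth $\varphi_j,\eta_j$, whose adjoints do preserve $W^{1,q'}(D)$; this is precisely where \cite{GHH062} does its real work on Lipschitz domains. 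The point is repairable and known, so your strategy is the right one, but note that you flagged this step and then dismissed it for the wrong reason: the issue is not whether Calder\'on--Zygmund bounds survive the localization, but whether the decomposition operators themselves are transposable on $W^{1,q'}(D)$.
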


\section{Interior problem}\label{sec-3}
Throughout this section, let $D\subset \BR^3$ be a bounded Lipschitz domain. Following Shen \cite{S12},
we define for $1 < q < \infty$ the Stokes operator
$A_{D} \colon L^q_{\sigma}(D)\rightarrow L^q_{\sigma}(D)$ to be
\begin{align*}
A_Du &:=-\Delta u +\nabla p,\\
\SD(A_D) & :=\{u \in W^{1,q}_{0,\sigma}(D)
\mid\exists \; p \in L^q(D)
\enskip\text{such that}\enskip-\Delta u+\nabla p
\in L^q_\sigma(D)\},
\end{align*}
where the relation $-\Delta u+\nabla p
\in L^q_\sigma(D)$ is understood in the sense of distributions.
It is well-known that $-A_D$ generates a bounded analytic semigroup on
$L^q_\sigma (\Omega)$ for all $q$ satisfying
\begin{equation}
\label{cond-skotes}
\bigg\lvert \frac1q - \frac12 \bigg\rvert < \frac16 + \varepsilon_0,
\end{equation}
where $\varepsilon_0 > 0$ is a number depending on $D$,
and furthermore, the $H^\infty$-calculus of $A_D$ is bounded,
see \cite[Thm.~1.1]{S12} and \cite[Thm.~16]{KW17} 
for the details, respectively.
We also record the following known propositions. 

\begin{prop}[\!\!{\cite[Thm. 10.6.2]{MW12}}]
	\label{propstokeswell}
There exists
a constant $\delta_1\in (0,1]$ such that if either
\begin{equation}
\label{condidelta0}
\frac{1}{q}<\frac{s}{2}+\frac{1+\delta_1}{2}, \qquad 0<s<\delta_1
\end{equation}
or
\begin{equation}
\label{condidelta1}
\frac{\delta_1}{2}<\frac{1}{q}-\frac{s}{2}<\frac{1+\delta_1}{2},
\qquad \delta_1\leq s<1
\end{equation}
holds,
then for $f\in H^{s+\frac{1}{q}-2,q}(D)$ the Stokes problem
\begin{align*}
-\Delta u+\nabla p=f,\quad \div  u=0\quad {\rm in}~D,
\qquad u|_{\partial D}=0
\end{align*}
has a unique solution $(u,p)\in H^{s+\frac{1}{q},q}(D)\times
H^{s+\frac{1}{q}-1,q}(D)$.
Moreover, there exists a constant $C>0$ such that
\begin{align*}
\|u\|_{H^{s+\frac{1}{q},q}(D)}+\|p\|_{H^{s+\frac{1}{q}-1,q}(D)}
\leq C\|f\|_{H^{s+\frac{1}{q}-2,q}(D)}.
\end{align*}
Here, for $1 < q < \infty$ and $s \in \BR$, the Bessel potential space defined on $D$ is 
denoted by $H^{s,q} (D)$.
\end{prop}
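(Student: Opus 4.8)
The plan is to treat \eqref{condidelta0}--\eqref{condidelta1} as the Lipschitz-domain analogue of the classical Stokes regularity theory and to prove it through hydrodynamic layer potentials anchored at the energy level. First I would reduce the inhomogeneous system to a boundary value problem. Extending $f$ to $\BR^3$ and convolving with the whole-space Stokes fundamental tensor produces a particular pair $(u_0,p_0)$ with $-\Delta u_0 + \nabla p_0 = f$ and $\div u_0 = 0$, the assignment $f \mapsto (u_0,p_0)$ being bounded from $H^{s+\frac1q-2,q}$ into $H^{s+\frac1q,q}\times H^{s+\frac1q-1,q}$ by the two-order smoothing of the Stokeslet (a Mikhlin-type statement, valid for negative smoothness in the whole space). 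If one prefers to localize rather than extend, the divergence-free constraint introduced by the cut-off is restored by the Bogovski\u{\i} operator of Propositions \ref{prop-Bogovskii}--\ref{prop-Bogovskii2}. After subtracting $(u_0,p_0)$ one is left with the homogeneous Stokes system $-\Delta w + \nabla r = 0$, $\div w = 0$ in $D$ with Dirichlet datum $w|_{\partial D} = -u_0|_{\partial D}$ lying in an appropriate Whitney--Besov space on $\partial D$.

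Second, I would solve this homogeneous Dirichlet problem by representing $(w,r)$ through the hydrodynamic single- and double-layer potentials and reducing to a boundary integral equation $(\tfrac12 I + K)\phi = w|_{\partial D}$ on $\partial D$. The crux is the invertibility of $\tfrac12 I + K$ on the relevant scale of boundary spaces. I would establish this first at the $L^2$ endpoint --- equivalently the energy class $q=2$, $s=\tfrac12$, where the solution is the variational $W^{1,2}$ solution --- using Rellich-type identities and the nontangential maximal function estimates available on Lipschitz boundaries (the Fabes--Kenig--Verchota theory for the Stokes system). Invertibility on an open neighbourhood of this endpoint in the $(\tfrac1q,s)$-plane then follows from the stability of invertibility under small perturbation of the indices, combined with complex interpolation and a duality argument; the size of this neighbourhood is precisely what produces the threshold $\delta_1\in(0,1]$ and the two admissible regimes. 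Uniqueness follows from the same invertibility together with the energy estimate.

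The main obstacle is the sharp mapping and invertibility theory for the Stokes layer potentials on a \emph{merely} Lipschitz boundary: one cannot invoke the $C^{1,1}$ pseudodifferential calculus, so the $L^2$ bounds must rest on square-function and Rellich estimates, and the passage to the full Bessel-potential scale --- including the negative-order data, which forces atomic/molecular arguments or Besov-space duality --- has to be carried out while tracking the precise index ranges so that velocity, pressure, and datum occupy matching spaces with the two-derivative gain $H^{s+\frac1q-2,q}\mapsto H^{s+\frac1q,q}$. As an alternative route for the solenoidal component of $f$, the bounded $H^\infty$-calculus of $A_D$ recalled after \eqref{cond-skotes} yields, through its bounded imaginary powers and complex interpolation between $L^q_\sigma(D)$ and $\SD(A_D)$, a Bessel-potential characterization of $\SD(A_D^\theta)$, so that $u = A_D^{-1}P_D f$ could be placed in $H^{s+\frac1q,q}(D)$ and the pressure recovered from $\nabla p = f + \Delta u$; however, accommodating the non-solenoidal part of $f$ and its negative regularity ultimately leads back to the layer-potential estimates above, which I therefore expect to be the genuine bottleneck.
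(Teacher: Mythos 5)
You should first be aware that the paper itself contains no proof of Proposition \ref{propstokeswell}: it is imported as a black box, cited verbatim from Mitrea and Wright \cite[Thm.~10.6.2]{MW12}, and the constant $\delta_1$ together with the two regimes \eqref{condidelta0}--\eqref{condidelta1} is simply inherited from that monograph. Measured against that, your proposal is not an alternative route so much as an outline of the strategy of the cited source itself: reduction by a Stokeslet (Newtonian-type) potential, hydrodynamic single- and double-layer potentials, $L^2$ invertibility of $\tfrac12 I+K$ via Rellich identities in the Fabes--Kenig--Verchota tradition, and extrapolation of invertibility in the $(1/q,s)$-plane by perturbation, interpolation, and duality. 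That is indeed how such results are proved. The problem is that what you have written is a program, not a proof: essentially all of the mathematical content of the theorem sits in the steps you explicitly defer --- the sharp mapping and invertibility theory of the Stokes layer potentials on the Besov/Bessel scales over a merely Lipschitz boundary, the atomic and duality arguments needed for data of negative smoothness, and the determination of the exact invertibility region whose geometry is what produces $\delta_1$ and the two conditions \eqref{condidelta0}--\eqref{condidelta1}. These occupy a large part of \cite{MW12} and cannot be compressed into ``stability of invertibility under small perturbation of the indices plus interpolation''; in particular, nothing in your sketch explains why the admissible region has the specific shape stated, as opposed to some unspecified neighbourhood of the energy point.

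One further caution about your proposed alternative via the functional calculus: within this paper's logical structure that route is circular. The Bessel-potential characterization of $\SD(A_D^\theta)$ that you want to invoke is not available a priori; in the paper it is \emph{derived from} Proposition \ref{propstokeswell} (see Remark \ref{rmkembe} and the proof of the second assertion of Lemma \ref{lemfractional}, which applies the proposition with $s=s_0$, $q=q'$). The bounded $H^\infty$-calculus and complex interpolation only give $\SD(A_D^\theta)=[L^q_\sigma(D),\SD(A_D)]_\theta$; identifying this with $H^{2\theta,q}_{0,\sigma}(D)$ requires regularity information on $\SD(A_D)$ of exactly the kind the proposition supplies. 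So that alternative cannot serve as a proof of the proposition here, and the honest conclusion is the one the paper adopts: either quote \cite{MW12}, or reproduce its layer-potential machinery in full.
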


\begin{rmk}\label{rmkembe}
If $f\in L^q_\sigma(D)$, then we have $f\in H^{s+1/q-2,q}(D)$ for all $s\in(0,1)$.
Hence, together with Proposition \ref{propstokeswell}, we deduce
\begin{equation}
\label{abessel}
\SD_q(A_D)\subset\bigcap_{t<\min
\left\{1+\frac{1}{q},\frac{3}{q}+\delta_1\right\}}H^{t,q}(D).
\end{equation}
In fact, if $\delta_1=1,$ then \eqref{condidelta1} is void,
while the \eqref{condidelta0} is valid
for all $s\in(0,1)$, thereby, we infer
\begin{align*}
\SD_q(A_D)\subset\bigcap_{t<1+\frac{1}{q}}H^{t,q}(D).
\end{align*}
If $\delta_1\in(0,1)$, then the condition \eqref{condidelta1}
is equivalent to
$2/q-(1+\delta_1)<s<2/q+\delta_1$ with $\delta_1\leq s<1$. 
Thus we may take $s$ so that $s<\min\{1,2/q+\delta_1\}$ 
is valid, which implies \eqref{abessel}. 
The relation \eqref{abessel} will be used in the proof of 
the characterization of domains of fractional powers 
for the Stokes operator, 
see the second assertion of Lemma \ref{lemfractional} below. 
\end{rmk}

\begin{prop}[\!\!{\cite[Prop. 2.16]{MM08}},
{\cite[Lem. 4.4]{TW20}}]\label{propdelta2}
\label{prop-projection}
There exists a constant $\delta_2\in(0,1]$ such that if
either
\begin{align*}
0\leq 2\alpha<\frac{1}{q},\quad q\leq \frac{2}{1-\delta_2}
\qquad {\rm or}\qquad
0\leq 2\alpha<\frac{3}{q}-1+\delta_2,\quad \frac{2}{1-\delta_2}<q,
\end{align*}
then the Helmholtz projection $P_D$ is a bounded operator
from $H^{2\alpha,q}(D)$ to $\SD(A_D^{\alpha})$.
\end{prop}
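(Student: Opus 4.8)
The statement is quoted from \cite[Prop.~2.16]{MM08} and \cite[Lem.~4.4]{TW20}, so I would organize a proof around three ingredients already present in the excerpt: the bounded $H^\infty$-calculus of $A_D$ recorded after \eqref{cond-skotes} (via \cite{KW17}), the boundedness of the Helmholtz projection $P_D$ on $L^q(D)$, and the Bessel-scale regularity of the Stokes resolvent from Proposition \ref{propstokeswell}. The plan is to pass to fractional powers by complex interpolation and then to match the smoothing gained by $P_D$ with the regularity carried by $\SD(A_D^\alpha)$, taking care to stay strictly below the boundary-trace threshold so that no boundary condition is tacitly imposed.

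First I would record the interpolation identity. Since $A_D$ admits a bounded $H^\infty$-calculus on $L^q_\sigma(D)$, it has bounded imaginary powers, whence
\[
\SD(A_D^\alpha)=[L^q_\sigma(D),\SD(A_D)]_\alpha,\qquad 0\le\alpha\le 1,
\]
with equivalent norms, where $[\,\cdot\,,\,\cdot\,]_\theta$ is the complex interpolation functor. It therefore suffices, for the admissible range of $\alpha$, to produce a bounded map $P_D\colon H^{2\alpha,q}(D)\to[L^q_\sigma(D),\SD(A_D)]_\alpha$. The heart of the matter, and the step I expect to be the main obstacle, is the regularity of the Helmholtz projection on the Bessel scale: I would establish that $P_D$ is bounded on $H^{s,q}(D)$ with solenoidal image precisely for $0\le s<\sigma(q)$, where $\sigma(q)=1/q$ for $q\le 2/(1-\delta_2)$ and $\sigma(q)=3/q-1+\delta_2$ for $q>2/(1-\delta_2)$, with $\delta_2\in(0,1]$ the Lipschitz-regularity constant governing the $H^{s,q}$-stability of the Helmholtz decomposition on $D$. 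On a merely Lipschitz domain one cannot invoke elliptic $H^{2,q}$-theory, and this admissible range must be extracted from the boundary-potential regularity theory underlying \cite{MM08} (equivalently from the stability of the decomposition used in \cite[Prop.~2.3, Lem.~4.4]{TW20}); this is exactly where the dichotomy in the statement is born. A direct computation shows $\sigma(q)\le 1/q$ throughout, with equality at $q=2/(1-\delta_2)$, so that $s=2\alpha$ stays strictly below the trace threshold $1/q$ and the functions of $H^{2\alpha,q}(D)$ carry no boundary trace for $P_D$ to disturb.

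It then remains to identify, in this sub-threshold regime, the solenoidal Bessel space $P_DH^{2\alpha,q}(D)$ with the interpolation space $[L^q_\sigma(D),\SD(A_D)]_\alpha$. I would argue by interpolation between two anchors: at $\alpha=0$, where $P_D\colon L^q(D)\to L^q_\sigma(D)=\SD(A_D^0)$ is just boundedness of the projection, and a small positive exponent reached through Proposition \ref{propstokeswell}, whose embedding \eqref{abessel} gives $\SD(A_D)\hookrightarrow H^{t,q}(D)$ for $t<\min\{1+1/q,3/q+\delta_1\}$. Combining this with the Bessel-scale stability of $P_D$ from the previous paragraph in a reiteration argument yields $P_DH^{2\alpha,q}(D)\hookrightarrow\SD(A_D^\alpha)$ with the asserted norm control. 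Reconciling the two independent thresholds is the remaining delicacy: one may replace $\delta_2$ by $\min\{\delta_1,\delta_2\}$, which only shrinks the stated range and leaves the conclusion intact since the proposition fixes its own constant. The case distinction relative to $q=2/(1-\delta_2)$ then simply records which of the trace threshold $1/q$ or the shifted Helmholtz threshold $3/q-1+\delta_2$ is the binding constraint.
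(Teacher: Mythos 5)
First, a structural point: the paper never proves this proposition --- the statement itself carries the citations \cite{MM08} and \cite{TW20}, and it is used purely as an imported black box. So your proposal can only be measured against the proof in those references. Read that way, your skeleton is the right one: the bounded $H^\infty$-calculus gives $\SD(A_D^\alpha)=[L^q_\sigma(D),\SD(A_D)]_\alpha$, and you correctly observe that both regimes of the hypothesis force $2\alpha<1/q$, so that $H^{2\alpha,q}(D)$ carries no boundary trace and $H^{2\alpha,q}(D)\cap L^q_\sigma(D)$ coincides with $H^{2\alpha,q}_{0,\sigma}(D)$. However, your central step --- that $P_D$ is bounded on $H^{s,q}(D)$ for $s$ below the threshold $\sigma(q)$, with the dichotomy at $q=2/(1-\delta_2)$ --- is not argued at all: you explicitly defer it to ``the boundary-potential regularity theory underlying \cite{MM08}''. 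That step is the entire content of the proposition; it is where $\delta_2$ and the two regimes are born, via the regularity on a Lipschitz domain of the Neumann problem defining $I-P_D$ (cf. \cite{FMM98}). Everything else in your outline is scaffolding around this black box, so what you have written is a citation dressed as a proof. That is defensible --- it is exactly what the paper does --- but it should be presented as such, not as an independent argument.

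Second, a concrete error in the one step you do try to argue. To conclude $P_DH^{2\alpha,q}(D)\hookrightarrow\SD(A_D^\alpha)$ you invoke Proposition \ref{propstokeswell} and the embedding \eqref{abessel}, i.e. $\SD(A_D)\hookrightarrow H^{t,q}(D)$. This goes the wrong way: that embedding constrains elements already known to lie in the domain, and in the paper it is used (via duality, in the proof of the second assertion of Lemma \ref{lemfractional}) precisely for the \emph{opposite} inclusion $\SD(A_D^\alpha)\subset H^{2\alpha,q}_{0,\sigma}(D)$; no reiteration argument can convert it into the inclusion you need. The correct tool is the chain the paper records as \eqref{inclusion}, namely
\begin{equation}
H^{2\alpha,q}_{0,\sigma}(D)=[L^q_\sigma(D),W^{2,q}_{0,\sigma}(D)]_\alpha
\subset[L^q_\sigma(D),\SD(A_D)]_\alpha=\SD(A_D^\alpha),
\end{equation}
which rests on the interpolation identity of \cite[Thm. 2.12]{MM08}, the inclusion $W^{2,q}_{0,\sigma}(D)\subset\SD(A_D)$ (valid once the Helmholtz decomposition of $L^q(D)$ is available), and the $H^\infty$-calculus. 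With that substitution, and with the Bessel-scale boundedness of $P_D$ genuinely taken from \cite[Prop. 2.16]{MM08} rather than rederived, your outline closes --- but at that point it is the cited proof, not a new one.
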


We shall agree on the following convention for $\varepsilon_1$
and $q$, see \cite[Thm.~1.1]{T18}.
\begin{conv}
\label{conv-1}
Let $D\subset \BR^3$ be a bounded Lipschitz domain.
Let $\varepsilon_0 > 0$ be the number appearing in \eqref{cond-skotes}.
Furthermore, let $\delta_1$ and $\delta_2$ be the numbers given in Propositions
\ref{propstokeswell} and \ref{propdelta2}, respectively.
Let $\varepsilon_1\in(0,\min\{\varepsilon_0, \delta_1,\delta_2\})$
be such that for all $q\in(1,\infty)$ satisfying
\begin{align}\label{p}
\left|\frac{1}{q}-\frac{1}{2}\right|<\frac{1}{6}+\varepsilon_1,
\end{align}
there holds
\begin{align}\label{aresol}
\|\nabla^j(\lambda+A_D)^{-1}u\|_{q,D}
\leq C|\lambda|^{-1+\frac{j}{2}}\|u\|_{q,D}
\end{align}
with some constant $C=C(D,\theta,q)$
for $u\in L^q_\sigma(D)$ and $\lambda\in \Sigma_\theta$, $\theta \in (0, \pi)$, and that
\begin{align}\label{afrac}
\SD(A_{D}^{\frac{1}{2}})=W^{1,q}_{0,\sigma}(D)
\end{align}
with equivalence of the respective norms. Here, $\Sigma_\theta := \{z \in \BC \setminus \{0\}
\mid \lvert \arg z \rvert < \theta\}$ is a sector with the opening angle $\theta \in (0, \pi)$.
\end{conv}
Given $\eta = (\eta_1, \eta_2, \eta_3)^\top \in \BR^3$ and $\omega=(\omega_1,\omega_2,\omega_3)^\top \in\BR^3$,
we define the operator
$B_{D, \eta, \omega} \colon L^q_{\sigma}(D)\rightarrow L^q_{\sigma}(D)$ by
\begin{align*}
B_{D, \eta, \omega}u:=-(\eta + \omega\times x)\cdot\nabla u
+\omega\times u,\qquad
\SD(B_{D, \eta, \omega}):= W^{1,q}_{0,\sigma}(D).
\end{align*}
Then define $\CL_{D,\eta,\omega} \colon L^q_{\sigma}(D)\rightarrow L^q_{\sigma}(D)$ by
\begin{align}
\label{def-operator-D}
\CL_{D,\eta,\omega}u:= A_{D}u+B_{D, \eta, \omega} u,\qquad
\SD (\CL_{D,\eta,\omega}) :=\SD (A_{D}).
\end{align}

This section aims to show that 
$- \CL_{D,\eta,\omega}$ generates a bounded analytic 
$C_0$-semigroup $(e^{- t \CL_{D,\eta,\omega}})_{t \ge 0}$ 
on $L^q_\sigma (D)$. For later use, we also prepare 
the $L^q$-$L^r$-smoothing rate of 
$(e^{- t \CL_{D,\eta,\omega}})_{t \ge 0}$ 
and the estimates near $t=0$ of 
$\partial_t e^{- t \CL_{D,\eta,\omega}}$ and of 
the associated pressure with 
$e^{- t \CL_{D,\eta,\omega}} f$.

\begin{thm}\label{proplqlrsmooth}
Let $\varepsilon_1$ and $q$ be subject to Convention \ref{conv-1}
and $\eta, \omega \in \BR^3$.
Given $c_0>0$, we assume $\lvert \eta \rvert + \lvert \omega \rvert \leq c_0$.
Then the following assertions are valid.
\begin{enumerate}
\item The operator $- \CL_{D,\eta,\omega}$ defined by \eqref{def-operator-D} generates a bounded analytic
$C_0$-semigroup $(e^{- t \CL_{D,\eta,\omega}})_{t \ge 0}$ on $L^q_\sigma (D)$.
\item Let $T>0$, $j=0,1$, and let $r\leq\infty~
({\rm resp.}~r<\infty)$ if $j=0~({\rm resp.}~j=1)$.
Then there exists a constant $C=C(D,c_0,q,r,T,j)>0$ such that
\begin{align}\label{lqlrsmooth}
\|\nabla^j e^{-t\CL_{D,\eta,\omega}}f\|_{r,D}
\leq Ct^{-\frac{3}{2}(\frac{1}{q}-\frac{1}{r})-\frac{j}{2}}
\|f\|_{q,D}
\end{align}
for $0<t\leq T$ and $f\in L^q_\sigma(D)$.
\item Let $\alpha=\alpha(q)>0$ satisfy
\begin{align}\label{alphacondi}
2\alpha<1-\frac{1}{q}\quad{\rm if}~q\geq \frac{2}{1+\delta_2}
\qquad {\rm and}\qquad 2\alpha<2-\frac{3}{q}+\delta_2
\quad{\rm if}~q<\frac{2}{1+\delta_2},
\end{align}
where $\delta_2\in(0,1]$ is the same number as in Proposition \ref{propdelta2}.
Suppose that $r\in[q,\infty)$ fulfills $1/q-1/r\leq (1-2\alpha)/3$.
Then there holds $e^{-t\CL_{D,\eta,\omega}}f
\in C^1((0,\infty);W^{-1,r}(D))$ and,
for each $T>0$,
there exists a constant $C=C(D, c_0,q,r,\alpha,T)>0$
such that
\begin{align}
\|p(t)\|_{r,D}
& \leq Ct^{-1+\alpha}\|f\|_{r,D},\label{pest}\\
\|\partial_te^{-t\CL_{D,\eta,\omega}}f\|_{W^{-1,r}(D)}
& \leq Ct^{-1+\alpha}\|f\|_{r,D}\label{partialtest}
\end{align}
for $f\in L^q_\sigma(D)\cap L^r(D)$ and $0<t\leq T$,
where $p\in L^q_0(D)$ is an associated pressure
with $e^{-t\CL_{D,\eta,\omega}}f$.
\end{enumerate}
\end{thm}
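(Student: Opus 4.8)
The plan is to regard $\CL_{D,\eta,\omega}=A_D+B_{D,\eta,\omega}$ as a lower-order perturbation of the Stokes operator and then read off all three assertions from analytic-semigroup theory combined with the Bessel-potential description of the domains $\SD(A_D^\alpha)$.

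\emph{Generation.} Since $D$ is bounded, $|\eta+\omega\times x|\le C(c_0,D)$ on $D$, and because $\div[-(\eta+\omega\times x)\cdot\nabla u+\omega\times u]=0$ for solenoidal $u$, the operator $B_{D,\eta,\omega}$ indeed maps $W^{1,q}_{0,\sigma}(D)$ into $L^q_\sigma(D)$ with $\|B_{D,\eta,\omega}u\|_{q,D}\le C(\|\nabla u\|_{q,D}+\|u\|_{q,D})$; by $\SD(A_D^{1/2})=W^{1,q}_{0,\sigma}(D)$ it is therefore $A_D^{1/2}$-bounded. First I would plug this into the resolvent: from \eqref{aresol} with $j=0,1$ one gets $\|B_{D,\eta,\omega}(\lambda+A_D)^{-1}\|_{\SL(L^q_\sigma(D))}\le C(|\lambda|^{-1/2}+|\lambda|^{-1})\le\tfrac12$ for $\lambda\in\Sigma_\theta$, $\theta\in(\pi/2,\pi)$, with $|\lambda|\ge\beta$ and $\beta$ large. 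A Neumann series then inverts $I+B_{D,\eta,\omega}(\lambda+A_D)^{-1}$, and the identity $(\lambda+\CL_{D,\eta,\omega})^{-1}=(\lambda+A_D)^{-1}(I+B_{D,\eta,\omega}(\lambda+A_D)^{-1})^{-1}$ yields $\|(\lambda+\CL_{D,\eta,\omega})^{-1}\|\le C|\lambda|^{-1}$, so $-\CL_{D,\eta,\omega}$ is sectorial and generates an analytic semigroup. Since $A_D$ has bounded $H^\infty$-calculus and $B_{D,\eta,\omega}$ is $A_D^{1/2}$-bounded, a Kalton--Weis-type perturbation shows that, after a harmless shift, $\CL_{D,\eta,\omega}$ has bounded imaginary powers; hence $\SD(\CL_{D,\eta,\omega}^\gamma)=\SD(A_D^\gamma)=[L^q_\sigma(D),\SD(A_D)]_\gamma$ for $\gamma\in[0,1]$ with equivalent norms, which is the identity I will use repeatedly. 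Uniform boundedness comes from compactness of the resolvent on the bounded domain $D$; in any event a shift only changes the constant $C=C(T)$ in the estimates for $0<t\le T$.

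\emph{Smoothing.} Using analyticity and the domain identity $\|A_D^\gamma v\|_{q,D}\le C\|\CL_{D,\eta,\omega}^\gamma v\|_{q,D}$, one gets $\|A_D^\gamma e^{-t\CL_{D,\eta,\omega}}f\|_{q,D}\le C\|\CL_{D,\eta,\omega}^\gamma e^{-t\CL_{D,\eta,\omega}}f\|_{q,D}\le Ct^{-\gamma}\|f\|_{q,D}$ for $\gamma\in[0,1]$. For $j=0$ I would take $\gamma=\tfrac32(\tfrac1q-\tfrac1r)$ and use $\SD(A_D^\gamma)\hookrightarrow H^{2\gamma,q}(D)\hookrightarrow L^r(D)$, and for $j=1$ take $\gamma=\tfrac12+\tfrac32(\tfrac1q-\tfrac1r)$ and use $\SD(A_D^\gamma)\hookrightarrow H^{2\gamma,q}(D)\hookrightarrow W^{1,r}(D)$. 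Both embeddings $\SD(A_D^\gamma)\hookrightarrow H^{2\gamma,q}(D)$ rest on the characterization of the fractional powers of $A_D$ (the second assertion of Lemma \ref{lemfractional}, extending Remark \ref{rmkembe} and Proposition \ref{prop-projection} past $\gamma=1/2$); this is precisely the step that compensates for the absence of $W^{2,q}$-regularity. Combining the two displays gives \eqref{lqlrsmooth} on $0<t\le T$.

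\emph{Pressure and time derivative.} The pressure $p(t)$ of $u(t):=e^{-t\CL_{D,\eta,\omega}}f$ is the Stokes pressure of $A_Du(t)$, i.e. $-\Delta u+\nabla p=A_Du$ (the drift--Coriolis term is solenoidal and contributes nothing). Normalizing $\int_Dp=0$, I would estimate $\|p\|_{r,D}=\sup\{(p,g):g\in L^{r'}_0(D),\ \|g\|_{r',D}\le1\}$ by duality: for such $g$ set $w=\BB_D[g]\in W^{1,r'}_0(D)$, so that $(p,g)=-(\nabla p,w)=(\nabla u,\nabla w)-(A_Du,P_Dw)$. The first term is bounded by $\|\nabla u\|_{r,D}\|\nabla w\|_{r',D}\le Ct^{-1/2}\|f\|_{r,D}\|g\|_{r',D}$. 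For the second I would transfer fractional powers, $(A_Du,P_Dw)=(A_D^{1-\alpha}u,(A_D^{*})^{\alpha}P_Dw)$, bound $\|(A_D^{*})^{\alpha}P_Dw\|_{r',D}\le C\|w\|_{W^{1,r'}(D)}\le C\|g\|_{r',D}$ via the boundedness of $P_D$ from $H^{2\alpha,r'}(D)$ into $\SD((A_D^{*})^{\alpha})$ (the dual of Proposition \ref{prop-projection}, whose hypotheses for $r'$ follow from \eqref{alphacondi} because $r\ge q$), and finally use $\|A_D^{1-\alpha}u(t)\|_{r,D}\le Ct^{-1+\alpha}\|f\|_{r,D}$, valid since the gap condition $1/q-1/r\le(1-2\alpha)/3$ keeps $r$ in the admissible range so that the semigroup is analytic on $L^r_\sigma(D)$. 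This proves \eqref{pest}. Then $\partial_tu=\Delta u-\nabla p+(\eta+\omega\times x)\cdot\nabla u-\omega\times u$ tested against $\psi\in W^{1,r'}_0(D)$, together with $(\eta+\omega\times x)\cdot\nabla u=\div(u\otimes(\eta+\omega\times x))$ (as $\div(\omega\times x)=0$), gives $\|\partial_tu\|_{W^{-1,r}(D)}\le C(\|\nabla u\|_{r,D}+\|u\|_{r,D}+\|p\|_{r,D})\le Ct^{-1+\alpha}\|f\|_{r,D}$, which is \eqref{partialtest}.

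The generation and the $j=0$ bound are routine once $B_{D,\eta,\omega}$ is recognized as $A_D^{1/2}$-bounded. I expect the real difficulty --- and the place where the merely Lipschitz character of $\partial D$ bites --- to be the gradient estimate ($j=1$) and the pressure bound \eqref{pest}: both hinge on the sharp Bessel-potential description of $\SD(A_D^\alpha)$ for $\alpha>1/2$ and on verifying that the exponent restrictions \eqref{alphacondi} and $1/q-1/r\le(1-2\alpha)/3$ keep $P_D$ bounded and every duality pairing legitimate.
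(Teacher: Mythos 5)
Your assertions (1) and (2) follow essentially the paper's own route: $B_{D,\eta,\omega}$ is $A_D^{1/2}$-bounded via \eqref{afrac}, a Neumann series inverts $I+B_{D,\eta,\omega}(\lambda+A_D)^{-1}$ for large $|\lambda|$ in a sector, a Pr\"uss--Simonett/Kalton--Weis perturbation gives the $H^\infty$-calculus and the identity $\SD(\CL_{D,\eta,\omega}^\gamma)=\SD(A_D^\gamma)$, and the smoothing estimates come from the Bessel-potential description of $\SD(A_D^\gamma)$ (Lemma \ref{lemfractional}(2)). Two caveats there. First, \emph{bounded} analytic semigroup requires resolvent bounds on all of $\overline{\BC}_+$, not only for large $|\lambda|$; compactness of the resolvent only reduces this to excluding eigenvalues, and the paper does that by the energy identity $(\Re\lambda)\|u\|_{2,D}^2+\|\nabla u\|_{2,D}^2=0$ (skew-symmetry of the drift, first for $q\ge2$, then by duality). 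Your one-line appeal to compactness omits this injectivity step, though your remark that a shift suffices for the finite-time estimates in (2)--(3) is correct. Second, your choice $\gamma=\tfrac12+\tfrac32(\tfrac1q-\tfrac1r)$ for $j=1$ must be checked against the admissibility range \eqref{alpha2} for the embedding $\SD(A_D^\gamma)\hookrightarrow H^{2\gamma,q}(D)$; the paper's two-parameter interpolation ($\alpha<1/2<\beta$) faces the same constraint, so this is not a point of divergence, but it cannot simply be taken for granted.

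The genuine gap is in assertion (3). You run the duality argument entirely at the exponent $r$: you use $\|\nabla u(t)\|_{r,D}\le Ct^{-1/2}\|f\|_{r,D}$ and, crucially, $\|A_D^{1-\alpha}u(t)\|_{r,D}\le Ct^{-1+\alpha}\|f\|_{r,D}$, i.e.\ smoothing and fractional-power calculus of the semigroup on $L^r_\sigma(D)$, and you justify this by the claim that the gap condition $1/q-1/r\le(1-2\alpha)/3$ ``keeps $r$ in the admissible range''. That claim is false: the hypotheses allow $r$ far outside the range $|1/r-1/2|<1/6+\varepsilon_1$. For example, take $q\in\bigl(3,\tfrac{3}{1-3\varepsilon_1}\bigr)$ and $2\alpha\le 1-3/q$ (compatible with \eqref{alphacondi}); then the gap condition holds for \emph{every} $r<\infty$, while nothing is known about analyticity of the Stokes semigroup on $L^r_\sigma(D)$ for such $r$ on a Lipschitz domain --- this is exactly the Taylor/Shen restriction, and it is precisely this regime (arbitrarily large $r$) in which assertion (3) is applied later, e.g.\ in Lemma \ref{lemlrlr}. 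The paper avoids the problem by never leaving the exponent $q$: it pairs $\partial_t e^{-t\CL_{D,\eta,\omega}}f$ with $\BB_Dg$ in the $(L^q,L^{q'})$ duality, bounds $\|\CL_{D,\eta,\omega}^{1-\alpha}e^{-t\CL_{D,\eta,\omega}}f\|_{q,D}\le Ct^{-1+\alpha}\|f\|_{q,D}\le Ct^{-1+\alpha}\|f\|_{r,D}$ (H\"older on the bounded $D$), applies Proposition \ref{propdelta2} at the exponent $q'$ --- this is exactly what \eqref{alphacondi} guarantees --- and uses the gap condition only through the Sobolev embedding $W^{1,r'}(D)\hookrightarrow H^{2\alpha,q'}(D)$, equivalently $1-3/r'\ge 2\alpha-3/q'$; the first term of \eqref{pest} is likewise estimated from $q$ to $r$, the gap condition converting $t^{-\frac32(\frac1q-\frac1r)-\frac12}$ into $t^{-1+\alpha}$. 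Repairing your argument means switching to this $q$-based pairing, i.e.\ reproducing the paper's proof.
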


To prove Theorem \ref{proplqlrsmooth}, the following lemma is crucial.
In particular, it is necessary in our argument to use the characterization of the domain
of the fractional power $\CL_{D,\eta,\omega}^\alpha$ of $\CL_{D,\eta,\omega}$ with $\alpha > 1 \slash 2$.
\begin{lem}\label{lemfractional}
Let $\varepsilon_1$ and $q$ be subject to Convention \ref{conv-1}
and $\eta, \omega \in \BR^3$.
Given $c_0>0$, assume $\lvert \eta \rvert + \lvert \omega \rvert \leq c_0.$
\begin{enumerate}
\item The operator $\CL_{D,\eta,\omega}$
admits a bounded $H^\infty$-calculus.
For the domain of $\CL^\alpha_{D,a}$, there holds
\begin{align*}
\SD(\CL_{D,\eta,\omega}^\alpha)=[L^q_\sigma(D),
\SD(\CL_{D,\eta,\omega})]_\alpha
=[L^q_\sigma(D),\SD (A_D)]_\alpha=
\SD(A_D^\alpha),
\qquad 0<\alpha<1,
\end{align*}
with equivalence of the respective norms,
where $[\,\cdot\,,\,\cdot\,]_\alpha$ denotes complex interpolation of
order $\alpha$.
\item For every $\alpha>0$ fulfilling
\begin{equation}
\label{alpha2}
0<\alpha<\min\left\{\frac{1}{2}+\frac{3}{2q},\frac{3}{2q}+\frac{\delta_1}{2}\right\},
\end{equation}
where $\delta_1$ is the same number as in Proposition \ref{propstokeswell},
there holds
\begin{align*}
\SD(\CL_{D,\eta,\omega}^\alpha)=\SD(A_D^\alpha)=H^{2\alpha,q}_{0,\sigma}(D).
\end{align*}
\end{enumerate}
\end{lem}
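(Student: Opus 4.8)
The plan is to exploit the fact that $\CL_{D,\eta,\omega}$ differs from $A_D$ by the lower-order perturbation $B_{D,\eta,\omega}u = -(\eta+\omega\times x)\cdot\nabla u + \omega\times u$, which is relatively bounded with respect to $A_D^{1/2}$. For the first assertion, I would first observe that since $D$ is bounded and $\eta,\omega$ are fixed, the coefficient $\eta+\omega\times x$ is bounded on $D$, so $\|B_{D,\eta,\omega}u\|_{q,D} \le C(\|\nabla u\|_{q,D} + \|u\|_{q,D})$. By Convention \ref{conv-1}, namely \eqref{afrac}, we have $\SD(A_D^{1/2}) = W^{1,q}_{0,\sigma}(D)$ with equivalent norms, hence $B_{D,\eta,\omega}$ is $A_D^{1/2}$-bounded: $\|B_{D,\eta,\omega}u\|_{q,D} \le C\|A_D^{1/2}u\|_{q,D} + C\|u\|_{q,D}$. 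Since $A_D$ admits a bounded $H^\infty$-calculus (by \cite[Thm.~16]{KW17}), a perturbation result for the $H^\infty$-calculus under relatively bounded perturbations of order strictly less than one (the standard perturbation theorem, e.g.\ via the square-function characterization or the Kalton--Weis framework) yields that $\CL_{D,\eta,\omega} + \mu$ admits a bounded $H^\infty$-calculus for $\mu$ large enough; since the semigroup is already known to be analytic, one then removes the shift. The bounded $H^\infty$-calculus gives bounded imaginary powers, whence the domains of the fractional powers coincide with the complex interpolation spaces, $\SD(\CL_{D,\eta,\omega}^\alpha) = [L^q_\sigma(D), \SD(\CL_{D,\eta,\omega})]_\alpha$. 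The crucial reduction is then $\SD(\CL_{D,\eta,\omega}) = \SD(A_D)$ (which holds by definition \eqref{def-operator-D}, since $B_{D,\eta,\omega}$ is defined on all of $W^{1,q}_{0,\sigma}(D) \supset \SD(A_D)$), so the interpolation spaces agree and equal $\SD(A_D^\alpha)$.

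For the second assertion, the strategy is to identify $\SD(A_D^\alpha)$ explicitly with the Bessel-potential space $H^{2\alpha,q}_{0,\sigma}(D)$ in the stated range of $\alpha$. The inclusion $\SD(A_D^\alpha)\subset H^{2\alpha,q}(D)$ for $\alpha$ below the threshold $\min\{\tfrac12+\tfrac{3}{2q},\tfrac{3}{2q}+\tfrac{\delta_1}{2}\}$ should follow by writing $u = A_D^{-\alpha}g$ and using the Stokes regularity of Proposition \ref{propstokeswell} combined with Remark \ref{rmkembe}: the embedding \eqref{abessel} controls $\SD(A_D)$ inside $H^{t,q}(D)$ for $t < \min\{1+\tfrac1q, \tfrac3q+\delta_1\}$, and interpolating between $L^q_\sigma(D)$ and this space via the complex method gives the $H^{2\alpha,q}$-regularity, with the upper bound on $\alpha$ arising precisely from rescaling the threshold $t$ by the interpolation parameter. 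The reverse inclusion, that functions in $H^{2\alpha,q}_{0,\sigma}(D)$ lie in $\SD(A_D^\alpha)$, should be obtained by the mapping property of the Helmholtz projection in Proposition \ref{propdelta2}: for $\alpha$ satisfying the constraints matched to $\delta_2$, $P_D$ maps $H^{2\alpha,q}(D)$ boundedly into $\SD(A_D^\alpha)$, and for solenoidal data this is the identity, giving the embedding. The range \eqref{alpha2} is exactly the overlap of the conditions coming from Propositions \ref{propstokeswell} and \ref{propdelta2}.

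The main obstacle I anticipate is the perturbation argument for the bounded $H^\infty$-calculus in the first part. Unlike the analyticity of the semigroup — which only requires a relative bound for the generator and tolerates a lower-order zeroth-order term such as $\omega\times u$ — preserving the bounded $H^\infty$-calculus under a relatively bounded (rather than relatively compact or small) perturbation is delicate and typically requires the perturbation to be $A_D^\beta$-bounded with $\beta<1$ together with a large shift to absorb the perturbation within a sector; one must verify that the relevant square-function estimates or the resolvent-commutator bounds survive. Here the shift is harmless since we may always work with $\CL_{D,\eta,\omega}+\mu$ and note that the domains of fractional powers are insensitive to a bounded additive shift once the operator is sectorial with a bounded calculus. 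A secondary technical point is the sharpness of the exponent thresholds: one must check carefully that the complex-interpolation identification in Remark \ref{rmkembe} and the Helmholtz-projection bound in Proposition \ref{propdelta2} are simultaneously available on the stated range \eqref{alpha2}, so that no gap opens between the two inclusions defining $H^{2\alpha,q}_{0,\sigma}(D)$.
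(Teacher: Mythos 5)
Your strategy for part (1) coincides with the paper's in outline (a perturbation theorem for the bounded $H^\infty$-calculus, exploiting that $B_{D,\eta,\omega}$ is $A_D^{1/2}$-bounded via \eqref{afrac}), but there is a genuine gap at exactly the point you flag as the main obstacle. The perturbation theorem the paper invokes (\cite[Cor.~3.3.15, Thm.~3.3.7]{PS16}) requires as a hypothesis that the \emph{perturbed} operator $\CL_{D,\eta,\omega}$ be sectorial \emph{and invertible}, and your plan to work with $\CL_{D,\eta,\omega}+\mu$ and then ``remove the shift'' because ``the semigroup is already known to be analytic'' fails on two counts. First, it is circular: the analyticity and boundedness of $(e^{-t\CL_{D,\eta,\omega}})_{t\ge0}$ asserted in Theorem \ref{proplqlrsmooth}(1) is itself deduced from the proof of this lemma, so no semigroup information may be presupposed here. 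Second, even granting analyticity, it only yields resolvent bounds in a sector around a shifted half-plane; it gives neither $0\in\rho(\CL_{D,\eta,\omega})$ nor uniform resolvent estimates on $\overline{\BC}_+$ (a bounded analytic semigroup generator need not be invertible), which is what the hypothesis ``sectorial and invertible'' and the unshifted $H^\infty$-calculus claim of the lemma demand. The bulk of the paper's proof supplies precisely this: a Neumann-series argument (via Lemma \ref{delta}) for $\lambda\in\Sigma_\theta$ with $\lvert\lambda\rvert$ large; then the Fredholm alternative, using compactness of $W^{1,q}_{0,\sigma}(D)\hookrightarrow L^q_\sigma(D)$, combined with the skew-symmetry identities $\Re((\eta+\omega\times x)\cdot\nabla u,u)_D=0$ and $(\omega\times u,u)_D=0$ to get injectivity and hence $\overline{\BC}_+\subset\rho(-\CL_{D,\eta,\omega})$ for $q\ge2$; duality ($\CL_{D,\eta,\omega}^*=A_D+B_{D,-\eta,-\omega}$) for $q<2$; and a covering/compactness argument for uniform bounds on $\overline{B}_{R_0,+}$. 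None of this appears in your proposal, and it cannot be bypassed by a shift.

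Part (2) has two further gaps, and the two tools you assign to the two inclusions are in effect the wrong way around. For $\SD(A_D^\alpha)\subset H^{2\alpha,q}(D)$, interpolating between $L^q_\sigma(D)$ and \eqref{abessel} can only give $\SD(A_D^\alpha)\subset H^{\alpha t,q}(D)$ with $t<\min\{1+\tfrac1q,\tfrac3q+\delta_1\}<2$, which is strictly weaker than $H^{2\alpha,q}(D)$; since one cannot expect $\SD(A_D)\subset H^{2,q}(D)$ on a Lipschitz domain, no interpolation argument with endpoint $\SD(A_D)$ can reach the exponent $2\alpha$. The paper (following \cite[Thm.~3.17]{GT22}) instead proves boundedness of $A_D^{-\alpha}:L^q_\sigma(D)\to H^{2\alpha,q}_0(D)$ by duality: test against $g\in C_0^\infty(D)$, write $(A_D^{-\alpha}f,g)=(f,A_D^{-\alpha}P_Dg)$, and estimate $\|A_D^{-\alpha}P_Dg\|_{q',D}\le C\|g\|_{H^{-2\alpha,q'}(D)}$ using the Stokes solvability of Proposition \ref{propstokeswell} with negative-order data at the dual exponent; the threshold \eqref{alpha2} originates in this step (condition \eqref{theta2}) together with a final interpolation removing the lower bound. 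Conversely, for $H^{2\alpha,q}_{0,\sigma}(D)\subset\SD(A_D^\alpha)$ you propose Proposition \ref{propdelta2}, but its admissible range ($2\alpha<\tfrac1q$, resp.\ $2\alpha<\tfrac3q-1+\delta_2$) is far smaller than \eqref{alpha2}: for $q=2$ it forces $\alpha<\tfrac14$, whereas \eqref{alpha2} allows $\alpha$ up to $\tfrac34+\tfrac{\delta_1}{2}$, so a gap opens between your two inclusions on most of the stated range. The paper obtains this direction for \emph{all} $\alpha\in(0,1)$ from the interpolation identity $H^{2s,q}_{0,\sigma}(D)=[L^q_\sigma(D),W^{2,q}_{0,\sigma}(D)]_s$ of \cite[Thm.~2.12]{MM08} together with $W^{2,q}_{0,\sigma}(D)\subset\SD(A_D)$.
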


To deduce the properties of the fractional power of $\CL_{D,\eta,\omega}$,
we begin by introducing the following lemma.
\begin{lem}\label{delta}
Let $\varepsilon_1$ and $q$ be subject to Convention \ref{conv-1} 
and $\eta, \omega \in \BR^3$.
Given $c_0>0,$ assume $\lvert \eta \rvert + \lvert \omega \rvert \leq c_0$. For each $\delta>0,$
there exists a constant $C=C(q,c_0,\delta) > 0$ independent of  $\eta$ and $\omega$
such that
\begin{align*}
\|B_{D, \eta, \omega}u\|_{q,D}\leq \delta\|A_{D}u\|_{q,D}+C\|u\|_{q,D}
\end{align*}
for $u\in \SD(A_{D}).$
\end{lem}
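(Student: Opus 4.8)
The plan is to reduce the relative bound for $B_{D,\eta,\omega}$ to a relative bound for the gradient, and then to interpolate the half-power via \eqref{afrac}. First I would split $B_{D,\eta,\omega}u=-(\eta+\omega\times x)\cdot\nabla u+\omega\times u$ into its two contributions. Since $D$ is bounded there is $R=R(D)>0$ with $\lvert x\rvert\le R$ on $D$, so pointwise $\lvert\eta+\omega\times x\rvert\le\lvert\eta\rvert+\lvert\omega\rvert R\le c_0(1+R)$ and $\lvert\omega\times u\rvert\le\lvert\omega\rvert\lvert u\rvert\le c_0\lvert u\rvert$. Integrating gives
\[
\|B_{D,\eta,\omega}u\|_{q,D}\le c_0(1+R)\|\nabla u\|_{q,D}+c_0\|u\|_{q,D}.
\]
This is the only step in which $\eta$ and $\omega$ appear, and their entire influence is now confined to the single factor $c_0$; this is precisely what will make the final constant independent of $\eta,\omega$.

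Next I would control $\|\nabla u\|_{q,D}$ by $A_D$. By \eqref{afrac} one has $\SD(A_D^{1/2})=W^{1,q}_{0,\sigma}(D)$ with equivalent norms, hence $\|\nabla u\|_{q,D}\le C(\|A_D^{1/2}u\|_{q,D}+\|u\|_{q,D})$. Since $-A_D$ generates a bounded analytic semigroup (in fact $A_D$ admits a bounded $H^\infty$-calculus), the moment inequality for fractional powers yields $\|A_D^{1/2}u\|_{q,D}\le C\|A_Du\|_{q,D}^{1/2}\|u\|_{q,D}^{1/2}$ for $u\in\SD(A_D)$, with $C=C(D,q)$. Combining the two displays and applying Young's inequality $ab\le\frac{\kappa}{2}a^2+\frac{1}{2\kappa}b^2$, for every $\kappa>0$,
\[
\|\nabla u\|_{q,D}\le C(D,q)\Bigl(\tfrac{\kappa}{2}\|A_Du\|_{q,D}+\bigl(\tfrac{1}{2\kappa}+1\bigr)\|u\|_{q,D}\Bigr).
\]

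Substituting this into the first display and, given $\delta>0$, choosing $\kappa$ so small that $c_0(1+R)\,C(D,q)\,\kappa/2\le\delta$ produces the assertion, the constant $C$ collecting $c_0$, $\kappa$, $R$, and the structural constants of $D$ and $q$; in particular $C$ is independent of $\eta$ and $\omega$ (the paper fixes $D$ throughout Section \ref{sec-3}, so the $D$-dependence is suppressed in the statement). I do not expect any genuine obstacle here: the argument is a routine Ehrling-type absorption, and the only point requiring attention is to track that the moment inequality and the norm equivalence \eqref{afrac} have constants depending on $D$ and $q$ alone, so that all $\eta,\omega$-dependence remains isolated in the prefactor $c_0$ extracted at the first step.
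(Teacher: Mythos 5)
Your proposal is correct and follows essentially the same route as the paper: bound $\|B_{D,\eta,\omega}u\|_{q,D}$ by $Cc_0\|u\|_{W^{1,q}(D)}$ using the boundedness of $D$, invoke the norm equivalence \eqref{afrac} for $\SD(A_D^{1/2})$, apply the moment inequality $\|A_D^{1/2}u\|_{q,D}\le C\|A_Du\|_{q,D}^{1/2}\|u\|_{q,D}^{1/2}$, and absorb via Young's inequality. Your extra care in isolating the $\eta,\omega$-dependence in the prefactor $c_0$ matches the paper's claim that $C$ is independent of $\eta$ and $\omega$, so nothing is missing.
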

\begin{proof}
It follows from \eqref{afrac} and the moment inequality that
\begin{align*}
\|B_{D, \eta, \omega}u\|_{q,D}\leq Cc_0\|u\|_{W^{1,q}(D)} &\leq
Cc_0(\|A_{D}^{\frac{1}{2}}u\|_{q,D}+\|u\|_{q,D}) \\
&\leq
Cc_0(\|A_D u\|_{q,D}^{\frac{1}{2}}
\|u\|_{q,D}^{\frac{1}{2}}+\|u\|_{q,D})\\
&\leq \delta\|A_D u\|_{q,D}+\left(\frac{(Cc_0)^2}{4 \delta}
+Cc_0\right)\|u\|_{q,D}
\end{align*}
for $u\in\SD(A_{D})$, where we have used
\begin{align}
\|A_{D}u\|_{q,D}^{\frac{1}{2}}
\|u\|_{q,D}^{\frac{1}{2}}
& =\left(\frac{2 \delta}{Cc_0}\|A_{D}u\|_{q,D}
\right)^{\frac{1}{2}}
\left(\frac{Cc_0}{2\delta}\|u\|_{q,D}\right)^{\frac{1}{2}} \\
& \leq \frac{\delta}{Cc_0}\|A_{D}u\|_{q,D}+
\frac{Cc_0}{4 \delta}\|u\|_{q,D}
\end{align}
in the last inequality. The proof is complete.
\end{proof}

\begin{proof}[Proof of Lemma \ref{lemfractional}]
We note that there holds
$\SD(\CL_{D,\eta,\omega})=\SD(A_D)$
with the equivalence of norms.
To prove the first assertion, it suffices to prove that
the operator $\CL_{D,\eta,\omega}$ is sectorial and invertible
due to Corollary 3.3.15 and Theorem 3.3.7 in \cite{PS16}.
To this end, we first prove that
there exist constants $R$ and $C$, 
which are independent of $\eta$ and $\omega$, such that
\begin{align}\label{lresol}
\|\nabla^j(\lambda+\CL_{D,\eta,\omega})^{-1}u\|_{q,D}\leq
C|\lambda|^{-1+\frac{j}{2}}
\|u\|_{q,D}, \qquad j \in \{0, 1\},
\end{align}
for all $u\in L^q_{\sigma}(D)$ and $\lambda\in
\{\lambda\in\Sigma_\theta\mid|\lambda|\geq R\}$
with $\theta \in (0, \pi)$.
It follows from Lemma \ref{delta} that
\begin{align*}
\|B_{D, \eta, \omega}(\lambda+A_{D})^{-1}u\|_{q,D}&\leq
\delta\|A_{D}(\lambda+A_{D})^{-1}u\|_{q,D}
+C\|(\lambda+A_{D})^{-1}u\|_{q,D}\\
&\leq \delta(1+C_1)\|u\|_{q,D}+C_2|\lambda|^{-1}\|u\|_{q,D}
\end{align*}
for all $u\in L^q_{\sigma}(D)$ and $\lambda\in\Sigma_\theta$,
where $C_1$ is independent of $\delta$,
while $C_2$ depends on $\delta.$
Thus, taking $\delta>0$ and $R_0>0$
such that $\delta(1+C_1)<1/4$ and $C_2/R_0<1/4$ are valid, then
for all $\lambda \in \{\lambda\in\Sigma_\theta \mid|\lambda|\geq R_0\}$
we observe
\begin{equation}
\|(I+B_{D, \eta, \omega}(\lambda+A_{D})^{-1})^{-1}\|_{\SL(L^q_{\sigma}(D))}
\leq 2
\end{equation}
Together with the identity
$\lambda+A_{D}+B_{D, \eta, \omega}=(I+B_{D, \eta, \omega}(\lambda+A_{D})^{-1})
(\lambda+A_{D})$
and the estimate \eqref{aresol}, we have
$\{\lambda\in\Sigma_\theta \mid|\lambda|\geq R_0\}\subset
\rho(-\CL_{D,\eta,\omega})$ as well as \eqref{lresol}
for all $u\in L^q_{\sigma}(D)$ and $\lambda\in
\{\lambda\in\Sigma_\theta\mid|\lambda|\geq R_0\}$.
Here, $\rho (- \CL_{D,\eta,\omega})$ stands for the resolvent set of $- \CL_{D,\eta,\omega}$. \par
We next prove that
\begin{align}\label{resol}
\overline{\BC}_+:=\{\lambda\in\BC\mid\Re \lambda\geq 0\}\subset
\rho(-\CL_{D,\eta,\omega})
\end{align}
and that
$\|(\lambda+\CL_{D,\eta,\omega})^{-1}u\|_{q,D}\leq C\|u\|_{q,D}$
for all $u\in L^q_{\sigma}(D)$ and $\lambda\in \overline{B}_{R_0,+}:=
\{\lambda\in \overline{\BC}_+\mid|\lambda|\leq R_0\}$
with some fixed constant $R_0 > 0$.
Given $\lambda\in\overline{\BC}_+$, we define
\begin{align*}
K_{D,\eta,\omega}(\lambda)u:=(\lambda-R_0)(R_0+\CL_{D,\eta,\omega})^{-1}u,\qquad
\SD(K_{D,\eta,\omega}(\lambda)):=L^q_{\sigma}(D).
\end{align*}
Then we have
\begin{align*}
\lambda u+\CL_{D,\eta,\omega}u=(I+K_{D,\eta,\omega}(\lambda))
(R_0+\CL_{D,\eta,\omega})u
\end{align*}
for all $u\in\SD(\CL_{D,\eta,\omega})$ and $\lambda \in \overline{\BC}_+$,
where $I$ denotes the identity mapping.
Since  there holds 
\begin{equation}
\|K_{D,\eta,\omega}(\lambda)u\|_{W^{1,q}(D)}\leq C_{R_0,\lambda} \|u\|_{q,D}
\qquad \text{for $u\in L^q_{\sigma}(D)$},
\end{equation}
a compactness of the embedding
$W^{1,q}_{0,\sigma} (D) \hookrightarrow L^q_\sigma (D)$ implies that
$K_{D,\eta,\omega}(\lambda) \colon L^q_\sigma(D)\rightarrow L^q_\sigma(D)$ is a
compact operator. Hence, from the Fredholm alternative theorem,
it suffices to prove the injectivity of $I+K_{D,\eta,\omega}(\lambda)$
in order to observe $(I+K_{D,\eta,\omega}(\lambda))^{-1}\in \SL(L^q_\sigma(D))$.
Suppose $(I+K_{D,\eta,\omega}(\lambda))u=0$ for $u \in L^q_\sigma (D)$.
Then it follows from the definition of $K_{D,\eta,\omega} (\lambda)$ that
$u=-(\lambda-R_0)(R_0+\CL_{D,\eta,\omega})^{-1}u\in\SD(\CL_{D,\eta,\omega})$.
Since $(I+K_{D,\eta,\omega}(\lambda))u=0$ yields
$\lambda u+\CL_{D,\eta,\omega}u=0$, we find that
\begin{align}\label{distribution}
(\lambda u,\varphi)_D+\sum_{\ell=1}^3(\nabla u_\ell,\nabla \varphi_\ell)_D +
(p,\div \varphi)_D+(-(\eta + \omega\times x)\cdot\nabla u
+\omega\times u,\varphi)_D
\end{align}
for all $\varphi=(\varphi_1,\varphi_2,\varphi_3)
\in C_0^\infty(D)^3$.
Suppose $q\geq 2$.
Since $u \in\SD(\CL_{D,\eta,\omega})\subset W_{0,\sigma}^{1,q}(D)$
satisfies
$u\in W_{0,\sigma}^{1,2}(D)$ and since
$C_{0,\sigma}^\infty(D)$ is dense in $W_{0,\sigma}^{1,2}(D)$,
we see from \eqref{distribution} that
\begin{align*}
\lambda\|u\|_{2,D}+\|\nabla u\|_{2,D}^2+
(-(\eta + \omega\times x)\cdot\nabla u
+\omega\times u,u)_D=0.
\end{align*}
Moreover, we infer from
$\Re ((\eta + \omega\times x)\cdot\nabla u,u)_D=0$ and
$(\omega\times u,u)_D=0$
that
\begin{equation}
(\Re \lambda)\|u\|_{2,D}+\|\nabla u\|_{2,D}^2=0,
\end{equation}
which yields $\|\nabla u\|_{2,D}=0$ if $\Re\lambda\geq 0$.
Together with the boundary condition $u|_{\partial D}=0$,
we conclude $u=0$.
Hence, we have \eqref{resol} for $q \geq 2$
fulfilling \eqref{p} and $\lvert \eta \rvert + \lvert \omega \rvert \leq c_0$.
For the case $1<q<2$, since we find $q'>2$ and
$\CL_{D,\eta,\omega}^*=A_{D}+B_{D,- \eta, - \omega}$, the above argument asserts
$\overline{\BC}_+\subset\rho(-\CL_{D,\eta,\omega}^*)$
for $1<q<2$ fulfilling \eqref{p} and $\lvert \eta \rvert + \lvert \omega \rvert \leq c_0$.
Since we have $\rho(-\CL_{D,\eta,\omega})=\rho(-\CL_{D,\eta,\omega}^*)$ (cf.
\cite[Cor. A.4.3]{Hbook})
we see that \eqref{resol} holds independent of $q$.
\par Fix $\lambda_0 \in \overline{B}_{R_0,+}$. Then we may write
\begin{equation}
(\lambda+\CL_{D,\eta,\omega})^{-1}=(\lambda_0+\CL_{D,\eta,\omega})^{-1}
[I+(\lambda-\lambda_0)
(\lambda_0+\CL_{D,\eta,\omega})^{-1}]^{-1},
\end{equation}
which yields
\begin{equation}
\|(\lambda+\CL_{D,\eta,\omega})^{-1}\|_{\SL(L^q_\sigma(D))}\leq
2\|(\lambda_0+\CL_{D,\eta,\omega})^{-1}\|_{\SL(L^q_\sigma(D))}
\end{equation}
provided that
$|\lambda-\lambda_0|<1/2
\|(\lambda_0+\CL_{D,\eta,\omega})^{-1}\|_{\SL(L^q_\sigma(D))}.$
Since $\overline{B}_{R_0,+}$ is a compact set and there holds
\begin{align}
\overline{B}_{R_0,+}\subset
\bigcup_{\lambda_0\in\overline{B}_{R_0,+}}
\left\{\lambda\in \overline{B}_{R_0,+}\relmiddle|
|\lambda-\lambda_0|
<\frac{1}
{2\|(\lambda_0+\CL_{D,\eta,\omega})^{-1}\|_{\SL(L^q_\sigma(D))}}
\right\},
\end{align}
there exists a sequence
$\{\lambda_k\}_{k=1}^m\subset \overline{B}_{R_0,+}$ such that
\begin{align*}
\|(\lambda+\CL_{D,\eta,\omega})^{-1}\|_{\SL(L^q_\sigma(D))}
\leq 2\max_{1\leq k \leq m}
\|(\lambda_k +\CL_{D,\eta,\omega})^{-1}\|_{\SL(L^q_\sigma(D))}
\end{align*}
for all $\lambda\in \overline{B}_{R_0,+}$. We thus conclude that
the operator $\CL_{D,\eta,\omega}$ is sectorial and invertible, thereby,
the proof of the first assertion is complete.
\par It remains to prove the second assertion. 
The following proof is similar to the one by Gabel and Tolksdorf 
\cite[Thm. 3.17]{GT22}.
From the first assertion and
the interpolation result in Mitrea and Monniaux \cite[Thm. 2.12]{MM08},
we have
\begin{equation}
\label{inclusion}
H^{2s,r}_{0,\sigma}(D)=[L^r_\sigma(D),W^{2,r}_{0,\sigma}(D)]_s
\subset [L^r_\sigma(D),\SD_r(A_D)]_s=\SD_r(A_D^s)
\end{equation}
for any $s\in(0,1)$ and $r$ fulfilling $|1/r-1/2|<1/6+\varepsilon_1$.
Here and in the following, $\SD_r(A_D)$ and $\SD_r(A_D^\alpha)$ stand for 
the domain of $A_D$ and $A^\alpha_D$ defined on $L^r_\sigma(D)$, respectively.
To show the converse inclusion,
it suffices to prove that
$A_D^{-\alpha}:L^q_{\sigma}(D)\rightarrow H_0^{2\alpha,q}(D)$
is bounded since $H_{0,\sigma}^{2\alpha,q}(D)
=L^q_\sigma(D)\cap H_0^{2\alpha,q}(D)$ (cf. \cite[Cor. 2.11]{MM08})
and since
$A_D^{-\alpha}:L^q_{\sigma}(D)\rightarrow L^q_\sigma(D)$ is bounded.
To this end, we deduce
\begin{align}
\label{theta}
\|A_D^{-\alpha}P_Dg\|_{q',D}\leq C\|g\|_{H^{-2\alpha,q'}(D)}
\end{align}
for all $g\in C^{\infty}_0(D)$ provided that \eqref{p} and
\begin{equation}
\label{theta2}
\frac{1}{2}\left(1+\frac{1}{q}-\delta_1\right)<\alpha<\min\left\{
\frac{1}{2}+\frac{1}{2q},\frac{3}{2q}+\frac{\delta_1}{2}\right\},
\end{equation}
where $1/q+1/q'=1$. In fact,
by \eqref{inclusion} and $\SD_{q'}(A_D)\subset
H_{0,\sigma}^{2(1-\alpha),q'}(D)$ (cf. Remark \ref{rmkembe}), we have
\begin{equation}
\label{atheta}
\|A_D^{-\alpha}P_Dg\|_{q',D}=
\|A_D^{1-\alpha}A_D^{-1}P_Dg\|_{q',D}\leq
\|A_D^{-1}P_Dg\|_{H_{0,\sigma}^{2-2\alpha,q'}(D)}.
\end{equation}
Set $u=A_D^{-1}P_Dg$ and let $p$ denote the associated pressure. Then
$(u,p)$ solves the Stokes system
\begin{align*}
\left\{
\begin{aligned}
-\Delta u+\nabla p & = P_Dg &\quad & \text{in~$D$}, \\
\div u & = 0 &\quad & \text{in~$D$},\\
u & = 0& \quad & \text{on $\partial D$}.
\end{aligned}\right.
\end{align*}
Since we may write $P_Dg=g+\nabla h$ with some $h\in W^{1,q'}(D)$,
it follows that $(u,p-h)$ solves the same system but
with the right-hand side replaced by $g$. Set $s_0=1+1/q-2\alpha$.
Then, in view of \eqref{theta2}, we apply Proposition \ref{propstokeswell}
with $s=s_0$, $q=q'$, and $f=g$ to obtain
\begin{equation}
\|A_D^{-1}P_Dg\|_{H_{0,\sigma}^{2-2\alpha,q'}(D)}
=\|u\|_{H_{0,\sigma}^{2-2\alpha,q'}(D)}\leq
C\|g\|_{H^{-2\alpha,q'}(D)},
\end{equation}
which together with \eqref{atheta} yields \eqref{theta} provided that
\eqref{theta2} is valid.
Since $H_0^{2\alpha,q}(D)=H^{-2\alpha,q'}(D)^*$ and since
$C_0^\infty(D)$ is dense in $H^{-2\alpha,q'}(D)$ due to Triebel
\cite[Thm. 3.5 (i)]{T02}, applying \eqref{theta} to
\begin{equation}
|(A_D^{-\alpha}f,g)|=|(f,A_D^{-\alpha}P_Dg)|\leq
C\|f\|_{q,D}\|A_D^{-\alpha}P_Dg\|_{q',D}
\end{equation}
for $f\in C_{0,\sigma}^\infty(D)$ and
$g\in C_0^\infty(D)$ asserts the boundedness of
$A_D^{-\alpha}:L^q_{\sigma}(D)\rightarrow H_0^{2\alpha,q}(D)$ provided
\eqref{theta2}. To get rid of the lower bound in \eqref{theta2},
given $\alpha$ satisfying \eqref{alpha2}, we take $\theta$ so that
$\max\{(1+1/q-\delta_1)/2,\alpha\}<\theta<\min\{
1/2+1/(2q),3/(2q)+\delta_1/2\}$ is fulfilled. Then the aforementioned result for $\theta$
and the result due to \cite[Thm. 2.12]{MM08} yield
\begin{equation}
\SD_q(A_D^\alpha)=[L^q_\sigma(D),\SD_q(A_D^\theta)]_\tau=
[L^q_\sigma(D),H_{0,\sigma}^{2\theta,q}(D)]_\tau=H_{0,\sigma}^{2\alpha,q}(D)
\end{equation}
with $\alpha=\theta \tau$.
The proof is complete.
\end{proof}

\begin{proof}[Proof of Theorem \ref{proplqlrsmooth}]
The first assertion follows from the proof of Lemma \ref{lemfractional}. \par	
To show the second assertion, in view of $e^{-t\CL_{D,\eta,\omega}}f\in W^{1,q}_0(D)$, we use
the Gagliardo--Nirenberg inequality to see
\begin{align*}
\|e^{-t\CL_{D,\eta,\omega}}f\|_{r,D}\leq C
\|\nabla e^{-t\CL_{D,\eta,\omega}}f\|^{3(\frac{1}{q}-\frac{1}{r})}
_{q,D}
\|e^{-t\CL_{D,\eta,\omega}}f\|^{1-3(\frac{1}{q}-\frac{1}{r})}_{q,D}
\leq Ct^{-\frac{3}{2}(\frac{1}{q}-\frac{1}{r})}\|f\|_{q,D}
\end{align*}
for $f\in L^q_{\sigma}(D)$ and $0<t\leq T$
provided that \eqref{p} and $1/q-1/r<1/3$.
However, we may eliminate the condition $1/q-1/r<1/3$
by using semigroup property, which gives \eqref{lqlrsmooth} with $j=0$.
Concerning the estimate \eqref{lqlrsmooth} with $j=1$,
we infer from Lemma \ref{lemfractional} and the moment inequality that
\begin{align*}
\|\nabla e^{-t\CL_{D,\eta,\omega}}f\|_{r,D}
&\leq C\|e^{-t\CL_{D,\eta,\omega}}
f\|^{\frac{\beta-\frac{1}{2}}{\beta-\alpha}}_{H^{2\alpha,r}(D)}
\|e^{-t\CL_{D,\eta,\omega}}
f\|^{\frac{\frac{1}{2}-\alpha}{\beta-\alpha}}_{H^{2\beta,r}(D)}\\
&\leq C\|e^{-t\CL_{D,\eta,\omega}}
f\|^{\frac{\beta-\frac{1}{2}}{\beta-\alpha}}_{H^{2\gamma,q}(D)}
\|e^{-t\CL_{D,\eta,\omega}}
f\|^{\frac{\frac{1}{2}-\alpha}{\beta-\alpha}}_{H^{2\delta,q}(D)}
\\
&\leq C\|A_D^{\gamma}e^{-t\CL_{D,\eta,\omega}}
f\|^{\frac{\beta-\frac{1}{2}}{\beta-\alpha}}_{q,D}
\|A_D^{\delta}e^{-t\CL_{D,\eta,\omega}}
f\|^{\frac{\frac{1}{2}-\alpha}{\beta-\alpha}}_{q,D} \\
& \leq Ct^{-\frac{3}{2}(\frac{1}{q}-\frac{1}{r})-\frac{1}{2}}
\|f\|_{q,D}
\end{align*}
for $f\in L^q_\sigma(D)$ and $0<t\leq T$, where
we have taken $\alpha$, $\beta$, $\gamma$, and $\delta$ fulfilling
\begin{equation}
\alpha<1/2<\beta,\quad\alpha<\beta, \gamma <\delta,\quad
2\gamma-\frac3q=2\alpha-\frac3r,\quad2\delta-\frac3q=2\beta-\frac3r.
\end{equation}
Thus, the proof of the second assertion is complete. \par
To prove the third assertion, we notice that
the condition $1/q-1/r\leq (1-2\alpha)/3$
is equivalent to $1-3/r'\geq 2\alpha-3/q'$.
Let $\BB_D$ be the Bogovski\u{\i} operator defined on $D$.
Then, we observe
\begin{align*}
|(\partial_te^{-t\CL_{D,\eta,\omega}}f,\BB_D g)|
& \leq \|\CL_{D,\eta,\omega}^{1-\alpha}e^{-t\CL_{D,\eta,\omega}}f\|_{q,D}
\|\CL_{D,- \eta, - \omega}^\alpha\BB_D g\|_{q',D} \\
&\leq Ct^{-1+\alpha}
\|f\|_{q,D}\|\BB_D g\|_{H^{2\alpha,q'}(D)}\\
&\leq
Ct^{-1+\alpha}\|f\|_{q,D}\|\BB_D g\|_{W^{1,r'}(D)}\\
&\leq Ct^{-1+\alpha}\|f\|_{r,D}\|g\|_{r',D}
\end{align*}
for all
$f\in L^q_\sigma(D)\cap L^r(D)$, $g\in L^{r'}_0(D)$,
and $0<t\leq T$.
Together with the equation and \eqref{lqlrsmooth}, we obtain
\begin{align}
|(p,g)| &\leq C\|e^{-t\CL_{D,\eta,\omega}}f\|_{W^{1,r}(D)}
\|\BB_D g\|_{W^{1,r'}(D)}
+|(\partial_te^{-t\CL_{D,\eta,\omega}}f,\BB_D g)| \\
& \leq Ct^{-1+\alpha}\|f\|_{r,D}\|g\|_{r',D}
\end{align}
for $g\in L^{r'}_0(D)$ and $0 < t\leq T$ yielding \eqref{pest}.
Finally, the estimate \eqref{partialtest} follows from
the equation.
\end{proof}

For the latter use, we also record the result on the decay of
the pressure $p_\lambda$ with respect to the resolvent parameter
$\lambda \in \{\lambda \in \Sigma_\theta \mid 
\lvert \lambda \rvert \ge R\}$ with $\theta \in (0, \pi)$,
where $(u_\lambda, p_\lambda)$ is a solution to
\begin{equation}
\label{eq-resolvent}
\left\{
\begin{aligned}
\lambda u_\lambda-\Delta u_\lambda-(\eta + \omega\times x)\cdot\nabla u_\lambda
+\omega\times u_\lambda +\nabla p_\lambda & = f, &\quad & \text{in $D$}, \\
\div u_\lambda & = 0, &\quad & \text{in $D$},\\
u_\lambda & = 0, &\quad & \text{in $\pd D$}
\end{aligned}\right.
\end{equation}
for $f \in L^q_\sigma (D)$ and
$\lambda \in \{\lambda \in \Sigma_\theta \mid \lvert \lambda \rvert \ge R\}$, $\theta \in (0, \pi)$. 
Notice that
the case $\eta=\omega=0$ 
was observed by Tolksdorf and the second author \cite[Prop. 4.3]{TW20}.  
By taking into account \eqref{lresol} and 
Lemma \ref{lemfractional}, 
the proof of the following proposition is 
essentially the same as the one by 
\cite[Prop. 4.3]{TW20}, and thus 
we shall only give the sketch of the proof. 
We also note that \eqref{plambda} below 
with $\alpha=(1-1/q)/2$ was 
found by Hishida and Shibata \cite{HS09} in the case of 
$C^{1,1}$-boundary by using the trace estimate \cite[(3.15)]{HS09} (cf. Galdi \cite[Thm.~II.4.1]{G11}). 
However, we may not employ their idea since 
we may not expect the $W^{2,q}$-regularity for the velocity 
field. 
\begin{prop}\label{proppdecay}
Let $\varepsilon_1$ and $q$ be subject to Convention \ref{conv-1}.
Given $c_0>0$, assume $\lvert \eta \rvert + \lvert \omega \rvert \leq c_0$.
For $f\in L^q_\sigma(D)$, let $(u_\lambda,p_\lambda)$
be the unique solution to Problem \eqref{eq-resolvent}
such that $u_\lambda\in \SD(\CL_{D,\eta,\omega})$
and $p_\lambda\in L^q_0(D)$.
Define the operator
$P_\lambda:L^q_\sigma(D)\rightarrow L^q_0(D)$ by
$P_\lambda f:= p_\lambda$.
For every $\alpha$ satisfying
\begin{align}\label{alpha}
0\leq 2\alpha<1-\frac{1}{q}\quad{\rm if}~q\geq \frac{2}{1+\delta_2}
\quad {\rm and}\quad 0\leq 2\alpha<2-\frac{3}{q}+\delta_2
\quad{\rm if}~q<\frac{2}{1+\delta_2},
\end{align}
there exist constants $C$ and $R$ such that
\begin{align}
\label{plambda}
\|P_\lambda\|_{\SL(L^q_{\sigma}(D),L^q_0(D))}\leq
C|\lambda|^{-\alpha}
\end{align}
with every $\lambda\in \Sigma_\theta$, 
$\theta \in (0, \pi)$, fulfilling $|\lambda|\geq R$,
where $\delta_2\in(0,1]$ is
the same number as in Proposition \ref{propdelta2}.
\end{prop}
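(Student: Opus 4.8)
The plan is to follow the duality scheme of \cite[Prop.~4.3]{TW20}, estimating $\|p_\lambda\|_{q,D}$ by pairing against mean-zero test functions realized through the Bogovski\u{\i} operator, and to absorb the apparently non-decaying contribution carrying the factor $\lambda$ by means of the fractional-power characterization in Lemma \ref{lemfractional}. Since $p_\lambda\in L^q_0(D)$, by duality it suffices to bound $|(p_\lambda,g)_D|$ by $C|\lambda|^{-\alpha}\|f\|_{q,D}\|g\|_{q',D}$ for $g\in C^\infty_0(D)$ with $\int_D g\d x=0$. For such $g$, let $\BB_D$ be the operator from Proposition \ref{prop-Bogovskii}, so that $\BB_D g\in W^{1,q'}_0(D)$, $\div\BB_D g=g$, and $\|\BB_D g\|_{W^{1,q'}(D)}\le C\|g\|_{q',D}$. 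Testing the weak form of \eqref{eq-resolvent} with $\varphi=\BB_D g$ and integrating by parts (boundary terms vanish because $\BB_D g$ has zero trace) gives the identity
\begin{equation}
\begin{split}
(p_\lambda,g)_D={}&\lambda(u_\lambda,\BB_D g)_D+(\nabla u_\lambda,\nabla\BB_D g)_D\\
&-((\eta+\omega\times x)\cdot\nabla u_\lambda,\BB_D g)_D+(\omega\times u_\lambda,\BB_D g)_D-(f,\BB_D g)_D.
\end{split}
\end{equation}

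The three middle terms are harmless. By the resolvent estimate \eqref{lresol} one has $\|\nabla u_\lambda\|_{q,D}\le C|\lambda|^{-1/2}\|f\|_{q,D}$ and $\|u_\lambda\|_{q,D}\le C|\lambda|^{-1}\|f\|_{q,D}$; since $\eta+\omega\times x$ is bounded on the bounded domain $D$, each of the second, third, and fourth terms is $O(|\lambda|^{-1/2})$, which decays faster than the claimed rate since $\alpha<1/2$ under \eqref{alpha}. It remains to handle the first and last terms together. Because $u_\lambda,f\in L^q_\sigma(D)$, the gradient part of $\BB_D g$ drops out of these pairings, so with $h:=P_D\BB_D g\in L^{q'}_\sigma(D)$ and using $\lambda u_\lambda+\CL_{D,\eta,\omega}u_\lambda=f$,
\begin{equation}
\lambda(u_\lambda,\BB_D g)_D-(f,\BB_D g)_D=(\lambda u_\lambda-f,h)_D=-(\CL_{D,\eta,\omega}u_\lambda,h)_D.
\end{equation}

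The heart of the argument is to extract $|\lambda|^{-\alpha}$ from this last pairing. Writing $\CL_{D,\eta,\omega}=\CL_{D,\eta,\omega}^\alpha\CL_{D,\eta,\omega}^{1-\alpha}$ and transferring $\CL_{D,\eta,\omega}^\alpha$ onto the adjoint $\CL_{D,\eta,\omega}^*=\CL_{D,-\eta,-\omega}$---legitimate since $\CL_{D,\eta,\omega}$ has a bounded $H^\infty$-calculus by Lemma \ref{lemfractional}, whence $(\CL_{D,\eta,\omega}^\alpha)^*=\CL_{D,-\eta,-\omega}^\alpha$---I obtain
\begin{equation}
|(\CL_{D,\eta,\omega}u_\lambda,h)_D|=|(\CL_{D,\eta,\omega}^{1-\alpha}u_\lambda,\CL_{D,-\eta,-\omega}^\alpha h)_D|\le\|\CL_{D,\eta,\omega}^{1-\alpha}u_\lambda\|_{q,D}\,\|\CL_{D,-\eta,-\omega}^\alpha h\|_{q',D}.
\end{equation}
For the first factor, the moment inequality together with $\CL_{D,\eta,\omega}u_\lambda=f-\lambda u_\lambda$ (bounded in $L^q_\sigma(D)$ by \eqref{lresol}) and $\|u_\lambda\|_{q,D}\le C|\lambda|^{-1}\|f\|_{q,D}$ yields $\|\CL_{D,\eta,\omega}^{1-\alpha}u_\lambda\|_{q,D}\le C\|\CL_{D,\eta,\omega}u_\lambda\|_{q,D}^{1-\alpha}\|u_\lambda\|_{q,D}^{\alpha}\le C|\lambda|^{-\alpha}\|f\|_{q,D}$. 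For the second factor, Lemma \ref{lemfractional} applied to the adjoint operator gives $\SD(\CL_{D,-\eta,-\omega}^\alpha)=\SD_{q'}(A_D^\alpha)=H^{2\alpha,q'}_{0,\sigma}(D)$ under \eqref{alpha}, while Proposition \ref{propdelta2} with exponent $q'$ makes $P_D\colon H^{2\alpha,q'}(D)\to\SD_{q'}(A_D^\alpha)$ bounded precisely in the range \eqref{alpha}; since $2\alpha<1$ forces $\BB_D g\in W^{1,q'}_0(D)\hookrightarrow H^{2\alpha,q'}(D)$, this gives $\|\CL_{D,-\eta,-\omega}^\alpha h\|_{q',D}\le C\|\BB_D g\|_{H^{2\alpha,q'}(D)}\le C\|g\|_{q',D}$.

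Collecting the estimates yields $|(p_\lambda,g)_D|\le C|\lambda|^{-\alpha}\|f\|_{q,D}\|g\|_{q',D}$ for $\lambda\in\Sigma_\theta$ with $|\lambda|\ge R$, and taking the supremum over $g$ establishes \eqref{plambda}. The one genuine difficulty is the term $\lambda(u_\lambda,\BB_D g)_D$: in the smooth case \cite{HS09} it was controlled via a boundary trace estimate for $\nabla u_\lambda$, which is unavailable on Lipschitz domains because $W^{2,q}$-regularity fails, and the fractional-power duality above---made possible by the sharp domain characterization of Lemma \ref{lemfractional}---is what replaces it. A secondary bookkeeping point is to check that the admissible range of $\alpha$ coincides exactly with \eqref{alpha}, which amounts to translating the hypotheses of Proposition \ref{propdelta2} from the exponent $q'$ back to $q$ via $1/q'=1-1/q$.
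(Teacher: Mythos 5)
Your proof is correct and follows essentially the same route as the paper's: duality against mean-zero test functions realized through the Bogovski\u{\i} operator, the resolvent estimates \eqref{lresol} to dispose of the first-order terms, and a fractional-power splitting of the term carrying $\lambda$, with Proposition \ref{propdelta2} applied at the exponent $q'$ supplying the bound $\|A_D^\alpha P_D\BB_D g\|_{q',D}\le C\|g\|_{q',D}$ and producing exactly the range \eqref{alpha}. The only (immaterial) difference is that you factor the key pairing through $\CL_{D,\eta,\omega}^{1-\alpha}$ and the adjoint power $\CL_{D,-\eta,-\omega}^{\alpha}$, whereas the paper factors through $A_D^{1-\alpha}$ and $A_D^{\alpha}$; the two are interchangeable by the norm equivalences of Lemma \ref{lemfractional}.
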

\begin{proof}
Since $p_\lambda\in L^q_0(D)$, we may have
\begin{align*}
\|p_\lambda\|_{q,D}=\sup_{g\in L^{q'}_0(D),\|g\|_{q',D}\leq 1}
\left|\int_Dp_\lambda \overline{g}\d x\right|.
\end{align*}
By the definition of the Stokes operator, 
we infer from Proposition 
\ref{prop-Bogovskii} and Proposition \ref{prop-projection} that
\begin{align*}
\left|\int_Dp_\lambda \overline{g}\d x\right|&=
\left|\int_Dp_\lambda \overline{\nabla\cdot\BB[g]}\d x\right|\\
& \leq
C(\|\nabla(\lambda+\CL_{D,\eta,\omega})^{-1}f\|_{q,D}
\|\nabla \BB[g]\|_{q',D}
+\|A_{D}^{1-\alpha}
(\lambda+\CL_{D,\eta,\omega})^{-1}f\|_{q,D}
\|A_D^\alpha P_D\BB[g]\|_{q',D}\\
&\leq C|\lambda|^{-\alpha}\|g\|_{q',D}
\end{align*}
for all $g\in L^{q'}_0(D)$, which implies the assertion.  
\end{proof}

\section{Generation of a $C_0$-semigroup}
\label{sec-4}

In the following, let $\Omega\subset \BR^3$ be an exterior Lipschitz domain.
The aim of this section is to prove that $- \CL_{\Omega,\eta,\omega}$ generates a
$C_0$-semigroup on $L^q_\sigma (\Omega)$ and to show its mapping properties.
To this end, we need to take $\varepsilon_2> 0$ so small that the Helmholtz decomposition
of $L^q (\Omega)^3$ exists for all $q$ subject to the condition \eqref{p'} below,
if necessary. Namely, in the following, we assume that $q$ and $\varepsilon_2$
satisfy the following convention.
\begin{conv}
\label{conv-2}
Let $\Omega\subset \BR^3$ be an exterior Lipschitz domain and
$\varepsilon_1$ be subject to Convention \ref{conv-1}.
Let $\varepsilon_2\in(0, \varepsilon_1]$ be a number such that
for all $q\in(1,\infty)$ satisfying
\begin{equation}
\label{p'}
\bigg\lvert \frac1q - \frac12 \bigg\rvert < \frac16 + \varepsilon_2
\end{equation}
the Helmholtz decomposition of $L^q(\Omega)^3$ exists.
\end{conv}
Notice that the existence of the Helmholtz decomposition of $L^q(\Omega)^3$ for all $q \in (1, \infty)$ satisfying \eqref{conv-2}
was proved in \cite[Prop. 2.3]{TW20}. This section especially aims to prove the following theorem.

\begin{thm}\label{thm1}
Let $\varepsilon_2$ and $q$ be subject to Convention \ref{conv-2}
and $\eta, \omega \in \BR^3$. Then the following assertions are valid.
\begin{enumerate}
\item The operator $-\CL_{\Omega,\eta,\omega}$ defined by \eqref{def-operator} generates a $C_0$-semigroup
$(e^{-t\CL_{\Omega,\eta,\omega}})_{t \ge 0}$ on $L^q_\sigma(\Omega)$.
\item Given $c_0>0,$ assume $\lvert \eta \rvert + \lvert \omega \rvert \leq c_0.$
Let $j=0,1$ and let $q \le r\leq\infty~
({\rm resp.}~q \le r<\infty)$ if $j=0~({\rm resp.}~j=1)$.
For $T>0$, there exists a constant
$C=C(\Omega,j,T,c_0,q,r)$ independent of $\eta$ and $\omega$ 
such that
\begin{align}\label{lqlrsmooth2}
\|\nabla^j e^{-t\CL_{\Omega,\eta,\omega}}f\|_{r,\Omega}
\leq Ct^{-\frac{3}{2}(\frac{1}{q}-\frac{1}{r})-\frac{j}{2}}
\|f\|_{q,\Omega}
\end{align}
for $0<t\leq T$ and $f\in L^q_\sigma(\Omega)$.
\end{enumerate}
\end{thm}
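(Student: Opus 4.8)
The plan is to follow the parametrix construction of Geissert, Heck, and Hieber \cite{GHH06}, adapted to the Lipschitz setting, using the two building blocks already prepared: the explicit whole-space semigroup $T_{\BR^3,\eta,\omega}(t)$ with its bounds \eqref{lqlrwhole}, and the bounded-domain analytic semigroup $e^{-t\CL_{D,\eta,\omega}}$ from Theorem \ref{proplqlrsmooth}, where $D$ is a bounded Lipschitz domain with $\BR^3\setminus\Omega\subset B_R(0)\subset D$ for $R$ large. First I would fix a cut-off $\varphi\in C_0^\infty(\BR^3)$ that equals $1$ near $\partial\Omega$ and vanishes outside $B_R(0)$, and build a candidate resolvent by patching: apply the whole-space resolvent $(\lambda+\CL_{\BR^3,\eta,\omega})^{-1}$, obtained by Laplace transform of $T_{\BR^3,\eta,\omega}$ and satisfying $\|\nabla^j(\lambda+\CL_{\BR^3,\eta,\omega})^{-1}f\|_{q,\BR^3}\le C\lambda^{-1+j/2}\|f\|_{q,\BR^3}$ for $\lambda>0$ by \eqref{lqlrwhole}, on the unbounded part, and the bounded-domain resolvent $(\lambda+\CL_{D,\eta,\omega})^{-1}$, whose estimate \eqref{lresol} is at hand, near the boundary. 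Multiplying the interior and exterior pieces by $\varphi$ and $1-\varphi$ destroys the divergence-free condition, so I would restore it with the Bogovski\u{\i} operator $\BB_D$ of Proposition \ref{prop-Bogovskii}, which absorbs the terms supported where $\nabla\varphi\ne0$.

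Applying $\lambda+\CL_{\Omega,\eta,\omega}$ to this parametrix reproduces $f$ up to a remainder $\Psi(\lambda)f$ as in \eqref{eq-res}, consisting of commutators in which a derivative falls on $\varphi$. All such terms are supported in the bounded annulus $A_{R,L}$ and involve only the velocity, its first derivatives, and the associated pressures. The crucial step is to show $\|\Psi(\lambda)\|_{\SL(L^q_\sigma(\Omega))}\le C\lambda^{-\kappa}$ for some $\kappa>0$: the velocity and gradient contributions are controlled by the two resolvent bounds above, gaining at least $\lambda^{-1/2}$, while the genuinely delicate pressure contributions are exactly those estimated by the decay $\|P_\lambda\|\le C|\lambda|^{-\alpha}$ of Proposition \ref{proppdecay}. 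Consequently $I+\Psi(\lambda)$ is invertible for real $\lambda\ge\beta$ via the Neumann series $(I+\Psi(\lambda))^{-1}=\sum_{j\ge0}(-\Psi(\lambda))^j$, and the resolvent of $-\CL_{\Omega,\eta,\omega}$ is the composition of the parametrix with this series.

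To pass from the resolvent back to the semigroup, one cannot invoke analytic semigroup theory, since the unbounded drift $(\omega\times x)\cdot\nabla$ makes $\CL_{\Omega,\eta,\omega}$ non-sectorial; only the bounded-domain block $\CL_{D,\eta,\omega}$ is sectorial, by Theorem \ref{proplqlrsmooth}. Instead I would realize each factor of the resolvent representation as a Laplace transform — the parametrix as the transform of the operator family built from $T_{\BR^3,\eta,\omega}(t)$, $e^{-t\CL_{D,\eta,\omega}}$ and $\BB_D$, and $\Psi(\lambda)^j$ as the transform of a $j$-fold convolution — and apply the iterated-convolution lemma (Lemma \ref{lemserires}, \cite[Lem.~4.6]{GHH06}) to sum the series in the time variable. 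This produces the strongly continuous family $(e^{-t\CL_{\Omega,\eta,\omega}})_{t\ge0}$ and establishes assertion (1).

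For assertion (2), the smoothing rate $t^{-\frac32(1/q-1/r)-j/2}$ near $t=0$ is inherited term by term: the leading parametrix term reduces to \eqref{lqlrwhole} on the unbounded part and to the bounded-domain estimate \eqref{lqlrsmooth} of Theorem \ref{proplqlrsmooth} near the boundary, and the convolution corrections only improve the time integrability so that the same power of $t$ survives. The gradient case $j=1$ uses exactly the bounded-domain gradient bound, which requires $r<\infty$ and rests on the fractional-power characterization of Lemma \ref{lemfractional}, matching the hypothesis of the theorem. I expect the main obstacle to be the remainder estimate of the second paragraph: because the Lipschitz boundary precludes $W^{2,q}$-regularity and hence the trace estimate available in the smooth case, the pressure terms in $\Psi(\lambda)$ cannot be treated directly and must be routed through Proposition \ref{proppdecay}; beyond that, the argument is careful bookkeeping of the exponents $q,r$ so that all convolutions converge, which is where Convention \ref{conv-2} on \eqref{p'} enters.
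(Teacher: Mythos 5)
Your proposal follows essentially the same route as the paper's proof: a resolvent parametrix patched from the whole-space solution and the bounded-Lipschitz-domain solution via cut-offs and the Bogovski\u{\i} operator, decay of the remainder in $\lambda$ with the pressure contribution routed through Proposition \ref{proppdecay}, a Neumann series, and then inverse Laplace transform together with the iterated-convolution lemma (Lemma \ref{lemserires}) to build the family in the time variable and read off the $L^q$-$L^r$ smoothing rates term by term. The only step you compress is the verification that the resulting family is in fact a $C_0$-semigroup whose generator is the operator $\CL_{\Omega,\eta,\omega}$ of \eqref{def-operator}; the paper settles this by matching Laplace transforms with the $L^2$-semigroup on $L^q_\sigma(\Omega)\cap L^2_\sigma(\Omega)$, a density argument for the semigroup property and strong continuity, and duality for $q<2$.
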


In the following, we take $R_0 > 0$ sufficiently large such that
$\BR^3\setminus\Omega \subset B_{R_0}(0)$. To prove Theorem \ref{thm1},
we shall start from the following proposition.

\begin{prop}\label{propsurj}
Let $\varepsilon_2$ and $q$ be subject to Convention \ref{conv-2}
and $\eta, \omega \in \BR^3$.
There exists a constant $\kappa\geq 1$ such that
for $\lambda\in\BR$ satisfying $\lambda\geq \kappa$ and
for $f\in L^q_\sigma(\Omega)$,
there exists $u\in \SD(\CL_{\Omega,\eta,\omega})$ that satisfies
\begin{align*}
\lambda u+\CL_{\Omega,\eta,\omega}u=f.
\end{align*}
\end{prop}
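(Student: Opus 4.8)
The plan is to construct an approximate resolvent (parametrix) for $\lambda+\CL_{\Omega,\eta,\omega}$ by patching the whole-space resolvent to the interior resolvent on a bounded Lipschitz domain, and then to remove the patching error by a Neumann series valid for large real $\lambda$. First I would fix radii $R_0<R_1<R_2$ with $\BR^3\setminus\Omega\subset B_{R_0}(0)$, put $D:=\Omega_{R_2}$ (a bounded Lipschitz domain), and choose $\phi\in C_0^\infty(\BR^3)$ with $\phi\equiv1$ on $B_{R_1}(0)$ and $\operatorname{supp}\phi\subset B_{R_2}(0)$, so that $\nabla\phi$ is supported in the annulus $A_{R_1,R_2}$. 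Given $f\in L^q_\sigma(\Omega)$, I denote by $\tilde f\in L^q_\sigma(\BR^3)$ its extension by zero (solenoidal since $f$ has vanishing normal trace on $\pd\Omega$), set $w:=\int_0^\infty e^{-\lambda t}T_{\BR^3,\eta,\omega}(t)\tilde f\,\d t$ for the whole-space resolvent and $v:=(\lambda+\CL_{D,\eta,\omega})^{-1}P_D(f|_D)$ for the interior resolvent, the latter being available for large $\lambda$ by Theorem \ref{proplqlrsmooth} and the resolvent bound \eqref{lresol}. I would then define the parametrix
\[
\Phi(\lambda)f:=\phi v+(1-\phi)w-\BB_{A_{R_1,R_2}}\!\big[\nabla\phi\cdot(v-w)\big],
\]
where $\BB_{A_{R_1,R_2}}$ is the Bogovski\u{\i} operator of Proposition \ref{prop-Bogovskii}. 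The correction term restores $\div\Phi(\lambda)f=0$ because $v$ and $w$ are individually solenoidal, and the compatibility condition $\int_{A_{R_1,R_2}}\nabla\phi\cdot(v-w)\,\d x=0$ holds by the divergence theorem together with the vanishing flux of $f$ through spheres enclosing the obstacle. Since $v$ vanishes on $\pd\Omega$ and the correction is supported away from $\pd\Omega$, one checks directly that $\Phi(\lambda)f\in\SD(\CL_{\Omega,\eta,\omega})$.

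Next I would apply $\lambda+\CL_{\Omega,\eta,\omega}$ to $\Phi(\lambda)f$. Using the whole-space and interior equations together with $\phi+(1-\phi)=1$, the leading contribution reproduces $f$ up to terms localized in the annulus $A_{R_1,R_2}$. Writing $(\lambda+\CL_{\Omega,\eta,\omega})\Phi(\lambda)f=f+\Psi(\lambda)f$, the remainder $\Psi(\lambda)f$ is $P_\Omega$ applied to a finite sum of annulus-supported terms of the schematic form $\nabla\phi\cdot\nabla v$, $(\Delta\phi)v$, $[(\eta+\omega\times x)\cdot\nabla\phi]v$, $(\nabla\phi)p_v$ (with $p_v$ the pressure associated with $v$), the analogous $w$-commutators, and the Bogovski\u{\i} terms produced when $\lambda+\CL_{\Omega,\eta,\omega}$ is applied to $-\BB_{A_{R_1,R_2}}[\nabla\phi\cdot(v-w)]$, including the delicate factor $\lambda\,\BB_{A_{R_1,R_2}}[\nabla\phi\cdot(v-w)]$.

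The crux is to prove $\|\Psi(\lambda)\|_{\SL(L^q_\sigma(\Omega))}\to0$ as $\lambda\to\infty$ through the positive reals. The commutators carrying $\nabla v$ or $\nabla w$ decay like $\lambda^{-1/2}$, and those carrying $v$ or $w$ like $\lambda^{-1}$, by the gradient resolvent estimate \eqref{lresol} on $D$ and the whole-space estimate \eqref{lqlrwhole}; the pressure commutator $(\nabla\phi)p_v$ decays like $\lambda^{-\alpha}$ with some $\alpha>0$ by Proposition \ref{proppdecay}. The main obstacle is the term $\lambda\,\BB_{A_{R_1,R_2}}[\nabla\phi\cdot(v-w)]$, for which the naive bound only yields $O(1)$. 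Here I would exploit that, in the annulus, $d:=v-w$ solves the homogeneous resolvent equation $\lambda d-\Delta d-(\eta+\omega\times x)\cdot\nabla d+\omega\times d+\nabla p_v=0$, so that
\[
\lambda\,\BB_{A_{R_1,R_2}}[\nabla\phi\cdot d]
=\BB_{A_{R_1,R_2}}\!\big[\nabla\phi\cdot\big(\Delta d+(\eta+\omega\times x)\cdot\nabla d-\omega\times d-\nabla p_v\big)\big].
\]
After integrating by parts to shift one derivative and invoking the extension of the Bogovski\u{\i} operator to negative-order spaces (Proposition \ref{prop-Bogovskii2}), this is controlled by $\|\nabla d\|_{q,A_{R_1,R_2}}+\|p_v\|_{q,A_{R_1,R_2}}$, hence by $\lambda^{-1/2}+\lambda^{-\alpha}\to0$. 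This step is precisely where the absence of $W^{2,q}$-regularity forces the combined use of the negative-order Bogovski\u{\i} estimate and the pressure decay of Proposition \ref{proppdecay}.

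Finally, choosing $\kappa\ge1$ so large that $\|\Psi(\lambda)\|_{\SL(L^q_\sigma(\Omega))}\le1/2$ for all real $\lambda\ge\kappa$, the operator $I+\Psi(\lambda)$ is boundedly invertible on $L^q_\sigma(\Omega)$ with $(I+\Psi(\lambda))^{-1}=\sum_{j=0}^\infty(-\Psi(\lambda))^j$. Then $u:=\Phi(\lambda)(I+\Psi(\lambda))^{-1}f\in\SD(\CL_{\Omega,\eta,\omega})$ satisfies $(\lambda+\CL_{\Omega,\eta,\omega})u=f$, which establishes the asserted surjectivity.
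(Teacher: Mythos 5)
Your overall architecture (whole-space resolvent glued to an interior resolvent on a bounded Lipschitz domain, Bogovski\u{\i} correction, the equation-trick for the term $\lambda\,\BB[\nabla\phi\cdot(v-w)]$ combined with Proposition \ref{prop-Bogovskii2} and Proposition \ref{proppdecay}, then a Neumann series for large real $\lambda$) is exactly the paper's strategy. However, there is a genuine gap in your choice of interior data $P_D(f|_D)$. Since the Helmholtz projection on $D$ is nonlocal, $P_D(f|_D)=f|_D-\nabla h$ with a $\lambda$-\emph{independent} function $h$ supported throughout $D$, so your $v$ solves the equation with right-hand side $f-\nabla h$ rather than $f$ on the gluing region. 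When you apply $\lambda+\CL_{\Omega,\eta,\omega}$ to $\phi v$, the term $-\phi\nabla h$ can only be absorbed into the parametrix pressure up to the commutator $h\nabla\phi$ (equivalently, the error contains $P_\Omega[\phi\nabla h]=-P_\Omega[h\nabla\phi]$), and this piece is of size $C\lVert f\rVert_{q,\Omega}$ \emph{uniformly in} $\lambda$: it does not decay, so $\lVert\Psi(\lambda)\rVert_{\SL(L^q_\sigma(\Omega))}\not\to0$ and the Neumann series argument collapses. The same defect contaminates your key step: in the annulus the difference $d=v-w$ solves the homogeneous equation with pressure $\pi_v+h$, not $\pi_v$, so Proposition \ref{proppdecay} no longer yields any decay for the pressure contribution. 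The paper avoids all of this by taking the interior data to be $f^D:=\chi f-\BB_{K_2}[(\nabla\chi)\cdot f]$, where $\chi$ and the Bogovski\u{\i} correction are supported so that $f^D\equiv f$ on the region where the interior solution is actually used (where $1-\varphi\neq0$ and on $\operatorname{supp}\nabla\varphi$); this keeps the data modification local and strictly away from the gluing zone, so no $O(1)$ error ever appears.

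A secondary gap is your claim that ``one checks directly'' that the parametrix lies in $\SD(\CL_{\Omega,\eta,\omega})$. The Stokes operator $A_{q,\Omega}$ on $L^q_\sigma(\Omega)$ is not defined by a local differential characterization: it is the closure of the part of $A_{2,\Omega}$ in $L^q_\sigma(\Omega)$. Solving the resolvent system distributionally with $W^{1,q}$-regularity therefore does not by itself place $u$ in the domain. The paper closes this point by first treating $f\in L^2_\sigma(\Omega)\cap L^q_\sigma(\Omega)$ (where the $L^2$ definition applies), then approximating a general $f\in L^q_\sigma(\Omega)$ by $f_k\in C^\infty_{0,\sigma}(\Omega)$, using the uniform bound \eqref{ulambda} to pass to the limit, and invoking the very definition of $A_{q,\Omega}$ as a closure; some such argument is needed in your write-up as well.
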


\begin{proof}
Let $R>R_0$ and set
\begin{align}
\label{def-D}
\begin{split}
D &:=\Omega\cap B_{R+5}(0),\\
K_1&:=\{x\in\Omega\mid R<|x|<R+3\},\\
K_2&:=\{x\in\Omega\mid R+2<|x|<R+5\}.
\end{split}
\end{align}
For $n \in \{1,2\}$, let $\BB_{K_n}$ be the Bogovski\u{\i} operator on $K_n$.
We take functions $\varphi,\chi\in C^\infty (\BR^3)$ such that
$0\leq \varphi(x), \chi(x)\leq 1$ for $x\in\BR^3$ and
\begin{align}
\varphi (x) = \begin{cases}
0 & \text{for $\lvert x \rvert \le R + 1$}, \\
1 & \text{for $\lvert x \rvert \ge R + 2$},
\end{cases}
\qquad
\chi (x) = \begin{cases}
1 & \text{for $\lvert x \rvert \le R + 3$}, \\
0 & \text{for $\lvert x \rvert \ge R + 4$}.
\end{cases}
\end{align}
For $f\in L^q_\sigma(\Omega)$,
we define $f^R\in L^q_\sigma(\BR^3)$ as the zero extension of $f$ to $\BR^3$.
Given $f\in L^q_\sigma(\Omega)$,
we also define $f^D\in L^q_\sigma(D)$
by $f^D:=\chi f-\BB_{K_2} [(\nabla\chi)\cdot f]$.
For $\lambda>0$,
let $u_\lambda^R$ be a solution to the problem
\begin{equation}
\left\{\begin{aligned}
\lambda u^R_\lambda - \Delta u^R_\lambda - (\eta + \omega\times x)\cdot\nabla u^R_\lambda
+ \omega\times u^R_\lambda & = f^R & \quad & \text{in $\BR^3$}, \\
\div u^R_\lambda & =0 & \quad & \text{in $\BR^3$}
\end{aligned}\right.
\end{equation}
and let $(u_\lambda^D,\pi_\lambda^D)$ be a solution to the problem
\begin{equation}
\left\{\begin{aligned}
\lambda u^D_\lambda - \Delta u^D_\lambda - (\eta + \omega\times x)\cdot\nabla u^D_\lambda
+ \omega\times u^D_\lambda + \nabla \pi^D_\lambda & = f^D & \quad & \text{in $D$}, \\
\div u^D_\lambda & =0 & \quad & \text{in $D$}, \\
u_\lambda^D & = 0 & \quad & \text{on $\pd D$}.
\end{aligned}\right.
\end{equation}
We define
\begin{align*}
U_\lambda f:=\varphi u_\lambda^R+(1-\varphi)u_\lambda^D
+\BB_{K_1}[\nabla\varphi\cdot(u_\lambda^R-u_\lambda^D)],
\qquad P_\lambda f:=(1-\varphi)\pi_\lambda^D,
\end{align*}
then $U_\lambda f\in\{u\in W^{1,q}_{0,\sigma}(\Omega)
\mid(\eta + \omega\times x)\cdot\nabla u\in L^q(\Omega)\}$ and
there holds
\begin{align}\label{ulambda}
\|U_\lambda f\|_{W^{1,q}(\Omega)}+
\|-(\eta + \omega\times x)\cdot\nabla U_\lambda f
+\omega\times U_\lambda f\|_{q,\Omega}
\leq C\|f\|_{q,\Omega}.
\end{align}
In addition, the pair $(U_\lambda f,P_\lambda f)$ satisfies
\begin{equation}\label{paramtrixeq}
\left\{\begin{aligned}
\lambda U_\lambda f - \Delta U_\lambda f - (\eta + \omega\times x)\cdot\nabla U_\lambda f
+ \omega\times U_\lambda f + \nabla P_\lambda f & = f + \Phi_\lambda f & \quad & \text{in $\Omega$}, \\
\div U_\lambda f & =0 & \quad & \text{in $\Omega$}, \\
U_\lambda f & = 0 & \quad & \text{on $\pd \Omega$},
\end{aligned}\right.
\end{equation}
where we have set
\begin{align*}
\Phi_\lambda f & :=-2\nabla\varphi\cdot\nabla(u_\lambda^R-u_\lambda^D)
-(\Delta \varphi
+(\eta + \omega\times x)\cdot\nabla \varphi)(u^R_\lambda-u_\lambda^D)
-\pi_\lambda^D\nabla \varphi \\
& \quad +\lambda\BB_{K_1}[(\nabla\varphi)\cdot(u_\lambda^R-u_\lambda^D)]
-\Delta\BB_{K_1}[(\nabla\varphi)\cdot(u_\lambda^R-u_\lambda^D)] \\
& \quad -(\eta + \omega\times x)\cdot\nabla
\BB_{K_1}[(\nabla\varphi)\cdot(u_\lambda^R-u_\lambda^D)]
+\omega\times\BB_{K_1}[(\nabla\varphi)\cdot(u_\lambda^R-u_\lambda^D)].
\end{align*}
Notice that we may find
$\Phi_\lambda f=P_{\Omega}\Phi_\lambda f+\nabla \Pi_\lambda f$
on account of the Helmholtz decomposition. 
The term
$\lambda\BB_{K_1}[(\nabla\varphi)\cdot(u_\lambda^R-u_\lambda^D)]$
is written as
\begin{align}
& \lambda\BB_{K_1}[(\nabla\varphi)\cdot(u_\lambda^R-u_\lambda^D)] \\
& =
\BB_{K_1}[(\nabla\varphi)\cdot
\{\Delta(u_\lambda^R-u_\lambda^D)
+(\eta + \omega\times x)\cdot\nabla (u_\lambda^R-u_\lambda^D)
-\omega\times(u_\lambda^R-u_\lambda^D)\}]
+\BB_{K_1}[\nabla\varphi\cdot\nabla \pi_\lambda^D].
\end{align}
For $\psi\in W^{1,q'}(K_1)$
and $\lambda\geq 1$, it holds that
\begin{align}
\big|&\big((\nabla\varphi)\cdot
\{\Delta(u_\lambda^R-u_\lambda^D)
+(\eta + \omega\times x)\cdot\nabla (u_\lambda^R-u_\lambda^D)
-\omega\times(u_\lambda^R-u_\lambda^D)\},\psi\big)_{K_1}\big|\\
&\leq
C\|u_\lambda^R-u_\lambda^D
\|_{W^{1,q}(K_1)}\|\psi\|_{W^{1,q'}(K_1)} \\
& \leq C\lambda^{-\frac{1}{2}}\|f\|_{q,\Omega}
\|\psi\|_{W^{1,q'}(K_1)},
\end{align}
which implies
\begin{align*}
\|(\nabla\varphi)\cdot
\{\Delta(u_\lambda^R-u_\lambda^D)
+(\eta + \omega\times x)\cdot\nabla (u_\lambda^R-u_\lambda^D)
-\omega\times(u_\lambda^R-u_\lambda^D)\}\|_{W^{1,q'}(K_1)^*}\leq
C\lambda^{-\frac{1}{2}}\|f\|_{q,\Omega}
\end{align*}
for $f\in L^q_\sigma(\Omega)$ and $\lambda\geq 1$.
Let $\alpha$ satisfy \eqref{alpha}.
By the same investigation as above together
with Proposition \ref{proppdecay}, we also have
$\|\nabla\varphi\cdot\nabla \pi_\lambda^D\|_{W^{1,q'}(K_1)^*}
\leq C\lambda^{-\alpha}\|f\|_{q,\Omega}$
for $f\in L^q_\sigma(\Omega)$ and $\lambda\geq 1$, thereby, 
Proposition \ref{prop-Bogovskii2} asserts that
\begin{align}
\|\lambda\BB_{K_1}[(\nabla\varphi)\cdot(u_\lambda^R-u_\lambda^D)]
\|_{q,\Omega}\leq C\lambda^{-\min\{\frac{1}{2},\alpha\}}
\|f\|_{q,\Omega}
\end{align}
for $f\in L^q_\sigma(\Omega),\,\lambda\geq 1$.
The other terms may be treated more easily and we deduce that
$\|P_\Omega \Phi_\lambda\|_{ \SL(L^q_\sigma(\Omega))}
\leq C\lambda^{-\min\{1/2,\alpha\}}$
for $\lambda\geq 1$.
Hence, there exists $\kappa\geq 1$ such that
$(I+P_\Omega \Phi_\lambda)^{-1}\in \SL(L^q_\sigma(\Omega))$
for $\lambda\geq \kappa$.
We may conclude from \eqref{paramtrixeq} that
\begin{align}\label{surj}
u:=U_\lambda(I+P_\Omega \Phi_\lambda)^{-1}f,\qquad
p:=P_\lambda (I+P_\Omega\Phi_\lambda)^{-1}f-\Pi_{\lambda}f
\end{align}
satisfies
\begin{equation}
\label{resolexterior}
\left\{
\begin{aligned}
\lambda u -\Delta u -(\eta + \omega\times x)\cdot\nabla u
+\omega\times u +\nabla p & = f, &\quad & \text{in $\Omega$}, \\
\div u & = 0, &\quad & \text{in $\Omega$},\\
u & = 0, &\quad & \text{on $\pd \Omega$}.
\end{aligned}\right.
\end{equation}
If $f\in L^2_\sigma(\Omega)$, then by taking into account
the definition of the Stokes
operator, we find $\lambda u+\CL_{\Omega,\eta,\omega}u=f$.
If $f\in L^q_\sigma(\Omega)$,
then we take $f_k\in C_{0,\sigma}^\infty(\Omega)
\subset L^2_{\sigma}(\Omega)\cap L^q_{\sigma}(\Omega)$ so that
$f_k \rightarrow f$ as $k\rightarrow \infty$
in $L^q_\sigma(\Omega)$.
Since $u_k:=U_\lambda(I+P_\Omega \Phi_\lambda)^{-1}f_k$ and
$p_k:=P_\lambda (I+P_\Omega\Phi_\lambda)^{-1}f_k-\Pi_{\lambda}f_k$
solve \eqref{resolexterior} with $f=f_k$, we find
$u_k\in \SD(A_{2,\Omega})$ with
\begin{equation}
A_{2,\Omega}u_k=-\lambda u_k
+(\eta + \omega\times x)\cdot\nabla u_k
-\omega\times u+f_k\in L^q_\sigma(\Omega).
\end{equation}
It also follows from \eqref{ulambda} that
$u_k\in L^q_\sigma(\Omega)$ and that
\begin{equation}
u_k\rightarrow u \quad \text{and} \quad A_{2,\Omega}u_k\rightarrow
-\lambda u+(\eta + \omega\times x)\cdot\nabla u-\omega\times u+f
\quad \text{in $L^q_\sigma(\Omega)$ as $k \to \infty$}.
\end{equation}
Hence, we have $\lambda u+\CL_{\Omega,\eta,\omega}u=f$ 
from the definition of
the Stokes operator. 
\end{proof}
The construction of a semigroup with some smoothing effects 
is based on the the following lemma. For the proof, we refer to
\cite[Lem. 4.6]{GHH06} and \cite[Lem. 3.3]{HR11}.
\begin{lem}\label{lemserires}
Let $X_1$ and $X_2$ be two Banach spaces and fix 
$T\in(0,\infty]$. 
Suppose that operator families
$\{A_0(t)\mid 0<t<T\}\subset\SL(X_1,X_2)$ and
$\{Q(t)\mid 0<t<T\}\subset\SL(X_1)$ satisfy
\begin{align*}
\|A_0(t)\|_{\SL(X_1,X_2)}\leq Ct^{-\alpha_0}e^{\sigma_0 t},\qquad
\|Q(t)\|_{\SL(X_1)}\leq Ct^{-\beta_0}e^{\sigma_0 t}
\end{align*}
for every $0<t<T$ with some constants
$\alpha_0,\beta_0\in [0,1)$ and $\sigma_0,\,C>0$. 
For $f\in X_1$ and $0<t<T$, define
\begin{align*}
A_{j+1}(t)f:=\int_0^tA_{j}(t-\tau)Q(\tau)f\d \tau,
\qquad j \in \BN \cup \{0\}.
\end{align*}
Then the operator
\begin{align*}
A(t)f:=\sum_{j=0}^\infty A_j(t)f
\end{align*}
in $X_2$ converges absolutely and uniformly 
in every bounded intervals $I\subset (0,T)$.
Moreover, there exist constants $\sigma>\sigma_0$ and $C$ such that
\begin{align*}
\|A(t)f\|_{X_2}\leq \sum_{j=0}^\infty\|A_j(t)f\|_{X_2}\leq
Ct^{-\alpha_0}e^{\sigma t}\|f\|_{X_1}
\end{align*}
for $t<T$ and $f\in X_1$. 
\end{lem}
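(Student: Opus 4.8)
The plan is to estimate the operator norms $\|A_j(t)\|_{\SL(X_1,X_2)}$ by induction on $j$ and then to sum the resulting bounds. Writing $a:=1-\alpha_0\in(0,1]$ and $b:=1-\beta_0\in(0,1]$, I claim that
\begin{equation}
\label{indA}
\|A_j(t)\|_{\SL(X_1,X_2)}\leq C^{j+1}\,\frac{\Gamma(a)\,\Gamma(b)^{j}}{\Gamma(a+jb)}\,t^{-\alpha_0+jb}\,e^{\sigma_0 t},\qquad 0<t<T,\ j\in\BN\cup\{0\},
\end{equation}
where $C$ is the constant from the hypotheses and $\Gamma$ denotes the Gamma function. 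The case $j=0$ is exactly the assumed bound on $A_0$, since $\Gamma(b)^0=1$ and $\Gamma(a)/\Gamma(a)=1$. For the inductive step I would insert the two hypotheses together with \eqref{indA} into the recursion to get
\begin{equation}
\|A_{j+1}(t)f\|_{X_2}\leq C^{j+2}\,\frac{\Gamma(a)\,\Gamma(b)^{j}}{\Gamma(a+jb)}\,e^{\sigma_0 t}\,\|f\|_{X_1}\int_0^t(t-\tau)^{-\alpha_0+jb}\,\tau^{-\beta_0}\d\tau,
\end{equation}
and then evaluate the convolution by the elementary Beta integral $\int_0^t(t-\tau)^{p-1}\tau^{q-1}\d\tau=\Gamma(p)\Gamma(q)\Gamma(p+q)^{-1}t^{p+q-1}$ with $p=a+jb$ and $q=b$. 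Since $p+q-1=-\alpha_0+(j+1)b$ and the ratio $\Gamma(a+jb)\Gamma(b)/\Gamma(a+(j+1)b)$ absorbs the Gamma factors, this reproduces \eqref{indA} with $j$ replaced by $j+1$, completing the induction.

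Summing \eqref{indA} over $j$ and factoring out the common $C\,\Gamma(a)\,t^{-\alpha_0}e^{\sigma_0 t}$ gives
\begin{equation}
\label{sumA}
\sum_{j=0}^\infty\|A_j(t)\|_{\SL(X_1,X_2)}\leq C\,\Gamma(a)\,t^{-\alpha_0}\,e^{\sigma_0 t}\sum_{j=0}^\infty\frac{z^{j}}{\Gamma(a+jb)},\qquad z:=C\,\Gamma(b)\,t^{b}.
\end{equation}
The inner series is the two-parameter Mittag--Leffler function $E_{b,a}(z)$, which is entire in $z$ because $\Gamma(a+jb)$ grows super-exponentially in $j$ by Stirling's formula. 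This already yields absolute convergence of $\sum_j A_j(t)$ in $\SL(X_1,X_2)$, hence of $\sum_j A_j(t)f$ in the Banach space $X_2$, and the convergence is uniform on each compact subinterval $[t_0,t_1]\subset(0,T)$, since there $z$ stays bounded while $t^{-\alpha_0}e^{\sigma_0 t}$ is bounded away from $0$ and $\infty$.

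The only genuinely analytic point, which I expect to be the main obstacle, is to turn the growth of the inner series in \eqref{sumA} into a clean exponential factor: one needs $E_{b,a}(z)\leq C_1 e^{c_2 z^{1/b}}$ for $0<b\leq1$ and $z\geq0$. This is the classical large-argument asymptotic of the Mittag--Leffler function; alternatively it can be shown directly by the Laplace method, observing that $\log\bigl(z^{j}/\Gamma(a+jb)\bigr)$ is maximized near $j\approx z^{1/b}/b$ with maximal value $\approx z^{1/b}$, so that the series is dominated by a fixed multiple of its largest term. Substituting $z=C\Gamma(b)t^{b}$, so that $z^{1/b}=(C\Gamma(b))^{1/b}t$, converts this into $E_{b,a}(z)\leq C_1 e^{\mu t}$ with $\mu:=c_2(C\Gamma(b))^{1/b}$. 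Inserting this into \eqref{sumA} and setting $\sigma:=\sigma_0+\mu>\sigma_0$ produces
\begin{equation}
\sum_{j=0}^\infty\|A_j(t)f\|_{X_2}\leq C'\,t^{-\alpha_0}\,e^{\sigma t}\,\|f\|_{X_1},\qquad 0<t<T,
\end{equation}
which is the asserted estimate; the convergence of $A(t)f=\sum_j A_j(t)f$ in $X_2$ follows since $X_2$ is complete and the series converges absolutely.
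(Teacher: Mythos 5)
Your proof is correct and is essentially the standard argument behind this lemma, which the paper itself does not prove but delegates to \cite[Lem. 4.6]{GHH06} and \cite[Lem. 3.3]{HR11}: induction on $j$ via the Beta integral to obtain the Gamma-quotient bounds $\|A_j(t)\|\leq C^{j+1}\Gamma(a)\Gamma(b)^j\Gamma(a+jb)^{-1}t^{-\alpha_0+jb}e^{\sigma_0 t}$, followed by the exponential growth estimate for the resulting Mittag--Leffler series. The only shortfall is that you verify uniform convergence merely on compact subintervals $[t_0,t_1]\subset(0,T)$, whereas the statement asserts it on every bounded interval $I\subset(0,T)$, including those with $\inf I=0$; this also follows from your inductive bound, since for $N>\alpha_0/(1-\beta_0)$ every exponent $-\alpha_0+j(1-\beta_0)$ with $j\geq N$ is nonnegative, so the tail $\sum_{j\geq N}\|A_j(t)f\|_{X_2}$ is dominated, uniformly in $t\in(0,\sup I]$, by the convergent numerical series obtained by evaluating your bound at $t=\sup I$.
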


We are now in a position to prove Theorem \ref{thm1}.

\begin{proof}[Proof of Theorem \ref{thm1}]
In this proof, the operator $\CL_{\Omega,\eta,\omega}$ on
$L^q_\sigma(\Omega)$ is denoted by $\CL_q$ 
to simplify the notation. 
By Theorem \ref{proplqlrsmooth}, we know that 
$(e^{- t \CL_{D,\eta,\omega}})_{t \ge 0}$ is a bounded
analytic $C_0$-semigroup generated by $-\CL_{D,\eta,\omega}$.
Given $f\in L^q_\sigma(\Omega)$, we set
\begin{align}
S_{\eta,\omega}(t)f & :=\varphi T_{\BR^3,\eta,\omega}(t)f^R+(1-\varphi) e^{- t \CL_{D,\eta,\omega}}f^D
+\BB_{K_1}[(\nabla\varphi)\cdot(T_{\BR^3,\eta,\omega}(t)f^R-e^{- t \CL_{D,\eta,\omega}}f^D)],
\notag\\
T_{\Omega,\eta,\omega,0}(t)f & :=S_{\eta,\omega}(t)f,
\label{const1}\\
T_{\Omega,\eta,\omega,n}(t)f & :=-\int_0^t T_{\Omega,\eta,\omega,n-1}(t-\tau)
H(\tau)f\d \tau,\quad n\in\BN,\label{const2}
\end{align}
where $(T_{\BR^3,\eta,\omega}(t))_{t \ge 0}$ is the $C_0$-semigroup 
given by \eqref{r3semi} and $H(t)$ is the inverse 
Laplace transform of $P_\Omega \Phi_\lambda$, see 
\cite[Lem. 4.4]{GHH06}. 
Here, $\varphi$, $f^R$, and $f^D$ are the same functions
as in the proof of Proposition \ref{propsurj}.
Then the boundedness of $T_{\BR^3,\eta,\omega}(t)$ and $e^{- t \CL_{D,\eta,\omega}}$ together with
\cite[Lem. 4.4]{GHH06} yields
\begin{align}\label{shest}
\|S_{\eta,\omega}(t)\|_{\SL(L^q_\sigma(\Omega))}\leq C,
\qquad \|H(t)\|_{\SL(L^q_\sigma(\Omega))}\leq
Ct^{\alpha-1}e^{\sigma_0 t}
\end{align}
with some constants $\sigma_0,C>0$,
where $\alpha>0$ is a fixed constant satisfying \eqref{alpha}.
We then infer from Lemma \ref{lemserires} with $A_0=S_{\eta,\omega}$ and $Q=H$ that there exists a constant $\sigma > \sigma_0$ such that there hold
\begin{align}\label{tseries}
T_{\Omega,\eta,\omega} f:=\sum_{n=0}^\infty T_{\Omega,\eta,\omega,n}(t)f,\qquad
\sum_{n=0}^\infty\|T_{\Omega,\eta,\omega,n}(t)f\|_{q,\Omega}
\leq Ce^{\sigma t}
\|f\|_{q,\Omega}
\end{align}
for $t>0$ and $f\in L^q_\sigma(\Omega)$.
Since we have
\begin{align*}
\|S_{\eta,\omega} (t)\|_{\SL(L^q_\sigma(\Omega),L^{r}(\Omega))}\leq
Ct^{-\frac{3}{2}(\frac{1}{q}-\frac{1}{r})},\qquad
\|S_{\eta,\omega} (t)\|_{\SL(L^q_\sigma(\Omega),W^{1,r}(\Omega))}\leq
Ct^{-\frac{3}{2}(\frac{1}{q}-\frac{1}{r})-\frac{1}{2}}
\end{align*}
for $0 < t<T$, we apply Lemma \ref{lemserires}
to obtain 
\begin{align*}
\|\nabla^j T_{\Omega,\eta,\omega} (t)f\|_{r,\Omega}
\leq Ct^{-\frac{3}{2}(\frac{1}{q}-\frac{1}{r})-\frac{j}{2}}
\|f\|_{q,\Omega}
\end{align*}
for $0 < t<T$ and $f\in L^q_\sigma(\Omega)$ 
whenever $1/q-1/r<(2-j)/3$ is satisfied. 
However, this restriction may be eliminated 
by virtue of the semigroup property. 
\par It remains to prove that 
$(T_{\Omega,\eta,\omega}(t))_{t\ge 0}$ is indeed a 
$C_0$-semigroup $(e^{- t \CL_q})_{t\ge 0}$ generated by $-\CL_{q}$.
To this end, we notice that $-\CL_{2}$ generates a contraction
$C_0$-semigroup $e^{-t\CL_{2}}$ on $L^2_\sigma(\Omega)$
since $-\CL_{2}$ is $m$-dissipative.
We thus observe
\begin{align}\label{lap2}
\int_0^\infty e^{-\lambda t}e^{-t\CL_{2}}f\d t
=(\lambda+\CL_{2})^{-1}f
\end{align}
for $f\in L^2_\sigma(\Omega)$ and $\lambda\in\BC$
satisfying $\Re\lambda>0$. In addition, since there hold
\begin{align}
\|T_{\Omega,\eta,\omega}(t)f\|_{q,\Omega}\leq
\sum_{n=0}^\infty\|T_{\Omega,\eta,\omega,n}(t)f\|_{q,\Omega}
\leq Ce^{\sigma t}\|f\|_{q,\Omega}
\end{align}
and
\begin{align}
\int_0^\infty e^{-\lambda t}T_{\Omega,\eta,\omega,n}(t)f\d t=U_\lambda
(-P_\Omega \Phi_\lambda)^nf,
\end{align}
we have
\begin{align}\label{lapt}
\int_0^\infty e^{-\lambda t}T_{\Omega,\eta,\omega}(t)f\d t=
\sum_{n=0}^\infty\int_0^\infty e^{-\lambda t}T_{\Omega,\eta,\omega,n}(t)f\d t
=U_\lambda(I+P_\Omega\Phi_\lambda)^{-1}f
\end{align}
for $t>0$, $f\in L^q_\sigma(\Omega)$,
and $\lambda>\max\{\sigma,\kappa\}$,
where $\kappa$ is the same number as in Proposition \ref{propsurj}.
Since there holds
\begin{align}\label{resoll}
(\lambda+\CL_{q})^{-1}f=(\lambda+\CL_{2})^{-1}f
=U_\lambda(I+P_\Omega \Phi_\lambda)^{-1}f
\end{align}
for all $f\in L^q_\sigma(\Omega)\cap L^2_\sigma(\Omega)$
and $\lambda>\max\{\sigma,\kappa\}$, see
Proposition \ref{propsurj} and \eqref{surj}, it follows from
\eqref{lap2} and \eqref{lapt} that
\begin{align*}
\int_0^\infty e^{-\lambda t}e^{-t\CL_{2}}f\d t
=\int_0^\infty e^{-\lambda t}T_{\Omega,\eta,\omega}(t)f\d t
\end{align*}
for $f\in L^q_\sigma(\Omega)\cap L^2_\sigma(\Omega)$.
Hence, we arrive at
\begin{align}\label{coin}
T_{\Omega,\eta,\omega}(t)f=e^{-t\CL_{2}}f
\end{align}
for $f\in L^q_\sigma(\Omega)\cap L^2_\sigma(\Omega)$. \par
From \eqref{coin}, we shall prove that $(T_{\Omega,\eta,\omega}(t))_{t \ge 0}$ is a
semigroup $(e^{- t \CL_{\Omega,\eta,\omega}})_{t\ge0}$ on $L^q_\sigma(\Omega)$ generated by $-\CL_{q}$.
Let $f\in L^q_\sigma(\Omega)$, then there exists a sequence
$\{f_n\}_{n=1}^\infty\subset L^2_{\sigma}(\Omega)
\cap L^q_{\sigma}(\Omega)$ such that
$f_n\rightarrow f$ as $n\rightarrow\infty$
in $L^q_\sigma(\Omega)$.
Then we find from \eqref{coin} and the semigroup property of
$(e^{-t\CL_{2}})_{t\ge0}$ that
\begin{equation}
\|T_{\Omega,\eta,\omega}(t+s)f_n-T_{\Omega,\eta,\omega}(t)T_{\Omega,\eta,\omega}(s)f_n
\|_{q,\Omega} = 0,
\end{equation}
and thus we have
\begin{align*}
&\|T_{\Omega,\eta,\omega}(t+s)f-T_{\Omega,\eta,\omega}
(t)T_{\Omega,\eta,\omega}(s)f\|_{q,\Omega}\\
& \quad \le
\|T_{\Omega,\eta,\omega}(t+s)(f-f_n)\|_{q,\Omega}
+\|T_{\Omega,\eta,\omega}(t)T_{\Omega,\eta,\omega}(s)(f-f_n)\|_{q,\Omega}\\
& \quad \leq \big(\|T_{\Omega,a}(t+s)\|_{\SL(L^q_\sigma(\Omega))}
+\|T_{\Omega,\eta,\omega}(t)\|_{\SL(L^q_\sigma(\Omega))}
\|T_{\Omega,\eta,\omega}(s)\|_{\SL(L^q_\sigma(\Omega))}\big)
\|f-f_n\|_{q,\Omega}\rightarrow 0
\end{align*}
as $n\rightarrow \infty$, which yields
$T_{\Omega,\eta,\omega}(t+s)=T_{\Omega,\eta,\omega}(t)T_{\Omega,\eta,\omega}(s)$
for $s,t\geq 0$. This shows that
$(T_{\Omega,\eta,\omega}(t))_{t\ge0}$ is a semigroup on $L^q_\sigma(\Omega)$.
Let $-G$ be a generator of $T_{\Omega,\eta,\omega}(t)$.
By \eqref{lapt} and \eqref{resoll}, we have
\begin{align}
(\lambda+G)^{-1}f=\int_0^\infty e^{-\lambda t}T_{\Omega,\eta,\omega}(t)f\d t
=U_\lambda(I+P_\Omega\Phi_\lambda)^{-1}f
=(\lambda+\CL_{q})^{-1}f
\end{align}
for all $f\in L^q_\sigma(\Omega)\cap L^2_\sigma(\Omega)$ and
$\lambda>\max\{\sigma,\kappa\}$,
from which $-G=-\CL_{q}$ follows.
Finally, we shall prove the strong continuity of $(T_{\Omega,\eta,\omega}(t))_{t\ge0}$ on $L^q_\sigma (\Omega)$.
Let $t_0\in[0,\infty)$, $q\geq 2$, and $f\in L^q_\sigma(\Omega)$.
We fix a sequence $\{f_n\}_{n=1}^\infty\subset L^2_{\sigma}(\Omega)
\cap L^q_{\sigma}(\Omega)$ such that
$f_n\rightarrow f$ as $n\rightarrow\infty$
in $L^q_\sigma(\Omega)$. We then obtain
\begin{equation}
\label{f-fn}
\begin{split}
\|T_{\Omega,\eta,\omega}(t_0)f-T_{\Omega,\eta,\omega}(t)f\|_{q,\Omega}&\leq
\big(\|T_{\Omega,\eta,\omega}(t_0)\|_{\SL(L^q_\sigma(\Omega))}
+\|T_{\Omega,\eta,\omega}(t)\|_{\SL(L^q_\sigma(\Omega))}\big)
\|f-f_n\|_{q,\Omega}\\
&\quad+\|T_{\Omega,\eta,\omega}(t_0)f_n-T_{\Omega,\eta,\omega}(t)f_n\|_{q,\Omega}
\\
&\leq C(e^{\sigma t_0}+e^{\sigma t})
\|f-f_n\|_{q,\Omega}
+\|T_{\Omega,\eta,\omega}(t_0)f_n-T_{\Omega,\eta,\omega}(t)f_n\|_{q,\Omega}.
\end{split}
\end{equation}
Since we have
\begin{align}
T_{\Omega,a}(t_0)f_n-T_{\Omega,a}(t)f_n=
\begin{cases}
e^{-t_0\CL_{2}}(I-e^{-(t-t_0)\CL_{2}})f_n & \text{if $t>t_0$}, \\
e^{-t\CL_{2}}(e^{-(t_0-t)\CL_{2}}-I)f_n & \text{if $t<t_0$},
\end{cases}
\end{align}
applying the $L^2$-$L^q$-estimate asserts that
the second term of the right-hand side of \eqref{f-fn} tends to
$0$
as $t\rightarrow t_0$.
We thus conclude that $T_{\Omega,\eta,\omega} (t) = e^{- t \CL_q}$ if $q\geq 2$.
In view of
the duality relation $\CL_{\Omega,\eta,\omega}^*=\CL_{\Omega,-\eta,- \omega}$,
the case $q<2$ directly follows from the case $q>2$.
The proof is complete.
\end{proof}

\section{
$L^q$-$L^r$ estimate with additional restriction for $r$}
\label{sec-5}

In this section, we aim to establish the $L^q$-$L^r$ estimates
for the $C_0$-semigroup $(e^{- t \CL_{\Omega,\eta,\omega}})_{t\ge0}$ provided
that $r$ satisfies the stronger condition in contrast to the
condition posed in Theorem \ref{thmlqlr}.
To be precise, we intend to prove the following lemma.
\begin{lem}\label{proplqlr}
Let $\varepsilon_2$ and $q$ be subject to Convention~\ref{conv-2}
and $\eta, \omega \in \BR^3$. Given $c_0>0,$ assume $\lvert \eta \rvert + \lvert \omega \rvert \leq c_0.$
Let $r \in [q, \infty)$ satisfy
\begin{equation}
\label{cond-r}
\bigg\lvert \frac1r - \frac12 \bigg\rvert < \frac16 + \varepsilon_2.
\end{equation}
Then there exists a constant $C$ independent of 
$\eta$ and $\omega$ such that 
\begin{align}
\|e^{-t\CL_{\Omega,\eta,\omega}}f\|_{r,\Omega}
\leq Ct^{-\frac{3}{2}(\frac{1}{q}-\frac{1}{r})}\|f\|_{q,\Omega}
\end{align}
holds for $t>0$ and $f \in L^q_\sigma (\Omega)$.
\end{lem}
Lemma \ref{proplqlr} was essentially proved by 
Hishida \cite{H18}, but to reveal the additional condition 
\eqref{cond-r}, let us give the proof.   
A key ingredient to prove Lemma \ref{proplqlr} is that the operator $(\eta + \omega\times x)\cdot \nabla$
is skew-symmetric, which gives the energy relation 
\begin{align}\label{en1}
\frac{1}{2}\frac{\mathrm{d}}{\d t}\|e^{-t\CL_{\Omega,\eta,\omega}}f\|_{2,\Omega}^2
=-\|\nabla e^{-t\CL_{\Omega,\eta,\omega}}f\|_{2,\Omega}^2
\end{align}
for $t>0$ and $f\in\SD_2(\CL_{\Omega,\eta,\omega})$. Integrating with respect to $t$ implies
\begin{align}\label{en2}
\frac{1}{2}\|e^{-s\CL_{\Omega,\eta,\omega}}f\|^2_{2,\Omega}=
\int_s^t\|\nabla e^{-\tau\CL_{\Omega,\eta,\omega}}f\|_{2,\Omega}^2\d \tau
+\frac{1}{2}\|e^{-t\CL_{\Omega,\eta,\omega}}f\|^2_{2,\Omega}
\end{align}
for $0\leq s\leq t$ and $f\in\SD_2(\CL_{\Omega,\eta,\omega})$.
Using this identity, we may show the following lemma.

\begin{lem}\label{lem0}
Let $\varepsilon_2$ and $q$ be subject to Convention~\ref{conv-2}
and $\eta, \omega \in \BR^3$. 
Given $c_0>0,$ assume $\lvert \eta \rvert + \lvert \omega \rvert \leq c_0.$
Suppose
$\|e^{-t\CL_{\Omega,\eta,\omega}}f\|_{r_0,\Omega}
\leq C\|f\|_{r_0,\Omega}$
for $t\geq 2$ and $f\in C_{0,\sigma}^\infty(\Omega)$ with
$r_0\in (2,\infty)$ fulfilling
\begin{align}\label{q0condi}
\left|\frac{1}{r_0}-\frac{1}{2}\right|
<\frac{1}{6}+\varepsilon_2.
\end{align}
Then the following assertions are valid.
\begin{enumerate}
\item Let $2\leq q\leq r\leq r_0$.
Then there exists a constant $C=C(c_0,r_0,q,r,\Omega)$ such that
\begin{align}\label{lplqomega}
\|e^{-t\CL_{\Omega,\eta,\omega}}f\|_{r,\Omega}
\leq Ct^{-\frac{3}{2}(\frac{1}{q}-\frac{1}{r})}\|f\|_{q,\Omega}
\end{align}
for $t>0$ and $f\in L^q_\sigma(\Omega).$
\item Let $r_0'\leq q \leq r\leq 2$.
Then there exists a constant $C=C(c_0,r_0,q,r,\Omega)$ such that
\begin{align}\label{lplqomegadual}
\|e^{-t\CL_{\Omega,- \eta, - \omega}}f\|_{r,\Omega}
\leq Ct^{-\frac{3}{2}(\frac{1}{q}-\frac{1}{r})}\|f\|_{q,\Omega}
\end{align}
for $t>0$ and $f\in L^q_\sigma(\Omega).$
\end{enumerate}
\end{lem}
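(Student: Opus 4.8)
The plan is to bootstrap both estimates from the assumed $L^{r_0}$-boundedness by combining a duality argument, the energy identity, and a Maremonti--Solonnikov-type differential inequality. Throughout I write $u(t):=e^{-t\CL_{\Omega,-\eta,-\omega}}f$ and $y(t):=\|u(t)\|_{2,\Omega}^2$, and I first establish the second assertion \eqref{lplqomegadual}; the first assertion \eqref{lplqomega} will then follow by duality.

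First I would pass to the adjoint. Since $\CL_{\Omega,\eta,\omega}^*=\CL_{\Omega,-\eta,-\omega}$ and $(L^{r_0}_\sigma(\Omega))^*=L^{r_0'}_\sigma(\Omega)$, the hypothesis gives $\|e^{-t\CL_{\Omega,-\eta,-\omega}}\|_{\SL(L^{r_0'}_\sigma(\Omega))}\le C$ for $t\ge 2$, and the local boundedness of the $C_0$-semigroup from Theorem \ref{thm1} upgrades this to all $t>0$. The identity \eqref{en2} with $s=0$ yields the $L^2$-contraction $\|u(t)\|_{2,\Omega}\le\|f\|_{2,\Omega}$. Because $|1/r_0'-1/2|=|1/r_0-1/2|<1/6+\varepsilon_2$, every exponent in $[r_0',2]$ satisfies \eqref{p'}; since the family $e^{-t\CL_{\Omega,-\eta,-\omega}}$ is consistent across the resulting interpolation scale of solenoidal spaces, Riesz--Thorin interpolation then gives the uniform bound $\|u(t)\|_{q,\Omega}\le C\|f\|_{q,\Omega}$ for all $q\in[r_0',2]$ and $t>0$.

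The core step is the $L^q$-$L^2$ decay. Fix $q\in[r_0',2)$ and let $\mu\in(0,1)$ be defined by $1/2=\mu/6+(1-\mu)/q$. Using the Sobolev embedding $\|u(t)\|_{6,\Omega}\le C\|\nabla u(t)\|_{2,\Omega}$ valid in $\BR^3$, the H\"older interpolation $\|u(t)\|_{2,\Omega}\le\|u(t)\|_{6,\Omega}^\mu\|u(t)\|_{q,\Omega}^{1-\mu}$, the bound $\|u(t)\|_{q,\Omega}\le C\|f\|_{q,\Omega}$ just obtained, and the energy identity \eqref{en1} in the form $\|\nabla u(t)\|_{2,\Omega}^2=-\tfrac12 y'(t)$, I would derive the differential inequality $y(t)\le C\big(-\tfrac12 y'(t)\big)^{\mu}\|f\|_{q,\Omega}^{2(1-\mu)}$. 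Rewriting this as $\tfrac{\mathrm{d}}{\d t}\big(-\tfrac{\mu}{1-\mu}\,y^{1-1/\mu}\big)+c\,\|f\|_{q,\Omega}^{-2(1-\mu)/\mu}\le 0$, integrating over $(0,t)$, and discarding the nonnegative term at $t=0$ gives $y(t)\le C\|f\|_{q,\Omega}^2\,t^{-\mu/(1-\mu)}$. A direct computation from the defining relation for $\mu$ yields the exponent identity $\mu/(1-\mu)=3(1/q-1/2)$, whence $\|u(t)\|_{2,\Omega}\le Ct^{-\frac32(1/q-1/2)}\|f\|_{q,\Omega}$, first for $f\in C^\infty_{0,\sigma}(\Omega)$ and then for all $f\in L^q_\sigma(\Omega)$ by density.

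Finally I would assemble the estimates. For $r_0'\le q\le r\le 2$, H\"older's inequality $\|u(t)\|_{r,\Omega}\le\|u(t)\|_{q,\Omega}^{1-\theta}\|u(t)\|_{2,\Omega}^{\theta}$ with $1/r=(1-\theta)/q+\theta/2$, combined with the uniform $L^q$-bound and the $L^q$-$L^2$ decay, produces the rate $t^{-\theta\frac32(1/q-1/2)}=t^{-\frac32(1/q-1/r)}$, which is \eqref{lplqomegadual}. Taking adjoints in \eqref{lplqomegadual} and relabelling $(q,r)\mapsto(r',q')$ --- which maps $r_0'\le q\le r\le 2$ onto $2\le q\le r\le r_0$ and leaves $\frac32(1/q-1/r)$ invariant --- gives \eqref{lplqomega}. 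The main obstacle I anticipate is the differential-inequality step: one must verify the exponent identity $\mu/(1-\mu)=3(1/q-1/2)$ and justify the integration carefully (positivity of $y$, the case $f=0$), while keeping every interpolation exponent inside the admissible range prescribed by \eqref{p'}.
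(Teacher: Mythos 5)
Your proposal is correct and follows essentially the same route as the paper's proof: dualize the $L^{r_0}$-bound to the adjoint semigroup $e^{-t\CL_{\Omega,-\eta,-\omega}}$, combine the energy identity \eqref{en1}--\eqref{en2} with the Sobolev embedding and the uniform $L^q$-bound (obtained by interpolation on $[r_0',2]$) to get the Maremonti--Solonnikov-type differential inequality yielding the $L^q$-$L^2$ decay, then interpolate to cover $r_0'\le q\le r\le 2$ and dualize back for the range $2\le q\le r\le r_0$. Your explicit integration of the differential inequality and the exponent identity $\mu/(1-\mu)=3(1/q-1/2)$ are exactly the content of the paper's monotonicity argument for $F(t)$, so there is no substantive difference.
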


\begin{proof}
The lemma was essentially proved by Hishida \cite[Lem. 4.1]{H18}, but
we shall give the proof for the reader's convenience.
In view of the duality,
we infer from the assumption that
\begin{align*}
\|e^{-t\CL_{\Omega,- \eta, - \omega}}f\|_{r_0',\Omega}
\leq C\|f\|_{r_0',\Omega}
\end{align*}
for $t>0$ and $f\in L^{r_0'}_\sigma(\Omega)$.
It follows from \eqref{en2} that
$\|e^{-t\CL_{\Omega,- \eta, - \omega}}f\|_{2,\Omega}\leq \|f\|_{2,\Omega}$ for
$t>0$ and $f\in C^\infty_{0,\sigma}(\Omega)$.
Hence, we infer from complex interpolation that
\begin{align}\label{qbounddual}
\|e^{-t\CL_{\Omega,- \eta, - \omega}}f\|_{q,\Omega}
\leq C\|f\|_{q,\Omega}
\end{align}
for $q\in[r_0',2]$. Let $\mu := (2-q) \slash (6 - q)$ so that there holds $[L^q (\Omega), L^6 (\Omega)]_\mu = L^2 (\Omega)$.
Then, together with the Sobolev embedding and the relation \eqref{en1}, we find
\begin{align*}
\|e^{-t\CL_{\Omega,- \eta, - \omega}}f\|_{2,\Omega}
& \leq \|e^{-t\CL_{\Omega,- \eta, - \omega}}f\|_{6,\Omega}^\mu
\|e^{-t\CL_{\Omega,- \eta, - \omega}}f\|_{q,\Omega}^{1-\mu} \\
&\leq C
\|\nabla e^{-t\CL_{\Omega,- \eta, - \omega}}f\|_{2,\Omega}^\mu
\|f\|_{q,\Omega}^{1-\mu}\\
&\leq C\left(-\frac{1}{2}\frac{\mathrm{d}}{\d t}
\|e^{-t\CL_{\Omega,- \eta, - \omega}}f\|_{2,\Omega}^2\right)^{\frac{\mu}{2}}
\|f\|_{q,\Omega}^{1-\mu}.
\end{align*}
Hence, for $t>0$ and $f\in C_{0,\sigma}^\infty(\Omega)\setminus\{0\}$ we see that
\begin{align*}
F(t):=
-\frac{\mu}{1-\mu}
(\|e^{-t\CL_{\Omega,\eta,\omega}}f\|^2_{2,\Omega})^{-\frac{1}{\mu}+1}
+\frac{2}{C}\|f\|_{q,\Omega}^{-\frac{2}{\mu}+2}t
\end{align*}
is monotone decreasing with respect to $t > 0$.
Therefore, we have
\begin{align*}
-\frac{\mu}{1-\mu}
\|e^{-t\CL_{\Omega,\eta,\omega}}f\|_{2,\Omega}^{-\frac{2}{\mu}+2}
+\frac{2}{C}\|f\|_{q,\Omega}^{-\frac{2}{\mu}+2}t
=F(t)\leq F(0)=-\frac{\mu}{1-\mu}
\|f\|_{2,\Omega}^{-\frac{2}{\mu}+2}\leq 0,
\end{align*}
which yields
\eqref{lplqomegadual} with $r=2$.
From \eqref{qbounddual} together with complex interpolation,
we obtain \eqref{lplqomegadual} for $r_0'\leq q\leq r \leq 2$.
The other estimate \eqref{lplqomega}
follows from the duality. The proof is complete.
\end{proof}

\begin{rmk}
It turns out that
\begin{align*}
\left|\frac{1}{r_0}-\frac{1}{2}\right|=\frac{1}{2}-\frac{1}{r_0}
\geq \frac{1}{2}-\frac{1}{q}=\left|\frac{1}{q}-\frac{1}{2}\right|
\end{align*}
if $2\leq q\leq q_0$ and that
\begin{align*}
\left|\frac{1}{r_0}-\frac{1}{2}\right|=
\left|\frac{1}{r_0'}-\frac{1}{2}\right|=
\frac{1}{r_0'}-\frac{1}{2}
\geq \frac{1}{q}-\frac{1}{2}=\left|\frac{1}{q}-\frac{1}{2}\right|
\end{align*}
if $r_0'\leq q \leq 2$. Then, we find that
the condition \eqref{p'}
is automatically fulfilled if either $2\leq q \leq r_0$ or
$r_0'\leq q \leq 2$ under the assumption \eqref{q0condi}.
\end{rmk}

\begin{proof}[Proof of Lemma \ref{proplqlr}]
Let $f\in C_{0,\sigma}^\infty(\Omega)$. We fix a cut-off function
$\phi_1 \in C_0^\infty(\BR^3)$ such that $\phi_1 =1$ 
on $B_{2R}(0)$ and
$\phi_1 =0$ on $\BR^3\setminus B_{3R}(0)$.
We set
\begin{align}\label{vperturb}
v(t)=e^{-t\CL_{\Omega,\eta,\omega}}f
-(1-\phi_1)T_{\BR^3,\eta,\omega}(t)f
-\BB_{K_1}[T_{\BR^3,\eta,\omega}(t)f\cdot\nabla\phi_1],
\end{align}
where $K_3=A_{2R,3R}$.
Then $v$ together with the pressure $p(t)$ associated with
$e^{-t\CL_{\Omega,\eta,\omega}}f$ obeys
\begin{equation}
\left\{\begin{aligned}
\partial_t v - \Delta v - (\eta + \omega\times x)\cdot\nabla v
+ \omega\times v + \nabla p & = F_1 & \quad & \text{in $\Omega \times \BR_+$}, \\
\div v & =0 & \quad & \text{for $\Omega \times \BR_+$},\\
v & =0 & \quad & \text{on $\pd \Omega \times \BR_+$}, \\
v(x,0) & = \widetilde{f} & \quad & \text{for $\Omega$},
\end{aligned}\right.
\end{equation}
where we have set
\begin{align*}
\widetilde{f} & :=\phi_1 f-\BB_{K_3}[f\cdot\nabla \phi_1], \\
F_1(x,t) & := -2\nabla\phi_1\cdot\nabla T_{\BR^3,\eta,\omega} f
-\Big(\Delta\phi_1+(\eta + \omega\times x)\cdot\nabla\phi_1\Big)T_{\BR^3,\eta,\omega}f
-\BB_{K_3}[\partial_tT_{\BR^3,\eta,\omega}f\cdot\nabla \phi_1]\\
&\;\quad +\Delta\BB_{K_3}[T_{\BR^3,\eta,\omega}f\cdot\nabla \phi_1]
+(\eta + \omega\times x)\cdot\nabla\BB_{K_3}
[T_{\BR^3,\eta,\omega}f\cdot\nabla \phi_1]
-\omega\times\BB_{K_3}[T_{\BR^3,\eta,\omega}f\cdot\nabla \phi_1].
\end{align*}
Note that the function $F_1$ fulfills
\begin{align}\label{F}
\|F_1(t)\|_{r,\Omega}\leq
\begin{cases}
C(c_0+1)t^{-\frac{1}{2}}\|f\|_{r,\Omega},\quad &t<1,\\
C(c_0+1)t^{-\frac{3}{2r}}\|f\|_{r,\Omega},\quad &t\geq 1
\end{cases}
\end{align}
for $1<r<\infty$. In view of \eqref{vperturb}
and $f\in C_{0,\sigma}^\infty(\Omega)$, we have
$v\in C^1((0,\infty);L^q_{\sigma}(\Omega))$ for $1<q<\infty$,
which implies
\begin{align*}
(v(t),\psi)_\Omega=(\widetilde{f},e^{-t\CL_{\Omega,- \eta, - \omega}}
\psi)_\Omega+
\int_0^t(F_1(\tau),e^{-(t-\tau)\CL_{\Omega,- \eta, - \omega}}\psi)_\Omega\d \tau
\end{align*}
for $\psi\in C_{0,\sigma}^\infty(\Omega).$
Let $r\in (2,\infty)$ satisfy \eqref{cond-r}.
To prove the desired assertion, it suffices to show
\begin{align}\label{vrbound}
\|v(t)\|_{r,\Omega}\leq C\|f\|_{r,\Omega}
\end{align}
for $t\ge2$ and $\lvert \eta \rvert + \lvert \omega \rvert \leq c_0$ on account of Lemma \ref{lem0}.
By \eqref{en2}, \eqref{F}, and Theorem \ref{thm1}, we deduce that
\begin{equation}
\label{fest}
\begin{split}
|(\widetilde{f},e^{-t\CL_{\Omega,- \eta, - \omega}}\psi)_\Omega|
& \leq
\|\widetilde{f}\|_{2,B_{3R}(0)}\|e^{-t\CL_{\Omega,- \eta, - \omega}}
\psi\|_{2,\Omega} \\
& \leq C\|f\|_{r,\Omega}
\|e^{-\CL_{\Omega,- \eta, - \omega}}\psi\|_{2,\Omega} \\
& \leq C\|f\|_{r,\Omega}
\|\psi\|_{r',\Omega}
\end{split}
\end{equation}
as well as
\begin{equation}
\label{Fest}
\begin{split}
& \left|\left(\int_0^1+\int_{t-1}^t\right)(F_1(\tau),
e^{-(t-\tau)\CL_{\Omega,- \eta, - \omega}}\psi)_\Omega\d \tau \right| \\
& \leq
\left(\int_0^1+\int_{t-1}^t\right)\|F_1(\tau)\|_{2,K_3}
\|e^{-(t-\tau)\CL_{\Omega,- \eta, - \omega}}\psi\|_{2,\Omega}\d \tau \\
& \leq C\|e^{-\CL_{\Omega,- \eta, - \omega}}\psi\|_{2,\Omega}
\int_0^1\|F_1(\tau)\|_{2,K_3}\d \tau+C\int_{t-1}^t
\|F_1(\tau)\|_{r,\Omega}
(t-\tau)^{-\frac{3}{2}(\frac{1}{r'}-\frac{1}{2})}\d \tau\,
\|\psi\|_{r',\Omega} \\
& \leq C(c_0+1)\|f\|_{r,\Omega}\|\psi\|_{r',\Omega}
+C(c_0+1)\int_{t-1}^t
\tau^{-\frac{3}{2r}}
(t-\tau)^{-\frac{3}{2}(\frac{1}{r'}-\frac{1}{2})}\d \tau\,
\|f\|_{r,\Omega}\|\psi\|_{r',\Omega} \\
& \leq C(c_0+1)\|f\|_{r,\Omega}\|\psi\|_{r',\Omega}
\end{split}
\end{equation}
for $t\ge2$ and $\psi\in C_{0,\sigma}^\infty(\Omega)$.
Set
\begin{align}
J:=\int_1^{t-1}(F_1(\tau),e^{-(t-\tau)\CL_{\Omega,- \eta, - \omega}}
\psi)_\Omega\d \tau.
\end{align}
Since $e^{-t\CL_{\Omega,- \eta, - \omega}}\psi\in W^{1,2}_0(\Omega)$
vanishes on $\partial \Omega$, we use the Poincar\'{e} inequality
in $\Omega\cap B_{3R}(0)$ to see
\begin{align}
|J|&\leq \int_1^{t-1}\|F_1(\tau)\|_{2,K_3}
\|e^{-(t-\tau)\CL_{\Omega,- \eta, - \omega}}
\psi\|_{2,\Omega \cap B_{3R}(0)}\d \tau\notag\\
&\leq C(c_0+1)\|f\|_{r,\Omega}
\int_1^{t-1}\tau^{-\frac{3}{2r}}
\|\nabla e^{-(t-\tau)\CL_{\Omega,- \eta, - \omega}}\psi\|_{2,\Omega}\d \tau
\label{integral}\\
&\leq C(c_0+1)\|f\|_{r,\Omega}
\left(\int_1^{t-1}\tau^{-\frac{3}{r}}\d \tau\right)^{\frac{1}{2}}
\left(\int_1^{t-1}
\|\nabla e^{-(t-\tau)\CL_{\Omega,- \eta, - \omega}}\psi\|^2_{2,\Omega}\d \tau
\right)^{\frac{1}{2}}\notag\\
&\leq C\|f\|_{r,\Omega}\|e^{-\CL_{\Omega,- \eta, - \omega}}\psi\|_{2,\Omega} \\
& \leq C\|f\|_{r,\Omega}\|\psi\|_{r',\Omega}\notag
\end{align}
for $t\geq 2$ and $2<r<3$,
where we have used \eqref{en2}.
Combined with \eqref{fest} and \eqref{Fest}, we have
\eqref{vrbound} for $2<r<3$. Hence, together with
Lemma~\ref{lem0}, we see that
\eqref{lplqomegadual} holds for $3/2<q\leq r\leq 2$. \par
Let $3<r<\infty$ satisfy \eqref{rcondi}.
Given $\delta>0$ arbitrarily small,
we take $p_0\in(3/2,2)$ (so close to $3/2$) to deduce
\begin{align}\label{lq'l2est}
\|e^{-t\CL_{\Omega,- \eta, - \omega}}\psi\|_{2,\Omega}
\leq C(t-1)^{-\frac{1}{4}+\delta}
\|e^{-\CL_{\Omega,- \eta, - \omega}}\psi\|_{p_0,\Omega}\leq
C(t-1)^{-\frac{1}{4}+\delta}
\|\psi\|_{r',\Omega}
\end{align}
for $t>1$. Then we infer from
\begin{equation}
\label{int0}
\begin{split}
\int_1^{\frac{t}{2}}
\|\nabla e^{-(t-\tau)\CL_{\Omega,- \eta, - \omega}}\psi\|_{2,\Omega}^2\d \tau
& =
\int_{\frac{t}{2}}^{t-1}
\|\nabla e^{-\tau'\CL_{\Omega,- \eta, - \omega}}
\psi\|_{2,\Omega}^2\d \tau' \\
& \leq
\|e^{-\frac{t}{2}\CL_{\Omega,- \eta, - \omega}}\psi\|^2_{2,\Omega} \\
& \leq C(t-1)^{-\frac{1}{2}+2\delta}\|\psi\|^2_{r',\Omega}
\end{split}
\end{equation}
that
\begin{equation}
\label{int3}
\begin{split}
\int_{1}^{\frac{t}{2}}\tau^{-\frac{3}{2r}}
\|\nabla e^{-(t-\tau)\CL_{D,-\eta,-\omega}}\psi\|_{2,D}\d \tau
&\leq \left(\int_1^{\frac{t}{2}}
\tau^{-\frac{3}{r}}\d \tau\right)^{\frac{1}{2}}
\left(\int_1^{\frac{t}{2}}
\|\nabla e^{-(t-\tau)\CL_{D,-\eta,-\omega}}\psi\|^2_{2,D}\d \tau
\right)^{\frac{1}{2}} \\
&\leq Ct^{-\frac{3}{2r}+\frac{1}{2}}(t-1)^{-\frac{1}{4}+\delta}
\|\psi\|_{r',\Omega}
\end{split}
\end{equation}
for $t\geq 2$ and $\psi\in C_{0,\sigma}^\infty(\Omega).$
By \eqref{en2} and \eqref{lq'l2est}, we observe
\begin{align*}
\int_{1+t}^{1+2t}
\|\nabla e^{-\tau'\CL_{\Omega,- \eta, - \omega}}\psi\|^2_{2,\Omega}\d \tau'
\leq Ct^{-\frac{1}{2}+2\delta}\|\psi\|_{r',\Omega}
\end{align*}
for $t>0$ and $\psi\in C_{0,\sigma}^\infty(\Omega)$, thereby,
we may employ \cite[Lem. 3.4]{H18} with $s=1$ and $t=t/2-1$ to obtain
\begin{align}\label{int1}
\int_{\frac{t}{2}}^{t-1}
\|\nabla e^{-(t-\tau)\CL_{\Omega,- \eta, - \omega}}\psi\|_{2,\Omega}\d \tau\leq
C(t-1)^{\frac{1}{4}+\delta}\|\psi\|_{r',\Omega},
\end{align}
which leads us to 
\begin{align}\label{int2}
\int_{\frac{t}{2}}^{t-1}\tau^{-\frac{3}{2r}}
\|\nabla e^{-(t-\tau)\CL_{\Omega,- \eta, - \omega}}\psi\|_{2,\Omega}\d \tau
\leq Ct^{-\frac{3}{2r}}(t-1)^{\frac{1}{4}+\delta}
\|\psi\|_{r',\Omega}.
\end{align}
Thus, there holds
\begin{align*}
|J|\leq Ct^{-\frac{3}{2r}+\frac{1}{4}+\delta}
\|f\|_{r,\Omega}\|\psi\|_{r',\Omega}
\leq C\|f\|_{r,\Omega}\|\psi\|_{r',\Omega}
\end{align*}
for $t\geq 2$ with $3<r<6$
by applying \eqref{int3} and \eqref{int2} to \eqref{integral}.
Finally, together with
\eqref{fest} and \eqref{Fest}, we arrive at \eqref{vrbound} for $3<r<6$.
\par It remains to prove the case $6\leq r<\infty$ (if $\varepsilon_2 > 1 \slash 6$).
Suppose $6\leq r<\infty$ with \eqref{cond-r}.
Then by Lemma~\ref{lem0}, it turns out that
the estimate \eqref{lq'l2est} is replaced by
\begin{align*}
\|e^{-t\CL_{\Omega,- \eta, - \omega}}\psi\|_{2,\Omega}
\leq C(t-1)^{-\frac{1}{2}+\delta}\|\psi\|_{r',\Omega}.
\end{align*}
We thus find that \eqref{int0} and \eqref{int1} are replaced by
\begin{align*}
\int_1^{\frac{t}{2}}
\|\nabla e^{-(t-\tau)\CL_{\Omega,- \eta, - \omega}}\psi\|^2_{2,\Omega}\d \tau
& \leq
C(t-1)^{-1+2\delta}\|\psi\|^2_{r',\Omega},\\
\int_{\frac{t}{2}}^{t-1}
\|\nabla e^{-(t-\tau)\CL_{\Omega,- \eta, - \omega}}\psi\|_{2,\Omega}\d \tau & \leq
C(t-1)^{\delta}\|\psi\|_{r',\Omega},
\end{align*}
respectively.
By applying these estimates to the integral in \eqref{integral}, we have
$|J|\leq Ct^{-\frac{3}{2r}+\delta}\|f\|_{r,\Omega}
\|\psi\|_{r',\Omega}$
for $t\geq 2$, which yields \eqref{vrbound}
for $r\in[6,\infty)$.
The proof is complete.
\end{proof}

\section{Proofs of Theorems \ref{thmlqlr} and \ref{thmlqlr2}}
\label{sec-6}

This section is devoted to proving Theorems \ref{thmlqlr} 
and \ref{thmlqlr2}.
In the proof of Theorem \ref{thmlqlr}, 
the local energy estimates of the
semigroup $(e^{- t\CL_{\Omega,\eta,\omega}})_{t\ge0}$ over
a bounded domain $\Omega_R = \Omega \cap B_R (0)$ 
(near the boundary), cf.
Lemma \ref{lemsigma1} below, will play a crucial role. 
In order to accomplish the proofs of 
Theorems \ref{thmlqlr} and \ref{thmlqlr2}, 
in Proposition \ref{propr0} below, 
we also clarify the relation between the decay rate 
in the local energy decay estimates and 
the range of exponents $q,r$ for $L^q$-$L^r$ estimates of 
$(e^{-t\CL_{\Omega,\eta,\omega}})_{t\ge0}$. 
\subsection{Local energy decay estimates}
We first establish the local energy estimates of the
semigroup $(e^{- t\CL_{\Omega,\eta,\omega}})_{t\ge0}$ over $\Omega_R$.
To this end, we first prepare some regularity properties of
the semigroup $(e^{- t\CL_{\Omega,\eta,\omega}})_{t\ge0}$.

\begin{prop}
\label{prop-6.1}
Let $\varepsilon_2$ and $q$ be subject to Convention \ref{conv-2}
and $\eta, \omega \in \BR^3$. 
Given $c_0>0,$ assume $\lvert \eta \rvert + 
\lvert \omega \rvert \leq c_0.$
Let $\alpha>0$ be the same number as in
Theorem \ref{proplqlrsmooth}.
Given $f\in L^q_\sigma(\Omega)$,
set $u(t):=e^{-t\CL_{\Omega,\eta,\omega}}f$.
Fix $R\in (R_0+1,\infty)$ and let $\phi\in C_0^\infty(B_R(0))$
fulfill $\phi=1$ in $B_{R_0+1}(0)$. Then
there holds $u\in C^1((0,\infty);W^{-1,q}(\Omega_R))$.
For each $T>0$, there exists a constant $C>0$ such that
\begin{align}
\|\partial_te^{-t\CL_{\Omega,\eta,\omega}}f
\|_{W^{-1,q}(\Omega_R)} &\leq Ct^{-1+\alpha}
\|f\|_{q,\Omega}\label{partialtest2}, \\
\|\BB_{A_{R_0,R}}[\partial_te^{-t\CL_{\Omega,\eta,\omega}}f\cdot\nabla \phi
]\|_{q,\Omega_R} & \leq Ct^{-1+\alpha}
\|f\|_{q,\Omega}\label{partialtest3}
\end{align}
for $t\leq T$ and $f\in L^q_\sigma(\Omega)$, 
where $A_{R_0,R}=\{x\in\BR^3\mid R_0<|x|<R\}$. 
Moreover, there exists the pressure $p(t)$ subject to
$\int_{\Omega_R}p(t)\d x=0$ for $t>0$ such that
the pair $(u,p)$ obeys
\begin{align}\label{eq2}
(\partial_tu,\psi)_{\Omega_R}=(\nabla u+u\otimes(\eta + \omega\times x)
-(\omega\times x)\otimes u,\nabla\psi)_{\Omega_R}
-(p,\div\psi)_{\Omega_R}=0
\end{align}
for $t>0$ and $\psi\in W_0^{1,q'}(\Omega_R)$ and that
\begin{align}
\|p(t)\|_{q,\Omega_R}
\leq Ct^{-1+\alpha}\|f\|_{q,\Omega}\label{pest2}
\end{align}
for $t\leq T$ and $f\in L^q_\sigma(\Omega)$ as well as
\begin{align}\label{pest3}
\|p(t)\|_{q,\Omega_R}
\leq C\|\partial_te^{-t\CL_{\Omega,\eta,\omega}}f\|_{W^{-1,q}(\Omega_R)}
+C\|u(t)\|_{W^{1,q}(\Omega_R)}
\end{align}
for $t>0$ and $f\in L^q_\sigma(\Omega)$.
\end{prop}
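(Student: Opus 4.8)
The plan is to reduce all four displayed estimates to the single short-time pressure bound \eqref{pest2}, and to obtain the latter by transporting the interior pressure estimate \eqref{pest} of Theorem \ref{proplqlrsmooth} to the bounded piece $\Omega_R$ through the parametrix construction of $(e^{-t\CL_{\Omega,\eta,\omega}})_{t\ge0}$ performed in the proof of Theorem \ref{thm1}. First I would set up the weak formulation: writing $u=e^{-t\CL_{\Omega,\eta,\omega}}f$ and denoting by $p_\Omega$ its associated pressure on $\Omega$, I restrict the equation to $\Omega_R$, test against $\psi\in W_0^{1,q'}(\Omega_R)$ extended by zero, and integrate the viscous and drift terms by parts, using $\div(\eta+\omega\times x)=0$ to recombine them exactly as in Lemma \ref{lempartialtwhole}; this yields \eqref{eq2}, and de Rham's theorem on the bounded Lipschitz domain $\Omega_R$ produces a unique zero-mean $p\in L^q_0(\Omega_R)$. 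Estimate \eqref{pest3} is then immediate from \eqref{eq2}: choosing $\psi=\BB_{\Omega_R}[g]$ with $g\in L^{q'}_0(\Omega_R)$, so that $\div\psi=g$, expresses $(p,g)_{\Omega_R}$ through $\partial_tu$ and the viscous--drift pairing, and Proposition \ref{prop-Bogovskii} bounds it by $C(\|\partial_tu\|_{W^{-1,q}(\Omega_R)}+\|u\|_{W^{1,q}(\Omega_R)})\|g\|_{q',\Omega_R}$.

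For the decisive bound \eqref{pest2} I would construct the local pressure by the same device that produced the velocity semigroup in Theorem \ref{thm1}. Choosing the parametrix radius large enough that all its cut-offs $\varphi,\chi$ and Bogovski\u{\i} correctors are supported in $\{|x|>R\}$, the leading term $S_{\eta,\omega}(t)f$ coincides on $\Omega_R$ with $e^{-t\CL_{D,\eta,\omega}}f^D$, whose associated pressure obeys \eqref{pest} with the rate $t^{-1+\alpha}$ (here I use \eqref{pest} with $r=q$, which is admissible precisely because $\alpha<\tfrac12$); the Helmholtz pressure generated by the remainder $\Phi_\lambda f$ is controlled, after the Poincar\'e inequality on $\Omega_R$, by the resolvent decay $|\lambda|^{-\alpha}$ of Proposition \ref{proppdecay}, i.e.\ by $t^{-1+\alpha}$ in the time variable. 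Writing the local pressure as the convolution series dictated by \eqref{const2}, with base operator of order $\alpha_0=1-\alpha$ and with $H(t)$ satisfying $\|H(t)\|\le Ct^{\alpha-1}e^{\sigma_0 t}$ by \eqref{shest}, Lemma \ref{lemserires} sums the series and preserves the rate, giving $\|p(t)\|_{q,\Omega_R}\le Ct^{-1+\alpha}e^{\sigma t}\|f\|_{q,\Omega}$ and hence \eqref{pest2} for $t\le T$.

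With \eqref{pest2} available, \eqref{partialtest2} follows directly from \eqref{eq2}: for $\psi\in W_0^{1,q'}(\Omega_R)$ the right-hand side is bounded by $(C\|u\|_{W^{1,q}(\Omega_R)}+C\|p\|_{q,\Omega_R})\|\psi\|_{W^{1,q'}(\Omega_R)}$, and inserting the gradient bound $\|u\|_{W^{1,q}(\Omega_R)}\le Ct^{-1/2}\|f\|_{q,\Omega}$ of Theorem \ref{thm1} together with \eqref{pest2} (and noting $t^{-1/2}\le C(T)t^{-1+\alpha}$ for $t\le T$ since $\alpha<\tfrac12$) yields the claim after taking the supremum over $\psi$. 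Estimate \eqref{partialtest3} then drops out: since $\nabla\phi$ is compactly supported in $A_{R_0,R}$, the functional $\psi\mapsto(\partial_tu,\psi\nabla\phi)_{A_{R_0,R}}$ defines an element of $W^{1,q'}(A_{R_0,R})^*$ of norm at most $C\|\partial_tu\|_{W^{-1,q}(\Omega_R)}$, so Proposition \ref{prop-Bogovskii2} gives $\|\BB_{A_{R_0,R}}[\partial_tu\cdot\nabla\phi]\|_{q,\Omega_R}\le C\|\partial_tu\|_{W^{-1,q}(\Omega_R)}\le Ct^{-1+\alpha}\|f\|_{q,\Omega}$. The regularity $u\in C^1((0,\infty);W^{-1,q}(\Omega_R))$ is inherited from the analyticity of $(e^{-t\CL_{D,\eta,\omega}})$, the $C^1$-property of $T_{\BR^3,\eta,\omega}(t)f$ in $W^{-1,q}$ furnished by Lemma \ref{lempartialtwhole}, and the uniform convergence of the differentiated series provided by Lemma \ref{lemserires}.

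The hard part is the passage to \eqref{pest2}. The obstruction is that $(e^{-t\CL_{\Omega,\eta,\omega}})$ is only a $C_0$-semigroup, not analytic, when $\omega\ne0$, so the fractional-power/moment argument that produced \eqref{pest} and \eqref{partialtest} on the bounded domain $D$ (where $\CL_{D,\eta,\omega}^{1-\alpha}e^{-t\CL_{D,\eta,\omega}}$ makes sense) cannot be run on $\Omega$ itself. The rate $t^{-1+\alpha}$ must therefore be manufactured on $D$, transported to $\Omega_R$ through the parametrix, and propagated through the perturbation series; the technical care lies in checking that term-by-term time differentiation is legitimate in the weak topology of $W^{-1,q}(\Omega_R)$ and that the orders $\alpha_0=1-\alpha$ and $1-\alpha$ of the base pressure operator and of $H$ both remain in $[0,1)$, which is exactly where $\alpha<\tfrac12$ is used to make Lemma \ref{lemserires} applicable.
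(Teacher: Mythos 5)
Your treatment of \eqref{eq2} and \eqref{pest3} (testing against $\BB_{\Omega_R}[g]$, de Rham) and of \eqref{partialtest3} (from \eqref{partialtest2} via Proposition \ref{prop-Bogovskii2}) coincides with the paper's. The divergence --- and the gap --- lies in making \eqref{pest2} the primary estimate. The paper proceeds in the opposite order: it first proves \eqref{partialtest2} by differentiating the series $\sum_j T_{\Omega,\eta,\omega,j}(t)f$ term by term in $W^{-1,q}(\Omega_R)$, the base bound $\|\partial_tT_{\Omega,\eta,\omega,0}(t)f\|_{W^{-1,q}(\Omega_R)}\le Ct^{-1+\alpha}\|f\|_{q,\Omega}$ coming from Lemma \ref{lempartialtwhole} (rate $t^{-1/2}$ for the whole-space piece) and from \eqref{partialtest} of Theorem \ref{proplqlrsmooth} (rate $t^{-1+\alpha}$ for the piece on $D$), with the iterates summed via Beta-function estimates; then \eqref{eq2} and \eqref{pest3} follow by testing for $f\in C^\infty_{0,\sigma}(\Omega)$ and approximating; finally \eqref{pest2} is read off from \eqref{pest3}, \eqref{partialtest2}, and $\|u(t)\|_{W^{1,q}(\Omega_R)}\le Ct^{-1/2}\|f\|_{q,\Omega}\le C(T)\,t^{-1+\alpha}\|f\|_{q,\Omega}$ (Theorem \ref{thm1}; this is where $\alpha<1/2$ enters). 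Your alternative proof of \eqref{pest2} rests on two unproven steps. First, the assertion that the zero-mean pressure of $e^{-t\CL_{\Omega,\eta,\omega}}f$ on $\Omega_R$ \emph{is} the convolution series dictated by \eqref{const2} is precisely the nontrivial identification: to verify it you must either show that each pair (velocity term, candidate pressure term) solves its own forced equation and telescope --- which requires differentiating the velocity series term by term in $W^{-1,q}(\Omega_R)$, i.e.\ the very estimate \eqref{partialtest2} you are postponing --- or run a Laplace-transform uniqueness argument for the pressure, which needs a priori continuity and exponential bounds on the de~Rham pressure, again obtained from \eqref{pest3} and bounds on $\partial_t u$. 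Second, ``resolvent decay $|\lambda|^{-\alpha}$ \dots\ i.e.\ by $t^{-1+\alpha}$ in the time variable'' is a non sequitur: decay of a Laplace transform along the positive real axis implies nothing about temporal decay of the underlying function. (This piece can be repaired by estimating the time-domain remainder $\Phi(t)f$ and its gradient part directly, using \eqref{pest}, \eqref{partialtest}, Lemma \ref{lempartialtwhole}, and Propositions \ref{prop-Bogovskii} and \ref{prop-Bogovskii2}, but that is not the argument you give.)

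There is also an internal circularity. Your final step invokes ``uniform convergence of the differentiated series provided by Lemma \ref{lemserires}'' to obtain $u\in C^1((0,\infty);W^{-1,q}(\Omega_R))$; establishing that convergence \emph{is} the paper's proof of \eqref{partialtest2}, after which \eqref{pest2} follows from \eqref{pest3} in one line and the entire pressure-series detour becomes superfluous. Conversely, if you drop that step, your deduction of \eqref{partialtest2} from \eqref{eq2} and \eqref{pest2} is legitimate only once you know $\partial_t u(t)$ exists in $W^{-1,q}(\Omega_R)$ --- and since $(e^{-t\CL_{\Omega,\eta,\omega}})_{t\ge0}$ is merely a $C_0$-semigroup when $\omega\ne0$, this holds a priori only for $f$ in the domain of the generator (e.g.\ $f\in C^\infty_{0,\sigma}(\Omega)$) and must be transferred to general $f\in L^q_\sigma(\Omega)$ by a density argument using the uniform bound \eqref{partialtest2} for smooth data. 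So, as written, the proposal either secretly contains the paper's proof or has a hole exactly where \eqref{pest2} is claimed.
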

\begin{proof}
We carry out the same argument in the proof of Hishida 
\cite[Prop. 5.1]{H20} to complete the proof. 
Let us use the form introduced in the proof of Theorem \ref{thm1}.
From Theorem \ref{proplqlrsmooth} and
Lemma \ref{lempartialtwhole}, there holds
$T_{\Omega,\eta,\omega,0}(t)f\in C^1((0,\infty);W^{-1,q}(\Omega_R))$ with
\begin{equation}
\|\partial_t T_{\Omega,\eta,\omega,0}(t)f\|_{W^{-1,q}(\Omega_R)}
\leq Ct^{-1+\alpha}\|f\|_{q,\Omega}
\end{equation}
for $t\leq T$ and $f\in L^q_\sigma(\Omega)$.
Hence, together with \eqref{shest},
we obtain
$T_{\Omega,\eta,\omega,j}(t)f\in C^1((0,\infty);W^{-1,q}(\Omega_R))$ for every $j \in \BN$ and
\begin{alignat}{3}
\partial_t T_{\Omega,\eta,\omega,1}(t)f & =H(t)f+\int_0^t\partial_tT_{\Omega,\eta,\omega,0}(t-\tau)
H(\tau)f\d \tau,\label{partialt0}\\
\partial_t T_{\Omega,\eta,\omega, j+1}(t)f & =\int_0^t\partial_tT_{\Omega,\eta,\omega,j}(t-\tau)H(\tau)f
\d \tau, & \qquad j\in \BN. \label{partialt2}
\end{alignat}
We deduce
\begin{align*}
\|\partial_t T_{\Omega,\eta,\omega,0}(t)f\|_{W^{-1,q}(\Omega_R)}
&\leq c_0t^{-1+\alpha}\|f\|_{q,\Omega},\\
\|\partial_t T_{\Omega,\eta,\omega,1}(t)f\|_{W^{-1,q}(\Omega_R)}
&\leq (c_0t^{-1+\alpha}
+c_0^2B(\alpha,\alpha)t^{-1+2 \alpha})\|f\|_{q,\Omega}\\
\|\partial_t T_{\Omega,\eta,\omega,j+1}(t)f\|_{W^{-1,q}(\Omega_R)}
&\leq \left(\frac{(c_0\Gamma(\alpha))^{j+1}}{\Gamma((j+1)\alpha)}
+\frac{(c_0\Gamma(\alpha))^{j+2}}{\Gamma((j+2)\alpha)}\right)
t^{-1+\alpha}\|f\|_{q,\Omega}
\end{align*}
for $t\leq T$ and $f\in L^q_\sigma(\Omega)$,
where $c_0$ is some fixed constant and
$B(\,\cdot\,,\,\cdot\,)$ and $\Gamma(\,\cdot\,)$ denote the beta function
and the gamma function, respectively. Therefore, the series
$\sum_{j=0}^\infty\partial_tT_{\Omega,\eta,\omega,j}(t)f$
converges in $W^{-1,q}(\Omega_R)$ uniformly with respect to
$t\in [\delta,T]$ for $0<\delta<T$ and
$\partial_t T(t)f=\sum_{j=0}^\infty\partial_tT_{\Omega,\eta,\omega,j}(t)f$
in $W^{-1,q}(\Omega_R)$ with
$\|\partial_t T(t)f\|_{W^{-1,q}(\Omega_R)}
\leq Ct^{-1+\alpha}\|f\|_{q,\Omega},$
which yields \eqref{partialtest2}. \par
Suppose $f\in C_{0,\sigma}^\infty(\Omega)$.
Then we have $e^{-t\CL_{\Omega,\eta,\omega}}f\in \SD(\CL_{\Omega,\eta,\omega})$.
By the definition of $\CL_{\Omega,\eta,\omega}$ and by
\begin{align*}
\|p(t)\|_{q,\Omega}=\sup_{g\in L^{q'}_0(\Omega),\|g\|_{q',\Omega}
\leq 1}\left|\int_\Omega p(t)\overline{g}\d x\right|,\qquad
L^q_0(\Omega):=\left\{F\in L^q(\Omega)\relmiddle|
\int_\Omega F\d x=0\right\},
\end{align*}
we have \eqref{eq2} and \eqref{pest3},
thereby, the estimate \eqref{pest2} holds.
We next take a general $f\in L^q_\sigma(\Omega)$,
then approximation procedure yields
$p_R\in C((0,\infty);L^q(\Omega_R))$.
Together with $u(t)=e^{-t\CL_{\Omega,\eta,\omega}}f$, we see that
\eqref{eq2}, \eqref{pest2}, and \eqref{pest3} follow.
By virtue of this procedure,
for any integer $k>0$, we obtain the pressure $p_{R+k}$
over $\Omega_{R+k}$ satisfying
$\int_{\Omega_{R+k}}p_{R+k}\d x=0$, but
it follows from
$(p_{R+k}(t)-p_{R+j},\div\psi)_{\Omega_{R+j}}=0$ for
$\psi\in C_0^\infty(\Omega_{R+j})$ and $k>j\geq 0$
that $p_{R+k}(x,t)=p_R(x,t)=c_k(t)$ almost everywhere
in $\Omega_R$ with
some $c_k(t)$ independent of $x\in \Omega_R$.
We thus obtain the pressure over $\Omega$ defined by
\begin{align*}
p(x,t)=
\begin{cases}
p_R(x,t),&x\in \Omega_R, \\
p_{R+k}(x,t)-c_k(t),\quad
&x\in \Omega_{R+k}\setminus \Omega_{R+k-1}, \quad k \in \BN,
\end{cases}
\end{align*}
which fulfills $\int_{\Omega_R}p(t)\d x=0$ as well as
\eqref{pest2} and \eqref{pest3}. The proof is complete.
\end{proof}

We also prepare the following lemma.

\begin{lem}\label{lemlrlr}
Let $\varepsilon_2$ and $q$ be subject to Convention \ref{conv-2}
and $\eta, \omega \in \BR^3$. 
Given $c_0>0,$ assume $\lvert \eta \rvert + \lvert \omega \rvert \leq c_0.$
Let $r\in[q,\infty)$.
Then there hold
\begin{align}
\|e^{-t\CL_{\Omega,\eta,\omega}}f\|_{W^{1,r}(\Omega)}
\leq C\|e^{-(t-1)\CL_{\Omega,\eta,\omega}}f\|_{r,\Omega}\label{lrlr}
\end{align}
and
\begin{align}\label{partialtlrlr}
\|\partial_te^{-t\CL_{\Omega,\eta,\omega}}f\|_{W^{-1,r}(\Omega_R)}
\leq C\|e^{-(t-1)\CL_{\Omega,\eta,\omega}}f\|_{r,\Omega}
\end{align}
for $t>1$ and $f\in L^q_\sigma(\Omega)$.
\end{lem}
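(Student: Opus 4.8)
The plan is to reduce both estimates to the fixed elapsed time $1$ by the semigroup property, and then to track the $L^r$-norm through the parametrix construction of Theorem \ref{thm1}. First I would set $g:=e^{-(t-1)\CL_{\Omega,\eta,\omega}}f$. Since $t-1>0$ and $q$ is subject to Convention \ref{conv-2}, Theorem \ref{thm1} (2) gives $g\in L^q_\sigma(\Omega)\cap L^r(\Omega)$, and the semigroup property yields $e^{-t\CL_{\Omega,\eta,\omega}}f=e^{-\CL_{\Omega,\eta,\omega}}g$. Hence it suffices to prove the unit-time bounds $\|e^{-\CL_{\Omega,\eta,\omega}}g\|_{W^{1,r}(\Omega)}\le C\|g\|_{r,\Omega}$ and $\|\partial_s e^{-s\CL_{\Omega,\eta,\omega}}g\big|_{s=1}\|_{W^{-1,r}(\Omega_R)}\le C\|g\|_{r,\Omega}$ for every $g\in L^q_\sigma(\Omega)\cap L^r(\Omega)$ and $r\in[q,\infty)$.

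Next I would revisit the decomposition \eqref{const1}--\eqref{const2} underlying $(e^{-s\CL_{\Omega,\eta,\omega}})_{s\ge0}$, now applied to the datum $g$ for $0<s\le 1$. Its building blocks are the far-field whole-space evolution $T_{\BR^3,\eta,\omega}(s)g^R$ and the near-field bounded-domain evolution $e^{-s\CL_{D,\eta,\omega}}g^D$, glued by Bogovski\u{\i} corrections supported in the annulus $K_1$ (with $g^R$ the zero extension of $g$ and $g^D:=\chi g-\BB_{K_2}[(\nabla\chi)\cdot g]$, as in the proof of Proposition \ref{propsurj}). The far-field blocks are harmless for every exponent: by the whole-space estimate \eqref{lqlrwhole} with $q=r$ one has $\|\nabla^j T_{\BR^3,\eta,\omega}(s)g^R\|_{r,\BR^3}\le Cs^{-j/2}\|g\|_{r,\Omega}$ for all $r\in(1,\infty)$, and Lemma \ref{lempartialtwhole} controls $\partial_s T_{\BR^3,\eta,\omega}(s)g^R$ in $W^{-1,r}(B_R(0))$ by $Cs^{-1/2}\|g\|_{r,\Omega}$, again with no restriction on $r$. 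The delicate block is the near-field one: although $r$ may lie outside the admissible range \eqref{p} for the bounded Lipschitz domain $D$, the domain $D$ is bounded, so $L^r(D)\hookrightarrow L^q(D)$ and $\|g^D\|_{q,D}\le C\|g^D\|_{r,D}\le C\|g\|_{r,\Omega}$. Feeding this into the base-$q$ smoothing estimate \eqref{lqlrsmooth} of Theorem \ref{proplqlrsmooth} (2) yields $\|\nabla^j e^{-s\CL_{D,\eta,\omega}}g^D\|_{r,D}\le Cs^{-\frac32(\frac1q-\frac1r)-\frac j2}\|g\|_{r,\Omega}$, while the already $L^r$-calibrated estimate \eqref{partialtest} of Theorem \ref{proplqlrsmooth} (3) bounds $\partial_s e^{-s\CL_{D,\eta,\omega}}g^D$ in $W^{-1,r}(D)$ by $Cs^{-1+\alpha}\|g\|_{r,\Omega}$.

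With these block estimates, the leading operator $S_{\eta,\omega}(s)$ maps $L^r_\sigma(\Omega)$ into $W^{1,r}(\Omega)$ with norm of order $s^{-1/2}$, and the same near/far splitting applied to the remainder term $\Phi_\lambda$ — using the whole-space and bounded-domain resolvent bounds \eqref{lresol}, the pressure decay of Proposition \ref{proppdecay}, and the bounded-domain embedding once more — shows that $H(s)$ is bounded on $L^r_\sigma(\Omega)$ with $\|H(s)\|_{\SL(L^r_\sigma(\Omega))}\le Cs^{\alpha-1}e^{\sigma_0 s}$, exactly as in \eqref{shest} but on the $L^r$-scale. I would then invoke the iterated-convolution Lemma \ref{lemserires}, once with $X_1=L^r_\sigma(\Omega)$ and $X_2=W^{1,r}(\Omega)$ and once with $X_2=W^{-1,r}(\Omega_R)$, to sum the series \eqref{tseries} and obtain the two unit-time bounds, hence \eqref{lrlr} and \eqref{partialtlrlr}. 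For \eqref{partialtlrlr} I would additionally use the correction estimate \eqref{partialtest3} of Proposition \ref{prop-6.1} to handle the Bogovski\u{\i} glue term in $W^{-1,r}(\Omega_R)$.

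The hard part will be precisely the near-boundary contribution in $W^{1,r}$ when $r$ is large and lies outside the range \eqref{p} in which the bounded-domain Stokes semigroup is well behaved: a direct $L^r$-$W^{1,r}$ smoothing estimate on $D$ is simply unavailable there. The device that removes this obstacle is the bounded-domain embedding $L^r(D)\hookrightarrow L^q(D)$, which lets one route everything through the admissible base exponent $q$ while keeping $\|g\|_{r,\Omega}$ on the right-hand side; the unbounded far-field, where no such embedding exists, is instead governed by the whole-space semigroup, for which every exponent is admissible. A secondary point, to be handled in the same spirit, is checking that the remainder operator $H(s)$ keeps its integrable singularity $s^{\alpha-1}$ on the $L^r$-scale so that Lemma \ref{lemserires} indeed applies.
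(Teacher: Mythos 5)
Your overall architecture — reduction to unit time by the semigroup property, the parametrix blocks from the proof of Theorem \ref{thm1}, the bounded-domain embedding for the near field, whole-space theory for the far field, and the iterated-convolution lemma — is the same as the paper's. But there is a genuine gap in the exponent bookkeeping, and it occurs exactly in the main case of the lemma. Your own near-field block estimate reads $\|\nabla^j e^{-s\CL_{D,\eta,\omega}}g^D\|_{r,D}\le Cs^{-\frac32(\frac1q-\frac1r)-\frac j2}\|g\|_{r,\Omega}$; consequently $S_{\eta,\omega}(s)$ maps into $W^{1,r}(\Omega)$ with singularity $s^{-\frac32(\frac1q-\frac1r)-\frac12}$, \emph{not} $s^{-1/2}$ as you assert. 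Since $q$ may lie below $3$ (e.g.\ $q=2$) while $r$ may be arbitrarily large, the exponent $\frac32(\frac1q-\frac1r)+\frac12$ can reach or exceed $1$ (already for $q=2$, $r\ge 6$); then Lemma \ref{lemserires}, which requires $\alpha_0\in[0,1)$, is inapplicable, and the convolution integrals $\int_0^s(s-\tau)^{-\alpha_0}\tau^{-1+\alpha}\d\tau$ defining the series diverge. For the same reason your appeal to \eqref{partialtest} with base $q$ and target $r$ is illegitimate: Theorem \ref{proplqlrsmooth} (3) requires $1/q-1/r\le(1-2\alpha)/3<1/3$, which fails in that regime, and you never verify this hypothesis.

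The paper repairs precisely this point by inserting an intermediate exponent $q_0$: it takes $q_0=r$ when $r<3/(1-3\varepsilon_2)$, and otherwise any $q_0\in(\max\{3,q\},\,3/(1-3\varepsilon_2))$, together with $\alpha=\alpha(q_0)$ satisfying \eqref{alphacondi} and $2\alpha\le 1-3/q_0$. Such a $q_0$ is still admissible, so the bounded-domain theory and the Helmholtz projection are available at level $q_0$; since $q_0>3$ one has $\frac32(\frac1{q_0}-\frac1r)+\frac12<1$ for every $r\ge q_0$, and $1/q_0-1/r\le(1-2\alpha)/3$ holds, so both Lemma \ref{lemserires} and Theorem \ref{proplqlrsmooth} (3) apply. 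The near field is then routed through $L^r(D)\hookrightarrow L^{q_0}(D)$ — your device, but with $q_0$ in place of $q$ — and the iteration is run in the intersection space $L^{q_0}_\sigma(\Omega)\cap L^r(\Omega)$; note that on the pure space $L^r_\sigma(\Omega)$, where you claim $H(s)$ is bounded, neither $P_\Omega$ nor the semigroup is meaningful when $r$ violates \eqref{p'}. Finally one tracks (as the paper remarks, this is why Lemma \ref{lemserires} cannot be cited verbatim) that $\|H(s)g\|_{L^{q_0}\cap L^r}\le Cs^{-1+\alpha}\|g\|_{r,\Omega}$ and the block bounds involve only the $L^r$-norm, so the unit-time estimate closes with $\|g\|_{r,\Omega}$ alone on the right-hand side. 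With $q_0$ in place of $q$ your argument becomes essentially the paper's proof; as written, it breaks down whenever $1/q-1/r\ge 1/3$.
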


\begin{proof}
We use the form introduced in the proof of Theorem \ref{thm1}.
We fix $q_0$ satisfying
\begin{align*}
\max\{3,q\}<q_0<\frac{3}{1-3\varepsilon_2}
\quad{\rm if}~r\geq \frac{3}{1-3\varepsilon_2},\qquad q_0=r
\quad{\rm if}~r< \frac{3}{1-3\varepsilon_2}
\end{align*}
and take $\alpha=\alpha(q_0)$ so that \eqref{alphacondi}
with $q=q_0$
and $2\alpha\leq 1-3/q_0$ are fulfilled,
which implies $1/q_0-1/r\leq (1-2\alpha)/3$.
Let $T>0$.
Then the operators
$T_{\Omega,\eta,\omega,0}(t):L^{q_0}_\sigma(\Omega)\cap
L^r(\Omega)\rightarrow W^{1,r}(\Omega)$ and
$H(t):L^{q_0}_\sigma(\Omega)\cap L^r(\Omega)
\rightarrow L^{q_0}_\sigma(\Omega)
\cap L^r(\Omega)$ fulfill
\begin{align*}
\|T_{\Omega,\eta,\omega,0}(t)f\|_{W^{1,r}(\Omega)}
& \leq Ct^{-\frac{3}{2}(\frac{1}{q_0}-\frac{1}{r})-\frac{1}{2}}
\|f\|_{q_0,\Omega}\leq Ct^{-\frac{3}{2}(\frac{1}{q_0}-\frac{1}{r})
-\frac{1}{2}}\|f\|_{L^{q_0}(\Omega)\cap L^r(\Omega)},\\
\|H(t)f\|_{L^{q_0} (\Omega) \cap L^r (\Omega)}
& \leq Ct^{-1+\alpha}\|f\|_{r,\Omega}
\end{align*}
for $0 < t\leq T$ and 
$f\in L^{q_0}_\sigma(\Omega)\cap L^r(\Omega)$, 
where $\|f\|_{L^{q_0}(\Omega)\cap L^r(\Omega)}=
\max\{\|f\|_{q_0,\Omega},\|f\|_{r,\Omega}\}$. 
From these estimates, carrying out the same procedure as in the proof of 
Lemma \ref{lemserires} implies 
\begin{align}
\label{w1r-est}
\|e^{-t\CL_{\Omega,\eta,\omega}}f\|_{W^{1,r}(\Omega)}
\leq Ct^{-\frac{3}{2}(\frac{1}{q_0}-\frac{1}{r})-\frac{1}{2}}
\|f\|_{r,\Omega}
\end{align}
for $0 < t \le T$ and 
$f\in L^{q_0}_\sigma(\Omega)\cap L^r(\Omega)$. 
Here, we note that \eqref{w1r-est} do not directly follow
from Lemma \ref{lemserires} since 
Lemma \ref{lemserires} with 
$X_1=L^{q_0}_\sigma(\Omega)\cap L^r(\Omega)$ and
$X_2=W^{1,r}(\Omega)$ yields \eqref{w1r-est}, in which 
$\|f\|_{r,\Omega}$ is replaced 
by  $\|f\|_{L^{q_0}_\sigma(\Omega)\cap L^r(\Omega)}
(\ge \|f\|_{r,\Omega})$.
The estimate \eqref{w1r-est} combined with 
$e^{-t\CL_{\Omega,\eta,\omega}}f=e^{-\CL_{\Omega,\eta,\omega}}
e^{-(t-1)\CL_{\Omega,\eta,\omega}}f$ asserts \eqref{lrlr}.
Similarly, it follows from \eqref{partialt0} and \eqref{partialt2} that
\begin{align*}
\|\partial_\tau e^{-\tau\CL_{\Omega,\eta,\omega}}f\|_{W^{-1,r}(\Omega_R)}
\leq C\tau^{-1+\alpha}\|f\|_{r,\Omega}
\end{align*}
for $0 < \tau \le T$ and $f\in L^{q_0}_\sigma(\Omega)\cap L^r(\Omega)$,
where $q_0$ and $\alpha=\alpha(q_0)$ are defined as above.
Moreover, we have
\begin{align*}
\frac1h \big(e^{-(t+h)\CL_{\Omega,\eta,\omega}}f-e^{-t\CL_{\Omega,\eta,\omega}}f \big)
=\frac1h \big(e^{-(1+h)\CL_{\Omega,\eta,\omega}}-e^{-\CL_{\Omega,\eta,\omega}}\big) e^{-(t-1)
\CL_{\Omega,\eta,\omega}}f
\end{align*}
for $t>1$, $h>-1$, and $f\in L^{q}_\sigma(\Omega)$.
Hence, there holds
\begin{equation}
\partial_te^{-t\CL_{\Omega,\eta,\omega}}f
=\big(\partial_{\tau}e^{-\tau\CL_{\Omega,\eta,\omega}}
e^{-(t-1)\CL_{\Omega,\eta,\omega}}f\big)\big|_{\tau=1} 
\qquad \text{in $W^{-1,r}(\Omega_R)$}
\end{equation}
with
$e^{-(t-1)\CL_{\Omega,\eta,\omega}}f\in L^{q_0}_\sigma(\Omega)
\cap L^r(\Omega)$.
Thus the estimate \eqref{partialtlrlr} follows.
\end{proof}

We now investigate how the rate $\sigma_2$ in 
\eqref{local} below inherits from the rate $\sigma_1$ in the 
local energy decay estimates \eqref{drest}.

\begin{lem}\label{lemsigma1}
Let $\varepsilon_2$ and $q$ be subject to Convention \ref{conv-2} 
and $\eta, \omega \in \BR^3$. 
Given $c_0>0,$ assume $\lvert \eta \rvert + \lvert \omega \rvert \leq c_0.$
Let $R\in(R_0+1,\infty)$ and let $\phi\in C_0^\infty(B_R(0))$
fulfill $\phi=1$ in $B_{R_0+1}(0)$.
Assume that the following estimate holds with some $\sigma_1>0:$
\begin{align}\label{drest}
\|e^{-t\CL_{\Omega,\eta,\omega}}f\|_{W^{1,q}(\Omega_R)}+
\|\partial_te^{-t\CL_{\Omega,\eta,\omega}}f\|_{W^{-1,q}(\Omega_R)}
\leq Ct^{-\sigma_1}
\|f\|_{q,\Omega}
\end{align}
for $t\geq 2$ and $f\in L^q_\sigma(\Omega)$ with
$f=0$ a.e. $\BR^3\setminus B_{R_0}(0)$. Then there holds
\begin{multline}
\label{local}
\|e^{-t\CL_{\Omega,\eta,\omega}}f\|_{W^{1,q}(\Omega_R)}+
\|\partial_te^{-t\CL_{\Omega,\eta,\omega}}f\|_{W^{-1,q}(\Omega_R)} \\
+ \|\BB_{A_{R_0,R}}[\partial_te^{-t\CL_{\Omega,\eta,\omega}}
f\cdot\nabla\phi]\|_{q,\Omega_R}
+\|p(t)\|_{q,\Omega_R}
\leq Ct^{-\sigma_2}\|f\|_{q,\Omega}
\end{multline}
for $t\geq 2$ and $f\in L^q_\sigma(\Omega)$,
where $p(t)$ is the pressure associated with
$e^{-t\CL_{\Omega,\eta,\omega}}f$ and
\begin{align}\label{sigma1def}
\sigma_2:=\min\left\{\sigma_1,\frac{3}{2q}+\sigma_1-1
-\epsilon,\frac{3}{2q}\right\}
\end{align}
with arbitrarily small $\epsilon>0.$
Here, the notation $A_{R_0,R}=\{x\in\BR^3\mid R_0<|x|<R\}$ has been used.
\end{lem}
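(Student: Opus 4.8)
The plan is to adapt the cut-off procedure of Hishida \cite[Prop.~6.1]{H20} to the present Lipschitz setting: I would split $u(t):=e^{-t\CL_{\Omega,\eta,\omega}}f$ into a far-field part carried by the whole-space semigroup and a near-field part driven by compactly supported data, and then feed the hypothesis \eqref{drest} into a Duhamel--duality argument. As a preliminary reduction, I note that it suffices to establish the asserted decay for the two ``core'' quantities $\|u(t)\|_{W^{1,q}(\Omega_R)}$ and $\|\partial_t u(t)\|_{W^{-1,q}(\Omega_R)}$. Indeed, since $\nabla\phi$ is smooth and supported in $A_{R_0,R}$, the distribution $\partial_t u\cdot\nabla\phi$ belongs to $W^{1,q'}(A_{R_0,R})^*$ with norm bounded by $\|\partial_t u\|_{W^{-1,q}(\Omega_R)}$, so Proposition \ref{prop-Bogovskii2} controls the Bogovski\u{\i} term, while the pressure is controlled through \eqref{pest3} of Proposition \ref{prop-6.1}. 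Both extra terms therefore inherit the rate $t^{-\sigma_2}$ once the core quantities do.

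Following the proof of Lemma \ref{proplqlr}, I would fix the cut-off $\phi$ and set
\[
v(t):=u(t)-(1-\phi)T_{\BR^3,\eta,\omega}(t)f^R-\BB_{A_{R_0,R}}\big[(\nabla\phi)\cdot T_{\BR^3,\eta,\omega}(t)f^R\big],
\]
so that $v$ solves the exterior problem with compactly supported initial datum $\widetilde f=\phi f-\BB_{A_{R_0,R}}[(\nabla\phi)\cdot f]$ and a forcing $F_1$ supported in $A_{R_0,R}$ obeying the bounds \eqref{F}. On $\Omega_R$ one has $u=v+(\text{far field})$, and the far-field terms, being evaluated over the bounded set $\Omega_R$, are estimated by the whole-space bounds \eqref{lqlrwhole} and Lemma \ref{lempartialtwhole} (supplemented, for large $t$, by rewriting $\partial_t T_{\BR^3,\eta,\omega}(t)f^R$ through the equation in $W^{-1,q}(\Omega_R)$); this yields the rate $t^{-3/(2q)}$ for both core quantities when $t\ge2$. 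For $v$ I would use the Duhamel representation $v(t)=e^{-t\CL_{\Omega,\eta,\omega}}\widetilde f+\int_0^t e^{-(t-s)\CL_{\Omega,\eta,\omega}}P_\Omega F_1(s)\d s$; since $\widetilde f$ is compactly supported, the hypothesis \eqref{drest} applies directly to the first term and contributes the rate $t^{-\sigma_1}$.

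The forcing integral is the crux, and the obstruction is that $P_\Omega F_1(s)$ is no longer compactly supported, so \eqref{drest} cannot be invoked for it directly. I would resolve this exactly as in the proof of Lemma \ref{proplqlr}, by passing to the weak form: testing against $\psi\in C^\infty_{0,\sigma}(\Omega_R)$ and using $\CL_{\Omega,\eta,\omega}^*=\CL_{\Omega,-\eta,-\omega}$ gives
\[
\big(e^{-(t-s)\CL_{\Omega,\eta,\omega}}P_\Omega F_1(s),\psi\big)_\Omega=\big(F_1(s),e^{-(t-s)\CL_{\Omega,-\eta,-\omega}}\psi\big)_{A_{R_0,R}},
\]
in which now both $F_1(s)$ and $\psi$ are compactly supported, so only the local norm of the adjoint-evolved test function enters. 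Applying the local energy decay to $e^{-(t-s)\CL_{\Omega,-\eta,-\omega}}\psi$ (i.e.\ \eqref{drest} with $(-\eta,-\omega)$, admissible since $\lvert\eta\rvert+\lvert\omega\rvert\le c_0$) supplies the factor $(t-s)^{-\sigma_1}$, while \eqref{F} gives $\|F_1(s)\|_{q}\lesssim s^{-1/2}$ for $s\le1$ and $\lesssim s^{-3/(2q)}$ for $s\ge1$. Splitting $\int_0^t=\int_0^1+\int_1^{t-2}+\int_{t-2}^t$, the first integral contributes $t^{-\sigma_1}$, the last contributes $t^{-3/(2q)}$ (using only the local $L^{q'}$-boundedness of the adjoint semigroup over the bounded time span $t-s\le2$), and the middle is a Beta-type convolution $\int_1^{t-2}s^{-3/(2q)}(t-s)^{-\sigma_1}\d s\sim t^{-(3/(2q)+\sigma_1-1)}$, with an arbitrarily small loss $\epsilon$ in the borderline cases. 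Taking the supremum over $\psi$ and then the minimum of the three exponents produces exactly $\sigma_2$ as in \eqref{sigma1def}; the $W^{1,q}$-upgrade and the $\partial_t$-estimate for $v$ follow by running the same duality with test functions in the appropriate spaces and by invoking the equation together with Lemma \ref{lemlrlr} and Proposition \ref{prop-6.1}.

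The main difficulty is thus the loss of compact support under the Helmholtz projection, circumvented by the adjoint--duality device above; a secondary but pervasive point, forced by the Lipschitz geometry, is that $W^{2,q}$-regularity is unavailable, so every appearance of the pressure and of $\partial_t u$ near the boundary must be handled through the Bogovski\u{\i}/duality estimates of Propositions \ref{proppdecay} and \ref{prop-6.1} rather than through trace inequalities, and one must keep the exponents within the range dictated by Convention \ref{conv-2} so that all the convolution integrals converge.
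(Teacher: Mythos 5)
Your overall architecture coincides with the paper's: the reduction of the Bogovski\u{\i} and pressure terms to the two core quantities via Proposition \ref{prop-Bogovskii2} and \eqref{pest3}, the cut-off decomposition into a whole-space far field (decaying like $t^{-3/(2q)}$ locally) plus a perturbation $v$ with compactly supported initial datum, the direct application of \eqref{drest} to $e^{-t\CL_{\Omega,\eta,\omega}}\widetilde f$, and the splitting of the convolution integrals, whose exponent bookkeeping correctly reproduces $\sigma_2$. The genuine gap is in your treatment of the Duhamel forcing integral. You rightly observe that $P_\Omega F_1(s)$ is not compactly supported, but your duality fix does not prove the lemma as stated, for two reasons. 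First, the pairing $(F_1(s),e^{-(t-s)\CL_{\Omega,-\eta,-\omega}}\psi)$ requires local decay of the \emph{adjoint} semigroup acting on $L^{q'}$ data: you need \eqref{drest} for the parameters $(-\eta,-\omega)$ \emph{and} the conjugate exponent $q'$, neither of which is assumed, and neither follows from \eqref{drest} for $(q,\eta,\omega)$ by duality --- dualizing \eqref{drest} only controls pairings of $e^{-t\CL_{\Omega,-\eta,-\omega}}\psi$ against compactly supported \emph{solenoidal} $L^q$ fields, whereas $F_1(s)$ is precisely not solenoidal. Second, and more fundamentally, testing against $\psi\in C_{0,\sigma}^\infty(\Omega_R)$ cannot produce the norms asserted in \eqref{local}: since $(\nabla h,\psi)_{\Omega_R}=0$ for every such $\psi$, these pairings annihilate all local gradient fields and hence do not control even $\|\,\cdot\,\|_{L^q(\Omega_R)}$, let alone $\|\,\cdot\,\|_{W^{1,q}(\Omega_R)}$ or $\|\partial_t\,\cdot\,\|_{W^{-1,q}(\Omega_R)}$. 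This is exactly why the same device is legitimate in Lemma \ref{proplqlr}: there the target is a \emph{global} $L^r$ norm, which is recovered by pairing against globally solenoidal test functions, and the adjoint ingredients used there (energy relation, $L^{r'}$-boundedness) are unconditional. Recovering a true local norm would force you to test against Helmholtz projections of compactly supported non-solenoidal functions, which are again not compactly supported, so your obstruction reappears on the dual side; and your fallback via Lemma \ref{lemlrlr} fails as well, since that lemma bounds local $W^{1,q}$ norms by \emph{global} $L^q$ norms at time $t-1$, and the Duhamel integral has no useful global decay at this stage (decay at spatial infinity is Lemma \ref{lemsigma2}, proved after, and by means of, the present lemma).

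The paper's proof never leaves the given semigroup and exponent: it applies \eqref{drest}, merged with the short-time smoothing \eqref{lqlrsmooth2} into the two-regime bound $C(t-\tau)^{-1/2}(1+t-\tau)^{1/2-\sigma_1}$, directly to $e^{-(t-\tau)\CL_{\Omega,\eta,\omega}}$ acting on the compactly supported forcing $F_2(\tau)$, and treats the $\partial_t$-part analogously via \eqref{integral2} and \eqref{partialtest2}. The point you flagged is then settled not by duality but by reading the hypothesis as valid for data of the form $P_\Omega g$ with $g$ compactly supported (with $\|g\|_{q,\Omega}$ on the right-hand side). This extension costs nothing, because in the only place \eqref{drest} is ever verified --- the bootstrap estimate \eqref{lp0lq0} --- compact support enters solely through H\"older's inequality, and $\|P_\Omega g\|_{p_0,\Omega}\le C\|g\|_{p_0,\Omega}\le C\|g\|_{q,\Omega}$ holds for such $g$ by the boundedness of $P_\Omega$ under Convention \ref{conv-2}. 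If you wish to salvage your write-up, replace the duality step by this direct application, stating the mild extension of the hypothesis explicitly, rather than routing through the adjoint semigroup.
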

\begin{proof}
For a technical reason, let  $\phi_2\in C_0^\infty(B_{R+1}(0))$ 
fulfill $\phi_2=1$ in $B_{R}(0)$.
We regard $e^{-t\CL_{\Omega,\eta,\omega}}f$ as the perturbation
from the modification of $T_{\BR^3,\eta,\omega}(t)$, i.e.,
\begin{equation}
\label{f0def}
\begin{split}
e^{-t\CL_{\Omega,\eta,\omega}}f&=(1-\phi_2)T_{\BR^3,\eta,\omega}(t)f_0
+\BB_{A_{R_0+1,R+1}}[T_{\BR^3,\eta,\omega}(t)f_0\cdot\nabla \phi_2]+v(t), \\
f_0&=
\begin{cases}
(1-\phi)f+\BB_{A_{R_0,R}}[f\cdot\nabla \phi]\quad &\text{if}~x\in D,\\
0&\text{if}~x\in\BR^3\setminus D,
\end{cases}
\end{split}
\end{equation}
where $v(t)$ denotes the perturbation.
Then the $L^q$-$L^\infty$-estimate
of $T_{\BR^3,\eta,\omega}(t)$ together with 
Proposition \ref{prop-Bogovskii} implies
that the term $(1-\phi)T_{\BR^3,\eta,\omega}(t)f_0
+\BB_{A_{R_0+1,R+1}}[T_{\BR^3,\eta,\omega}(t)f_0\cdot\nabla \phi]$ has
the better decay rate $t^{-3/(2q)}$ than $t^{-\sigma_2}$.
Therefore, it suffices to estimate
\begin{align}\label{integral1}
v(t)=e^{-t\CL_{\Omega,\eta,\omega}}v(0)+
\int_0^te^{-(t-\tau)\CL_{\Omega,\eta,\omega}}F_2(\tau)\d \tau
\end{align}
and
\begin{align}\label{integral2}
\partial_tv(t)=\partial_te^{-t\CL_{\Omega,\eta,\omega}}v(0)+F_2(t)+
\int_0^t\partial_te^{-(t-\tau)\CL_{\Omega,\eta,\omega}}F_2 (\tau)\d \tau,
\end{align}
where $v(0)=\phi_2f
-\BB_{A_{R_0+1,R+1}}[f\cdot\nabla \phi_2]$ and
\begin{align*}
F_2(x,t)&=-2\nabla\phi_2
\cdot\nabla T_{\BR^3,\eta,\omega}(t)f_0
-[\Delta\phi_2+(\eta + \omega\times x)\cdot\nabla
\phi_2]T_{\BR^3,\eta,\omega}(t)f_0 \\
&\quad -\BB_{A_{R_0+1,R+1}}[\partial_tT_{\BR^3,\eta,\omega}(t)f_0\cdot\nabla\phi_2]
+\Delta\BB_{A_{R_0+1,R+1}}[T_{\BR^3,\eta,\omega}(t)f_0\cdot\nabla\phi_2] \\
& \quad +(\eta + \omega\times x)\cdot
\nabla \BB_{A_{R_0+1,R+1}}[T_{\BR^3,\eta,\omega}(t)f_0\cdot\nabla\phi_2]
-\omega\times \BB_{A_{R_0+1,R+1}}[T_{\BR^3,\eta,\omega}(t)f_0\cdot\nabla\phi_2].
\end{align*}
It follows that $\mathrm{supp}\, v(0)\subset \Omega_{R}$,
$\mathrm{supp}\, F_2\subset \Omega_{R}$. 
By Proposition \ref{prop-Bogovskii} and 
Proposition \ref{prop-Bogovskii2}, we also have 
$\|v(0)\|_{q,\Omega}\leq C\|f\|_{q,\Omega}$ and 
\begin{equation}
\|F_2(t)\|_{q,\Omega}
\leq Ct^{-\frac{1}{2}}(1+t)^{\frac{1}{2}-\frac{3}{2q}}
\|f\|_{q,\Omega}, \qquad t > 0.
\end{equation}
Together with \eqref{lqlrsmooth2} and the assumption \eqref{drest}
we find that
\begin{align*}
\|e^{-t\CL_{\Omega,\eta,\omega}}v(0)\|_{W^{1,q}(\Omega_R)}
\leq Ct^{-\sigma_1}\|f\|_{q,\Omega}
\end{align*}
for $t\geq 2$ and that
\begin{align*}
\|e^{-(t-\tau)\CL_{\Omega,\eta,\omega}}F_2(\tau)\|_{W^{1,q}(\Omega_R)}
\leq C(t-\tau)^{-\frac{1}{2}}(1+t-\tau)^{\frac{1}{2}-\sigma_1}
\tau^{-\frac{1}{2}}(1+\tau)^{\frac{1}{2}-\frac{3}{2q}}
\|f\|_{q,\Omega}
\end{align*}
for all $t>\tau$.
By applying these estimates to \eqref{integral1}, we obtain
\begin{align}
\|v(t)\|_{W^{1,q}(\Omega_R)} \leq Ct^{-\sigma_1}\|f\|_{q,\Omega}
+C\int_0^t(t-\tau)^{-\frac{1}{2}}(1+t-\tau)^{\frac{1}{2}-\sigma_1}
\tau^{-\frac{1}{2}}(1+\tau)^{\frac{1}{2}-\frac{3}{2q}}\d \tau
\|f\|_{q,\Omega}.
\end{align}
Setting
\begin{equation}
G_1 (t, \tau) := (t-\tau)^{-\frac{1}{2}}(1+t-\tau)^{\frac{1}{2}-\sigma_1}
\tau^{-\frac{1}{2}}(1+\tau)^{\frac{1}{2}-\frac{3}{2q}},
\end{equation}
we see that
\begin{align}
\label{int00}
\begin{split}
\int_0^1 G_1 (t,\tau) \d \tau &\leq Ct^{-\sigma_1}\int_0^1
\tau^{-\frac{1}{2}}(1+\tau)^{\frac{1}{2}-\frac{3}{2q}}\d \tau\leq
Ct^{-\sigma_1}, \\
\int_1^{\frac{t}{2}}G_1 (t,\tau) \d \tau&\leq Ct^{-\sigma_1}\int_1^{\frac{t}{2}}
\tau^{-\frac{3}{2q}}\d \tau=
Ct^{-\sigma_1} g_1 (t) \leq Ct^{-\sigma_2}, \\
\int_{\frac{t}{2}}^{t-1}G_1 (t,\tau) \d \tau&\leq Ct^{-\frac{3}{2q}}
\int_{\frac{t}{2}}^{t-1}
(t-\tau)^{-\sigma_1}\d \tau
=Ct^{-\frac{3}{2q}} g_2 (\sigma_1, t)
\leq Ct^{-\sigma_2}, \\
\int_{t-1}^tG_1 (t,\tau) \d \tau&\leq Ct^{-\frac{3}{2q}}\int_{t-1}^t
(t-\tau)^{-\frac{1}{2}}\d \tau
\leq Ct^{-\frac{3}{2q}}
\end{split}
\end{align}
for $t\geq 2$ with
\begin{equation}
g_1 (t) := \begin{cases}
Ct^{-\frac{3}{2q}+1}\quad&\text{if}~q>
\displaystyle\frac{3}{2},\\[12pt]
\log t &\text{if}~q=\displaystyle\frac{3}{2},\\[12pt]
1 &\text{if}~q<\displaystyle\frac{3}{2},
\end{cases}
\qquad
g_2 (\sigma_1, t) :=
\begin{cases}
t^{-\sigma_1+1}\quad&\text{if}~\sigma_1<1,\\
\log t&\text{if}~\sigma_1=1,\\
1&\text{if}~\sigma_1>1.
\end{cases}
\end{equation}
Combining these estimates implies
\begin{align*}
\|e^{-t\CL_{\Omega,\eta,\omega}}f\|_{W^{1,q}(\Omega_R)}
\leq Ct^{-\sigma_2}\|f\|_{q,\Omega}
\end{align*}
for $t\geq 2$ and $f\in L^q_\sigma(\Omega)$.
By taking into account \eqref{partialtest2}, \eqref{drest},
Proposition \ref{prop-Bogovskii}, and 
Proposition \ref{prop-Bogovskii2}, we have
\begin{align*}
\|\partial_te^{-t\CL_{\Omega,\eta,\omega}}v(0)
\|_{W^{-1,q}(\Omega_R)}\leq Ct^{-\sigma_1}\|f\|_{q,\Omega}
\end{align*}
for $t\geq 2$ and
\begin{align*}
\|\partial_te^{-(t-\tau)\CL_{\Omega,\eta,\omega}}F_2(\tau)
\|_{W^{-1,q}(\Omega_R)}
\leq C(t-\tau)^{-1+\alpha}(1+t-\tau)^{1-\alpha-\sigma_1}
\tau^{-\frac{1}{2}}(1+\tau)^{\frac{1}{2}-\frac{3}{2q}}
\|f\|_{q,\Omega}
\end{align*}
for $t>\tau$. Hence, it follows from \eqref{integral2} that
\begin{align*}
\|\partial_te^{-t\CL_{\Omega,\eta,\omega}}f\|_{W^{-1,q}(\Omega_R)}
\leq Ct^{-\sigma_2}\|f\|_{q,\Omega},
\end{align*}
which together with Proposition \ref{prop-Bogovskii2} 
and \eqref{pest3} completes the proof.
\end{proof}

\subsection{Proofs of Theorems \ref{thmlqlr} and \ref{thmlqlr2}}
To prove Theorem \ref{thmlqlr}, 
it remains to establish decay estimates outside $B_R(0)$
(near infinity). In paricular, 
we first investigate the relation between 
the rate ($\sigma_1$ or $\sigma_2$) 
in the estimates near the boundary and the one 
near infinity.  
Together with the local energy
decay estimates concerned in the previous subsection,
we will prove Theorem \ref{thmlqlr}. 
By real interpolation, we see that 
Theorem \ref{thmlqlr} induces Theorem \ref{thmlqlr2}. 
\begin{lem}\label{lemsigma2}
Let $\varepsilon_2$ and $q$ be subject to Convention \ref{conv-2}
and $\eta, \omega \in \BR^3$. 
Given $c_0>0,$ assume $\lvert \eta \rvert + \lvert \omega \rvert \leq c_0.$
Let $R\in(R_0+1,\infty)$.
Assume that \eqref{drest} with some $\sigma_1>0$ holds for
$t\geq 2$ and $f\in L^q_\sigma(\Omega)$ provided that
$f=0$ a.e. $\BR^3\setminus B_{R_0}$.
Then there holds
\begin{align}\label{lqlrinfy}
\|e^{-t\CL_{\Omega,\eta,\omega}}f\|_{r,\BR^3\setminus B_R(0)}\leq
Ct^{-\sigma_3}\|f\|_{q,\Omega}
\end{align}
for $t\geq 2$ and $f\in L^q_\sigma(\Omega)$ provided that
$q\leq r\leq\infty$ and $1/q-1/r<2/3$, where $\sigma_3$ is given by
\begin{align}\label{sigma2def}
\sigma_3:=\min\left\{\sigma_2,
\frac{3}{2}\left(\frac{1}{q}-\frac{1}{r}\right)\right\}.
\end{align}
In particular, if \eqref{drest} holds with some $\sigma_1>1$, then there holds
\begin{align}\label{gradlqlrinfy}
\|\nabla e^{-t\CL_{\Omega,\eta,\omega}}f\|_{r,\BR^3\setminus B_R(0)}\leq
\begin{cases}
Ct^{-\frac{3}{2}(\frac{1}{q}-\frac{1}{r})-\frac{1}{2}}
\|f\|_{q,\Omega}
\quad &\text{if \enskip $q\leq r\leq 3$},\\
Ct^{-\frac{3}{2q}}\|f\|_{q,\Omega}
&\text{if \enskip $\dfrac{3}{2\sigma_1} \leq q$ and $3<r\leq \infty$},
\end{cases}
\end{align}
for $t\geq 2$ and $f\in L^q_\sigma(\Omega)$ provided that
$1 \slash q - 1 \slash r < 1 \slash 3$.
\end{lem}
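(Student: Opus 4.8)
The plan is to reduce the exterior evolution near infinity to a whole-space problem by cutting off near the boundary, and then to read off the decay from the whole-space $L^q$–$L^r$ estimate \eqref{lqlrwhole} together with the local energy information: the hypothesis \eqref{drest} and its consequence \eqref{local} (with rate $\sigma_2$) furnished by Lemma \ref{lemsigma1}. Concretely, with $\phi$ as in the statement I set $w(t):=(1-\phi)e^{-t\CL_{\Omega,\eta,\omega}}f-\BB_{A_{R_0,R}}[\nabla\phi\cdot e^{-t\CL_{\Omega,\eta,\omega}}f]$, which is solenoidal on all of $\BR^3$, coincides with $e^{-t\CL_{\Omega,\eta,\omega}}f$ on $\BR^3\setminus B_R(0)$, and solves the whole-space problem $\partial_t w-\Delta w-(\eta+\omega\times x)\cdot\nabla w+\omega\times w+\nabla\Pi=G$ with $w(0)=(1-\phi)f-\BB_{A_{R_0,R}}[\nabla\phi\cdot f]$. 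The forcing $G$ collects the commutator terms $\nabla\phi\cdot\nabla u$, $(\Delta\phi)u$, $((\eta+\omega\times x)\cdot\nabla\phi)u$, $(\nabla\phi)p$, together with the Bogovskii corrections built from $\partial_t u$ and from $(-\Delta-(\eta+\omega\times x)\cdot\nabla+\omega\times)\BB_{A_{R_0,R}}[\nabla\phi\cdot u]$; crucially $G(\tau)$ is supported in the fixed annulus $A_{R_0,R}$, so every quantity entering $G$ is a local one. Since $u=w$ outside $B_R(0)$, it then suffices to bound $\|w(t)\|_{r,\BR^3}$ and $\|\nabla w(t)\|_{r,\BR^3}$.

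Next I would apply the Duhamel formula $w(t)=T_{\BR^3,\eta,\omega}(t)w(0)+\int_0^t T_{\BR^3,\eta,\omega}(t-\tau)P_{\BR^3}G(\tau)\d\tau$ and estimate with \eqref{lqlrwhole}. The data term gives $\|T_{\BR^3,\eta,\omega}(t)w(0)\|_{r,\BR^3}\le Ct^{-\frac32(\frac1q-\frac1r)}\|f\|_{q,\Omega}$, accounting for the exponent $\frac32(\frac1q-\frac1r)$ in $\sigma_3$. For the forcing, Lemma \ref{lemsigma1} together with the smoothing bounds of Theorem \ref{thm1} and Proposition \ref{prop-6.1} yield $\|G(\tau)\|_{q,\Omega}\le C\tau^{-1+\alpha}\|f\|_{q,\Omega}$ for $0<\tau\le2$ and $\|G(\tau)\|_{q,\Omega}\le C\tau^{-\sigma_2}\|f\|_{q,\Omega}$ for $\tau\ge2$. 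The key point is that, because $G(\tau)$ is supported in a fixed bounded set, I may feed it into \eqref{lqlrwhole} through any Lebesgue exponent $s\le q$ via $\|G(\tau)\|_{s,\BR^3}\le C\|G(\tau)\|_{q,\Omega}$; choosing $s$ with $\frac32(\frac1s-\frac1r)$ as close to $1$ as desired, which is possible precisely because $\frac1q-\frac1r<\frac23$, keeps the kernel singularity $(t-\tau)^{-\frac32(\frac1s-\frac1r)}$ integrable while forcing the Duhamel integral to decay at the rate dictated by $G$, namely $\sigma_2$ up to an arbitrarily small loss that is absorbed by the $-\epsilon$ in the definition \eqref{sigma1def} of $\sigma_2$. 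Splitting $\int_0^t=\int_0^2+\int_2^{t/2}+\int_{t/2}^t$ and running the beta-function bookkeeping as in the proof of Lemma \ref{lemsigma1} then produces a contribution of order $t^{-\sigma_2}$, so that $\|w(t)\|_{r,\BR^3}\le Ct^{-\min\{\sigma_2,\frac32(\frac1q-\frac1r)\}}\|f\|_{q,\Omega}=Ct^{-\sigma_3}\|f\|_{q,\Omega}$, which is \eqref{lqlrinfy}.

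For the gradient estimate \eqref{gradlqlrinfy} under the stronger hypothesis $\sigma_1>1$, I would differentiate the Duhamel formula and invoke the gradient bound in \eqref{lqlrwhole}, which carries the extra factor $(t-\tau)^{-1/2}$. The data term then decays like $t^{-\frac32(\frac1q-\frac1r)-\frac12}$, and the condition $\frac1q-\frac1r<\frac13$ makes the kernel $(t-\tau)^{-\frac32(\frac1q-\frac1r)-\frac12}$ integrable near $\tau=t$. In the range $q\le r\le3$ the same convolution analysis delivers exactly $t^{-\frac32(\frac1q-\frac1r)-\frac12}$, the forcing contribution being subordinate because $\sigma_1>1$. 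For $3<r\le\infty$ the gradient of the whole-space flow generated by a localized source is only borderline summable at spatial infinity, so the decay saturates at the steady-state rate $t^{-3/(2q)}$, the same threshold phenomenon recorded in the second Remark after Theorem \ref{thmlqlr}; here the hypothesis $\frac{3}{2\sigma_1}\le q$, i.e.\ $\sigma_1\ge\frac{3}{2q}$, is precisely what guarantees the localized forcing decays at least as fast as the target $t^{-3/(2q)}$, so that this rate is attained. As before $u=w$ outside $B_R(0)$ converts these into the claimed bounds on $\BR^3\setminus B_R(0)$.

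I expect the main obstacle to be the convolution bookkeeping: extracting precisely the rate $\sigma_3=\min\{\sigma_2,\frac32(\frac1q-\frac1r)\}$, rather than the lossy $t^{1-\frac32(\frac1q-\frac1r)-\sigma_2}$ coming from a crude single-exponent estimate, forces one to exploit the bounded support of $G$ and to tune the integrability exponent in \eqref{lqlrwhole} so that the kernel exponent sits just below $1$; likewise the saturation at $t^{-3/(2q)}$ in the gradient estimate requires a careful separate treatment of the far field, together with the exact role of the thresholds $\frac1q-\frac1r<\frac23$, $\frac1q-\frac1r<\frac13$, and $\sigma_1\ge\frac{3}{2q}$.
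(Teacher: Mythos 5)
Your overall device---cut off near the boundary, correct with the Bogovski\u{\i} operator so that the truncation solves a whole-space problem with a forcing supported in a fixed annulus, control that forcing by the local energy decay from Lemma \ref{lemsigma1}, and run Duhamel with the whole-space estimate \eqref{lqlrwhole}---is exactly the paper's argument, but the paper runs it \emph{only for $r>3$}, and that restriction is essential. Your key tuning step, ``choose $s\le q$ so that $\tfrac32(\tfrac1s-\tfrac1r)$ is as close to $1$ as desired,'' is impossible when $r\le 3$: it would require $\tfrac1s>\tfrac23+\tfrac1r\ge 1$. With the best admissible exponent the kernel decay is only $(t-\tau)^{-\beta}$ with $\beta\le\tfrac32\bigl(1-\tfrac1r\bigr)<1$, and then the portion $\int_1^{t/2}$ of the Duhamel integral is bounded by $Ct^{-\beta}\int_1^{t/2}\tau^{-\sigma_2}\d\tau\le Ct^{1-\beta-\sigma_2}$. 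Since the lemma assumes only $\sigma_1>0$, the number $\sigma_2$ may be arbitrarily small, and this bound can be much weaker than $t^{-\sigma_3}$ and can even \emph{grow} in $t$ (take $q=3/2$, $r=2$, $\sigma_1=1/10$: then $\sigma_3=\sigma_2\approx 1/10$ while $1-\beta-\sigma_2\ge 1/4-1/10>0$). The paper disposes of $q\le r\le3$ in one line, with no cut-off at all: such $r$ automatically satisfies \eqref{cond-r}, so Lemma \ref{proplqlr} gives $\|e^{-t\CL_{\Omega,\eta,\omega}}f\|_{r,\Omega}\le Ct^{-\frac32(\frac1q-\frac1r)}\|f\|_{q,\Omega}\le Ct^{-\sigma_3}\|f\|_{q,\Omega}$. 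Your proposal never invokes Lemma \ref{proplqlr}, so this case is a genuine gap.

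Even for $r>3$, keeping the kernel exponent strictly below $1$ costs a factor $t^{\delta}$ in the pieces $\int_1^{t/2}$ and $\int_{t/2}^{t}$, i.e.\ you obtain $t^{-\sigma_2+\delta}$ rather than $t^{-\sigma_2}$. Your claim that this loss ``is absorbed by the $-\epsilon$ in \eqref{sigma1def}'' fails in general: that $\epsilon$ is attached only to the middle entry of the minimum, so when $\sigma_2=\min\{\sigma_1,\tfrac{3}{2q}\}$---precisely the situation in the gradient part, where $\sigma_1>1$---no renaming of $\epsilon$ recovers the stated rate, and the exponents in \eqref{gradlqlrinfy} (which carry no $\epsilon$ and feed into \eqref{gradlqlq} and \eqref{gradlqlr2}) would not follow. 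The paper gets the clean rate by using \emph{both} exponents at once: $\|T_{\BR^3,\eta,\omega}(t-\tau)P_{\BR^3}F_3(\tau)\|_{r,\BR^3}$ is bounded by the kernel $(t-\tau)^{-\frac32(\frac1q-\frac1r)}(1+t-\tau)^{-\frac32(\frac1{s_0}-\frac1q)}$ with $1<s_0<\min\{3/2,q\}$ and $\tfrac1{s_0}-\tfrac1r>\tfrac23$ (feasible exactly because $r>3$), so the singularity at $\tau=t$ has integrable exponent $\tfrac32(\tfrac1q-\tfrac1r)<1$ while the long-range exponent exceeds $1$ and the $\tau$-integral converges with no $t^{\delta}$ factor. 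The gradient estimate needs the analogous two-exponent choice $\tfrac1{s_1}-\tfrac1r>\tfrac13$ (feasible only for $r>3/2$), a separate computation for $r\le 3/2$ using $\sigma_2\ge1$, and the observation that $1/q-1/r<1/3<(2\sigma_1-1)/3$; your sketch addresses none of this bookkeeping. (A minor point: your truncation should read $w=(1-\phi)e^{-t\CL_{\Omega,\eta,\omega}}f+\BB_{A_{R_0,R}}[e^{-t\CL_{\Omega,\eta,\omega}}f\cdot\nabla\phi]$; with your minus sign $\div w=-2\nabla\phi\cdot e^{-t\CL_{\Omega,\eta,\omega}}f\ne0$, so $w$ is not admissible data for $T_{\BR^3,\eta,\omega}$.)
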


\begin{proof}
Let $r\leq 3$. In this case, there holds $|1/r-1/2|<1/6+\varepsilon_2$.
Lemma \ref{proplqlr} yields
\begin{align*}
\|e^{-t\CL_{\Omega,\eta,\omega}}f\|_{r,\BR^3\setminus B_R(0)}\leq
\|e^{-t\CL_{\Omega,\eta,\omega}}f\|_{r,\Omega}\leq
Ct^{-\frac{3}{2}(\frac{1}{q}-\frac{1}{r})}\|f\|_{q,\Omega}\leq
Ct^{-\sigma_3}\|f\|_{q,\Omega},
\end{align*}
from which \eqref{lqlrinfy} follows for $r\leq 3$. \par
Next, let $r>3$.
Let $\phi\in C_0^\infty(B_R(0))$ be as in Lemma \ref{lemsigma1}.
For $f\in C_{0,\sigma}^\infty(\Omega)$, let
$p(t)$ be the pressure associated with $e^{-t\CL_{\Omega,\eta,\omega}}f$
subject to $\int_{\Omega_R}p(t)\d x=0$.
Set
\begin{align*}
v(t)=(1-\phi)e^{-t\CL_{\Omega,\eta,\omega}}f
+\BB_{A_{R_0,R}}[e^{-t\CL_{\Omega,\eta,\omega}}f\cdot\nabla \phi],
\qquad p_v(t)=(1-\phi)p.
\end{align*}
It suffices to deduce the estimate of $\|v(t)\|_{r,\Omega}$
by using the integral equation
\begin{align}\label{integraleq}
v(t)=T_{\BR^3,\eta,\omega}(t)f_0+\int_0^t T_{\BR^3,\eta,\omega}(t-\tau)
P_{\BR^3}F_3(\tau)\d \tau,
\end{align}
where $f_0$ is defined by \eqref{f0def}$_2$ and $F_3$ is given by
\begin{align*}
F_3 (x,t)=&\,2\nabla\phi
\cdot\nabla e^{-t\CL_{\Omega,\eta,\omega}}f
-[\Delta\phi+(\eta + \omega\times x)\cdot\nabla \phi]
e^{-t\CL_{\Omega,\eta,\omega}}f
-\Delta\BB_{A_{R_0,R}}[e^{-t\CL_{\Omega,\eta,\omega}}f\cdot\nabla\phi]
\\
&-(\eta + \omega\times x)\cdot
\nabla \BB_{A_{R_0,R}}[e^{-t\CL_{\Omega,\eta,\omega}}f\cdot\nabla \phi]
+\omega\times \BB_{A_{R_0,R}}[e^{-t\CL_{\Omega,\eta,\omega}}f\cdot\nabla \phi]\\
&+\BB_{A_{R_0,R}}[\partial_te^{-t\CL_{\Omega,\eta,\omega}}f
\cdot\nabla \phi]-(\nabla\phi)p.
\end{align*}
Since $\mathrm{supp}\, F_3 \subset B_R(0)$,
we may employ \eqref{partialtest2}, \eqref{pest2},
\eqref{lqlrsmooth2}, Proposition \ref{prop-Bogovskii},  
Proposition \ref{prop-Bogovskii2} and Lemma \ref{lemsigma1} 
to obtain
\begin{align}
\|F_3(t)\|_{s,\BR^3}\leq Ct^{-1+\alpha}(1+t)^{1-\alpha-\sigma_2}
\|f\|_{q,\Omega}
\end{align}
for $t>0$ and $s\in(1,q]$. Hence, it follows that
\begin{align*}
\|T_{\BR^3,\eta,\omega}(t)f_0\|_{r,\BR^3}
& \leq Ct^{-\frac{3}{2}(\frac{1}{q}-\frac{1}{r})}\|f\|_{q,\Omega},\\
\|T_{\BR^3,\eta,\omega}(t-\tau)P_{\BR^3}F_3(\tau)\|_{r,\BR^3}
& \leq C G_2 (t, \tau)
\|f\|_{q,\Omega}
\end{align*}
for $t>\tau$. Here, we have set
\begin{equation}
G_2 (t,\tau) := (t-\tau)^{-\frac{3}{2}(\frac{1}{q}-\frac{1}{r})}
(1+t-\tau)^{-\frac{3}{2}(\frac{1}{s_0}-\frac{1}{q})}
\tau^{-1+\alpha}(1+\tau)^{1-\alpha-\sigma_2}.
\end{equation}
with $s_0$ (close to $1$)
such that $1<s_0<\min\{3/2,q\}$ and that $1/s_0-1/r>2/3$.
Then it turns out that
\begin{align*}
\|v(t)\|_{r,\Omega}&\leq
Ct^{-\frac{3}{2}(\frac{1}{q}-\frac{1}{r})}\|f\|_{q,\Omega}
+\int_0^t G_2 (t,\tau) \d \tau
\|f\|_{q,\Omega}.
\end{align*}
By the condition on $s_0$ and by $1/q-1/r<2/3$, we carry out
the same calculation as in \eqref{int00} to
conclude \eqref{lqlrinfy}.
To clarify the definition of $\sigma_3$,
we give the detailed calculation of the integral terms:
\begin{equation}
\label{integral3}
\begin{split}
\int_0^1G_2 (t,\tau) \d \tau&\leq
Ct^{-\frac{3}{2}(\frac{1}{s_0}-\frac{1}{r})}
\int_0^1\tau^{-1+\alpha}(1+\tau)^{1-\alpha-\sigma_2}\d \tau\leq
Ct^{-\frac{3}{2}(\frac{1}{q}-\frac{1}{r})}, \\
\int_1^{\frac{t}{2}}G_2 (t,\tau) \d \tau&
\leq Ct^{-\frac{3}{2}(\frac{1}{s_0}-\frac{1}{r})}
\int_1^{\frac{t}{2}}\tau^{-\sigma_2}\d \tau
\leq
C t^{- \frac32 (\frac1{s_0} - \frac1r)} g_2 (\sigma_2, t) \leq Ct^{-\sigma_3},\\
\int_{\frac{t}{2}}^{t-1}G_2 (t,\tau) \d \tau&
\leq Ct^{-\sigma_2}\int_{\frac{t}{2}}^{t-1}
(t-\tau)^{-\frac{3}{2}(\frac{1}{s_0}-\frac{1}{r})}\d \tau
\leq Ct^{-\sigma_2}, \\
\int_{t-1}^t G_2 (t,\tau) \d \tau& \leq Ct^{-\sigma_2}
\int_{t-1}^t(t-\tau)^{-\frac{3}{2}(\frac{1}{q}-\frac{1}{r})}\d \tau
\leq Ct^{-\sigma_2}.
\end{split}
\end{equation}
This concludes \eqref{lqlrinfy}.
\par To derive \eqref{gradlqlrinfy} under the assumption
$\sigma_1>1$, we use \eqref{integraleq}. Consider
the integral equation
\begin{align*}
\nabla v(t)=\nabla T_{\BR^3,\eta,\omega}(t)f_0
+\int_0^t \nabla T_{\BR^3,\eta,\omega}(t-\tau)P_{\BR^3}F_3(\tau)\d \tau.
\end{align*}
We take $s_1$ so that $1<s_1<\min\{3/2,q\}$.
We also suppose $1/s_1-1/r>1/3$ if $r>3/2$.
Then there holds 
\begin{equation}
\|\nabla v(t)\|_{r,\Omega}\leq
Ct^{-\frac{3}{2}(\frac{1}{q}-\frac{1}{r})-\frac{1}{2}}
\|f\|_{q,\Omega}
+\int_0^t G_3 (t, \tau) \d \tau
\|f\|_{q,\Omega}. \label{nablav}
\end{equation}
with
\begin{equation}
G_3 (t, \tau) := (t-\tau)^{-\frac{3}{2}(\frac{1}{q}-\frac{1}{r})-\frac{1}{2}}
(1+t-\tau)^{-\frac{3}{2}(\frac{1}{s_1}-\frac{1}{q})}
\tau^{-1+\alpha}(1+\tau)^{1-\alpha-\sigma_2}
\end{equation}
Similarly to \eqref{integral3}$_{1,2}$, there holds
\begin{equation}
\int_0^{t\slash 2} G_3 (t,\tau) \d \tau
= \bigg(\int_0^1 + \int_1^{t \slash 2} \bigg) G_3 (t, \tau) \d \tau
\le C t^{- \frac32 (\frac1q - \frac1r) - \frac12},
\end{equation}
where we have used $\sigma_2=3/(2q)$
in the case $\sigma_2<1$ due to $\sigma_1>1$.
We also deduce
\begin{equation}
\begin{split}
\label{int50}
\int_{\frac{t}{2}}^{t-1} G_3 (t,\tau) \d \tau &\leq Ct^{-\sigma_2}
\begin{dcases}
\displaystyle\int_{\frac{t}{2}}^{t-1}
(t-\tau)^{-\frac{3}{2}(\frac{1}{q}-\frac{1}{r})-\frac{1}{2}}\d \tau
\quad
&\text{if}~r\leq \displaystyle\frac{3}{2},\\
\displaystyle\int_{\frac{t}{2}}^{t-1}
(t-\tau)^{-\frac{3}{2}(\frac{1}{s_1}-\frac{1}{r})-\frac{1}{2}}
\d \tau
\quad
&\text{if}~r>\displaystyle\frac{3}{2},
\end{dcases} \\
&\leq
\begin{dcases}
Ct^{-\frac{3}{2}(\frac{1}{q}-\frac{1}{r})+\frac{1}{2}-\sigma_2}
\quad
&\text{if}~r\leq \displaystyle\frac{3}{2},\\
Ct^{-\sigma_2}
\quad
&\text{if}~r>\displaystyle\frac{3}{2},
\end{dcases}
\end{split}
\end{equation}
where we have used $1/s_1-1/r>1/3$ in the second inequality
if $r>3/2$. 
If $r\leq 3/2$, from $\sigma_1>1$ 
and $\sigma_2=\{\sigma_1,3/(2q)\}$, the last term in 
\eqref{int50} is bounded by $t^{-3(1/q-1/r)/2-1/2}$. If 
$r>3/2$ and $\sigma_1\ge 3/(2q)$, then we also find that 
the last term in \eqref{int50} is 
bounded by $t^{-\min\{3(1/q-1/r)/2+1/2,3/(2q)\}}$. 
If $3/2<r\leq 3$ and $\sigma_1\leq 3/(2q)$ 
i.e. $\sigma_1=\sigma_2$, then 
we impose the additional condition $1/q-1/r<(2\sigma_1-1)/3$ 
to find that the last term in \eqref{int50} is 
bounded by $t^{-3(1/q-1/r)/2-1/2}$. We thus conclude
\begin{equation}
\label{g3est2}
\int_{\frac{t}{2}}^{t-1} G_3 (t,\tau) \d \tau
\leq \begin{cases}
Ct^{-\frac{3}{2}(\frac{1}{q}-\frac{1}{r})-\frac{1}{2}}
\|f\|_{q,\Omega}
\quad &\text{if \enskip $ q\leq r\leq 3$ and
$\dfrac{1}{q}-\dfrac{1}{r}<\dfrac{2\sigma_1-1}{3}$}, \\
Ct^{-\frac{3}{2q}}\|f\|_{q,\Omega}
&\text{if \enskip $\dfrac{3}{2\sigma_1}\leq q$ and $3<r\leq \infty$}.
\end{cases}
\end{equation}
Under the condition $1/q-1/r<1/3$,
the term $\int_{t-1}^t G_2 (t,\tau) \d \tau$ may
be bounded from above by $Ct^{-\sigma_2}$.
In the case $\sigma_2=3/(2q)$,
we see $-\sigma_2\leq -\min\{3(1/q-1/r)/2+1/2,3/(2q)\}$, whereas
in the other case $\sigma_2=\sigma_1$, we have
$-\sigma_2\leq -3(1/q-1/r)/2-1/2$ if $1/q-1/r<1/3$ is fulfilled.
This observation together with \eqref{nablav}--\eqref{g3est2}
asserts \eqref{gradlqlrinfy} for $t\geq 2$ and
$f\in L^q_\sigma (\Omega)$ provided that $1/q-1/r<1/3$.
\end{proof}

\begin{rmk}
If $\sigma_1=1$, then we may not deduce
\begin{align*}
\|\nabla e^{-t\CL_{\Omega,\eta,\omega}}f\|_{r,\BR^3\setminus B_R(0)}\leq
Ct^{-\frac{3}{2}(\frac{1}{q}-\frac{1}{r})-\frac{1}{2}}
\|f\|_{q,\Omega}
\end{align*}
with $r=3$ by
the calculation as in the proof of Lemma \ref{lemsigma2}.
In fact, from \eqref{int50}, we need to prove
\begin{equation}
\int_{t/2}^{t-1} G_2 (t, \tau) \d \tau \leq Ct^{-\sigma_2}\|f\|_{q,\Omega}
\leq Ct^{-\frac32(\frac1q - \frac1r)-\frac12}\|f\|_{q,\Omega}
\end{equation}
if $r>3/2$. However, by $\sigma_1=1$, we know
$\sigma_2=\min\{1,3/(2q)-\epsilon\}$ 
with arbitrarily small $\epsilon>0$, and thus
$r\leq 3/(1+2\epsilon)$ is imposed.
\end{rmk}

The following proposition is a direct consequence of
Lemma \ref{lemsigma1} and Lemma \ref{lemsigma2}.
\begin{prop}\label{propr0}
Let $\varepsilon_2$ and $q$ be subject to Convention \ref{conv-2}
and $\eta, \omega \in \BR^3$. 
Given $c_0>0,$ assume $\lvert \eta \rvert + \lvert \omega \rvert \leq c_0.$
Let $R\in(R_0+1,\infty)$.
Assume that \eqref{drest} holds with some $\sigma_1=1/2+\kappa > 1 \slash 2$
for $t\geq 2$ and $f\in L^q_\sigma(\Omega)$ with
$f=0$ a.e. $\BR^3\setminus B_{R_0}(0)$.
Set
\begin{align*}
r_0:=
\begin{cases}
\displaystyle\frac{3}{1-2\kappa+2\epsilon}
\quad &{\rm if}~\kappa\leq\displaystyle\frac{1}{2},\\[12pt]
\infty&{\rm if}~\kappa>\displaystyle\frac{1}{2}
\end{cases}
\end{align*}
with arbitrarily small $\epsilon>0$.
Then the estimate \eqref{lplqomega} holds for
$t>0$ and $f\in L^q_\sigma(\Omega)$
provided that $q\leq r\leq\max\{r_0,3/(1-3\varepsilon_2)\}$.
In particular, if \eqref{drest} holds
with some $\sigma_1=1/2+\kappa > 1$ for
$t\geq 2$ and $f\in L^q_\sigma(\Omega)$ with
$f=0$ a.e. $\BR^3\setminus B_{R_0}(0)$, then there holds
\begin{align}\label{gradlqlq}
\|\nabla e^{-t\CL_{\Omega,\eta,\omega}}f\|_{q,\Omega}\leq
Ct^{-\min\{\frac{1}{2},\frac{3}{2q}\}}\|f\|_{q,\Omega}
\end{align}
for $t\geq 2$ and $f\in L^q_\sigma(\Omega)$.
\end{prop}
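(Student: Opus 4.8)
The plan is to obtain \eqref{lplqomega} by splitting $\Omega$ into the near-boundary region $\Omega_R$ and the exterior region $\BR^3\setminus B_R(0)$, feeding the hypothesis \eqref{drest} separately into Lemma \ref{lemsigma1} and Lemma \ref{lemsigma2}. For the short-time range $0<t\le 2$, the estimate \eqref{lplqomega} is already contained in \eqref{lqlrsmooth2} of Theorem \ref{thm1}, so the whole matter reduces to $t\ge 2$. First I would insert $\sigma_1=1/2+\kappa$ into the definition \eqref{sigma1def}, which gives $\sigma_2=\min\{1/2+\kappa,\,3/(2q)-1/2+\kappa-\epsilon,\,3/(2q)\}$, and record that both Lemma \ref{lemsigma1} (the full local energy estimate \eqref{local}) and Lemma \ref{lemsigma2} (the decay at infinity \eqref{lqlrinfy}) become available with this $\sigma_2$.

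For the exterior part, Lemma \ref{lemsigma2} yields $\|e^{-t\CL_{\Omega,\eta,\omega}}f\|_{r,\BR^3\setminus B_R(0)}\le Ct^{-\sigma_3}\|f\|_{q,\Omega}$ with $\sigma_3=\min\{\sigma_2,\tfrac32(\tfrac1q-\tfrac1r)\}$, so the target rate $\tfrac32(\tfrac1q-\tfrac1r)$ is attained precisely when $\tfrac32(\tfrac1q-\tfrac1r)\le\sigma_2$. By the form of $\sigma_2$ above this inequality is governed by the middle term and is equivalent to $\tfrac1r\ge(1-2\kappa+2\epsilon)/3$, i.e. $r\le r_0$; note that then automatically $\tfrac1q-\tfrac1r<2/3$, so the range restriction of Lemma \ref{lemsigma2} is met. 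For the near-boundary part I would argue that on the bounded domain $\Omega_R$ the local norm decays at the faster local-energy rate $\sigma_2$: when $r\le 3/(1-3\varepsilon_2)$ this is immediate from Lemma \ref{proplqlr} (which already controls all of $\Omega$), while for larger $r$ the $W^{1,q}(\Omega_R)$-bound \eqref{local} combined with the Sobolev embedding on $\Omega_R$ (and, beyond its direct range, a further integrability step via Lemma \ref{lemlrlr}) gives $\|e^{-t\CL_{\Omega,\eta,\omega}}f\|_{r,\Omega_R}\le Ct^{-\sigma_2}\|f\|_{q,\Omega}$. Since $\sigma_2\ge\tfrac32(\tfrac1q-\tfrac1r)$ exactly in the regime $r\le r_0$, adding the two regions produces \eqref{lplqomega} for $q\le r\le\max\{r_0,3/(1-3\varepsilon_2)\}$, the maximum reflecting that the base range comes from Lemma \ref{proplqlr} and the extension to $r_0$ from the splitting.

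The gradient assertion \eqref{gradlqlq} under $\sigma_1=1/2+\kappa>1$ I would handle as the diagonal case $r=q$. Here $\sigma_1>1$ forces $\sigma_2=3/(2q)$ and $r_0=\infty$, so the exterior estimate \eqref{gradlqlrinfy} of Lemma \ref{lemsigma2} applies and yields $\|\nabla e^{-t\CL_{\Omega,\eta,\omega}}f\|_{q,\BR^3\setminus B_R(0)}\le Ct^{-\min\{1/2,3/(2q)\}}\|f\|_{q,\Omega}$ (using the $q\le 3$ branch to get $t^{-1/2}$ and, for $q>3$, the branch with $3/(2\sigma_1)\le q$ to get $t^{-3/(2q)}$). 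The interior gradient is bounded by the $W^{1,q}(\Omega_R)$-part of \eqref{local} at rate $\sigma_2=3/(2q)\ge\min\{1/2,3/(2q)\}$, and combining the two regions gives \eqref{gradlqlq}.

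The main obstacle I anticipate is the near-boundary control in $L^r(\Omega_R)$ for $r$ beyond the Sobolev range of $W^{1,q}(\Omega_R)$: one must upgrade the local-energy bound to higher integrability on the bounded domain while keeping the decay rate pinned at $\sigma_2$, and then verify that the two regional rates glue to the single clean rate $\tfrac32(\tfrac1q-\tfrac1r)$. This forces careful bookkeeping of the three competing terms in $\sigma_2$ from \eqref{sigma1def}, \eqref{sigma2def} and the identification of $r_0$ as exactly the threshold at which the exterior rate $\sigma_3$ switches from $\tfrac32(\tfrac1q-\tfrac1r)$ to $\sigma_2$; the remaining steps are routine applications of the two preceding lemmas.
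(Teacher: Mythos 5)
Your strategy coincides with the paper's: split $\Omega$ into $\Omega_R$ and $\BR^3\setminus B_R(0)$, feed \eqref{drest} into Lemma \ref{lemsigma1} (interior, via the $W^{1,q}(\Omega_R)$-bound and Sobolev embedding) and into Lemma \ref{lemsigma2} (exterior), identify $r_0$ by comparing $\sigma_2$ with $\tfrac32(\tfrac1q-\tfrac1r)$, and obtain \eqref{gradlqlq} from \eqref{gradlqlrinfy} with $r=q$ together with \eqref{local}. However, there is a genuine gap in the exponent bookkeeping. You assert that $\tfrac32(\tfrac1q-\tfrac1r)\le\sigma_2$ is ``governed by the middle term'' of \eqref{sigma1def} and hence equivalent to $r\le r_0$. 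This ignores the first entry $\sigma_1$ of the minimum: the middle term $\tfrac{3}{2q}+\sigma_1-1-\epsilon$ is the relevant one only when it is $\le\sigma_1$, i.e. when $q\ge 3/(2+2\epsilon)$. Convention \ref{conv-2} allows $1/q$ up to $2/3+\varepsilon_2$, and for such small $q$, with $\kappa$ small and $r$ near $r_0$, one computes $\tfrac32\bigl(\tfrac1q-\tfrac1r\bigr)$ as large as $\tfrac12+\kappa+\tfrac32\varepsilon_2-\epsilon$, which exceeds $\sigma_1=\tfrac12+\kappa$ because $\epsilon$ is arbitrarily small while $\varepsilon_2>0$ is fixed. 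In that regime $\sigma_3=\sigma_2\le\sigma_1<\tfrac32(\tfrac1q-\tfrac1r)$, so neither the exterior rate from \eqref{lqlrinfy} nor the interior rate from \eqref{local} reaches the target, and your splitting argument does not close precisely for pairs $(q,r)$ that the proposition claims to cover.

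The paper resolves this by first proving the estimate under the additional hypotheses $q\ge 3/(2+2\epsilon)$ and $1/q-1/r<1/3$, and then removing both restrictions by the semigroup law combined with Lemma \ref{proplqlr}: factor $e^{-t\CL_{\Omega,\eta,\omega}}=e^{-(t/2)\CL_{\Omega,\eta,\omega}}e^{-(t/2)\CL_{\Omega,\eta,\omega}}$, map $L^q_\sigma(\Omega)\to L^{q_1}(\Omega)$ with an intermediate admissible exponent $q_1\ge 3/(2+2\epsilon)$ satisfying $1/q_1-1/r<1/3$ via Lemma \ref{proplqlr}, then $L^{q_1}\to L^r$ via the restricted estimate; the two rates add up to $\tfrac32(\tfrac1q-\tfrac1r)$. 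You invoke Lemma \ref{lemlrlr} only to upgrade integrability on $\Omega_R$ beyond the Sobolev range, but never address the small-$q$ obstruction; inserting this reduction step repairs the proof. Two smaller slips, both harmless once this is done: the claim that $r\le r_0$ automatically yields $1/q-1/r<2/3$ also fails for $1/q$ near $2/3+\varepsilon_2$ (it is restored under the auxiliary constraint $1/q-1/r<1/3$), and in the gradient part $\sigma_1>1$ gives $\sigma_2=\min\{\sigma_1,3/(2q)\}$ rather than $\sigma_2=3/(2q)$; what the argument actually needs, and what holds, is $\sigma_2\ge\min\{1/2,3/(2q)\}$.
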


\begin{proof}
In light of \eqref{sigma1def}, we know
\begin{align*}
\sigma_2=
\begin{cases}
\min\left\{\sigma_1,\displaystyle\frac{3}{2q}+\sigma_1-1
-\epsilon\right\}
\quad &{\rm if}~\kappa\leq\displaystyle\frac{1}{2},\\[12pt]
\min\left\{\sigma_1,\displaystyle\frac{3}{2q}\right\}
&{\rm if}~\kappa>\displaystyle\frac{1}{2}.
\end{cases}
\end{align*}
In the case $\kappa\leq 1/2$, we suppose
$q\geq 3/(2+2\epsilon)$
and $r\leq r_0=3/(1-2\kappa+2\epsilon)$, then there holds
\begin{equation}
\sigma_2=\frac{3}{2q}+\sigma_1-1
-\epsilon\geq
\frac32\biggl(\frac1q-\frac1r\biggr)=\sigma_3.
\end{equation}
Hence, by virtue of the embedding $W^{1,q}(\Omega_R)\hookrightarrow L^r(\Omega_R)$
provided $1/q-1/r<1/3$, we infer from Lemma~\ref{lemsigma1} and the estimate \eqref{lqlrinfy}
that \eqref{lplqomega} is valid
if there hold $\eqref{p'},
3/(2+2\epsilon)\leq q\leq r\leq r_0$, and $1/q-1/r<1/3$.
Since the conditions $3/(2+2\epsilon)\leq q$ and
$1/q-1/r<1/3$ may be eliminated by using
the semigroup law and Lemma \ref{proplqlr},
we have \eqref{lplqomega} for $t>0$ and $f\in L^q_\sigma(\Omega)$
provided that $q\leq r\leq\max\{r_0,3/(1-3\varepsilon_2)\}$. \par
We may deal with the case $\kappa>1/2$ by the same procedure as above.
Namely, we conclude
\eqref{lplqomega} for $t>0$ and $f\in L^q_\sigma(\Omega)$
provided that $q\leq r\leq\infty$.
In addition, in the case $\kappa>1/2$, we have $\sigma_1>1$ and
$\sigma_2=\min\{\sigma_1,3/(2q)\}\geq \min\{1/2,3/(2q)\}$, and hence
we may employ Lemma \ref{lemsigma1} and
\eqref{gradlqlrinfy} with $q=r$ to obtain \eqref{gradlqlq}
for $t\geq 2$ and $f\in L^q_\sigma(\Omega)$ provided that
\eqref{p'}. Here, notice that
the condition $q\geq 3/(2\sigma_1)$ in \eqref{gradlqlrinfy}
is automatically fulfilled if $q\geq 3$. The proof is complete.
\end{proof}

Let us close this section with the completion of the proofs of
Theorems \ref{thmlqlr} and \ref{thmlqlr2}.

\begin{proof}[Proof of Theorem \ref{thmlqlr}]
We already know \eqref{lqlr} and \eqref{gradlqlr} for $t\leq 2$
as follows from Theorem \ref{thm1}. For the decay estimates, by virtue of
Lemma \ref{proplqlr}, we have \eqref{lqlr} provided that
\eqref{p'} and \eqref{cond-r} are valid.
We proceed to prove \eqref{lqlr} for $r\leq\infty$.
Let $\delta_0>0$ be an arbitrarily small constant.
Given $q$ fulfilling \eqref{p'},
let us take $p_0$ and $q_0$ so that
\begin{align*}
p_0<q<q_0,\quad
\left|\frac{1}{p_0}-\frac{1}{2}\right|
<\frac{1}{6}+\varepsilon_2,\quad
\left|\frac{1}{q_0}-\frac{1}{2}\right|
<\frac{1}{6}+\varepsilon_2,\quad
-\frac{3}{2}\left(\frac{1}{p_0}-\frac{1}{q_0}\right)
=-\frac{1}{2}-3\varepsilon_2+\delta_0
\end{align*}
Then we employ Lemma \ref{proplqlr}
and Lemma \ref{lemlrlr} to deduce
\begin{equation}
\label{lp0lq0}
\begin{split}
\|\pd_t^\ell e^{-t\CL_{\Omega,\eta,\omega}}f\|_{W^{1 - 2\ell,q}(\Omega_R)}
& \leq C
\|\pd_t^\ell e^{-t\CL_{\Omega,\eta,\omega}}f\|_{W^{1 - 2 \ell,q_0}(\Omega_R)} \\
& \leq C
\|e^{-(t-1)\CL_{\Omega,\eta,\omega}}f\|_{q_0,\Omega} \\
&\leq Ct^{-\frac{1}{2}-3\varepsilon_2+\delta_0}
\|f\|_{p_0,\Omega} \\
&\leq Ct^{-\frac{1}{2}-3\varepsilon_2+\delta_0}
\|f\|_{q,\Omega}
\end{split}
\end{equation}
for $\ell \in \{0, 1\}$, $t\geq 2$, and $f\in L^q_\sigma(\Omega)$
with $f=0$ a.e. $\BR^3\setminus B_{R_0}(0)$.
If $3\varepsilon_2\leq 1/2$, then Proposition \ref{propr0} with
$\kappa=3\varepsilon_2-\delta_0 < 1/2$ implies
\eqref{lqlr} for $t>0$ provided that 
$q$ and $r$ satisfy \eqref{p'} and
\begin{align*}
r\leq \frac{3}{1-6\varepsilon_2+2\delta_0+2\epsilon}
\end{align*}
with arbitrarily small $\epsilon>0$, respectively. Notice that
\begin{align}
\frac{3}{1-6\varepsilon_2+2\delta_0+2\epsilon}
>\frac{3}{1-3\varepsilon_2}.
\end{align}
Hence, the same argument as in
\eqref{lp0lq0} yields
\begin{align*}
\|\pd_t^\ell e^{-t\CL_{\Omega,\eta,\omega}}f\|_{W^{1-2\ell,q}(\Omega_R)}
& \leq C \|\pd_t^\ell e^{-t\CL_{\Omega,\eta,\omega}}f\|_{W^{1-2\ell,q_1}(\Omega_R)} \\
& \leq Ct^{-\frac{1}{2}-\frac{9}{2}\varepsilon_2
+\delta_0+\delta_1+\epsilon}\|f\|_{p_1,\Omega} \\
& \leq Ct^{-\frac{1}{2}-\frac{9}{2}\varepsilon_2
+\delta_0+\delta_1+\epsilon}\|f\|_{q,\Omega},
\end{align*}
for $\ell \in \{0,1 \}$, $t\geq 2$, and $f\in L^q_\sigma(\Omega)$
with $f=0$ a.e. $\BR^3\setminus B_{R_0}(0)$,
where $p_1$ and $q_1$ satisfy
\begin{gather}
p_1<q<q_1\leq\frac{3}{1-6\varepsilon_2+2\delta_0+2\epsilon},\quad
\left|\frac{1}{p_1}-\frac{1}{2}\right|
<\frac{1}{6}+\varepsilon_2,\\
-\frac{3}{2}\left(\frac{1}{p_1}-\frac{1}{q_1}\right)
=-\frac{1}{2}-\frac{9}{2}\varepsilon_2
+\delta_0+\delta_1+\epsilon
\end{gather}
with arbitrarily small $\delta_1>0$.
If $(9\varepsilon_2)/2 \leq 1/2$,
then Proposition \ref{propr0} with
$\kappa=(9\varepsilon_2)/2-\delta_0-\delta_1
-\epsilon < 1/2$
implies
\eqref{lqlr} for $t>0$ provided that 
$q$ and $r$ satisfy \eqref{p'} and
\begin{align*}
r\leq \frac{3}{1-9\varepsilon_2+2\delta_0+2\delta_1
+4\epsilon},
\end{align*}
respectively. Notice that
\begin{align*}
\frac{3}{1-9\varepsilon_2+2\delta_0+2\delta_1
+4\epsilon}
>\frac{3}{1-6\varepsilon_2+2\delta_0+2\epsilon}.
\end{align*}
Set $k_0:=\max\{k\in\BN \cup \{0\} \mid (3+(3k)/2)
\varepsilon_2 \leq 1/2\}$.
Then, carrying out the aforementioned argument $(k_0+1)$times
asserts
\begin{align*}
\|e^{-t\CL_{\Omega,\eta,\omega}}f\|_{W^{1,q}(\Omega_R)}+
\|\partial_te^{-t\CL_{\Omega,\eta,\omega}}f\|_{W^{-1,q}(\Omega_R)}
\leq
Ct^{-\frac{1}{2}-\left\{3+\frac{3}{2}(k_0+1)\right\}\varepsilon_2
+\delta}\|f\|_{q,\Omega}
\end{align*}
for $t\geq 2$ and $f\in L^q_\sigma(\Omega)$
with $f=0$ a.e. $\BR^3\setminus B_{R_0}(0)$, where
$\delta>0$ is an arbitrarily small constant.
This estimate allows us to employ Proposition \ref{propr0} with
$\kappa=\{3+3(k_0+1)/2\}\varepsilon_2-\delta>1/2$.
Thus, we end up with \eqref{lqlr}
for $t>0$ and $f\in L^q_\sigma(\Omega)$
provided that $q$ fulfills \eqref{p'} and $q\leq r\leq\infty$.
We may also obtain \eqref{gradlqlr2}
by using \eqref{gradlqlq} with $q=r$ and the semigroup law.
The proof is complete.
\end{proof}

\begin{proof}[Proof of Theorem \ref{thmlqlr2}]
The estimates \eqref{lqlr01} and \eqref{gradlqlr01} follow from 
real interpolation. 
As was discussed
in Hishida and Shibata \cite{HS09} and Hishida \cite{H20}, 
to deduce 
\eqref{gradlqlr02}, in particular \eqref{gradlqlr02} with $r=3$, 
we fix some constant $\sigma_1>1$ and 
perform real interpolation in \eqref{partialtest3},
\eqref{pest2}, and \eqref{local}. Then given 
$q \ne 3/(2\sigma_1)$ satisfying \eqref{p'}
and $1\leq\rho\leq \infty$, there holds
\begin{align}
\label{local2}
\|\nabla^je^{-t\CL_{\Omega,\eta,\omega}}f\|_{q, \rho, \Omega_R}&\leq
Ct^{-\frac{j}{2}}(1+t)^{\frac{j}{2}-\sigma_2}\|f\|_{q,\Omega},\\
\|\BB_{A_{R_0,R}}[\partial_te^{-t\CL_{\Omega,\eta,\omega}}
f\cdot\nabla\phi]\|_{q, \rho, \Omega_R}
+\|p(t)\|_{q, \rho, \Omega_R}& \leq
Ct^{-1+\alpha} (1+t)^{1-\alpha-\sigma_2}\|f\|_{q,\Omega}
\end{align}
for $j=0,1$, $t>0$, and $f\in L^{q,\rho}_\sigma(\Omega)$,
where $\alpha=\alpha(q)\in(0,1)$ and
$\sigma_2=\min\left\{\sigma_1,3/(2q)\right\}$,
see \eqref{sigma1def}.
By these estimates, $L^{q,\rho}$-$L^{r,\rho}$ estimates of
$\nabla T_{\BR^3,\eta,\omega}(t)$ and 
the estimate of the Bogovski\u{\i} operator in the Lorentz space,
which follows from Proposition \ref{prop-Bogovskii}
and real interpolation, carrying out the same procedure as in the
proof of Lemma \ref{lemsigma2} yields \eqref{gradlqlrinfy} 
with the Lebesgue norm replaced by the Lorentz norm.
Since the continuity is needed to conclude \eqref{gradlqlrinfy},
the case $\rho=\infty$ is missing in \eqref{gradlqlr02}.
The estimate \eqref{duality} follows from the same argument as
in the proof of Yamazaki \cite[Cor. 2.3]{Y00}. 
\end{proof}

\section{Nonlinear problem}
\label{sec-7}
In this section, we apply Theorems \ref{thmlqlr} 
and \ref{thmlqlr2} to 
the Navier--Stokes initial value problem \eqref{eq-main}.
In particular, this section aims to prove Theorem \ref{thmnonlinear} with the
aid of Theorems \ref{thmlqlr} and \ref{thmlqlr2}. 
Before the proof, let us introduce several symbols 
to reformulate the problem. Set 
\begin{align}
f(x)=(f_1(x),f_2(x),f_3(x))^\top=\frac12 \Bigl\{\zeta(x) \Bigl(\eta \times x - |x|^2\omega \Bigr)\Bigr\}.
\end{align}
Since $b (x)$ is given by \eqref{def-b}, we see that
\begin{align}
\Delta b (x)&= \div
\begin{pmatrix}
0 & -\Delta f_3 & \Delta f_2\\
\Delta f_3 & 0 & -\Delta f_1\\
-\Delta f_2 & \Delta f_1 & 0
\end{pmatrix}, \\ 
\omega \times b & = \div
\begin{pmatrix}
\omega_2f_2+\omega_3f_3&-\omega_1f_2&-\omega_1f_3\\
-\omega_2f_1&\omega_1f_1+\omega_3f_3&-\omega_2f_3\\
-\omega_3f_1&-\omega_3f_2&\omega_1f_1+\omega_2f_2
\end{pmatrix}.
\end{align}
Hence, we may write $\CN_2(b)$, which is given by \eqref{def-n2},
as $\CN_2(b)=\div \CF (b)$. 
Here, we have used the notation introduced in \eqref{def-F}. 
In order to prove Theorem \ref{thmnonlinear},
we first consider the integral equation in the weak form: 
\begin{equation}
\label{eq-dual}
\begin{split}
(u(t),\varphi)& =(v_0-b,e^{-t\CL_{\Omega,- \eta, - \omega}}\varphi) \\
& \quad + \int_0^t \Big(u(s)\otimes u(s)+u(s)\otimes b
+b\otimes u(s)-\CF (b),
\nabla e^{-(t-s)\CL_{\Omega,- \eta, - \omega}}\varphi \Big) \d s
\end{split}
\end{equation}
for all $\varphi \in C_{0,\sigma}^\infty(\Omega)$. 
\par Since $\CF (b) \in L^{3/2,\infty}(\Omega)$,
by following Yamazaki \cite{Y00} (cf. Hishida and Shibata \cite{HS09}),
we may construct a unique global solution to \eqref{eq-dual}
provided that $\lVert u_0-b\rVert_{3,\infty,\Omega}$, $|\omega|$, and $\lvert \eta \rvert$ are sufficiently small.
Notice that $\CF (b)$ do not have a temporal decay property, 
for $q > 3$ we may not expect 
the $L^{q,\infty}$-decay of the solution $u$ 
unless $b$ is a stationary solution to \eqref{eq-main},
but for each $t>0$
we still find $u(t)\in L^{q,\infty}(\Omega)$.
Moreover, for later use, we also deduce some estimates, which
are uniformly bounded in a neighborhood of each time $t>0$.
To describe this issue more precisely,
we prepare the following lemma.
The proof is same as in Yamazaki \cite[Sec. 3]{Y00}
(see also Hishida and Shibata \cite[Sec. 8]{HS09}), 
and hence we may omit the proof.
Here and hereafter, we shall use the notation
\begin{equation}
\label{subnorm}
[u]_{r,t_0,t_1}:=\sup_{t_0<s<t_1}
(s-t_0)^{\frac{1}{2}-\frac{3}{2r}}
\lVert u(s)\rVert_{r,\infty,\Omega},
\qquad r\geq 3,\enskip t_0<t_1\leq\infty.
\end{equation}

\begin{lem}
\label{lem-est}
Let $\varepsilon_2$ be subject to Convention \ref{conv-2}
and $\eta,\omega \in \BR^3$. Given $c_0 > 0$, assume $\lvert \eta \rvert + \lvert \omega \rvert \le c_0$. 
Let $0\leq t_0<t_1\leq\infty$.
Set
\begin{align*}
\braket{\CI(u,v)(t_0;t),\varphi} & :=
\int_{t_0}^t \Big(u(\tau)\otimes v(\tau),
\nabla e^{-(t-\tau)\CL_{\Omega,- \eta, - \omega}}\varphi \Big)\d\tau,\\
\braket{\CJ(u)(t_0;t),\varphi} & :=
\int_{t_0}^t \Big(u(\tau)\otimes b+b\otimes u(\tau),
\nabla e^{-(t-\tau)\CL_{\Omega,- \eta, - \omega}}\varphi \Big)\d\tau,\\
\braket{\CK(t_0;t),\varphi} & :=
\int_{t_0}^t \Big(\CF (b),\nabla e^{-(t-\tau)\CL_{\Omega,- \eta, - \omega}}\varphi \Big) \d\tau,
\end{align*}
where $b$ and $\CF (b)$ are the functions defined by \eqref{def-b}
and \eqref{def-F}, respectively.
Then the following assertions are valid.
\begin{enumerate}
\item There exists a constant $C$ independent of $t_0$ and $t_1$
such that
\begin{align*}
\lVert\CI(u,v)(t_0;t)\rVert_{3,\infty}
& \le C[u]_{3,t_0,t}[v]_{3,t_0,t},\\
\lVert\CJ(u)(t_0;t)\rVert_{3,\infty}
& \le C[u]_{3,t_0,t}\lVert b\rVert_{3,\infty},\\
\lVert\CK(t_0;t)\rVert_{3,\infty}
&\le C\lVert \CF (b) \rVert_{\frac{3}{2},\infty}
\end{align*}
for every measurable functions $u$ and $v$ subject to
$[u]_{3,t_0,t_1} < \infty$ and $[v]_{3,t_0,t_1}<\infty$, respectively, and for every $t_0<t<t_1$.
\item Let $q$ satisfy $1/3-\varepsilon_2<1\slash q<1/3$.
Then there exists a constant $C=C(q)$ independent of $t_0$ and $t_1$
such that
\begin{align*}
\lVert\CI(u,v)(t_0;t)\rVert_{q,\infty}
& \le C(t-t_0)^{-\frac{1}{2}+\frac{3}{2q}}
[u]_{q,t_0,t}[v]_{3,t_0,t},\\
\lVert\CJ(u)(t_0;t)\rVert_{q,\infty}
& \le C(t-t_0)^{-\frac{1}{2}+\frac{3}{2q}}[u]_{q,t_0,t}
\lVert b\rVert_{3,\infty},\\
\lVert\CK(t_0;t)\rVert_{q,\infty}
& \le C(t-t_0)^{\frac{1}{2}}\lVert \CF (b) \rVert_{\frac{3}{2},\infty}
\end{align*}
for every measurable functions $u$ and $v$ subject to
$[u]_{q,t_0,t_1} < \infty$ and $[v]_{3,t_0,t_1}<\infty$, respectively, and for every $t_0<t<t_1$.
\end{enumerate}
\end{lem}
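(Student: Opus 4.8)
The plan is to prove the estimates for $\CI$, $\CJ$, and $\CK$ by combining the $L^{q,\rho}$-$L^{r,\rho}$ smoothing estimates for the gradient of the adjoint semigroup $e^{-t\CL_{\Omega,-\eta,-\omega}}$ established in Theorem~\ref{thmlqlr2} with a duality/H\"older argument in Lorentz spaces. The underlying mechanism is that each of these trilinear and bilinear forms is tested against $\nabla e^{-(t-\tau)\CL_{\Omega,-\eta,-\omega}}\varphi$, so I would first extract a pointwise-in-$\tau$ bound on the integrand by pairing the appropriate Lorentz-H\"older exponents, then integrate the resulting time singularity. The key observation is the duality pairing between $L^{q,\infty}$ and $L^{q',1}$: to estimate $\lVert \CI(u,v)(t_0;t)\rVert_{q,\infty}$ it suffices to bound $|\braket{\CI(u,v)(t_0;t),\varphi}|$ by $C\,\lVert\varphi\rVert_{q',1}$ after a density argument reducing from $\varphi\in C^\infty_{0,\sigma}(\Omega)$ to general $\varphi\in L^{q',1}_\sigma(\Omega)$.

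First I would treat the $L^{3,\infty}$ estimates in assertion~(1). For the trilinear term, H\"older's inequality in Lorentz spaces gives $\lVert u(\tau)\otimes v(\tau)\rVert_{3/2,\infty}\le C\lVert u(\tau)\rVert_{3,\infty}\lVert v(\tau)\rVert_{3,\infty}$, which by the definition \eqref{subnorm} of $[\,\cdot\,]_{r,t_0,t_1}$ is controlled by $C[u]_{3,t_0,t}[v]_{3,t_0,t}$ since the exponent $r=3$ removes the weight $(\tau-t_0)^{1/2-3/(2r)}$. Pairing against $\nabla e^{-(t-\tau)\CL_{\Omega,-\eta,-\omega}}\varphi$ and applying the gradient decay estimate \eqref{duality} of Theorem~\ref{thmlqlr2} — precisely the endpoint $1/q-1/r=1/3$ case with the integral-in-time bound — would yield the desired estimate after recognizing that the relevant dual pairing is $\int_{t_0}^t\lVert\nabla e^{-(t-\tau)\CL_{\Omega,-\eta,-\omega}}\varphi\rVert_{3,1}\,\d\tau\le C\lVert\varphi\rVert_{3/2,1}$. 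The forms $\CJ$ and $\CK$ are handled identically, using $\lVert u(\tau)\otimes b\rVert_{3/2,\infty}\le C\lVert u(\tau)\rVert_{3,\infty}\lVert b\rVert_{3,\infty}$ and the fact that $\CF(b)\in L^{3/2,\infty}(\Omega)$ is time-independent, so that $\CK$ reduces to a single application of \eqref{duality}.

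For assertion~(2) with $q$ satisfying $1/3-\varepsilon_2<1/q<1/3$, the same scheme applies but now the gradient estimate produces a genuine temporal decay rather than an integrable singularity. Here I would invoke the gradient $L^{q,\rho}$-$L^{r,\rho}$ bound \eqref{gradlqlr01} near $t=0$, pairing $\lVert u(\tau)\otimes v(\tau)\rVert_{s,\infty}$ (with $s$ chosen by H\"older so that the target exponent matches) against $\nabla e^{-(t-\tau)\CL_{\Omega,-\eta,-\omega}}\varphi$, and then carefully tracking the powers $(t-\tau)^{-3(1/s-1/q)/2-1/2}$ and $(\tau-t_0)^{-1/2+3/(2q)}$ through a Beta-function integral to obtain the prefactor $(t-t_0)^{-1/2+3/(2q)}$; the restriction $1/3-\varepsilon_2<1/q$ is exactly what keeps the relevant exponent inside the admissible range \eqref{rcondi} so that \eqref{gradlqlr01} is available, while $1/q<1/3$ guarantees convergence of the time integral near $\tau=t$.

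The main obstacle I anticipate is the bookkeeping of exponents so that all Lorentz-H\"older pairings are valid and the resulting time-integral exponents are strictly less than one at both endpoints, simultaneously respecting the condition \eqref{rcondi} under which the gradient estimates of Theorem~\ref{thmlqlr2} hold. In particular, the endpoint nature of the $L^{3,\infty}$ estimate forces reliance on the sharp integrated bound \eqref{duality} rather than a pointwise-in-time gradient estimate, so the delicate point is verifying that the chosen intermediate exponent $s$ lands in the window where \eqref{duality} or \eqref{gradlqlr01} is applicable while the H\"older product still closes. Since these computations are precisely parallel to those in Yamazaki~\cite{Y00} and Hishida--Shibata~\cite{HS09}, once the exponent constraints are verified the remaining steps are routine, which is why the proof may be omitted as stated.
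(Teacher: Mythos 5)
Your treatment of assertion (1) is correct and coincides with the argument behind the paper's citation of Yamazaki \cite{Y00}: Lorentz--H\"older gives $\lVert u(\tau)\otimes v(\tau)\rVert_{3/2,\infty}\le C[u]_{3,t_0,t}[v]_{3,t_0,t}$ uniformly in $\tau$, and the duality $L^{3,\infty}=(L^{3/2,1})^*$ together with the integrated bound \eqref{duality} for the pair $(3/2,3)$ closes the estimate (the paper's own ``proof'' is exactly this reference). The genuine gap is in assertion (2), for $\CI$ and $\CJ$. There the H\"older exponent is \emph{forced}: since $v$ (resp.\ $b$) is controlled only in $L^{3,\infty}$, you must place $u(\tau)\otimes v(\tau)$ in $L^{s,\infty}$ with $1/s=1/q+1/3$, hence the semigroup factor is measured in $L^{s',1}$ with $1/q'-1/s'=1/s-1/q=1/3$. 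Then \eqref{gradlqlr01} (and likewise \eqref{gradlqlr02}, since $3/(2q')>1$) yields the rate $(t-\tau)^{-\frac32\cdot\frac13-\frac12}=(t-\tau)^{-1}$ \emph{independently of $q$}, so your proposed Beta-function integral $\int_{t_0}^t(\tau-t_0)^{-\frac12+\frac{3}{2q}}(t-\tau)^{-1}\d\tau$ diverges logarithmically at $\tau=t$. The claim that ``$1/q<1/3$ guarantees convergence of the time integral near $\tau=t$'' is therefore false; no choice of intermediate exponent removes the $(t-\tau)^{-1}$ singularity, because the exponent gap $1/3$ comes from $v$, not from a free parameter.

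The missing idea --- the actual content of the Yamazaki/Hishida--Shibata scheme --- is to split the integral at the midpoint $(t_0+t)/2$. On $[t_0,(t_0+t)/2]$ one has $t-\tau\ge(t-t_0)/2$, so the factor $(t-\tau)^{-1}\le 2(t-t_0)^{-1}$ is pulled out and the weight integrates to $C(t-t_0)^{\frac12+\frac{3}{2q}}$, giving $C(t-t_0)^{-\frac12+\frac{3}{2q}}$. On $[(t_0+t)/2,t]$ one has $\tau-t_0\ge(t-t_0)/2$, so the weight is pulled out and the remaining integral $\int\lVert\nabla e^{-(t-\tau)\CL_{\Omega,-\eta,-\omega}}\varphi\rVert_{s',1}\d\tau$ is bounded by $C\lVert\varphi\rVert_{q',1}$ via the \emph{integrated} estimate \eqref{duality} applied to the pair $(q',s')$: this is admissible because $1/q'-1/s'=1/3$, $s'\le 3$, and $q'$ satisfies \eqref{cond-q} --- and \emph{this} is the true role of the hypothesis $1/3-\varepsilon_2<1/q<1/3$, not taming the singularity at $\tau=t$. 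Finally, $\CK$ in assertion (2) cannot be ``handled identically'' either: the pointwise rate from $L^{q',1}$ to $L^{3,1}$ is $(t-\tau)^{-\frac32+\frac{3}{2q}}$, which is more singular than $(t-\tau)^{-1}$, and \eqref{duality} does not reach $L^{3,1}$ from $L^{q',1}$. To get the stated $(t-t_0)^{1/2}$ bound one must exploit the specific structure of $\CF(b)$ (bounded, compactly supported), e.g.\ combining the assertion-(1) bound with $\lVert e^{-\sigma\CL_{\Omega,\eta,\omega}}P_\Omega\CN_2(b)\rVert_{q,\infty}\le C\lVert\CN_2(b)\rVert_{q,\Omega}$ for short times; a bound vanishing as $t\searrow t_0$ that is uniform over all $\CF\in L^{3/2,\infty}(\Omega)$ is not available (it already fails by scaling for the heat semigroup on $\BR^3$).
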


Using Lemma \ref{lem-est}, we may construct a global solution to Problem \eqref{eq-dual}.

\begin{prop}\label{prop-sol}
Let $\varepsilon_2$ be subject to Convention \ref{conv-2} and $\eta,\omega \in \BR^3$.
The following assertions hold.
\begin{enumerate}
\item There exist constants $\kappa_1,\kappa_2>0$ such that if
$\lvert\eta \rvert + \lvert \omega \rvert <\kappa_1$ and
$v_0 - b \in L^{3,\infty}_\sigma (\Omega)$ satisfies
$\lVert v_0 - b\rVert_{3,\infty,\Omega}<\kappa_2$,
then Problem \eqref{eq-dual}
admits a global solution with the following properties:
\begin{enumerate}
\item $u\in BC((0,\infty);L^{3,\infty}_\sigma(\Omega)).$
\item $u(t)\rightarrow v_0-b$
weakly $*$ in $L^{3,\infty}(\Omega)$ as $t\searrow +0$.
\item There is a constant $C>0$ such that
\begin{equation}
\label{apriori}
\lVert u(t)\rVert_{3,\infty,\Omega}\le C \bigl(\lvert \eta \rvert
+ \lvert \omega \rvert +\lVert v_0 - b\rVert_{3,\infty,\Omega}\bigr)
\end{equation}
for $t>0$.
\item The solution $u$ is unique among solutions with small
$\sup_{t>0}\|u(t)\|_{3,\infty}$.
\end{enumerate}
\item Let $\kappa_1, \kappa_2 > 0$ be the same numbers as in the first assertion.
For every $q$ satisfying $1/3-\varepsilon_2<1/q<1/3$, there
exist constants
$c_1=c_1(q) \le \kappa_1$ and $c_2 = c_2(q) \le \kappa_2$ such that if
$\lvert\eta \rvert + \lvert \omega \rvert < c_1$ and
$v_0 - b \in L^{3,\infty}_\sigma (\Omega)$ satisfies
$\lVert v_0 - b\rVert_{3,\infty,\Omega}< c_2$,
then the solution $u$ obtained in the first assertion
also satisfies
$v\in C((0,\infty);L^{q,\infty}(\Omega))$.
Furthermore, there exist constants $C,\CT>0$ such that
\begin{align}
\label{est-qinfty}
\|u(t)\|_{q,\infty,\Omega}\leq
C\Bigl\{\bigl(\lvert\eta\rvert + \lvert \omega \rvert \bigr) 
\CT^{1-\frac{3}{2q}}+\lvert\eta\rvert+ \lvert \omega \rvert + 
\lVert v_0 - b\rVert_{3,\infty,\Omega} \Bigr\}
(t-k\CT)^{-\frac{1}{2}+\frac{3}{2q}}
\end{align}
for $k \in \BN \cup\{0\}$ and for $k\CT<t\leq (k+1)\CT$.
\end{enumerate}
\end{prop}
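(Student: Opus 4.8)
The plan is to realize \eqref{eq-dual} as a fixed-point problem and apply the contraction mapping principle in a weak-$L^3$ framework, following Yamazaki \cite{Y00}. Reading \eqref{eq-dual} in the sense of the functionals of Lemma \ref{lem-est} (together with the duality $\CL_{\Omega,\eta,\omega}^*=\CL_{\Omega,-\eta,-\omega}$), a function $u$ solves \eqref{eq-dual} precisely when
\[
u(t) = e^{-t\CL_{\Omega,\eta,\omega}}(v_0-b) + \CI(u,u)(0;t) + \CJ(u)(0;t) - \CK(0;t)
\]
holds in $L^{3,\infty}_\sigma(\Omega)$. First I would work in $Y := BC((0,\infty); L^{3,\infty}_\sigma(\Omega))$ with $\|u\|_Y := \sup_{t>0}\|u(t)\|_{3,\infty,\Omega}$; note the weight in \eqref{subnorm} is trivial for $r=3$, so $\|u\|_Y=[u]_{3,0,\infty}$. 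By \eqref{lqlr01} with $q=r=3$ and $\rho=\infty$, the linear term obeys $\|e^{-t\CL_{\Omega,\eta,\omega}}(v_0-b)\|_{3,\infty,\Omega}\le C\|v_0-b\|_{3,\infty,\Omega}$, while Lemma \ref{lem-est}(1) controls the three integral terms by $C\|u\|_Y^2$, $C\|u\|_Y\|b\|_{3,\infty,\Omega}$, and $C\|\CF(b)\|_{\frac32,\infty,\Omega}$, respectively. Since $b$ and $\CF(b)$ are built from $\eta,\omega$ through the fixed cut-off $\zeta$ via \eqref{def-b} and \eqref{def-F}, one has $\|b\|_{3,\infty,\Omega}\le C(|\eta|+|\omega|)$ and, for $|\eta|+|\omega|$ bounded, $\|\CF(b)\|_{\frac32,\infty,\Omega}\le C(|\eta|+|\omega|)$.

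Writing $M := |\eta|+|\omega|+\|v_0-b\|_{3,\infty,\Omega}$, these estimates give, for the map $\Phi(u)$ defined by the right-hand side above, $\|\Phi(u)\|_Y\le C_0M + C_0\|u\|_Y^2 + C_0M\|u\|_Y$, and, by the bilinearity of $\CI$ and linearity of $\CJ$, $\|\Phi(u)-\Phi(w)\|_Y\le C_0(\|u\|_Y+\|w\|_Y+M)\|u-w\|_Y$. Choosing $\kappa_1,\kappa_2$ so small that $M$ is small, $\Phi$ maps the closed ball $\{\|u\|_Y\le 2C_0M\}$ into itself and is a contraction there; the Banach fixed-point theorem yields a unique fixed point $u\in Y$ with $\|u\|_Y\le 2C_0M$, which is the a priori bound \eqref{apriori} and gives uniqueness (d) among solutions with small $\sup_{t>0}\|u\|_{3,\infty}$. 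Strong continuity on $(0,\infty)$ (property (a)) follows from the continuity of $(e^{-t\CL_{\Omega,\eta,\omega}})_{t\ge0}$ and the continuity in $t$ of the integral terms, and the weak-$*$ convergence $u(t)\to v_0-b$ as $t\searrow 0$ (property (b)) follows from weak-$*$ continuity of the semigroup at $t=0$ and the vanishing of $\CI,\CJ,\CK$ as $t\searrow 0$, exactly as in \cite{Y00}; these I would only sketch, since the argument is unchanged.

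For the second assertion I would exploit the localized estimates of Lemma \ref{lem-est}(2). The obstruction is that $\CF(b)$ carries no temporal decay, so $\CK(0;t)$ cannot be bounded on a single infinite interval in the $L^{q,\infty}$-norm; instead I restart \eqref{eq-dual} at each time $k\CT$, writing on $(k\CT,(k+1)\CT]$
\[
u(t) = e^{-(t-k\CT)\CL_{\Omega,\eta,\omega}}u(k\CT) + \CI(u,u)(k\CT;t) + \CJ(u)(k\CT;t) - \CK(k\CT;t).
\]
Applying \eqref{lqlr01} to the linear term and Lemma \ref{lem-est}(2) with $t_0=k\CT$ to the three integrals, then multiplying by the weight $(t-k\CT)^{1/2-3/(2q)}$ of \eqref{subnorm}, gives
\[
[u]_{q,k\CT,(k+1)\CT} \le C\|u(k\CT)\|_{3,\infty,\Omega} + C\CT^{1-\frac{3}{2q}}\|\CF(b)\|_{\frac32,\infty,\Omega} + C\bigl([u]_{3,k\CT,\infty}+\|b\|_{3,\infty,\Omega}\bigr)[u]_{q,k\CT,(k+1)\CT},
\]
where the $\CT^{1-3/(2q)}$ factor is produced by the $(t-k\CT)^{1/2}$ bound on $\CK$ together with $t-k\CT\le\CT$. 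Shrinking $\kappa_1,\kappa_2$ to $q$-dependent constants $c_1(q),c_2(q)$ makes the last term absorbable, and inserting \eqref{apriori} for $\|u(k\CT)\|_{3,\infty,\Omega}$ and $\|\CF(b)\|_{\frac32,\infty,\Omega}\le C(|\eta|+|\omega|)$ yields \eqref{est-qinfty}. The finiteness of $[u]_{q,k\CT,(k+1)\CT}$ and the membership $u\in C((0,\infty);L^{q,\infty}(\Omega))$ I would secure by running the same contraction in the weight-norm $[\cdot]_{q,k\CT,(k+1)\CT}$ on each interval, the solution being already known in $Y$.

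The main obstacle is thus not the fixed-point scheme itself but the absence of decay of $\CF(b)$: it forces the interval-wise reinitialization and explains why only the local-in-time bound \eqref{est-qinfty}, rather than a uniform $L^{q,\infty}$ decay, is available. A secondary technical point will be to verify throughout that the exponent range $1/3-\varepsilon_2<1/q<1/3$ keeps $q$ and its relevant partners inside the regime \eqref{cond-q}--\eqref{rcondi} in which the Lorentz-space gradient estimates underlying Lemma \ref{lem-est}(2) hold, and to track the smallness constants so that the two nested contractions close simultaneously.
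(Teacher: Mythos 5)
Your proposal follows essentially the same route as the paper: the first assertion via the Yamazaki-type contraction in $BC((0,\infty);L^{3,\infty}_\sigma(\Omega))$ (which the paper omits, citing \cite{Y00} and \cite{HS09}), and the second by restarting the weak integral equation at $t_0=k\CT$, applying Lemma \ref{lem-est} with that $t_0$, and inserting the a priori bound \eqref{apriori} for $\lVert u(k\CT)\rVert_{3,\infty,\Omega}$ — exactly the paper's scheme, including the same origin of the $\CT^{1-\frac{3}{2q}}$ factor and the same reason (no temporal decay of $\CF(b)$) for the interval-wise reinitialization. The only cosmetic difference is that the paper runs one quadratic fixed-point estimate in the combined space $X_{q,t_0,\CT_0}$ carrying both the $L^{3,\infty}$ and the weighted $L^{q,\infty}$ norms, whereas you split this into a linear absorption (using the known small $L^{3,\infty}$ solution) plus a separate contraction for finiteness of the weighted norm, which amounts to the same argument.
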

\begin{proof}
The proof is essentially the same as in
Yamazaki \cite{Y00} and Hishida and Shibata \cite[Sec. 8]{HS09},
and thus we may omit the proof of the first assertion. However, 
to reveal the estimate \eqref{est-qinfty}
and a constant $\CT$,
we intend to give the proof of the second assertion.
We define the right-hand side of \eqref{eq-dual}
by $\braket{(\Phi v)(t),\varphi}$.
Let $t_0\geq 0$ and $\CT_0>0$ and set
\begin{align*}
X_{q,t_0,\CT_0}:=\{u\in BC((t_0,\CT_0];L^{3\infty}_\sigma(\Omega))
\mid (t-t_0)^{\frac{1}{2}+\frac{3}{2q}}u(t)\in
BC((t_0,\CT_0];L^{q,\infty}(\Omega))\},
\end{align*}
equipped with norm $\lVert u\rVert_{X_{q,t_0,\CT_0}}:=
[u]_{q,t_0,\CT_0}+[v]_{q,t_0,\CT_0}$,
where $[u]_{r,t_0,\CT_0}$ is the notation defined by \eqref{subnorm}.
Lemma \ref{lem-est} with $t_0=0$ implies
\begin{align*}
\lVert \Phi u\rVert_{X_{q,0,\CT_0}}\leq C_1
\lVert u\rVert_{X_{q,0,\CT_0}}^2+C_2\bigl(\lvert\eta\rvert + \lvert \omega \rvert \bigr) 
\lVert u\rVert_{X_{q,0,\CT_0}}+
C_3\Bigl\{\bigl(\lvert\eta\rvert + \lvert \omega \rvert \bigr) \CT_0^{1-\frac{3}{2q}}
+\lVert v_0-b\rVert_{3,\infty,\Omega}\Bigr\}.
\end{align*}
Hence, if $\lvert\eta\rvert + \lvert \omega \rvert < 1/(2C_2)$,
$\lVert v_0-b\rVert_{3,\infty,\Omega}<1/(32C_1C_2)$,
and
\begin{equation}
\CT_0^{1-\frac{3}{2q}}<\frac{C_2}{16C_1C_3},
\end{equation}
then we deduce
$u(t)\in X_{q,t_0,\CT_0}$ with
\begin{align}
\label{est-Xq}
\lVert u(t)\rVert_{q,\infty,\Omega}\leq
4C_3\Big(\bigl(\lvert\eta\rvert + \lvert \omega \rvert \bigr) \CT_0^{1-\frac{3}{2q}}
+\lVert v_0-b\rVert_{3,\infty}\Big)t^{-\frac{1}{2}+\frac{3}{2q}}
\end{align}
for $0<t\leq \CT_0$. We fix $t_0>0$ arbitrarily. For $t>t_0$,
the equation \eqref{eq-dual} is converted into
\begin{align}
\label{eq-dual2}
(u(t),\varphi)&=(u(t_0),e^{-(t-t_0)\CL_{\Omega,- \eta, - \omega}}\varphi)\\
&\qquad +\int_0^t(u(\tau)\otimes u(\tau)+u(\tau)\otimes b
+b\otimes u(\tau)-\CF (b),
\nabla e^{-(t-\tau)\CL_{\Omega,- \eta, - \omega}}\varphi)\d\tau\\
&=:\braket{(\Phi_{t_0}v)(t),\varphi}.
\end{align}
Lemma \ref{lem-est} implies
\begin{align*}
\lVert \Phi_{t_0} v\rVert_{X_{q,t_0,t_0+\CT_0}}&\le C_1
\lVert v\rVert_{X_{q,t_0,t_0+\CT_0}}^2+C_2\bigl(\lvert\eta\rvert + \lvert \omega \rvert \bigr)
\lVert v\rVert_{X_{q,t_0,t_0+\CT_0}} \\
& \quad +
C_3\Big\{\bigl(\lvert\eta\rvert + \lvert \omega \rvert \bigr) \CT_0^{1-\frac{3}{2q}}
+\lVert u(t_0)\rVert_{3,\infty,\Omega}\Big\},
\end{align*}
which together with \eqref{apriori} yields
\begin{align}
\lVert u(t)\rVert_{q,\infty,\Omega}\leq
4C_3\Big\{\bigl(\lvert\eta\rvert + \lvert \omega \rvert \bigr) \CT_0^{1-\frac{3}{2q}}
+C\lVert v_0-b\rVert_{3,\infty}\Big\}
(t-t_0)^{-\frac{1}{2}+\frac{3}{2q}}
\end{align}
for $t_0<t\leq t_0+\CT_0$ provided that
$\lvert \eta \rvert$, $\lvert \omega \rvert$, and
$\lVert v_0-b\rVert_{3,\infty,\Omega}$ are small enough. 
Notice that these smallness conditions are
independent of $t_0$ since the constants $C_i~(i=1,2,3)$
and $C$ (given by \eqref{apriori}) are independent of $t_0$.
Carrying out the aforementioned calculation
with $t_0=k\CT_0~(k\in\BN)$,
we conclude the second assertion with $\CT=\CT_0$.
\end{proof}

To prove that the global solution $u$ obtained by
Proposition \ref{prop-sol} possesses
further regularity properties and actually satisfies
the integral equation \eqref{eq-integral} for each $t>0$,
we next consider
\begin{equation}
\label{eq-t0}
u(t)=e^{-(t-t_0)\CL_{\Omega,\eta,\omega}}u_0
-\int_{t_0}^t e^{-(t-s)\CL_{\Omega,\eta,\omega}}
P_\Omega\Big[u\cdot \nabla u+\CN_1(b,u)+\CN_2(b)\Big]\d s
\end{equation}
with $u_0\in L^r_\sigma(\Omega)$, where $r$ is assumed to satisfy
$1/3-\varepsilon_2<1/r<1/3$. Here, recall that
$\CN_1(b,u)$ and $\CN_2(b)$ are defined by
\eqref{def-n1} and \eqref{def-n2}, respectively.
For a local solution to \eqref{eq-t0}, we have the following.
\begin{lem}
\label{lem-local}
Let $\varepsilon_2$ be subject to Convention \ref{conv-2} and 
$\eta, \omega \in \BR^3$. Let
$r$ satisfy $1/3-\varepsilon_2<1/r<1/3$.
For $t_0\geq 0$ and $u_0\in L^r_{\sigma}(\Omega)$,
there exists $t_1\in (t_0,t_0+1]$ such that \eqref{eq-t0}
admits a unique local solution $u\in Y_r(t_0,t_1)$, where
\begin{equation}
Y_r(t_0,t_1):=\{v\in C([t_0,t_1];L^r_\sigma(\Omega))\mid
(\, \cdot-t_0)^{\frac{1}{2}}\nabla v(\, \cdot \,)
\in BC((t_0,t_1];L^r(\Omega))\}.
\end{equation}
The local solution $u \in Y_r(t_0,t_1)$ also fulfills
\begin{equation}
\label{local-reg}
u\in C\big((t_0,t_1];L^\kappa(\Omega)\big),\qquad
\nabla u\in C\big((t_0,t_1];L^\gamma(\Omega)\big)
\end{equation}
for every $\kappa\in (r,\infty]$ and $\gamma \in(r,\infty)$.
Moreover, there exists a non-increasing function
$\zeta(\,\cdot\, ):[0,\infty)\rightarrow (0,1)$ such that
the length of the existence interval may be estimated
from below by
\begin{align*}
t_1-t_0\geq \zeta(\lVert u_0\rVert_{r,\Omega}).
\end{align*}
\end{lem}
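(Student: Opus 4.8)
The plan is to solve the integral equation \eqref{eq-t0} by the contraction mapping principle in $Y_r(t_0,t_1)$, following the Fujita--Kato scheme. First I would record that the hypothesis $1/3-\varepsilon_2<1/r<1/3$ is equivalent to $r>3$ together with $|1/r-1/2|<1/6+\varepsilon_2$; hence both $r$ and $r'$ are admissible in the sense of Convention \ref{conv-2}, so that $P_\Omega$ and the semigroup $(e^{-t\CL_{\Omega,\eta,\omega}})_{t\ge0}$ act on $L^r_\sigma(\Omega)$ and obey the smoothing bounds of Theorem \ref{thmlqlr} for $0<t\le1$. I equip $Y_r(t_0,t_1)$ with $\|u\|_{Y_r}:=\sup_{t_0\le t\le t_1}\|u(t)\|_{r,\Omega}+\sup_{t_0<t\le t_1}(t-t_0)^{\frac12}\|\nabla u(t)\|_{r,\Omega}$ and let $\Psi(u)(t)$ denote the right-hand side of \eqref{eq-t0}. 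Since each iterate satisfies $u(t)\in W^{1,r}_{0,\sigma}(\Omega)$ for $t>t_0$, its zero extension lies in $W^{1,r}(\BR^3)$, and the Gagliardo--Nirenberg inequality gives the key pointwise-in-time bound $\|u(t)\|_{\infty,\Omega}\le C\|\nabla u(t)\|_{r,\Omega}^{3/r}\|u(t)\|_{r,\Omega}^{1-3/r}\le C(t-t_0)^{-\frac{3}{2r}}\|u\|_{Y_r}$.

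For the linear part $e^{-(t-t_0)\CL_{\Omega,\eta,\omega}}u_0$, $L^r$-boundedness and the gradient estimate \eqref{gradlqlr} with $q=r$ give membership in $Y_r$ with norm $\le C\|u_0\|_{r,\Omega}$. For the quadratic term the crucial point is that, because $P_\Omega$ is bounded only for exponents near $2$, I do \emph{not} place $u\otimes u$ in $L^{r/2}$; instead I keep $u\cdot\nabla u\in L^r$ via $\|u\cdot\nabla u\|_{r,\Omega}\le\|u\|_{\infty,\Omega}\|\nabla u\|_{r,\Omega}\le C(s-t_0)^{-\frac{3}{2r}-\frac12}\|u\|_{Y_r}^2$, which is time-integrable exactly because $r>3$. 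Using boundedness of $e^{-(t-s)\CL_{\Omega,\eta,\omega}}P_\Omega$ on $L^r$, the bound $\|\nabla e^{-\tau\CL_{\Omega,\eta,\omega}}g\|_{r,\Omega}\le C\tau^{-\frac12}\|g\|_{r,\Omega}$, and a Beta-function computation, the quadratic Duhamel term is controlled in $Y_r$ by $C(t_1-t_0)^{\frac12-\frac{3}{2r}}\|u\|_{Y_r}^2$. The linear term $\CN_1(b,u)$ from \eqref{def-n1} is estimated through $\|\CN_1(b,u)(s)\|_{r,\Omega}\le C_b\bigl(1+(s-t_0)^{-\frac12}\bigr)\|u\|_{Y_r}$, and the fixed forcing $\CN_2(b)=\div\CF(b)\in L^r$ from \eqref{def-n2} contributes a term carrying a positive power of $t_1-t_0$. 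With these estimates (and their bilinear analogues for differences) $\Psi$ maps a closed ball of $Y_r(t_0,t_1)$ into itself and is a contraction once $t_1-t_0$ is small; the admissible threshold depends only on $\|u_0\|_{r,\Omega}$ and on $\eta,\omega$ (through $b$, which is fixed here), which yields existence, uniqueness in $Y_r$, and the lower bound $t_1-t_0\ge\zeta(\|u_0\|_{r,\Omega})$ for a non-increasing $\zeta$, capped so that $t_1\le t_0+1$.

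For the regularity \eqref{local-reg} I would bootstrap on $(t_0,t_1]$ using the smoothing estimates of Theorem \ref{thmlqlr}: the $L^r$-$L^\kappa$ bound \eqref{lqlr} and the $L^r$-$W^{1,\gamma}$ bound \eqref{gradlqlr}, applied to the free term and to each Duhamel integrand together with the a priori control $\|u\|_{Y_r}<\infty$, show $u(t)\in L^\kappa(\Omega)$ and $\nabla u(t)\in L^\gamma(\Omega)$ in the stated ranges, with temporal continuity coming from the strong continuity of the semigroup and dominated convergence. The main obstacle I anticipate is precisely the nonlinear estimate under the restricted range of admissible exponents: the usual passage through $u\otimes u\in L^{r/2}$ is blocked since $P_\Omega$ need not be bounded on $L^{r/2}$, so the argument must be reorganized as above to keep every exponent inside the window of Convention \ref{conv-2}, while still extracting a positive power of $t_1-t_0$ from each nonlinear and forcing estimate so that the uniform lower bound $\zeta(\|u_0\|_{r,\Omega})$ genuinely follows.
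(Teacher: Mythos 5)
Your proposal is correct and follows essentially the same route as the paper: a contraction argument in a closed ball of $Y_r(t_0,t_1)$ based on the $L^q$-$L^r$ smoothing estimates of Theorem \ref{thmlqlr}, with the nonlinearity kept in $L^r$ via $\lVert u\cdot\nabla u\rVert_{r,\Omega}\le\lVert u\rVert_{\infty,\Omega}\lVert\nabla u\rVert_{r,\Omega}$ and Gagliardo--Nirenberg, and with \eqref{local-reg} obtained by feeding the $Y_r$-bound back into the smoothing estimates, exactly as in the paper's displayed bounds for $\lVert u(t)\rVert_{\infty,\Omega}$ and $\lVert\nabla u(t)\rVert_{\gamma,\Omega}$. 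One minor remark: the obstacle you flag is not actually present, since for $1/3-\varepsilon_2<1/r<1/3$ one has $\lvert 2/r-1/2\rvert<1/6+\varepsilon_2$, so $P_\Omega$ is in fact bounded on $L^{r/2}(\Omega)$ --- but your reorganization through $L^r$ is precisely what the paper does in any case.
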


\begin{proof}
The proof of the existence of a local solution in
$Y_r(t_0,t_1)$ is similar to
Giga and Miyakawa \cite{GM85} as well as
Kozono and Yamazaki \cite[Thm. 4.1]{KY98},
see also \cite[Prop. 4.7]{T22}.
In fact, we may construct a local solution $u(t)$ to \eqref{eq-t0}
in $[t_0,t_1]$ by applying the $L^q$-$L^r$ estimates of the semigroup
$(e^{- t \CL_{\Omega,\eta,\omega}})_{t\ge 0}$
to the right hand side of \eqref{eq-t0} and
by using the contraction mapping
principle to a suitable closed ball in $Y_r(t_0,t_1)$. The property \eqref{local-reg} is also deduced from 
applying the $L^q$-$L^r$ estimates of the semigroup $(e^{- t \CL_{\Omega,\eta,\omega}})_{t\ge 0}$;
Theorem \ref{thmlqlr} asserts
$u(t)\in L^\infty(\Omega)$ and $\nabla u(t)\in L^\gamma (\Omega)$ with
\begin{equation}
\lVert u(t)\rVert_{\infty,\Omega}
\leq C(t-t_0)^{-\frac{3}{2r}}
\Big\{\lVert u_0\rVert_{r,\Omega}+\lVert u\rVert_{Y_r(t_0,t_1)}^2\\
+\lVert u\rVert_{Y_r(t_0,t_1)}(\lVert b\rVert_{r,\Omega}+
\lVert \nabla b\rVert_{r,\Omega})
+\lVert\CN_2(b)\rVert_{r,\Omega}
\Big\}
\end{equation}
and
\begin{multline}
\lVert \nabla u(t)\rVert_{\gamma,\Omega}\leq
C(t-t_0)^{-\frac{3}{2}(\frac{1}{r}-\frac{1}{\gamma})-\frac{1}{2}}
\Big\{\lVert u_0\rVert_{r,\Omega}+
\llbracket u\rrbracket_{\infty,t_0,t_1}\llbracket\nabla u\rrbracket_{r,t_0,t_1}
+\llbracket u\rrbracket_{\infty,t_0,t_1}\lVert b\rVert_{r,\Omega}\\
 +\llbracket\nabla u\rrbracket_{r,t_0,t_1}\lVert \nabla b\rVert_{r,\Omega}+
\lVert\CN_2(b)\rVert_{r,\Omega}
\Big\}
\end{multline}
for every $\gamma\in(r,\infty)$ and $t\in(t_0,t_1]$, where we have set
\begin{align*}
\lVert u\rVert_{Y_r(t_0,t_1)}:=\llbracket u\rrbracket_{r,t_0,t_1}+
\llbracket\nabla u\rrbracket_{r,t_0,t_1},\quad
\llbracket\nabla^k u\rrbracket_{s,t_0,t_1}:=\sup_{t_0<\tau<t_1}
(\tau-t_0)^{\frac{3}{2}(\frac{1}{r}-\frac{1}{s})+\frac{k}{2}}
\lVert \nabla^ku(\tau)\rVert_{s,\Omega}.	
\end{align*}
The proof is complete.
\end{proof}

In order to prove Theorem \ref{thmnonlinear},
we also prepare the following result on the uniqueness 
of the local solution. 
We refer to \cite[Lem. 4.5]{T22} for the proof.
\begin{lem}
\label{lem-uniquness}
Let $\varepsilon_2$ be subject to Convention \ref{conv-2} and
let $r$ satisfy $1/3-\varepsilon_2<1/r<1/3$.
For $0\leq t_0<t_1<\infty$ and $v_0\in L^r_\sigma(\Omega)$,
the equation \eqref{eq-dual2}
on $(t_0,t_1)$ admits at most one solution
within $L^\infty(t_0,t_1;L^r_\sigma(\Omega))$.
\end{lem}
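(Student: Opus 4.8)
The plan is to argue by a duality--Gronwall scheme, deliberately moving the single spatial derivative onto the test function so as to compensate for the fact that solutions in the class $L^\infty(t_0,t_1;L^r_\sigma(\Omega))$ carry no gradient control. Suppose $u_1,u_2\in L^\infty(t_0,t_1;L^r_\sigma(\Omega))$ both satisfy \eqref{eq-dual2} with the common datum $v_0$, and set $w:=u_1-u_2$. Subtracting the two identities, the linear term $(v_0,e^{-(t-t_0)\CL_{\Omega,-\eta,-\omega}}\varphi)$ and the forcing contribution carrying $\CF(b)$ cancel, while the quadratic part simplifies through $u_1\otimes u_1-u_2\otimes u_2=u_1\otimes w+w\otimes u_2$. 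Thus, for every $\varphi\in C_{0,\sigma}^\infty(\Omega)$,
\begin{equation}
(w(t),\varphi)=\int_{t_0}^t\Big(u_1\otimes w+w\otimes u_2+w\otimes b+b\otimes w,\ \nabla e^{-(t-\tau)\CL_{\Omega,-\eta,-\omega}}\varphi\Big)_\Omega\,\d\tau .
\end{equation}

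First I would estimate the integrand by H\"older's inequality: since $u_1,u_2,w,b\in L^r(\Omega)$, each bilinear term lies in $L^{r/2}(\Omega)$, so the pairing is controlled by $\|u_1\otimes w+w\otimes u_2+w\otimes b+b\otimes w\|_{r/2,\Omega}\,\|\nabla e^{-(t-\tau)\CL_{\Omega,-\eta,-\omega}}\varphi\|_{(r/2)',\Omega}$. The decisive ingredient is the gradient estimate \eqref{gradlqlr}, applied to $-\CL_{\Omega,-\eta,-\omega}$ (admissible since $|{-\eta}|+|{-\omega}|=|\eta|+|\omega|$) with source exponent $r'$ and target exponent $(r/2)'$. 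Because $r>3>2$ one checks $r'<(r/2)'$, so the estimate is applicable, and $1/r'-1/(r/2)'=1/r$ gives
\begin{equation}
\|\nabla e^{-(t-\tau)\CL_{\Omega,-\eta,-\omega}}\varphi\|_{(r/2)',\Omega}\le C\,(t-\tau)^{-\frac12-\frac{3}{2r}}\,\|\varphi\|_{r',\Omega}.
\end{equation}
Taking the supremum over $\varphi$ normalized in $L^{r'}$, using the density of $C_{0,\sigma}^\infty(\Omega)$ in $L^{r'}_\sigma(\Omega)$ together with the duality $L^r_\sigma(\Omega)=(L^{r'}_\sigma(\Omega))^*$, then yields, with $M:=\esssup_{t_0<\tau<t_1}\big(\|u_1(\tau)\|_{r,\Omega}+\|u_2(\tau)\|_{r,\Omega}\big)+\|b\|_{r,\Omega}$,
\begin{equation}
\|w(t)\|_{r,\Omega}\le C\,M\int_{t_0}^t(t-\tau)^{-\frac12-\frac{3}{2r}}\,\|w(\tau)\|_{r,\Omega}\,\d\tau .
\end{equation}

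The final step is a singular Gronwall argument. The exponent $\tfrac12+\tfrac{3}{2r}$ is strictly less than $1$ precisely because $r>3$, so the kernel is locally integrable and the Volterra inequality closes: iterating it $n$ times produces, by the usual Beta-function convolution, the bound $(CM)^n\,\Gamma(\tfrac12-\tfrac{3}{2r})^n\,\Gamma\big(n(\tfrac12-\tfrac{3}{2r})\big)^{-1}(t_1-t_0)^{n(\frac12-\frac{3}{2r})}\esssup_{(t_0,t_1)}\|w\|_{r,\Omega}$, which tends to $0$ as $n\to\infty$ since $\tfrac12-\tfrac{3}{2r}>0$. Hence $\|w(t)\|_{r,\Omega}=0$ for almost every $t\in(t_0,t_1)$, which is the assertion. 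Because $t_1-t_0$ is finite the estimate runs over the whole interval; if one prefers to invoke only the short-time form of \eqref{gradlqlr}, one splits $(t_0,t_1)$ into finitely many subintervals of length at most $2$ and chains the vanishing of $w$.

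The main obstacle is not any single inequality but the bookkeeping of exponents. Since the solution class provides no $\nabla$-control, the derivative must be routed onto the test function via duality, and one must then verify that the induced time-singularity $-\tfrac12-\tfrac{3}{2r}$ is integrable; this is exactly the borderline that the subcritical range $r>3$ guarantees. It is also the reason for the hypothesis $1/3-\varepsilon_2<1/r<1/3$, i.e.\ $3<r<3/(1-3\varepsilon_2)$: the upper constraint ensures $r>3$ (integrable kernel), while the lower constraint ensures that the source exponent $r'$ still satisfies \eqref{cond-q}, so that the gradient estimate of Theorem \ref{thmlqlr} is available.
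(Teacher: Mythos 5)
Your proof is correct. The paper itself gives no argument for this lemma (it only refers to \cite[Lem. 4.5]{T22}), and the duality--Gronwall scheme you use --- routing the derivative onto the adjoint semigroup $e^{-s\CL_{\Omega,-\eta,-\omega}}$, applying H\"older with the pair $r/2$, $(r/2)'$, invoking \eqref{gradlqlr} with source $r'$ (which satisfies \eqref{cond-q} exactly because of the lower bound $1/r>1/3-\varepsilon_2$), and closing with a singular Volterra iteration --- is precisely the standard argument behind that citation, so there is nothing genuinely different to compare. The only imprecision is your primary claim that the estimate ``runs over the whole interval'': \eqref{gradlqlr} is stated only for $0<t\le 2$, so for $t-\tau>2$ it is not literally available. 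Both of your repairs work, though: $(r/2)'$ satisfies \eqref{rcondi} (indeed $\lvert 1/(r/2)'-1/2\rvert=\lvert 2/r-1/2\rvert<1/6$ in your range of $r$), so \eqref{gradlqlr2} supplies a bounded kernel for $t-\tau\ge 2$; alternatively, the chaining over subintervals of length at most $2$ succeeds because every term of the difference equation contains $w$ as a factor, so once $w$ vanishes on an initial subinterval that portion of the Duhamel integral drops out and no re-derivation of the equation with a shifted initial time is needed.
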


Let us close this paper with completion of the proof of
Theorem \ref{thmnonlinear}.

\begin{proof}[Proof of Theorem \ref{thmnonlinear}]
Fix $q$ satisfying $1/3-\varepsilon_2<1\slash q<1/3$.
Let $c_1 = c_1(q)$, $c_2=c_2(q)$, and $\CT$ be the numbers
given by the second assertion of Proposition \ref{prop-sol}.
In addition, we assume that there hold
$\lvert \eta \rvert + \lvert\omega \rvert<c_1$ and
$\lVert v_0 - b\rVert_{3,\infty,\Omega}<c_2$
so that the solution $u(t)$ to \eqref{eq-dual} may be 
constructed on account of Proposition \ref{prop-sol}.
Notice that the solution $u (t)$ satisfies
$u(t)\in L^r(\Omega)$, $3<r<q$, with
\begin{align}
\label{lrbound}
\|u(t)\|_{r,\Omega}\leq
C\Bigl\{\bigl(\lvert\eta\rvert + \lvert \omega \rvert \bigr) \CT^{1-\frac{3}{2q}}
+\lvert \eta \rvert + \lvert\omega\rvert+
\lVert v_0 - b\rVert_{3,\infty,\Omega} \Bigr\}
(t-k\CT)^{-\frac{1}{2}+\frac{3}{2r}}
\end{align}
for $k \in \BN \cup \{0\}$ and for $k\CT<t\leq (k+1)\CT$.
Let $t_*\in(0,\infty)$. Then there exists $k_*$ such that
$k_*\CT<t_*\leq (k_*+1)\CT$, thereby, we infer from
\eqref{lrbound} that
\begin{align*}
\|u(t)\|_{r,\Omega}&\leq
C\Bigl\{\bigl(\lvert\eta\rvert + \lvert \omega \rvert \bigr) \CT^{1-\frac{3}{2q}}
+\lvert \eta \rvert + \lvert\omega\rvert+
\lVert v_0 - b\rVert_{3,\infty,\Omega} \Bigr\}
(t-k_*\CT)^{-\frac{1}{2}+\frac{3}{2r}}\\
&\leq
C\Bigl\{\bigl(\lvert\eta\rvert + \lvert \omega \rvert \bigr) \CT^{1-\frac{3}{2q}}
+\lvert \eta \rvert + \lvert\omega\rvert+
\lVert v_0 - b\rVert_{3,\infty,\Omega} \Bigr\}
\left\{\frac{1}{2}(t_*-k_*\CT)\right\}^{-\frac{1}{2}+\frac{3}{2r}}
\end{align*}
for all $t\in[(k_*\CT+t_*)/2,t_*]$.
By taking into account this estimate and
by employing Lemma \ref{lem-local},
for each $t_0\in[(k_*\CT+t_*)/2,t_*)$,
Problem \eqref{eq-t0} with $u_0=u(t_0)$ admits a
local solution $\widetilde{u}\in Y_r(t_0,t_1)$ with
\begin{equation}
t_1-t_0\geq \zeta\left(
C\Bigl\{\bigl(\lvert\eta\rvert + \lvert \omega \rvert \bigr) \CT^{1-\frac{3}{2q}}
+\lvert \eta \rvert + \lvert\omega\rvert+
\lVert v_0 - b\rVert_{3,\infty,\Omega} \Bigr\}
\left\{\frac{1}{2}(t_*-k_*\CT)\right\}^{-\frac{1}{2}+\frac{3}{2r}}
\right)=:\theta.
\end{equation}
Let us take $t_0=\max\{(k_*\CT+t_*)/2,t_*-\theta/2\}$ so that
$t_*\in(t_0,t_1)$, in which $u=\widetilde{u}$ since
both $u$ and $\widetilde{u}$ satisfy \eqref{eq-dual2} and since
we have Lemma \ref{lem-uniquness}. 
Since $t_*$ may be taken arbitrarily, we complete the proof.
\end{proof}

\begin{bibdiv}
\begin{biblist}
\bib{BL76}{book}{
    author = {Bergh, J\"{o}ran}
author={L\"{o}fstr\"{o}m, J\"{o}rgen},
     title = {Interpolation spaces},
 publisher = {Springer-Verlag, Berlin-New York},
      date = {1976},
}
\bib{B79}{article}{
	author={Bogovski\u{\i}, M. E.},
	title={Solution of the first boundary value problem for an equation of
		continuity of an incompressible medium},
	journal={Dokl. Akad. Nauk SSSR},
	date={1979},
	number={no. 5,},
	pages={1037--1040},
}

\bib{BM90}{article}{
	author={Borchers, Wolfgang},
	author={Miyakawa, Tetsuro},
	title={Algebraic $L^2$ decay for Navier-Stokes flows in exterior domains},
	journal={Acta Math.},
	volume={165},
	date={1990},
	number={3-4},
	pages={189--227},
}

\bib{BM95}{article}{
    author = {Borchers, Wolfgang}
author={Miyakawa, Tetsuro},
     title = {On stability of exterior stationary {N}avier-{S}tokes flows},
   journal = {Acta Math.},
    volume = {174},
      date = {1995},
    number = {2},
     pages = {311--382},
}

\bib{BS90}{article}{
	author={Borchers, Wolfgang},
	author={Sohr, Hermann},
	title={On the equations ${\rm rot}\,{\bf v}={\bf g}$ and ${\rm div}\,{\bf
			u}=f$ with zero boundary conditions},
	journal={Hokkaido Math. J.},
	volume={19},
	date={1990},
	number={1},
	pages={67--87},
}

\bib{ES04}{article}{
    author = {Enomoto, Yuko}
author={Shibata, Yoshihiro},
     title = {Local energy decay of solutions to the {O}seen equation in the
              exterior domains},
   journal = {Indiana Univ. Math. J.},
    volume = {53},
      date = {2004},
    number = {5},
     pages = {1291--1330},
}

\bib{ES05}{article}{
	author={Enomoto, Yuko},
	author={Shibata, Yoshihiro},
	title={On the rate of decay of the Oseen semigroup in exterior domains
		and its application to Navier-Stokes equation},
	journal={J. Math. Fluid Mech.},
	volume={7},
	date={2005},
	number={3},
	pages={339--367},
}

\bib{FMM98}{article}{
	author={Fabes, Eugene},
	author={Mendez, Osvaldo},
	author={Mitrea, Marius},
	title={Boundary layers on Sobolev-Besov spaces and Poisson's equation for
		the Laplacian in Lipschitz domains},
	journal={J. Funct. Anal.},
	volume={159},
	date={1998},
	number={2},
	pages={323--368},
}

\bib{FNN07}{article}{
	author={Farwig, Reinhard},
	author={Ne\v{c}asov\'{a}, \v{S}\'{a}rka},
	author={Neustupa, Ji\v{r}\'{\i}},
	title={On the essential spectrum of a Stokes-type operator arising from
		flow around a rotating body in the $L^q$-framework},
	book={
		series={RIMS K\^{o}ky\^{u}roku Bessatsu},
		volume={B1},
	},
	date={2007},
	pages={93--105},
}

\bib{FN07}{article}{
	author={Farwig, Reinhard},
	author={Neustupa, Ji\v{r}\'{\i}},
	title={On the spectrum of a Stokes-type operator arising from flow around
		a rotating body},
	journal={Manuscripta Math.},
	volume={122},
	date={2007},
	number={4},
	pages={419--437},
}

\bib{FS94}{article}{
    author = {Farwig, Reinhard}
author={Sohr, Hermann},
     title = {Generalized resolvent estimates for the {S}tokes system in
              bounded and unbounded domains},
   journal = {J. Math. Soc. Japan},
    volume = {46},
      date = {1994},
    number = {4},
     pages = {607--643},
}

\bib{F65}{article}{
	author={Finn, Robert},
	title={Stationary solutions of the Navier-Stokes equations},
	journal={Proc. Symp. Appl. Math.},
	volume={17},
	date={1965},
	pages={121--153},
}

\bib{FK64}{article}{
	author={Fujita, Hiroshi},
	author={Kato, Tosio},
	title={On the Navier-Stokes initial value problem. I},
	journal={Arch. Ration. Mech. Anal.},
	volume={16},
	date={1964},
	pages={269--315},
}

\bib{GT22}{article}{
    author = {Gabel, Fabian},
author={Tolksdorf, Patrick},
     title = {The {S}tokes operator in two-dimensional bounded {L}ipschitz
              domains},
   journal = {J. Differential Equations},
    volume = {340},
      date = {2022},
     pages = {227--272},
}

\bib{G02}{article}{
	author={Galdi, Giovanni P.},
	title={On the motion of a rigid body in a viscous liquid: a mathematical
		analysis with applications},
	book={
		publisher={North-Holland, Amsterdam},
	},
	date={2002},
	pages={653--791},
}

\bib{G11}{book}{
	author={Galdi, G. P.},
	title={An introduction to the mathematical theory of the Navier-Stokes
		equations},
	edition={2},
	publisher={Springer, New York},
	date={2011},
}

\bib{GHS97}{article}{
    author = {Galdi, G. P.}
author={Heywood, J. G.}
author={Shibata, Y.},
     title = {On the global existence and convergence to steady state of
              {N}avier-{S}tokes flow past an obstacle that is started from
              rest},
   journal = {Arch. Rational Mech. Anal.},
    volume = {138},
      date = {1997},
    number = {4},
     pages = {307--318},
}

\bib{GHH06}{article}{
	author={Geissert, Matthias},
	author={Heck, Horst},
	author={Hieber, Matthias},
	title={$L^p$-theory of the Navier-Stokes flow in the exterior of a moving
		or rotating obstacle},
	journal={J. Reine Angew. Math.},
	volume={596},
	date={2006},
	pages={45--62},
}

\bib{GHH062}{article}{
	author={Geissert, Matthias},
	author={Heck, Horst},
	author={Hieber, Matthias},
	title={On the equation ${\rm div}\,u=g$ and Bogovski\u{\i}'s operator in
		Sobolev spaces of negative order},
	book={
		series={Oper. Theory Adv. Appl.},
		volume={168},
		publisher={Birkh\"{a}user, Basel},
	},
	date={2006},
	pages={113--121},
}

\bib{G81}{article}{
    author = {Giga, Yoshikazu},
     title = {Analyticity of the semigroup generated by the {S}tokes
              operator in {$L_{r}$} spaces},
   journal = {Math. Z.},
    volume = {178},
      date = {1981},
    number = {3},
     pages = {297--329},
}

\bib{GM85}{article}{
    author = {Giga, Yoshikazu}
author={Miyakawa, Tetsuro},
     title = {Solutions in {$L_r$} of the {N}avier-{S}tokes initial value
              problem},
   journal = {Arch. Rational Mech. Anal.},
    volume = {89},
      date = {1985},
    number = {3},
     pages = {267--281},
}

\bib{Hbook}{book}{
	author={Haase, Markus},
	title={The functional calculus for sectorial operators},
	series={Operator Theory: Advances and Applications},
	volume={169},
	publisher={Birkh\"{a}user Verlag, Basel},
	date={2006},
}

\bib{HR11}{article}{
	author={Hansel, Tobias},
	author={Rhandi, Abdelaziz},
	title={Non-autonomous Ornstein-Uhlenbeck equations in exterior domains},
	journal={Adv. Differential Equations},
	volume={16},
	date={2011},
	number={3-4},
	pages={201--220},
}

\bib{HR14}{article}{
	author={Hansel, Tobias},
	author={Rhandi, Abdelaziz},
	title={The Oseen-Navier-Stokes flow in the exterior of a rotating
		obstacle: the non-autonomous case},
	journal={J. Reine Angew. Math.},
	volume={694},
	date={2014},
	pages={1--26},
}

\bib{H99a}{article}{
	author={Hishida, Toshiaki},
	title={The Stokes operator with rotation effect in exterior domains},
	journal={Analysis (Munich)},
	volume={19},
	date={1999},
	number={1},
	pages={51--67},
}

\bib{H99b}{article}{
   author={Hishida, Toshiaki},
   title={An existence theorem for the Navier-Stokes flow in the exterior of
   a rotating obstacle},
   journal={Arch. Ration. Mech. Anal.},
   volume={150},
   date={1999},
   number={4},
   pages={307--348},
}

\bib{H11}{incollection}{
    author = {Hishida, Toshiaki},
     title = {On the relation between the large time behavior of the
              {S}tokes semigroup and the decay of steady {S}tokes flow at
              infinity},
 	booktitle = {Parabolic problems},
    series = {Progr. Nonlinear Differential Equations Appl.},
    volume = {80},
     pages = {343--355},
 publisher = {Birkh\"{a}user/Springer Basel AG, Basel},
      date = {2011},
}

\bib{H18}{article}{
	author={Hishida, Toshiaki},
	title={Large time behavior of a generalized Oseen evolution operator,
		with applications to the Navier-Stokes flow past a rotating obstacle},
	journal={Math. Ann.},
	volume={372},
	date={2018},
	number={3-4},
	pages={915--949},
}

\bib{H20}{article}{
	author={Hishida, Toshiaki},
	title={Decay estimates of the gradient of a generalized Oseen evolution
		operator arising from time-dependent rigid motions in exterior domains},
	journal={Arch. Ration. Mech. Anal.},
	volume={238},
	date={2020},
	number={1},
	pages={215--254},
}

\bib{H21}{article}{
    author = {Hishida, Toshiaki},
     title = {An alternative proof of {$L^q$}-{$L^r$} estimates of the
              {O}seen semigroup in higher dimensional exterior domains},
   journal = {Partial Differ. Equ. Appl.},
    volume = {2},
      date = {2021},
    number = {2},
     pages = {Paper No. 32, 12},
}

\bib{HM18}{article}{
    author = {Hishida, Toshiaki}
author={Maremonti, Paolo},
     title = {Navier-{S}tokes flow past a rigid body: attainability of
              steady solutions as limits of unsteady weak solutions,
              starting and landing cases},
   journal = {J. Math. Fluid Mech.},
    volume = {20},
      date = {2018},
    number = {2},
     pages = {771--800},
}

\bib{HS09}{article}{
	author={Hishida, Toshiaki},
	author={Shibata, Yoshihiro},
	title={$L_p\text{-}L_q$ estimate of the Stokes operator and Navier-Stokes
		flows in the exterior of a rotating obstacle},
	journal={Arch. Ration. Mech. Anal.},
	volume={193},
	date={2009},
	number={2},
	pages={339--421},
}

\bib{I89}{article}{
	author={Iwashita, Hirokazu},
	title={$L_q$-$L_r$ estimates for solutions of the nonstationary Stokes
		equations in an exterior domain and the Navier-Stokes initial value
		problems in $L_q$ spaces},
	journal={Math. Ann.},
	volume={285},
	date={1989},
	number={2},
	pages={265--288},
}

\bib{K84}{article}{
    author = {Kato, Tosio},
     title = {Strong {$L^{p}$}-solutions of the 
{N}avier-{S}tokes equation
              in {${\bf R}^{m}$}, with applications to weak 
solutions},
   journal = {Math. Z.},
    volume = {187},
      date = {1984},
    number = {4},
     pages = {471--480},
}

\bib{K17}{article}{
    author = {Koba, Hajime},
     title = {On {$L^{3,\infty}$}-stability of the {N}avier-{S}tokes system
              in exterior domains},
   journal = {J. Differential Equations},
    volume = {262},
      date = {2017},
    number = {3},
     pages = {2618--2683},
}

\bib{KS98}{article}{
	author={Kobayashi, Takayuki},
	author={Shibata, Yoshihiro},
	title={On the Oseen equation in the three-dimensional exterior domains},
	journal={Math. Ann.},
	volume={310},
	date={1998},
	number={1},
	pages={1--45},
}

\bib{KY98}{article}{
    author = {Kozono, Hideo}
author={Yamazaki, Masao},
     title = {On a larger class of stable solutions to the {N}avier-{S}tokes
              equations in exterior domains},
   journal = {Math. Z.},
    volume = {228},
      date = {1998},
    number = {4},
     pages = {751--785},
}

\bib{KW17}{article}{
	author={Kunstmann, Peer Christian},
	author={Weis, Lutz},
	title={New criteria for the $H^\infty$-calculus and the Stokes operator
		on bounded Lipschitz domains},
	journal={J. Evol. Equ.},
	volume={17},
	date={2017},
	number={1},
	pages={387--409},
}

\bib{LM06}{article}{
	author={Lang, J.},
	author={M\'{e}ndez, O.},
	title={Potential techniques and regularity of boundary value problems in
		exterior non-smooth domains: regularity in exterior domains},
	journal={Potential Anal.},
	volume={24},
	date={2006},
	number={4},
	pages={385--406},
}

\bib{MS97}{article}{
	author={Maremonti, P.},
	author={Solonnikov, V. A.},
	title={On nonstationary Stokes problem in exterior domains},
	journal={Ann. Scuola Norm. Sup. Pisa Cl. Sci. (4)},
	volume={24},
	date={1997},
	number={3},
	pages={395--449},
}

\bib{MM08}{article}{
	author={Mitrea, Marius},
	author={Monniaux, Sylvie},
	title={The regularity of the Stokes operator and the Fujita-Kato approach
		to the Navier-Stokes initial value problem in Lipschitz domains},
	journal={J. Funct. Anal.},
	volume={254},
	date={2008},
	number={6},
	pages={1522--1574},
}

\bib{MW12}{article}{
	author={Mitrea, Marius},
	author={Wright, Matthew},
	title={Boundary value problems for the Stokes system in arbitrary
		Lipschitz domains},
	language={English, with English and French summaries},
	journal={Ast\'{e}risque},
	number={344},
	date={2012},
}

\bib{M82}{article}{
    author = {Miyakawa, Tetsuro},
     title = {On nonstationary solutions of the {N}avier-{S}tokes equations
              in an exterior domain},
   journal = {Hiroshima Math. J.},
    volume = {12},
      date = {1982},
    number = {1},
     pages = {115--140},
}

\bib{PS16}{book}{
	author={Pr\"{u}ss, Jan},
	author={Simonett, Gieri},
	title={Moving interfaces and quasilinear parabolic evolution equations},
	series={Monographs in Mathematics},
	volume={105},
	publisher={Birkh\"{a}user/Springer, [Cham]},
	date={2016},
}

\bib{S12}{article}{
	author={Shen, Zhongwei},
	title={Resolvent estimates in $L^p$ for the Stokes operator in Lipschitz
		domains},
	journal={Arch. Ration. Mech. Anal.},
	volume={205},
	date={2012},
	number={2},
	pages={395--424},
}

\bib{S99}{article}{
	author={Shibata, Yoshihiro},
	title={On an exterior initial-boundary value problem for Navier-Stokes
		equations},
	journal={Quart. Appl. Math.},
	volume={57},
	date={1999},
	number={1},
	pages={117--155},
}

\bib{S08}{article}{
	author={Shibata, Yoshihiro},
	title={On the Oseen semigroup with rotating effect},
	book={
		publisher={Birkh\"{a}user, Basel},
	},
	date={2008},
	pages={595--611},
}

\bib{S10}{article}{
	author={Shibata, Yoshihiro},
	title={On a $C^0$ semigroup associated with a modified Oseen equation
		with rotating effect},
	book={
		publisher={Springer, Berlin},
	},
	date={2010},
	pages={513--551},
}

\bib{Sbook}{book}{
	author={Sohr, Hermann},
	title={The Navier-Stokes equations},
	series={Modern Birkh\"{a}user Classics},
	publisher={Birkh\"{a}user/Springer Basel AG, Basel},
	date={2001},
}

\bib{S77}{article}{
    author = {Solonmkov, V.A.},
     title = {Estimates for solutions of nonstationary Navler-Stokes equations},
   journal = {J. Soviet. Math. },
    volume = {8},
      date = {1977},
     pages = {467--529},
}

\bib{T21}{article}{
    author = {Takahashi, Tomoki},
     title = {Existence of a stationary {N}avier-{S}tokes flow past a rigid
              body, with application to starting problem in higher
              dimensions},
   journal = {J. Math. Fluid Mech.},
    volume = {23},
      date = {2021},
    number = {2},
     pages = {Paper No. 32, 22},
}
\bib{T22}{article}{
    author = {Takahashi, Tomoki},
     title = {Attainability of a stationary {N}avier-{S}tokes flow around a
              rigid body rotating from rest},
   journal = {Funkcial. Ekvac.},
    volume = {65},
      date = {2022},
    number = {1},
     pages = {111--138},
}
\bib{T00}{incollection}{
    author = {Taylor, Michael E.},
     title = {Incompressible fluid flows on rough domains},
 booktitle = {Semigroups of operators: theory and applications ({N}ewport
              {B}each, {CA}, 1998)},
    series = {Progr. Nonlinear Differential Equations Appl.},
    volume = {42},
     pages = {320--334},
 publisher = {Birkh\"{a}user, Basel},
      date = {2000},
}

\bib{T18}{article}{
	author={Tolksdorf, Patrick},
	title={On the ${\rm L}^p$-theory of the Navier-Stokes equations on
		three-dimensional bounded Lipschitz domains},
	journal={Math. Ann.},
	volume={371},
	date={2018},
	number={1-2},
	pages={445--460},
}
\bib{Tthesis}{incollection}{
    author = {Tolksdorf, Patrick},
     title = {On the ${\rm L}^p$-theory of the Navier-Stokes equations on Lipschitz domains},
 booktitle = {PhD thesis, TU Darmstadt},
      date = {2017},
}

\bib{TW20}{article}{
	author={Tolksdorf, Patrick},
	author={Watanabe, Keiichi},
	title={The Navier-Stokes equations in exterior Lipschitz domains:
		$L^p$-theory},
	journal={J. Differential Equations},
	volume={269},
	date={2020},
	number={7},
	pages={5765--5801},
}

\bib{T02}{article}{
    author = {Triebel, Hans},
     title = {Function spaces in {L}ipschitz domains and on {L}ipschitz
              manifolds. {C}haracteristic functions as pointwise
              multipliers},
   journal = {Rev. Mat. Complut.},
    volume = {15},
      date = {2002},
    number = {2},
     pages = {475--524},
}

\bib{W23}{article}{
	author={Watanabe, Keiichi},
	title={Decay estimates of gradient of the Stokes semigroup in exterior
		Lipschitz domains},
	journal={J. Differential Equations},
	volume={346},
	date={2023},
	pages={277--312},
}

\bib{W00}{article}{
	author={Wiegner, Michael},
	title={Decay estimates for strong solutions of the Navier-Stokes
		equations in exterior domains},
	journal={Ann. Univ. Ferrara Sez. VII (N.S.)},
	volume={46},
	date={2000},
	pages={61--79},
}

\bib{Y00}{article}{
    author = {Yamazaki, Masao},
     title = {The {N}avier-{S}tokes equations in
the weak-{$L^n$} space with time-dependent external force},
   journal = {Math. Ann.},
    volume = {317},
      date = {2000},
    number = {4},
     pages = {635--675},
}
\end{biblist}
\end{bibdiv}

\end{document}